\DeclareMathAlphabet{\mathpzc}{OT1}{pzc}{m}{it}
\newcommand{\C}{\mathbb{C}}
\newcommand{\Q}{\mathbb{Q}}
\newcommand{\R}{\mathbb{R}}
\newcommand{\Pa}{\mathcal{P}}
\newcommand{\Z}{\mathbb{Z}}
\newcommand{\Om}{\Omega}
\DeclareMathOperator{\ad}{\mathsf{ad}}
\newcommand{\compcent}[1]{\vcenter{\hbox{$#1\circ$}}}
\newcommand{\comp}{\mathbin{\mathchoice
{\compcent\scriptstyle}{\compcent\scriptstyle}
{\compcent\scriptscriptstyle}{\compcent\scriptscriptstyle}}} 
\newcommand\cca[1]{{%
\ooalign{\raisebox{-.4ex}{\larger[4]$\circlearrowright$}\cr
\hidewidth$\,#1$\hidewidth}}}
\newtheoremstyle{mystyle}{3pt}{3pt}{}{}{\bfseries}{.}{.5em}{}
\numberwithin{equation}{section}
\theoremstyle{plain}
\newtheorem{thm}{Theorem}[section]
\newtheorem{lemma}[thm]{Lemma}
\newtheorem{coro}[thm]{Corollary}
\newtheorem{customthm}{Theorem}
\newtheorem*{thm*}{Theorem}
\newtheorem{prop}[thm]{Proposition}
\theoremstyle{definition}
\newtheorem{defi}[thm]{Definition}
\newtheorem{remark}[thm]{Remark}
\newtheorem{eg}[thm]{Example}
\begin{document}
\title[Generalized brace product]{On the generalized brace product: relation to $H$-splitting of loop space fibrations \& the $J$-homomorphism}
\author[Basu, Bhowmick, Samanta]{Somnath Basu, Aritra Bhowmick, Sandip Samanta}
\address{Department of Mathematics and Statistics, Indian Institute of Science Education and Research Kolkata,
Mohanpur, West Bengal, India -- 741 246}
\email{somnath.basu@iiserkol.ac.in, avowmix@gmail.com, ss18ip021@iiserkol.ac.in}
\keywords{Fibration, $H$-splitting, Brace Product, Whitehead Product, Samelson Product, $J$-homomorphism}
\subjclass[2020]{55Q15, 55R15, 55R05, 55R25, 55Q35, 55Q50, 55P10, 55P35, 55P45, 55P62}

\begin{abstract}
   Given a fibration $F \hookrightarrow E \rightarrow B$ with a homotopy section $s: B \rightarrow E$, James introduced a binary product $\left\{, \right\}_s: \pi_i B \times \pi_j F \rightarrow \pi_{i+j-1} F$, called the brace product, which was later generalized by Yoon.
   We show that the vanishing of this generalized brace product is the precise obstruction to the $H$-splitting of the loop space fibration, i.e., $\Omega E \simeq \Omega B \times \Omega F$ as $H$-spaces. Using rational homotopy theory, we show that for rational spaces, the vanishing of the generalized brace product coincides with the vanishing of the classical James brace product, enabling us to perform the relevant computations. In addition, the notion of $J$-homomorphism is generalized and connected to the generalized brace product. Among the applications, we characterize the homotopy types of certain fibrations, including sphere bundles over spheres. 
\end{abstract}

\maketitle

\section{Introduction}
A fibration $p: E \rightarrow B$ with homotopy fibre $F$, denoted by $F\overset{i}{\hookrightarrow} E\overset{p}{\to} B$, is said to be \emph{decomposable} (or \textit{trivial}) if $E$ is homotopy equivalent to the product space $B \times F$. A principal $G$-bundle $p: E \rightarrow B$ equipped with a section $s: B \rightarrow E$ is decomposable since $E$ is homeomorphic to $B\times G$. However, for a general fibration, merely having a section is a much weaker condition than the fibration being decomposable. We refer to a map $s: B \rightarrow E$ as a \emph{homotopy section} if $p \comp s$ is homotopic to the identity map of the base space $B$. 
In \cite{James_decomposable} James introduced a binary product $\left\{ , \right\}_s : \pi_* (B) \times \pi_* (F) \rightarrow \pi_* (F)$ for a fibration $p$ equipped with a homotopy section $s$. We shall call this product the \emph{James brace product} (or simply, the \textit{brace product}). James demonstrated that the brace product is an obstruction to certain fibrations being decomposable. Kallel and Sjerve \cite{Kallel-Sjerve} established a connection between the brace product and certain differentials in the Serre spectral sequence associated with a fibration having a section. In this article, we prove the following result (Theorem~\ref{phi_alg_map}).

\begin{customthm}
    Suppose $F\overset{i}{\hookrightarrow} E\overset{p}{\to} B$ is a fibration with a homotopy section $s:B\to E$. Then the James brace product $\{,\}_s$ vanishes for the given fibration if and only if the natural map $(\varphi_{s})_*:=(\Omega s\cdot\Omega i)_*:\pi_*(\Omega B\times\Omega F)\to \pi_*(\Omega E)$ is an algebra isomorphism with ring structure given by the Samelson product.
\end{customthm}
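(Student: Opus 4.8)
The plan is to separate the statement into an additive part, which holds automatically because $s$ is a homotopy section, and a multiplicative part, which is the only place the brace product is felt. For the additive part, applying $\Omega$ to $p\comp s\simeq\mathrm{id}_B$ shows that $\Omega s$ is a homotopy section of $\Omega p\colon\Omega E\to\Omega B$, and $\Omega$ carries the given fibration to the fibration $\Omega F\hookrightarrow\Omega E\xrightarrow{\Omega p}\Omega B$, whose long exact homotopy sequence therefore breaks into split short exact sequences with splitting $(\Omega s)_*$; in particular $(\Omega i)_*$ is injective. Writing $\varphi_s=m\comp(\Omega s\times\Omega i)$ with $m$ the loop multiplication of $\Omega E$ and using $m_*(x,y)=x+y$ on positive homotopy groups, one reads off $(\varphi_s)_*(a,b)=(\Omega s)_*(a)+(\Omega i)_*(b)$, which is exactly the resulting splitting isomorphism $\pi_*(\Omega B)\oplus\pi_*(\Omega F)\xrightarrow{\ \cong\ }\pi_*(\Omega E)$. (In degree $0$, i.e.\ for $\pi_1$, one argues identically with the semidirect decomposition and the group commutator in place of the Samelson bracket; I suppress this throughout.) So $(\varphi_s)_*$ is always a group isomorphism, and hence it is an algebra isomorphism for the Samelson products if and only if it is a ring homomorphism for them, the inverse of a bijective ring map being again a ring map.

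It remains to decide when $(\varphi_s)_*$ respects Samelson products. Both $\Omega s$ and $\Omega i$ are $H$-maps for loop concatenation, so $(\Omega s)_*$ and $(\Omega i)_*$ preserve Samelson products; and on $\Omega B\times\Omega F=\Omega(B\times F)$ the Samelson product is computed coordinatewise, the two projections being $H$-maps. Expanding $(\varphi_s)_*\langle(a_1,b_1),(a_2,b_2)\rangle$ and $\langle(\varphi_s)_*(a_1,b_1),(\varphi_s)_*(a_2,b_2)\rangle$ by biadditivity of the Samelson product and cancelling the pure terms $\langle(\Omega s)_*a_1,(\Omega s)_*a_2\rangle$ and $\langle(\Omega i)_*b_1,(\Omega i)_*b_2\rangle$ using those two facts, one is left with the requirement that the mixed brackets $\langle(\Omega s)_*a,(\Omega i)_*b\rangle_{\Omega E}$ vanish for all $a\in\pi_*(\Omega B)$ and $b\in\pi_*(\Omega F)$. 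Thus $(\varphi_s)_*$ is an algebra isomorphism if and only if all of these mixed Samelson products are zero.

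Finally I would identify the mixed bracket with the brace product. Under the adjunction $\pi_n(\Omega X)\cong\pi_{n+1}(X)$ the Samelson product corresponds, up to a sign depending only on the degrees, to the Whitehead product, and $(\Omega f)_*$ corresponds to $f_*$; so, writing $\bar\alpha\in\pi_{i-1}(\Omega B)$ and $\bar\beta\in\pi_{j-1}(\Omega F)$ for the adjoints of $\alpha\in\pi_i(B)$ and $\beta\in\pi_j(F)$, the mixed bracket $\langle(\Omega s)_*\bar\alpha,(\Omega i)_*\bar\beta\rangle_{\Omega E}$ is the adjoint of $\pm[s_*\alpha,i_*\beta]\in\pi_{i+j-1}(E)$. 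Since $p_*[s_*\alpha,i_*\beta]=[\alpha,0]=0$, this Whitehead product lies in the kernel of $p_*$, hence in the image of $i_*$, and the characterisation of James's brace product (well defined because the section makes $i_*$ injective) gives $[s_*\alpha,i_*\beta]=\pm\,i_*\{\alpha,\beta\}_s$. By injectivity of $i_*$ the mixed bracket vanishes if and only if $\{\alpha,\beta\}_s=0$, and letting $\alpha$, $\beta$ range over $\pi_*(B)$, $\pi_*(F)$ shows that all mixed Samelson products vanish if and only if $\{,\}_s$ vanishes. Chaining the three steps proves the theorem.

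The bulk of this is formal: the splitting of the homotopy exact sequence of a fibration with section, the naturality and biadditivity of the Samelson product, and the classical Samelson--Whitehead correspondence. The one genuinely content-carrying step is the last, and the point to get right is the precise relation $i_*\{\alpha,\beta\}_s=\pm[s_*\alpha,i_*\beta]$ between James's brace product and the Whitehead product, together with the bookkeeping of the signs coming from the loop-space adjunction; since only the vanishing is at issue none of these signs matter, but this relation should be cited or verified from James's construction before the argument is run.
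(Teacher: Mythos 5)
Your proposal is correct and follows essentially the same route as the paper: both reduce the question to the vanishing of the mixed commutators/Samelson products $\langle(\Omega s)_*\alpha,(\Omega i)_*\beta\rangle$, using that $(\varphi_s)_*$ is automatically additive (the splitting of the loop-space fibration, Proposition~\ref{homotopy_equivalence}) together with the Samelson--Whitehead correspondence identifying these mixed brackets with $i_*\{\alpha,\beta\}_s$. The one difference is that where you cancel the pure terms by biadditivity of the Samelson product (which is delicate precisely in the degree-$0$/$\pi_1$ case you suppress), the paper expands $\varphi_s\comp C_{B\times F}$ and $C_E\comp(\varphi_s\times\varphi_s)$ as explicit words in loop concatenation and commutes letters one at a time using $\Omega s(\alpha)\cdot\Omega i(\beta)\simeq\Omega i(\beta)\cdot\Omega s(\alpha)$, which handles all degrees $k,l\ge 0$ uniformly without appealing to bilinearity.
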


It is well known \cite{Eckmann-Hilton} that if a fibration $p: E \rightarrow B$ admits a homotopy section $s: B \rightarrow E$, then the map $\varphi_s: \Omega B \times \Omega F \rightarrow \Omega E$ is a homotopy equivalence. As any based loop space $\Omega Y$ of $Y$ is an $H$-space, a natural question arises: is $\varphi_s$ an $H$-space map? If so, then call $\varphi_s$ an \emph{$H$-splitting}. In \cite{Kallel-Sjerve}, the authors identified a certain space level analog of the James brace product as the obstruction to $\varphi_s$ being an $H$-splitting. Recently, Liu \cite{Liu} studied the same question and obtained some sufficient conditions. 

To answer the above question completely, we employ the \textit{generalized brace product} introduced by Yoon \cite[Def 3.3]{Yoon-DecomposabilityOfEvaluationFibration} for the groups of homotopy classes of maps from the suspension spaces. Recall, given any based space $X$ and its reduced suspension $\Sigma X$, the set $[\Sigma X, Z]$, of base point preserving homotopy classes of maps $\Sigma X \rightarrow Z$, forms a group.  With this in mind, for a fibration $p: E \rightarrow B$ with homotopy section $s: B \rightarrow E$, and spaces $X, Y$, the \emph{generalized brace product} (Definition~\ref{defn:genBraceProduct}) is defined as a binary operation
\[\left\{ , \right\}_s : [\Sigma X, B] \times [\Sigma Y, F] \rightarrow [\Sigma (X \wedge Y), F],\]
where $X \wedge Y$ represents the smash product of $X$ and $Y$. In \S \ref{sec:genBraceProductProperties}, we establish several basic properties of the generalized brace product and how they change in pullback and product fibration.

In the context of $H$-splitting of the $\varphi_s$ map, we establish the following result (Theorem~\ref{generalized_brace-splitting}).

\begin{customthm}
    The map $\varphi_s$ is an $H$-splitting if and only if the generalized brace product vanishes for all spaces $X, Y$.
\end{customthm}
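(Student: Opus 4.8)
The plan is to reduce the assertion ``$\varphi_s$ is an $H$-map'' to the vanishing of a single Samelson-type map and then to recognise that map, via the homotopy section, as the universal instance of Yoon's generalized brace product.

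First, recall that for any based map $f\colon Y\to Y'$ the looped map $\Omega f$ is an $H$-map for loop concatenation; in particular $\Omega s,\Omega i,\Omega p$ are $H$-maps and $\varphi_s=\mu_{\Omega E}\comp(\Omega s\times\Omega i)$. The sole obstruction to $\varphi_s$ being an $H$-map is then the failure of the images of $\Omega s$ and $\Omega i$ to commute in the group-like $H$-space $\Omega E$: comparing the maps $\varphi_s\comp\mu_{\Omega B\times\Omega F}$ and $\mu_{\Omega E}\comp(\varphi_s\times\varphi_s)$ on $(\Omega B\times\Omega F)^{\times 2}$ and using that $\Omega s,\Omega i$ are $H$-maps, one computes that their pointwise difference in the group structure of $\Omega E$ is, up to a conjugation and a coordinate projection, the commutator map $(\alpha,\beta)\mapsto[\Omega s(\alpha),\Omega i(\beta)]$. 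This commutator map is trivial on $\Omega B\vee\Omega F$, hence equals $q_{\Omega B,\Omega F}$ followed by the Samelson product $\kappa:=\langle\Omega s,\Omega i\rangle\colon\Omega B\wedge\Omega F\to\Omega E$. Since $\Omega p$ is an $H$-map and $\Omega p\comp\Omega i\simeq\ast$, we get $\Omega p\comp\kappa\simeq\ast$, so $\kappa$ lifts through the homotopy fibre $\Omega F\xrightarrow{\Omega i}\Omega E$ of $\Omega p$ to some $\bar\kappa\colon\Omega B\wedge\Omega F\to\Omega F$; and because $\Omega i=\varphi_s|_{\{\ast\}\times\Omega F}$ is a homotopy-split monomorphism (retraction $r:=\mathrm{pr}_{\Omega F}\comp\varphi_s^{-1}$, $\varphi_s$ being a homotopy equivalence), one has $\kappa\simeq\ast\iff\bar\kappa\simeq\ast$. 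The crucial point, which I would isolate as a lemma by unwinding Definition~\ref{defn:genBraceProduct}, is that $\bar\kappa$ is exactly the adjoint of the ``universal'' generalized brace product: for $[f]\in[\Sigma X,B]$, $[g]\in[\Sigma Y,F]$ with loop adjoints $\widehat f\colon X\to\Omega B$, $\widehat g\colon Y\to\Omega F$, the class $\{f,g\}_s\in[\Sigma(X\wedge Y),F]$ is the adjoint of $\bar\kappa\comp(\widehat f\wedge\widehat g)$.

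Granting this lemma, both implications are short. If the generalized brace product vanishes for all $X,Y$, then applying it with $X=\Omega B$, $Y=\Omega F$ and the evaluation maps $\mathrm{ev}_B\colon\Sigma\Omega B\to B$, $\mathrm{ev}_F\colon\Sigma\Omega F\to F$ (whose adjoints are the identities) gives $\bar\kappa\simeq\ast$, hence $\kappa\simeq\ast$; then the difference map above is nullhomotopic (conjugation by a loop preserves nullhomotopies in the connected group-like space $\Omega E$), so $\varphi_s$ is an $H$-map. Conversely, if $\varphi_s$ is an $H$-map then so is its homotopy inverse $\varphi_s^{-1}$, hence so is $r=\mathrm{pr}_{\Omega F}\comp\varphi_s^{-1}$; since $H$-maps of group-like $H$-spaces preserve Samelson products up to homotopy, $\bar\kappa\simeq r\comp\kappa\simeq\langle r\comp\Omega s,\ r\comp\Omega i\rangle=\langle r\comp\Omega s,\ \mathrm{id}_{\Omega F}\rangle$; and $r\comp\Omega s\simeq\ast$ since $\varphi_s\comp(\mathrm{id}_{\Omega B},\ast)\simeq\Omega s$ forces $\varphi_s^{-1}\comp\Omega s\simeq(\mathrm{id}_{\Omega B},\ast)$. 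A Samelson product with a constant factor is null, so $\bar\kappa\simeq\ast$, whence every $\bar\kappa\comp(\widehat f\wedge\widehat g)$ is null and $\{f,g\}_s=0$ for all $X,Y$.

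The step I expect to be the main obstacle is the lemma identifying $\bar\kappa$ with the generalized brace product of Definition~\ref{defn:genBraceProduct}: this means checking that Yoon's construction, stripped of its combinatorial packaging, is precisely the fibrewise refinement of the commutator of $\Omega s$ and $\Omega i$, which requires tracking the adjunctions and the homotopy section carefully. The remaining ingredients---that $\Omega f$ is an $H$-map, that a homotopy inverse of an $H$-equivalence is an $H$-map, that $H$-maps preserve commutators and Samelson products up to homotopy, the identification of the difference map with the conjugated commutator, and the passage between the product $\Omega B\times\Omega F$ and the smash $\Omega B\wedge\Omega F$---are standard group-like $H$-space bookkeeping but need care with based homotopies; it is exactly in order to detect the universal smash-level obstruction $\bar\kappa$ that one must quantify over all spaces $X$ and $Y$.
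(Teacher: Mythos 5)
Your proof is correct and follows essentially the same route as the paper: the ``key lemma'' you isolate identifying $\bar\kappa$ with the universal generalized brace product is exactly Proposition~\ref{map_inducing_brace} (your $\bar\kappa$ is the paper's $J_s=\ad\{\ad^{-1}\mathrm{Id}_{\Omega B},\ad^{-1}\mathrm{Id}_{\Omega F}\}_s$), and the reduction chain commutator $\leftrightarrow$ Samelson map $\leftrightarrow$ lift through $\Omega i$ is precisely the argument of Theorem~\ref{generalized_brace-splitting}. Your converse via the homotopy inverse $\varphi_s^{-1}$ and preservation of Samelson products by $H$-maps is sound but unnecessary, since every step of the forward reduction (injectivity of $q^*$ from Lemma~\ref{cofib_seq} and of $(\Omega i)_*$) is already an equivalence, which is how the paper closes the loop.
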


{As an application of the above theorem, we conclude the $H$-splitting of the based loop space of the $m$-th free loop space fibration of $S^2$ (Example~\ref{Eg:FreeLoopSpaceFibrationOfS2}). Example \ref{eg:LieGroup} provides another instance of $H$-splitting in the context of simply connected compact Lie groups.} Remarkably, the vanishing of the generalized brace product is significantly more stringent than the vanishing of the brace product as studied in James' initial research \cite{James_decomposable} or in \cite{Kallel-Sjerve}. To illustrate this point further, we provide an example (Example~\ref{eg:brace_vanish_but_not_genaralized}) of a fibration with a section exhibiting a vanishing brace product but a non-vanishing \emph{generalized} brace product. This gives a counter-example to Lemma 2.12 of Kallel-Sjerve \cite{Kallel-Sjerve}, which states that ``\textit{if the brace product vanishes identically, then $\varphi_s$ is an $H$-splitting}''. Their lemma remains true in some special cases, such as when both the base and fibre of a fibration are finite wedges of spheres (Corollary~\ref{base_fibre_product-wedge_sphere}). 
Moreover, in the case of a sphere bundle over a sphere, the vanishing of the brace product implies more (Corollary~\ref{brace_in_sphere_over_sphere}).
\begin{customthm}
    Suppose $S^m\overset{i}{\hookrightarrow} E\xrightarrow{p}S^n$ is a fibration. Then $E\simeq S^n\times S^m$ if and only if there exists a homotopy section $s$ such that the James brace product $\{\mathrm{Id}_{S^n},\mathrm{Id}_{S^m}\}_s=0$.
\end{customthm}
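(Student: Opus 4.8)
Write $\iota_n=[\mathrm{Id}_{S^n}]$, $\iota_m=[\mathrm{Id}_{S^m}]$, so that the brace product in question is $\{\iota_n,\iota_m\}_s\in\pi_{n+m-1}(S^m)$. The plan is to translate both sides into the homotopy type of the attaching map of the top cell of $E$ and to use the identification of $\{\iota_n,\iota_m\}_s$ with its obstruction term. I will run the argument for $m,n\ge 2$; the finitely many cases involving $\pi_1$ of the base or fibre circle can be handled by direct inspection. The structural observation is this: whenever $p$ has a homotopy section $s$, the connecting homomorphism $\partial\colon\pi_n(S^n)\to\pi_{n-1}(S^m)$ vanishes, so the $n$-cell of $E$ attaches trivially to the fibre and $E\simeq(S^n\vee S^m)\cup_{g_s}e^{n+m}$ for some $g_s\in\pi_{n+m-1}(S^n\vee S^m)$. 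In the Hilton--Milnor splitting of this group every iterated Whitehead product of weight $\ge 2$ other than $[\iota_n,\iota_m]$ lies above dimension $n+m-1$ (once $m,n\ge 2$), and the fact that $p$ extends over the top cell forces the $\pi_{n+m-1}(S^n)$-summand of $g_s$ to vanish; hence $g_s=[\iota_n,\iota_m]+i_{m*}(\beta_s)$ with $\beta_s\in\pi_{n+m-1}(S^m)$. The input I would quote from the preceding sections is that $\beta_s=\pm\{\iota_n,\iota_m\}_s$ --- this is precisely what the generalized $J$-homomorphism computes in this situation. Since $S^n\times S^m\simeq(S^n\vee S^m)\cup_{[\iota_n,\iota_m]}e^{n+m}$, the ``if'' direction follows at once: a section $s$ with $\{\iota_n,\iota_m\}_s=0$ gives $g_s=[\iota_n,\iota_m]$, so $E\simeq S^n\times S^m$. (This refines Corollary~\ref{base_fibre_product-wedge_sphere}, which only yields that $\varphi_s$ is an $H$-splitting.)

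For the converse, suppose $E\simeq S^n\times S^m$. I would first check that $p$ admits a section. For $n\le m$ this is automatic, since $\pi_{n-1}(S^m)=0$. For $n>m$, the exact sequence of the fibration gives $\pi_{n-1}(E)\cong\pi_{n-1}(S^m)/\langle\partial\iota_n\rangle$, whereas $\pi_{n-1}(S^n\times S^m)\cong\pi_{n-1}(S^m)$; since these groups are finitely generated, the isomorphism forces $\partial\iota_n=0$, and a section exists.

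Now fix a section $s_0$. The homotopy equivalence $E\simeq S^n\times S^m$, together with the standard classification of homotopy types of top-cell attachments $(S^n\vee S^m)\cup_{(\,\cdot\,)}e^{n+m}$, produces a self-homotopy equivalence $h$ of $S^n\vee S^m$ with $h_*(g_{s_0})=\pm[\iota_n,\iota_m]$. Self-equivalences of $S^n\vee S^m$ are controlled, via Hilton--Milnor, by two signs together with one off-diagonal class $\delta$; composing the equivalence with $\mathrm{Id}\times(\deg(-1))$ on the product side one may normalise the sign on the $S^m$-factor to $+1$, and then expanding $h_*([\iota_n,\iota_m]+i_{m*}\beta_{s_0})$ and comparing the $\pi_{n+m-1}(S^{n+m-1})$- and $\pi_{n+m-1}(S^m)$-summands of both sides yields $\beta_{s_0}=-[\delta,\iota_m]$, hence
\[
\{\iota_n,\iota_m\}_{s_0}=\pm\beta_{s_0}\in W:=\operatorname{Im}\!\big([\,\cdot\,,\iota_m]\colon\pi_n(S^m)\to\pi_{n+m-1}(S^m)\big).
\]
It remains to vary the section. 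Two sections differ up to vertical homotopy by a class $\delta\in\pi_n(S^m)$, so $s_*\iota_n=s_{0*}\iota_n+i_*\delta$, and bi-additivity of the Whitehead product --- equivalently, the behaviour of the brace product under change of section recorded in \S\ref{sec:genBraceProductProperties} --- gives $\{\iota_n,\iota_m\}_s=\{\iota_n,\iota_m\}_{s_0}+[\delta,\iota_m]$. Thus the collection of brace products, as $s$ ranges over sections, is a single coset of $W$; since $\{\iota_n,\iota_m\}_{s_0}\in W$ by the previous step, $0$ lies in this coset and the required section is found.

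The main obstacle is the converse direction, and within it the implication ``$E\simeq S^n\times S^m\Rightarrow\{\iota_n,\iota_m\}_{s_0}\in W$''. It rests on two points: the identification of the brace product with the top-cell obstruction term (so that the force of the earlier $J$-homomorphism material is genuinely used), and a careful Hilton--Milnor bookkeeping for $\operatorname{hAut}(S^n\vee S^m)$. The latter is routine when $m\ne n$ --- for $n<m$ one even finds $\beta_{s_0}=0$ and $W=0$, since $\pi_{n+m-1}(S^m)$ is then stable --- but the border case $n=m$ is more delicate: there $\operatorname{hAut}(S^m\vee S^m)$ acts through an invertible $2\times 2$ integer matrix, one must track the distributive-law (James--Hopf) corrections in $(c\iota_1+d\iota_2)\circ\beta_{s_0}$, and the conclusion $\beta_{s_0}\in W=\mathbb{Z}\,[\iota_m,\iota_m]$ follows only after observing that $\gcd(c,d)=1$ for a matrix of determinant $\pm 1$. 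The low-dimensional cases $m=1$ and $n=1$, where the Whitehead product degenerates into a commutator in a nonabelian $\pi_1$, are treated separately.
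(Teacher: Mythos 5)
Your route is genuinely different from the paper's, and in its present form it has real gaps. The paper's proof is two short steps: for the ``if'' direction it invokes Proposition~\ref{splitting_in_suspension_over_suspension} (vanishing of $\{\mathrm{Id}_{S^n},\mathrm{Id}_{S^m}\}_s$ gives $[s,i]=0$, Arkowitz's extension criterion produces a map $S^n\times S^m\to E$ restricting to $s\vee i$, and the five lemma shows it is an equivalence); for the converse it quotes Sasao's theorem \cite{Sasao-HomotopyTypeOfSphericalFibreSpacesOverSpheres} that $E\simeq S^n\times S^m$ forces the fibration to be \emph{fibre homotopically} trivial, takes $s=h\comp j_1$ for the resulting trivialization $h$, and computes $i_*\{\mathrm{Id}_{S^n},\mathrm{Id}_{S^m}\}_s=[h\comp j_1,h\comp j_2]=h_*[j_1,j_2]=0$. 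Your ``if'' direction, read as a top-cell argument, is essentially a reproof of the Arkowitz step and is fine in outline (and the coset observation $\{\iota_n,\iota_m\}_s=\{\iota_n,\iota_m\}_{s_0}+[\delta,\iota_m]$, with every $\delta\in\pi_n(S^m)$ realized by a section, is a nice supplement the paper does not state).

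The gaps are in the two steps you yourself flag but do not carry out. First, the identification $\beta_s=\pm\{\iota_n,\iota_m\}_s$ is not available in the generality you need: Theorem~\ref{BraceAndJ-homoInSuspensionOverSuspension} and the decomposition \eqref{eq:attachinMapDecomposition} are proved only for a fibration presented by a clutching map $\rho$ with $\rho(\bullet)=\mathrm{Id}$ and for its \emph{canonical} section $s_\infty$; for an arbitrary Hurewicz fibration $S^m\hookrightarrow E\to S^n$ with an arbitrary homotopy section you must first produce such a presentation and then transport the statement across the change of section, neither of which is done. Second, and more seriously, the heart of your converse is the claim that $E\simeq S^n\times S^m$ forces $\beta_{s_0}\in W=\mathrm{Im}\bigl([\,\cdot\,,\iota_m]\bigr)$. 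This requires (a) that a homotopy equivalence of the total complexes restricts, after cellular approximation, to a self-equivalence of the skeleton $S^n\vee S^m$ carrying $g_{s_0}$ to $\pm[\iota_n,\iota_m]$ up to the orientation of the top cell, and (b) the full $\operatorname{hAut}(S^n\vee S^m)$ computation, including the $n=m$ case with its James--Hopf correction terms. You explicitly defer (b) as ``delicate,'' but it is exactly the content of the converse in your approach; without it the proof is not complete. This is precisely the work that Sasao's fibre-homotopy-triviality theorem lets the paper avoid, reducing the converse to the single identity $[j_1,j_2]=0$ in $S^n\times S^m$. I would recommend either importing Sasao's result as the paper does, or, if you want a self-contained cell-structure proof, writing out the $\operatorname{hAut}(S^n\vee S^m)$ analysis in full (James--Whitehead's original paper is the model for this).
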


We have provided a counter-example (Example \ref{eg:Husemollar_counter}) to Proposition 6.4 of Husem\"{o}ller \cite[p. 226]{Husemoller-FibreBundles} stating that ``\textit{the image of the $J$-homomorphism is contained inside the image of the suspension homomorphism of sphere}''. Proposition \ref{HusemollerRectified} is a rectification of that. Alongside this, we provide computations for the rationalized version of the generalized brace product in certain cases and prove the decomposability of the corresponding rational spaces. This leads to the following result (Theorem~\ref{cor:JamesBraceVanishImpliesRationalEquiv}). {The converse of this theorem is not true though (see Remark~\ref{remark:ConverseOfRationalDecomposition}).}

\begin{customthm}
    Suppose $\Sigma F_\Q \xrightarrow{i_\Q} E_\Q \xrightarrow{p_\Q} \Sigma B_\Q$ is the rationalized fibration with section $s_\Q$ of a fibration $\Sigma F \xrightarrow{i} E \xrightarrow{p} \Sigma B$ with section $s$, where $\Sigma F, E,$ and $\Sigma B$ are simply connected finite CW complexes. If the James brace product $\{,\}_{s_\Q}$ vanishes, then $E$ and $\Sigma B\times \Sigma F$ have the same rational homotopy type.
\end{customthm}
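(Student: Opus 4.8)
The plan is to build an explicit rational homotopy equivalence $g\colon \Sigma B_\Q\times\Sigma F_\Q\to E_\Q$ out of the section $s_\Q$ and the fibre inclusion $i_\Q$, and then to recognize $\Omega g$ as $\varphi_{s_\Q}$. Two preliminary observations set this up. First, applying Theorem~\ref{phi_alg_map} to the rationalized fibration with section $s_\Q$: since $\{,\}_{s_\Q}$ vanishes, the map $(\varphi_{s_\Q})_*\colon \pi_*(\Omega\Sigma B_\Q)\otimes\Q\,\oplus\,\pi_*(\Omega\Sigma F_\Q)\otimes\Q\to\pi_*(\Omega E_\Q)\otimes\Q$ is an isomorphism of graded Lie algebras for the Samelson product. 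On the source the bracket is the componentwise one (cross Samelson products in a product of $H$-spaces vanish), and $(\varphi_{s_\Q})_*$ restricts to $(\Omega s_\Q)_*$ on the first summand and to $(\Omega i_\Q)_*$ on the second. Second, since $\Sigma B$ and $\Sigma F$ are simply connected finite suspensions, $\Sigma B_\Q\simeq\bigvee_i S^{n_i}_\Q$ and $\Sigma F_\Q\simeq\bigvee_j S^{m_j}_\Q$ are finite wedges of rational spheres: $\pi_*(\Omega\Sigma B)\otimes\Q$ is the free graded Lie algebra on $\widetilde H_*(B;\Q)$, so realizing a homogeneous basis of its generators by maps of rational spheres into $\Sigma B_\Q$ yields a map that is an isomorphism on the homotopy of loop spaces, hence an equivalence (and likewise for $F$).

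For the construction, recall that for pointed suspensions $\Sigma B_\Q\times\Sigma F_\Q$ is the mapping cone of the generalized Whitehead product $W=[\iota_{\Sigma B_\Q},\iota_{\Sigma F_\Q}]\colon X\to\Sigma B_\Q\vee\Sigma F_\Q$, where $X$ is a desuspension of $\Sigma B_\Q\wedge\Sigma F_\Q$; with the wedge decompositions above, $X\simeq\bigvee_{i,j}S^{n_i+m_j-1}_\Q$ and $W$ restricts on the $(i,j)$ summand to the ordinary Whitehead product $[\iota_{n_i},\iota_{m_j}]$. Let $g_0\colon\Sigma B_\Q\vee\Sigma F_\Q\to E_\Q$ be the map that is $s_\Q$ on the first wedge summand and $i_\Q$ on the second. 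Then $g_0$ extends over the cone, i.e.\ to a map $g\colon\Sigma B_\Q\times\Sigma F_\Q\to E_\Q$, as soon as each composite $g_0\circ(W|_{(i,j)})$ is null-homotopic; by naturality of Whitehead products this composite is $[a_i,b_j]$ with $a_i\in\pi_{n_i}(E_\Q)$ the class of $S^{n_i}_\Q\hookrightarrow\Sigma B_\Q\xrightarrow{s_\Q}E_\Q$ and $b_j\in\pi_{m_j}(E_\Q)$ the class of $S^{m_j}_\Q\hookrightarrow\Sigma F_\Q\xrightarrow{i_\Q}E_\Q$. Under the adjunction $\pi_k(E_\Q)\cong\pi_{k-1}(\Omega E_\Q)$, the class $[a_i,b_j]$ corresponds up to sign to the Samelson product $\langle\tilde a_i,\tilde b_j\rangle$ with $\tilde a_i\in\operatorname{im}(\Omega s_\Q)_*$ and $\tilde b_j\in\operatorname{im}(\Omega i_\Q)_*$. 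These are the images of the two summands under the Lie isomorphism $(\varphi_{s_\Q})_*$, and cross Samelson products on the source vanish; hence $\langle\tilde a_i,\tilde b_j\rangle=0$, so $[a_i,b_j]=0$ (all groups here are already rational vector spaces, so this is literal vanishing). Therefore $g$ exists.

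Finally, $g$ is an equivalence. By construction $g$ restricts to $s_\Q$ on $\Sigma B_\Q$ and to $i_\Q$ on $\Sigma F_\Q$, so the loop map $\Omega g\colon\Omega\Sigma B_\Q\times\Omega\Sigma F_\Q=\Omega(\Sigma B_\Q\times\Sigma F_\Q)\to\Omega E_\Q$ restricts to $\Omega s_\Q$ and $\Omega i_\Q$ on the two factors; being an $H$-map, $\Omega g$ must equal $\varphi_{s_\Q}$. Since $\varphi_{s_\Q}$ is a homotopy equivalence (Eckmann--Hilton, as recalled above), so is $\Omega g$, and as both $E_\Q$ and $\Sigma B_\Q\times\Sigma F_\Q$ are simply connected, $g$ induces isomorphisms on all homotopy groups and is therefore a homotopy equivalence, giving $E\simeq_\Q\Sigma B\times\Sigma F$. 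The step requiring the most care is the extension: one must work with the mapping-cone description rather than a naive cell decomposition (the rational spheres $S^n_\Q$ are infinite dimensional), identify the attaching map precisely with the family of Whitehead products $[a_i,b_j]$, and track the signs in the Whitehead--Samelson correspondence (irrelevant for vanishing, but one should be explicit). An alternative, model-theoretic route---noting that $\Sigma B_\Q\times\Sigma F_\Q$ is coformal and showing that the relative Quillen model of the rationalized fibration with section splits as a direct sum once the brace product vanishes---also works, but it relies on the dictionary between the brace product and the mixing terms of the relative model.
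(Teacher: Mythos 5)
Your proposal is essentially correct and, at its core, follows the same strategy as the paper: decompose $\Sigma B_\Q$ and $\Sigma F_\Q$ into finite wedges of rational spheres, reduce the splitting of $E_\Q$ to the vanishing of the Whitehead products of the restrictions of $s_\Q$ and $i_\Q$ to the individual wedge summands, and deduce that vanishing from the hypothesis on the James brace product. The differences are mostly in packaging. First, your detour through Theorem~\ref{phi_alg_map} and the Samelson Lie algebra is valid but unnecessary: the hypothesis directly gives $[s_\Q \comp \tilde\alpha_r, i_\Q \comp \tilde\beta_j] = (i_\Q)_* \{\tilde\alpha_r, \tilde\beta_j\}_{s_\Q} = 0$ for the integral spherical classes, which is all you extract from the Samelson argument. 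Second, your mapping-cone description of $\Sigma B_\Q \times \Sigma F_\Q$ and the extension of $s_\Q \vee i_\Q$ over the cone is exactly Arkowitz's criterion (\cite[Prop.~5.1]{Arkowitz_gen_Whitehead}) that $[\alpha,\beta]=0$ if and only if $\nabla\comp(\alpha\vee\beta)$ extends over the product, which the paper packages as Proposition~\ref{splitting_in_suspension_over_suspension}; your identification $\Omega g \simeq \varphi_{s_\Q}$ replaces the paper's five-lemma argument and works equally well.

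The one genuine soft spot is the passage between classical and generalized Whitehead products under localization. The obstruction to extending $g_0$ over the $(i,j)$-cell is the \emph{generalized} Whitehead product $[s_\Q\comp\alpha_i, i_\Q\comp\beta_j] \in \bigl[\Sigma\bigl(S^{n_i-1}_\Q \wedge S^{m_j-1}_\Q\bigr), E_\Q\bigr]$ of maps defined on \emph{rational} spheres, whereas your Samelson-product argument (and the brace-product hypothesis itself) controls only the classical Whitehead product of the restrictions to the integral spheres $S^{n_i}, S^{m_j}$. You implicitly identify the two via $[S^n_\Q, E_\Q]\cong\pi_n(E_\Q)$, but the compatibility of the Whitehead product with this identification is not automatic; it is exactly the content of the paper's Lemma~\ref{RationalGenWhiteAndWhitehead} (or, equivalently, of the uniqueness of extensions of maps to a localization, applied after noting that $[\alpha_\Q,\beta_\Q]$ restricts to $[\alpha,\beta]$ along the localization map of the integral sphere). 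This gap is fillable in one or two lines, but as written your proof does not close it.
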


Finally, we go on to explore the relationship between the generalized brace product and a generalized concept of the classical $J$-homomorphism in \S\ref{Sec:J-HomoAndBrace}. In particular, we prove the following result (Theorem~\ref{BraceAndJ-homoInSuspensionOverSuspension}).
\begin{customthm}
    Suppose $\Sigma F \overset{i}{\hookrightarrow} E \rightarrow \Sigma B$ is a fibration characterized by a map $\rho: B\to Map_\bullet(\Sigma F, \Sigma F)$, which maps the base point of $B$ to $\mathrm{Id}_{\Sigma F}$. Then, given any base point preserving self homotopy equivalence $\varphi: \Sigma F \rightarrow \Sigma F$, we have 
    $$\left\{ \mathrm{Id}_{\Sigma B}, \varphi \right\}_s = J[\varphi \comp \varepsilon] -J[\rho],$$ 
    where $s$ is {the choice of a canonical `$\infty$-section'} $s$, and $\varepsilon : B \rightarrow Map_\bullet(\Sigma F, \Sigma F)$ is given by $\varepsilon(b) = \mathrm{Id}_{\Sigma F}$. Moreover, if $B$ is a suspension, then $\left\{ \mathrm{Id}_{\Sigma B}, \varphi \right\}_s = -J[\rho]$.
\end{customthm}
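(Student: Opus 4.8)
The plan is to evaluate both sides inside the total space $E$, using the clutching description of the fibration to reduce the generalized brace product to a generalized Whitehead product and then reading the two $J$-terms off the two ends of the clutching. The first step is bookkeeping: by Definition~\ref{defn:genBraceProduct} and the basic properties collected in \S\ref{sec:genBraceProductProperties}, together with the standard cell structure $\Sigma B\times\Sigma F\simeq(\Sigma B\vee\Sigma F)\cup_{w}C\Sigma(B\wedge F)$ whose attaching map $w=[\iota_{\Sigma B},\iota_{\Sigma F}]$ is the generalized Whitehead product of the two wedge inclusions, the class $\{\mathrm{Id}_{\Sigma B},\varphi\}_s\in[\Sigma(B\wedge F),\Sigma F]$ is the lift through the fibre inclusion $i$ of the generalized Whitehead product $[\,s\comp\mathrm{Id}_{\Sigma B},\ i\comp\varphi\,]=[\,s,\ i\comp\varphi\,]\colon\Sigma(B\wedge F)\to E$. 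This lift exists since $p\comp[\,s,i\comp\varphi\,]=[\mathrm{Id}_{\Sigma B},\ast]\simeq\ast$, and it is the one singled out by the section $s$. So it remains to compute $[\,s,\ i\comp\varphi\,]$.

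Here I would feed in the explicit model of $E$. As the fibration is the one characterised by $\rho\colon B\to\mathrm{Map}_\bullet(\Sigma F,\Sigma F)$ with $\rho(b_0)=\mathrm{Id}_{\Sigma F}$, the total space is the double mapping cylinder
\[
E\ \simeq\ \Sigma F\ \cup_{\mathrm{pr}}\ \bigl((B_{+}\wedge\Sigma F)\times I\bigr)\ \cup_{\widehat{\rho}}\ \Sigma F,
\]
with $\widehat{\rho}$ the adjoint of $\rho$, $\mathrm{pr}$ the projection $B_{+}\wedge\Sigma F\to\Sigma F$, the fibre inclusion $i$ the inclusion of one end copy of $\Sigma F$, and the canonical $\infty$-section $s$ running through the basepoint of the fibre over every point of $\Sigma B$. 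The Whitehead product $[\,s,\ i\comp\varphi\,]$ becomes nullhomotopic after restriction to each of the two halves of this cylinder — each half deformation retracts onto an end, where the Whitehead-product data degenerates because $i\comp\varphi$ lands in the fibre (via $\varphi$) and $s$ meets the fibre at the basepoint — so $[\,s,\ i\comp\varphi\,]$ is recovered from the difference of the two resulting nullhomotopies. By naturality of the generalized Whitehead product in the co-$H$ structure of $\Sigma F$, the nullhomotopy over the $\mathrm{pr}$-end is governed by the trivial clutching datum composed with $\varphi$, that is, by $\varphi\comp\varepsilon$, while the nullhomotopy over the $\widehat{\rho}$-end is governed by $\rho$; passing through the adjunction that defines the generalized $J$-homomorphism in \S\ref{Sec:J-HomoAndBrace}, these are precisely $J[\varphi\comp\varepsilon]$ and $J[\rho]$. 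Hence the fibre-lift of $[\,s,\ i\comp\varphi\,]$, which is $\{\mathrm{Id}_{\Sigma B},\varphi\}_s$, equals $J[\varphi\comp\varepsilon]-J[\rho]$.

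For the last assertion I would show $J[\varphi\comp\varepsilon]=0$ when $B=\Sigma B'$. The map $\varphi\comp\varepsilon$ is constant at $\varphi$, so it is the clutching datum of the trivial fibration $\Sigma B\times\Sigma F$ with fibre inclusion post-composed with $\varphi$; the generalized Whitehead product of the two coordinate inclusions of a product vanishes, a vanishing which is natural in the co-$H$ structure of the fibre, so the contribution of this end is null. The hypothesis that $B$ is a suspension is used precisely to realise this vanishing at the level of $J$: the comultiplication on $\Sigma B'$ produces the explicit nullhomotopy of the map representing $J[\varphi\comp\varepsilon]$. Consequently the first term drops out and $\{\mathrm{Id}_{\Sigma B},\varphi\}_s=-J[\rho]$.

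The crux — and the place where the argument must be executed carefully rather than schematically — is the identification in the middle paragraph of the two nullhomotopies of $[\,s,\ i\comp\varphi\,]$ over the two halves of the mapping cylinder with $J[\varphi\comp\varepsilon]$ and $-J[\rho]$, \emph{with the correct sign}. This demands keeping track simultaneously of the normalisation of the generalized brace product (which preferred lift through $i$ is selected by the section $s$), of the precise form of the generalized $J$-homomorphism (which adjunction, and which desuspension — exactly the subtlety behind the rectification of Husem\"oller's statement cited in the introduction), and of the orientation conventions in the generalized Whitehead product, so that the difference of nullhomotopies lands in $[\Sigma(B\wedge F),\Sigma F]$ as $J[\varphi\comp\varepsilon]-J[\rho]$ and not as its negative or a suspension of it.
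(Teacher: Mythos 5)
Your overall geometric skeleton is the right one and matches the paper's strategy for part (1): model $E$ by the clutching construction, so that its top cell is attached along a map $\phi : CB \times F \cup B \times CF \rightarrow \Sigma B \vee \Sigma F$, and extract the two $J$-terms from the two ends of the clutching. However, the step you yourself flag as ``the crux'' is precisely the step you have not carried out, and it is where all the content of the theorem lives. The paper's mechanism is not a ``difference of two nullhomotopies over the two halves'' of the double mapping cylinder; it is the concrete identity
\[
[\phi \comp \mu] = [\iota \comp \tilde{\rho} \comp \mu] + [\iota \comp \epsilon \comp \mu] + [\omega \comp \mu]
\]
in $[\Sigma(B \wedge F), \Sigma B \vee \Sigma F]$, proved by writing down an explicit homotopy reparametrizing the cone coordinates, where $\tilde{\rho}$ and $\epsilon$ represent $J[\rho]$ and $-J[\varepsilon]$ and $\omega$ is the universal Whitehead map. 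Combining this with the fact that the attaching map of the top cell becomes null-homotopic in $E$ gives $0 = i_*\left( J[\rho] - J[\varepsilon] \right) + [s,i]$, hence the formula. Without exhibiting this decomposition (or an equivalent explicit computation), the identification of your two ``nullhomotopies'' with $J[\varphi \comp \varepsilon]$ and $J[\rho]$, including signs and the absence of a stray suspension, is an assertion rather than a proof.

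Two further points. First, you attack general $\varphi$ directly, which creates a bookkeeping problem your sketch does not resolve: a naive reading of your $\hat{\rho}$-end analysis would produce $J[\varphi \comp \rho] = \varphi_* J[\rho]$ rather than $J[\rho]$, and these differ in general. The paper avoids this by first proving the case $\varphi = \mathrm{Id}_{\Sigma F}$, then passing to the fibration with characteristic map $\psi \comp \rho$ for a homotopy inverse $\psi$ of $\varphi$ and transferring the identity via the comparison lemma for maps of fibrations (Lemma~\ref{lemma:mapOfFibrations}) together with $J[\varphi \comp \rho'] = \varphi_* J[\rho']$; you would need some such reduction or a genuinely more careful direct computation. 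Second, your argument for the ``moreover'' clause is partly circular: observing that the trivial clutching datum yields the product bundle, whose Whitehead product of coordinate inclusions vanishes, only recovers $J[\varepsilon] - J[\varepsilon] = 0$, which is vacuous. The actual reason $J[\varepsilon] = 0$ when $B$ is a suspension is the idempotency argument $J[\varepsilon] + J[\varepsilon] = J[\varepsilon]$ obtained by comparing the two group structures (Eckmann--Hilton), from which $J[\varphi \comp \varepsilon] = \varphi_* J[\varepsilon] = 0$ follows; your final sentence gestures at this but does not supply it.
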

The above Theorem, on the one hand, helps us to prove Lemma 1 (see Lemma~\ref{Milnor-lemma}) of Milnor \cite{Milnor-OnWhiteheadJHomo}, no proof of which could be located in the existing literature. On the other hand, this generalizes Lemma 1 of St\"ocker \cite{Stocker-NoteOnQuasi}, which is a consequence of Theorems 2 and 3 there. Moreover, this leads to the following result (Theorem \ref{thm:sphereBundleRationallySplits}).
\begin{customthm}
    Consider a fibre bundle $S^q \hookrightarrow E \rightarrow S^n$ with structure group $SO(q + 1)$. If the fibration admits a homotopy section $s: S^n\to E$, then $E$ has the same rational homotopy type as $S^q \times S^n$.
\end{customthm}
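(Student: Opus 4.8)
The plan is to deduce the theorem from Theorem~\ref{cor:JamesBraceVanishImpliesRationalEquiv} by exhibiting a homotopy section for which the James brace product of the rationalized fibration vanishes, the verification being a dimension count except in one case, which is handled through the $J$-homomorphism formula of Theorem~\ref{BraceAndJ-homoInSuspensionOverSuspension}. First, the degenerate cases are trivial: if $n\le 1$ the group $SO(q+1)$ is connected so the bundle over $S^n$ is trivial, and if $q\le 1$ a homotopy section of an $S^0$- or $S^1$-bundle is itself a trivialization. So assume $q,n\ge 2$ and write $S^q=\Sigma S^{q-1}$, $S^n=\Sigma S^{n-1}$; the fibration then takes the form $\Sigma F\hookrightarrow E\to\Sigma B$ with $F=S^{q-1}$, $B=S^{n-1}$, and $\Sigma F=S^q$, $\Sigma B=S^n$, $E$ are simply connected finite CW complexes ($E$ is a closed manifold, simply connected by the homotopy exact sequence). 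By Theorem~\ref{cor:JamesBraceVanishImpliesRationalEquiv} it then suffices to find a homotopy section $s$ with $\{,\}_{s_\Q}=0$ for the rationalized fibration. Using naturality of the brace product under rationalization and its additivity in each variable (\S\ref{sec:genBraceProductProperties}), and since the free part of $\pi_*(S^m)$ is generated by $\iota_m$ and, when $m$ is even, the Whitehead square $[\iota_m,\iota_m]$, this reduces to showing that $\{x,y\}_s\in\pi_{|x|+|y|-1}(S^q)$ is a torsion class for $x\in\{\iota_n\}$ (also $[\iota_n,\iota_n]$ if $n$ is even) and $y\in\{\iota_q\}$ (also $[\iota_q,\iota_q]$ if $q$ is even).

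A dimension count now disposes of almost every pair. Since $\pi_k(S^q)\otimes\Q\ne 0$ exactly for $k=q$ and, when $q$ is even, $k=2q-1$, inspection of the at most four pairs $(x,y)$ shows that the target group $\pi_{|x|+|y|-1}(S^q)$ is finite in all cases except $(x,y)=(\iota_n,\iota_q)$ with $n=q$ even, where it equals $\pi_{2q-1}(S^q)\cong\Z\oplus(\text{finite})$: the pair $(\iota_n,\iota_q)$ requires $n+q-1\in\{q,2q-1\}$, i.e. $n=1$ or $n=q$ even, and the remaining three pairs are ruled out by the analogous parity and inequality constraints with $n,q\ge 2$. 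Hence $\{x,y\}_s$ is automatically torsion, so $\{,\}_{s_\Q}$ vanishes, unless $n=q$ is even; in that remaining case we must still show that $\{\mathrm{Id}_{S^q},\mathrm{Id}_{S^q}\}_s$ is a torsion element of $\pi_{2q-1}(S^q)$.

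So suppose $n=q$ is even. Since $q-1<q$, the stabilization $\pi_{q-1}(SO(q))\to\pi_{q-1}(SO(q+1))$ is surjective (from $SO(q)\to SO(q+1)\to S^q$ and $\pi_{q-1}(S^q)=0$); thus the bundle automatically admits a section and its structure group reduces to $SO(q)$, and we take for $s$ the canonical $\infty$-section. As $B=S^{q-1}$ is a suspension, Theorem~\ref{BraceAndJ-homoInSuspensionOverSuspension} (with $\varphi=\mathrm{Id}_{S^q}$) gives $\{\mathrm{Id}_{S^q},\mathrm{Id}_{S^q}\}_s=-J[\rho]$, where the clutching map $\rho\colon S^{q-1}\to Map_\bullet(S^q,S^q)$ now factors through $SO(q)$: the subgroup $SO(q)\subset SO(q+1)$ stabilizing a unit vector acts on $S^q=\Sigma S^{q-1}$ fixing the two poles, so this action is the unreduced suspension of the standard $SO(q)$-action on the equatorial $S^{q-1}$. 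Consequently $J[\rho]$ is a value of the classical $J$-homomorphism of the associated orthogonal bundle, and by the compatibility of $J$ with suspension — Milnor's Lemma~\ref{Milnor-lemma}, which is itself obtained from Theorem~\ref{BraceAndJ-homoInSuspensionOverSuspension} — it equals $\pm\Sigma J(\widetilde\alpha)$ for the reduced clutching class $\widetilde\alpha\in\pi_{q-1}(SO(q))$, with $J(\widetilde\alpha)\in\pi_{2q-2}(S^{q-1})$. Since $q$ is even, $q-1$ is odd, so $\pi_{2q-2}(S^{q-1})$ is finite; hence $J[\rho]$, and therefore $\{\mathrm{Id}_{S^q},\mathrm{Id}_{S^q}\}_s$, is torsion. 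This verifies that $\{,\}_{s_\Q}$ vanishes in every case, and Theorem~\ref{cor:JamesBraceVanishImpliesRationalEquiv} then gives that $E$ and $S^q\times S^n$ have the same rational homotopy type.

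The main obstacle is exactly the case $n=q$ even: one has to pin down the clutching map $\rho$ precisely enough to recognize $J[\rho]$ as a suspension class coming from the \emph{odd} sphere $S^{q-1}$, which is where the structure-group reduction (forced by the section) and the suspension-compatibility of the $J$-homomorphism both enter; everywhere else the argument is bookkeeping. It should also be emphasized that the section hypothesis is indispensable and not merely convenient: without it Theorem~\ref{cor:JamesBraceVanishImpliesRationalEquiv} does not apply, and indeed the unit tangent bundle $S^3\hookrightarrow UTS^4\to S^4$ is an $SO(4)$-bundle with no section whose total space has the rational homotopy type of $S^7$, not of $S^3\times S^4$. (One could also bypass the brace product entirely: a section forces the Euler class $e(\xi)=e(\underline{\R}\oplus\xi')=0$ of the associated oriented rank-$(q+1)$ vector bundle, whence the Gysin sequence splits and, after normalizing the single multiplicative correction term — legitimate over $\Q$ — $H^*(E;\Q)\cong H^*(S^n\times S^q;\Q)$ as graded algebras; both spaces are formal, which already yields the conclusion. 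We keep the argument above as it fits the machinery developed here.)
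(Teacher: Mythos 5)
Your overall strategy (reduce to vanishing of the rationalized brace product, dispose of all but one case by counting the rational homotopy of $S^q$, and attack the remaining case via Theorem~\ref{BraceAndJ-homoInSuspensionOverSuspension}) is the same as the paper's, and the easy cases are handled correctly. But the case you yourself single out as the main obstacle --- $n=q$ even --- is exactly where your argument breaks, for two reasons. First, there is an indexing error: for the reduced clutching class $\widetilde\alpha\in\pi_{q-1}(SO(q))$, the clutching map $\rho$ of the $S^q$-bundle is $\widetilde\alpha$ composed with the action of $SO(q)$ on $S^q=\mathbb{R}^q\cup\{\infty\}$, so by Remark~\ref{Remark:GenJ-Homo} the class $J[\rho]=J(\widetilde\alpha)$ lies in $\pi_{(q-1)+q}(S^q)=\pi_{2q-1}(S^q)$, not in $\pi_{2q-2}(S^{q-1})$; landing in $\pi_{2q-2}(S^{q-1})$ would require a further reduction to $SO(q-1)$, i.e., a second independent section, which is not available. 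Second, and more seriously, the corrected claim is false: $\pi_{2q-1}(S^q)$ has a free summand for $q$ even, and $J:\pi_{q-1}(SO(q))\to\pi_{2q-1}(S^q)$ can hit it. For $q=2$ the generator of $\pi_1(SO(2))$ maps under $J$ to $\pm\eta$, a generator of $\pi_3(S^2)\cong\mathbb{Z}$ (its Thom space is $\mathbb{CP}^2$, cf.\ Lemma~\ref{Milnor-lemma}), so for the nontrivial $S^2$-bundle over $S^2$ the brace product $\{\mathrm{Id},\mathrm{Id}\}_{s_\infty}=-J(\widetilde\alpha)$ attached to the canonical section of any reduction has infinite order. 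What \emph{is} automatically torsion is $\Sigma J(\widetilde\alpha)=J(\iota_*\widetilde\alpha)\in\pi_{2q}(S^{q+1})$, i.e., the \emph{suspension} of the brace product (Proposition~\ref{HusemollerRectified}); but for $q$ even the suspension $\Sigma:\pi_{2q-1}(S^q)\to\pi_{2q}(S^{q+1})$ has infinite kernel generated by $[\iota_q,\iota_q]$, so torsionness of the suspension says nothing about the brace product itself. In effect you have conflated the brace product with its suspension.

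The repair is precisely the content of the paper's proof of this case. The lift $\widetilde\alpha$ is well defined only up to $\mathrm{im}(\partial)=\mathbb{Z}\cdot\partial(\iota_q)$, and $J\partial(\iota_q)=\mp[\iota_q,\iota_q]$, whose free coordinate in $\pi_{2q-1}(S^q)\cong\mathbb{Z}\oplus(\text{torsion})$ is $1$ for $q\neq 2,4,8$ but $2$ for $q=2,4,8$ (Hopf invariant one). Hence for $q\neq 2,4,8$ one can adjust the lift so that $J(\widetilde\alpha)$ becomes torsion, but for $q=2,4,8$ one can only shift the free part by even integers; the paper therefore first pulls back along a degree-$2$ self-map of the base (a rational equivalence), doubling the clutching class so that the free part of $-J$ of its lift is even and can be cancelled, and only then transfers the rational splitting back to $E$. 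This detour is unavoidable: as the $q=2$ example shows, there are bundles in this case for which the integral (indeed $2$-local) brace product of every such section is nonzero. Your Gysin/formality aside would also need more care --- a cohomology ring isomorphism yields a rational homotopy equivalence only once formality of $E$ is established, which is not automatic for sphere bundles over spheres --- but since you offer it only as an alternative, the essential gap is the one above.
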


\medskip

\noindent \textsc{Organization of the paper.} The article has the following structure. In \S\ref{sec:background}, we discuss the necessary background, including the Whitehead and the Samelson product on the homotopy groups of spaces. In \S\ref{sec:genBraceProductProperties}, we introduce the generalized brace product and derive some of its basic properties. In \S\ref{sec:vanishingOfBrace}, we present the main results of this article. In particular, we discuss the consequences and provide equivalent criteria for the vanishing of both the James and the generalized brace products. Following that, in \S\ref{sec:genBraceLocalization}, we discuss various applications of these results in the context of localization. Lastly, in \S\ref{Sec:J-HomoAndBrace}, we introduce the generalized $J$-homomorphism and relate it to a certain generalized brace product. In appendix~\ref{sec:appendix}, we provide proof of a proposition, which might be known to experts.\medskip
    
\noindent \textsc{Notation.} All spaces are assumed to be CW-complexes equipped with a base point, and the maps will be base point preserving continuous maps unless otherwise mentioned. Any homotopy of such a map will be assumed to be base point preserving as well. By a standard abuse of notation, $[\mathcal{P},\mathcal{Q}]$ denotes the collection of homotopy classes of maps from $\mathcal{P}$ to $\mathcal{Q}$ if $\mathcal{P}, \mathcal{Q}$ are spaces, and denotes the (generalized) Whitehead product if $\mathcal{P}, \mathcal{Q}$ are (homotopy classes of) maps. Also, we shall use the same symbol to denote a map and its homotopy class. The reduced cone and reduced suspension of $X$ shall be denoted by $CX$ and $\Sigma X$, respectively. For any space $X, Y$, we denote the bijection $\ad: [\Sigma X, Y] \rightarrow [X, \Omega Y]$ given by the $\Sigma$-$\Omega$ adjunction for arbitrary spaces $X$ and $Y$. We will adopt additive notation for the (potentially non-abelian) group $[\Sigma X, Y]$, and specifically denote the identity element as $0$. The concatenation operation on any based loop space $\Omega X$ will be denoted by ``$\cdot$''. We denote two homeomorphic (resp. homotopy equivalent) spaces $X$ and $Y$ by $X \cong Y$ (resp. $X \simeq Y$), while two homotopic maps $f$ and $g$ will be denoted by $f \simeq g$.\\

\noindent \textsc{Acknowledgments.} The authors would like to thank Sadok Kallel for initial communication and clarification regarding the article \cite{Kallel-Sjerve}. They are also grateful to Stephen Theriault for his valuable comments on the final version of this article, and to Thomas Schick for his helpful suggestion concerning Theorem \ref{thm:sphereBundleRationallySplits}. The authors deeply appreciate the anonymous reviewer's careful reading and thoughtful comments, which have helped strengthen the revised version of this work. The first author appreciates the support received through the SERB MATRICS grant MTR/2017/000807. The second author acknowledges the support provided by the NBHM Post-doctoral fellowship, while the third author is thankful for the PMRF fellowship.

\section{Background} \label{sec:background}
We recall some basic notions regarding binary operations on homotopy groups and refer to \cite{Whitehead_elements} as a general reference for this section.

\begin{defi}[Whitehead product]
    Suppose $f:(I^k,\partial I^k)\to (X,x_0)$ and $g:(I^l,\partial I^l)\to (X,x_0)$ are continuous maps representing elements $[f]\in \pi_k(X,x_0)$ and $[g]\in \pi_l(X,x_0)$. The \textit{Whitehead product} of $[f],[g]$ is denoted by $[f,g]\in \pi_{k+l-1}(X,x_0)$ and defined as the homotopy class of the following map
    \begin{align*}
        S^{k+l-1}=\partial (I^k\times I^l)=(I^k\times \partial I^l)\cup &(\partial I^k\times I^l)\xrightarrow{f\cup g} X
    \end{align*}
\end{defi}

As indicated by the notation, $[\cdot,\cdot]$ is also called the \emph{Whitehead bracket}. It forms a graded quasi-Lie algebra, i.e., $[\cdot,\cdot]$ is an operation of degree $-1$, graded commutative ($[f,g]=(-1)^{|f||g|}[g,f]$), and satisfies the (graded) Jacobi identity for higher homotopy groups
$$(-1)^{|f||h|}[[f,g],h]+(-1)^{|g||f|}[[g,h],f]+(-1)^{|h||g|}[[h,f],g]=0,$$
where $|f|, |g|, |h|\geq 1$. Initially, it was well-known for $|f|, |g|, |h|\geq 2$, but later Hilton \cite{Hilton-JacobiIdentity} appropriately extended it for $|f|, |g|, |h|\geq 1$, using additive notation for the (possibly non-abelian) fundamental group. Moreover, $\pi_1$ acts on $\pi_k$ via the Whitehead product. In particular, if $k=1$, then $[f,g]=fgf^{-1}g^{-1}$. The Whitehead product vanishes for $H$-spaces \cite[Cor. 7.8]{Whitehead_elements}. Note that the Whitehead product is functorial, i.e., if $h:(X,x_0)\to (Y,y_0)$ is a continuous map, then $h_\ast [f,g]=[h_\ast f,h_\ast g]$.

\begin{defi}[Samelson product]
    Let $G$ be a topological group. Given $f:(S^k,1)\to (G,e)$ and $g:(S^l,1)\to (G,e)$, the \textit{Samelson product} of $[f],[g]$ is denoted by $\left\langle f,g\right\rangle\in \pi_{k+l}(G,e)$ and defined as the homotopy class of the map
    \begin{align*}
        S^{k+l}= S^k\wedge S^l \xrightarrow{f\wedge g} G\wedge G \xrightarrow{c} G,
    \end{align*}
    where $c:G\wedge G\to G$ is induced by the commutator map $G\times G \to G$ defined by $(g_1,g_2)\mapsto g_1g_2g_1^{-1}g_2^{-1}$.
\end{defi}

For a pointed space $(X,x_0)$, consider the based loop space $\Omega X$. Let $x_0$ also denote the constant loop at $x_0$. On $\Omega X$, loop concatenation defines an $H$-space structure with inverses and identity defined up to homotopy. Now for $f\in \pi_k(\Omega X,x_0),\ g\in \pi_l(\Omega X,x_0)$, we have the following diagram:
$$\begin{tikzcd}[row sep=large, column sep = huge]
    S^k\vee S^l \ar[d, hook]\ar[r, "H"] & (\Omega X)^I\ar[d, "ev_0"]\\
    S^k\times S^l \arrow[r, "{fg\bar{f}\bar{g}}"]\ar[ur,dashed, "\Tilde{H}"]
    & \Omega X 
\end{tikzcd}$$
where $\bar{f}, \bar{g}$ denote the pointwise inverses in $\Omega X$ and $H$ is induced from a homotopy between the map $fg\bar{f}\bar{g}$ and the constant map on $S^k\vee S^l$. As $S^k\vee S^l$ is a subcomplex of $S^k\times S^l$, it has a homotopy extension property, and hence we have a map 
$$\Tilde{H}: S^k\times S^l\to (\Omega X)^I=\text{Map}(I,\Omega X).$$
The map $\Tilde{S}:=\Tilde{H}(\cdot,\cdot)(1):S^k\times S^l\to \Omega X$ is constant on $S^k\vee S^l$ and thus induces a map $\langle f,g\rangle:S^{k+l}\cong S^k\wedge S^l\to \Omega X$. Therefore, $\langle f,g\rangle \in \pi_{k+l}(\Omega X, x_0)$ and this is called the \textit{Samelson product} of $f,g$. It is known that the following diagram commutes up to sign $(-1)^k$ \cite[Thm X.7.10]{Whitehead_elements}.
$$
\begin{tikzcd}[row sep=large, column sep = large]
    \pi_k(\Omega X)\times \pi_l(\Omega X) \arrow[r, "{\langle~,~\rangle}"] \arrow[d, "\ad\times \ad", "\cong"']
    &  \pi_{k+l}(\Omega X)\ar[d, "\ad", "\cong"']\\
    \pi_{k+1}(X)\times \pi_{l+1}(X) \arrow[r,"{[~,~]}"]
    & \pi_{k+l+1}(X)    
\end{tikzcd}
$$
where $\ad:\pi_n(\Omega X,x_0)\to \pi_{n+1}(X,x_0)$ is the adjunction isomorphism defined as follows: for $f:(I^n, \partial I^n)\to (\Om X, x_0)$ representing an element of $\pi_n(\Om X,x_0)$, consider the map $\ad(f):(I^{n+1}, \partial I^{n+1})\to (X, x_0)$ as
$$\ad(f)(t_0,t_1,\ldots,t_{n}):=f(t_1,\ldots,t_{n})(t_0)\text{ for $(t_0,t_1,\ldots, t_n)\in I^{n+1}$ },$$
which represents an element of $\pi_{n+1}(X,x_0)$.\medskip

For a fibration with a section, James defined a product relating the homotopy groups of the base and the fibre. Suppose $F\overset{i}{\hookrightarrow} E \xrightarrow{p} B$ is a fibration with a homotopy section $s:B\to E$, i.e., $p \comp s \simeq \mathrm{Id}_B$. Then the induced long exact sequence of homotopy groups splits, and {for each $j \ge 1$,} we get the following short exact sequence.
$$\begin{tikzcd}
    0\ar[r]&\pi_j(F)\ar[r,"i_*"] &\pi_j(E)\ar[r,"p_*"] &\pi_j(B)\ar[r] \ar[l,"s_*"', bend right] &0.
\end{tikzcd}$$
Now, for $\alpha\in \pi_k(B),\beta\in\pi_l(F)$, we have $p_*[s_*\alpha, i_*\beta]=0$ i.e., $[s_*\alpha, i_*\beta]\in \text{kernel}(p_*)=\text{image}(i_*)$ by exactness. The definition of brace product, due to James \cite{James_Products}, is as follows. 

\begin{defi}[Brace product] 
    Suppose $F\overset{i}{\hookrightarrow} E \xrightarrow{p} B$ is a fibration with a homotopy section $s:B\to E$. The \textit{brace product} $\{\cdot,\cdot\}_s:\pi_i(B)\times \pi_j(F)\to \pi_{i+j-1}(F)$ is defined such that
    $$i_*\{\alpha,\beta\}_s=[s_*\alpha, i_*\beta]\text{ or } \{\alpha,\beta\}_s=(i_*)^{-1}[s_*\alpha, i_*\beta].$$
\end{defi}
    
Since $i_*$ is injective and $[s_*\alpha, i_*\beta]\in \text{image}(i_*)$, the brace product is well-defined, and we have the following commutative diagram
$$
\begin{tikzcd}
    \pi_k(B)\times \pi_l(F) \arrow[r, "{\{~,~\}_s}"] \arrow[d, "s_*\times i_*"]
    &\pi_{k+l-1}(F) \arrow[d, "i_*"] \\
    \pi_k(E)\times \pi_{l}(E) \arrow[r,"{[~,~]}"]
    &\pi_{k+l-1}(E)
\end{tikzcd}$$

Note that the brace product is a degree $-1$ map which defines a $\pi_*(B)$-module structure on $\pi_*(F)$. In particular, for each $\alpha\in \pi_*(B)$, we have a map $D_\alpha:\pi_*(F)\to \pi_*(F)$ of degree $|\alpha|-1$ defined by $D_\alpha(\gamma)=\{\alpha,\gamma\}_s$ for $\gamma\in \pi_*(F)$. Since $\pi_*(F)$ is a graded quasi-Lie algebra with respect to the Whitehead product, in the first part of the following proposition, we observe that $D_\alpha$ is a derivation for all $\alpha\in \pi_*(B)$; whereas in the second part, we show that $D:\pi_*(B)\to \operatorname{Der}(\pi_*(F))$ sending $\alpha$ to $D_\alpha$ is a Lie algebra map, where $\operatorname{Der}(\pi_*(F))$ is the set of all derivations of the quasi-Lie algebra $\pi_*(F)$.

\begin{prop}
Let $\alpha\in \pi_j(B), \beta\in \pi_k(B), \gamma\in \pi_l(F)$, and $\delta\in \pi_m(F)$. Then,
\begin{enumerate}[\hspace{1em}(a)]
    \item $D_\beta$ is a derivation: $D_\beta([\gamma,\delta])=(-1)^{|D_\beta|}[D_\beta(\gamma),\delta]+(-1)^{|D_\beta|(|\gamma|-1)}[\gamma, D_\beta(\delta)]$, i.e.,  
    $$\left\{ \beta, [\gamma, \delta] \right\}_s= (-1)^{k-1}\left[\{\beta, \gamma\}_s, \delta\right]+ (-1)^{(l-1)(k-1)}\left[\gamma, \{\beta, \delta\}_s \right].$$
    \item \sloppy $D : \pi_*(B) \rightarrow \operatorname{Der}\left( \pi_*(F) \right)$ is a Lie algebra map: $D([\alpha,\beta])=(-1)^{|D_\alpha|}[D(\alpha),D(\beta)]$, i.e.,
    $$\left\{[\alpha, \beta], \gamma\right\}_s= (-1)^{j-1}\left( \left\{\alpha, \{\beta,\gamma\}_s\right\}_s- (-1)^{(j-1)(k-1)}\left\{\beta, \{\alpha, \gamma\}_s\right\}_s\right).$$
\end{enumerate}
\end{prop}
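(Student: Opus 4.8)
The plan is to reduce both identities to the known algebraic properties of the Whitehead product — the graded Jacobi identity and functoriality — via the commutative square
$$
i_*\{\alpha,\beta\}_s=[s_*\alpha,i_*\beta],
$$
and the injectivity of $i_*$. Throughout, the guiding principle is that $s_*$ and $i_*$ are splittings of a short exact sequence of graded quasi-Lie algebras, so $[s_*\alpha,s_*\beta]$ need not equal $s_*[\alpha,\beta]$, but its image under $p_*$ is $[\alpha,\beta]$; hence $[s_*\alpha,s_*\beta]-s_*[\alpha,\beta]$ lies in $\operatorname{image}(i_*)$ and is the source of the correction terms. Recording this correction term precisely is the main bookkeeping task.

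For part (a), I would apply $i_*$ to the left-hand side and compute
$$
i_*\{\beta,[\gamma,\delta]\}_s=[s_*\beta,i_*[\gamma,\delta]]=[s_*\beta,[i_*\gamma,i_*\delta]],
$$
using functoriality of the Whitehead product for the last equality. Now expand $[s_*\beta,[i_*\gamma,i_*\delta]]$ using the graded Jacobi identity in $\pi_*(E)$: this rewrites it as a signed combination of $[[s_*\beta,i_*\gamma],i_*\delta]$ and $[i_*\gamma,[s_*\beta,i_*\delta]]$. Finally substitute $[s_*\beta,i_*\gamma]=i_*\{\beta,\gamma\}_s$ and $[s_*\beta,i_*\delta]=i_*\{\beta,\delta\}_s$, and pull the $i_*$ outside using functoriality again, so that the whole expression becomes $i_*$ of the claimed right-hand side. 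Injectivity of $i_*$ then yields (a). The only delicate point is getting the signs from the graded Jacobi identity to match the stated formula; since $|s_*\beta|=k$, $|i_*\gamma|=l$, $|i_*\delta|=m$, one has to track $(-1)^{km},(-1)^{lk}$ etc. carefully, but this is routine.

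For part (b), the same strategy applies but now the outer bracket is on the base side, so the correction term genuinely appears. Apply $i_*$:
$$
i_*\{[\alpha,\beta],\gamma\}_s=[s_*[\alpha,\beta],i_*\gamma].
$$
The key step is to replace $s_*[\alpha,\beta]$ by $[s_*\alpha,s_*\beta]$ modulo $\operatorname{image}(i_*)$: write $s_*[\alpha,\beta]=[s_*\alpha,s_*\beta]-i_*\kappa$ for some $\kappa\in\pi_*(F)$ (such $\kappa$ exists since $p_*$ kills the difference), substitute, and observe that $[i_*\kappa,i_*\gamma]=i_*[\kappa,\gamma]$ with $[\kappa,\gamma]$ a Whitehead product of two fibre classes — this is the term that must ultimately cancel. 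Then expand $[[s_*\alpha,s_*\beta],i_*\gamma]$ by the graded Jacobi identity into $[s_*\alpha,[s_*\beta,i_*\gamma]]$ and $[s_*\beta,[s_*\alpha,i_*\gamma]]$ (with signs), rewrite each inner bracket as $i_*\{\beta,\gamma\}_s$, $i_*\{\alpha,\gamma\}_s$, and then apply the brace-product formula once more to the outer bracket: $[s_*\alpha,i_*\{\beta,\gamma\}_s]=i_*\{\alpha,\{\beta,\gamma\}_s\}_s$, and similarly for the other term. The hard part — the real content — is showing that the correction term $i_*[\kappa,\gamma]$ does not actually contaminate the final formula: one must check that $p_*$ applied to the difference $[s_*\alpha,s_*\beta]-s_*[\alpha,\beta]$ is zero (immediate from functoriality plus $p_*s_*=\mathrm{Id}$), hence $\kappa$ is well-defined, and then verify that the $\kappa$-contribution is independent of $\gamma$ in a way forcing it to vanish — equivalently, that one may choose the section's deviation so that the induced derivation correction is trivial. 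I expect this verification, rather than the Jacobi bookkeeping, to be the main obstacle, and I would handle it by noting that both sides of the claimed identity are already determined by the diagram relating $\{,\}_s$ to $[,]$ in $\pi_*(E)$, so the correction term, being the image under $i_*$ of something expressible purely via $[,]$ in $\pi_*(E)$, must cancel against an identical term arising from re-expanding; concretely, $[s_*\alpha,s_*\beta]$ itself, when fed into $[\,\cdot\,,i_*\gamma]$ and then re-collapsed, reproduces exactly $i_*$ of the right-hand side plus $i_*[\kappa,\gamma]$, while the left-hand side is $i_*\{[\alpha,\beta],\gamma\}_s = [s_*[\alpha,\beta],i_*\gamma] = [[s_*\alpha,s_*\beta],i_*\gamma]-i_*[\kappa,\gamma]$, so the $i_*[\kappa,\gamma]$ terms are literally the same and cancel upon equating, leaving the stated identity after applying injectivity of $i_*$.
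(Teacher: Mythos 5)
Your overall strategy is exactly the paper's: apply the graded Jacobi identity for the Whitehead product in $\pi_*(E)$ to a suitable triple, rewrite mixed brackets via the defining relation $[s_*\alpha,i_*\beta]=i_*\{\alpha,\beta\}_s$, and conclude by injectivity of $i_*$. Part (a) is correct and matches the paper's argument (the paper does carry out the full sign bookkeeping that you defer as ``routine'').

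Part (b), however, contains a genuine gap. You posit a correction term $\kappa$ with $s_*[\alpha,\beta]=[s_*\alpha,s_*\beta]-i_*\kappa$, declare its disposal to be ``the real content,'' and then argue that the two occurrences of $i_*[\kappa,\gamma]$ cancel. They do not: expanding $[[s_*\alpha,s_*\beta],i_*\gamma]$ by the Jacobi identity and re-collapsing via $[s_*\alpha,i_*\mu]=i_*\{\alpha,\mu\}_s$ produces exactly $i_*$ of the claimed right-hand side with \emph{no} $\kappa$-term, whereas the left-hand side $[s_*[\alpha,\beta],i_*\gamma]$ would pick up $-i_*[\kappa,\gamma]$ once. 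So if $\kappa$ were nonzero your identity would come out wrong by $[\kappa,\gamma]$. The point you are missing is that $\kappa=0$ identically: $s\colon B\to E$ is an honest continuous map (the homotopy-section condition only enters via $p_*s_*=\mathrm{Id}$), so naturality of the Whitehead product gives $s_*[\alpha,\beta]=[s_*\alpha,s_*\beta]$ on the nose --- the very same functoriality you invoke for $i_*$ in part (a). The splitting $s_*$ is not merely a group-theoretic section of the exact sequence; it is induced by a map of spaces and therefore preserves brackets. With that one-line observation, part (b) reduces to the same Jacobi computation as part (a), which is precisely what the paper does.
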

\begin{proof}
    (a) By the Jacobi identity of the Whitehead product in $\pi_*(E)$, we have \medskip

    \resizebox{0.97\textwidth}{!}{$\begin{aligned}
    &(-1)^{km}\left[[s_*(\beta), i_*(\gamma)], i_*(\delta)\right]+ (-1)^{kl}\left[[i_*(\gamma), i_*(\delta)], s_*(\beta)\right]+ (-1)^{lm}\left[[i_*(\delta), s_*(\beta)], i_*(\gamma)\right]=0\\
    \Rightarrow\ & (-1)^{km}\left[i_*\{\beta, \gamma\}_s, i_*(\delta)\right]+ (-1)^{kl}\left[i_*[\gamma, \delta], s_*(\beta)\right]+ (-1)^{lm}\left[(-1)^{km}i_*\{\beta, \delta\}_s, i_*(\gamma)\right]=0\\
    \Rightarrow\ & i_*\left((-1)^{km}\left[\{\beta, \gamma\}_s, \delta\right]+ (-1)^{kl+(l+m-1)k}\left\{ \beta, [\gamma, \delta] \right\}_s+ (-1)^{lm+km}\left[\{\beta, \delta\}_s, \gamma\right]\right)=0\\
    \Rightarrow\ & \left[\{\beta, \gamma\}_s, \delta\right]+ (-1)^{k}\left\{ \beta, [\gamma, \delta] \right\}_s+ (-1)^{lm}\left[\{\beta, \delta\}_s, \gamma\right]=0\\
    \Rightarrow\ & \left\{ \beta, [\gamma, \delta] \right\}_s= (-1)^{k-1}\left[\{\beta, \gamma\}_s, \delta\right]+ (-1)^{lm-k+1+l(k+m-1)}\left[\gamma, \{\beta, \delta\}_s\right]\\
    \Rightarrow\ & \left\{ \beta, [\gamma, \delta] \right\}_s= (-1)^{k-1}\left[\{\beta, \gamma\}_s, \delta\right]+ (-1)^{(l-1)(k-1)}\left[\gamma, \{\beta, \delta\}_s\right].
\end{aligned}$}\medskip

    (b) By the Jacobi identity of the Whitehead product in $\pi_*(E)$, we also have \medskip

\resizebox{0.97\textwidth}{!}{$\begin{aligned}
    &(-1)^{jl}\left[[s_*(\alpha), s_*(\beta)], i_*(\gamma)\right]+ (-1)^{kj}\left[[s_*(\beta),i_*(\gamma)], s_*(\alpha)\right]+ (-1)^{lk}\left[[i_*(\gamma), s_*(\alpha)], s_*(\beta)\right]=0\\
    \Rightarrow\ & (-1)^{jl}\left[s_*[\alpha, \beta], i_*(\gamma)\right]+ (-1)^{kj}\left[i_*\{\beta,\gamma\}_s, s_*(\alpha)\right]+ (-1)^{lk}\left[(-1)^{jl}i_*\{\alpha, \gamma\}_s, s_*(\beta)\right]=0\\
    \Rightarrow\ & (-1)^{jl}i_*\left\{[\alpha, \beta], \gamma\right\}_s+ (-1)^{kj+(k+l-1)j}i_*\left\{\alpha, \{\beta,\gamma\}_s\right\}_s+ (-1)^{lk+jl+(j+l-1)k}i_*\left\{\beta, \{\alpha, \gamma\}_s\right\}_s=0\\
    \Rightarrow\ & \left\{[\alpha, \beta], \gamma\right\}_s+ (-1)^{j}\left\{\alpha, \{\beta,\gamma\}_s\right\}_s+ (-1)^{(j-1)k}\left\{\beta, \{\alpha, \gamma\}_s\right\}_s=0\\
    \Rightarrow\ & \left\{[\alpha, \beta], \gamma\right\}_s= (-1)^{j-1}\left\{\alpha, \{\beta,\gamma\}_s\right\}_s- (-1)^{(j-1)k}\left\{\beta, \{\alpha, \gamma\}_s\right\}_s\\
    \Rightarrow\ & \left\{[\alpha, \beta], \gamma\right\}_s= (-1)^{j-1}\left( \left\{\alpha, \{\beta,\gamma\}_s\right\}_s- (-1)^{(j-1)(k-1)}\left\{\beta, \{\alpha, \gamma\}_s\right\}_s\right). 
\end{aligned}$} 

\end{proof}

The notion of Whitehead products and brace products can be generalized. We first need the following lemma.

\begin{lemma}\label{cofib_seq}
For based spaces $X,Y,$ and $W$, the cofibration sequence
$$ X\vee Y\xrightarrow{j} X\times  Y\xrightarrow{q}  X\wedge Y \xrightarrow{\rho} \Sigma(X\vee Y)\xrightarrow{\Sigma j} \Sigma(X\times Y)\rightarrow \cdots$$
induces an exact sequence of pointed sets.
\begin{align*}
    \{c \}\to [X\wedge Y,W]
    \xrightarrow{q^*} [ X\times  Y,W] \xrightarrow{j^*} [ X\vee  Y ,W],
\end{align*}
where $c$ is the constant map in $[ X\wedge  Y,W]$.
\end{lemma}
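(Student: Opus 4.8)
The plan is to derive the exact sequence of pointed sets directly from the Puppe cofibration sequence, using the standard fact that for any cofibration $A \xrightarrow{f} X \to C_f$ and any target $W$, applying the contravariant functor $[-,W]$ yields an exact sequence of pointed sets $[C_f, W] \to [X,W] \to [A,W]$. Thus the only genuine content of the lemma is the identification of the cofibration sequence of the inclusion $j: X \vee Y \hookrightarrow X \times Y$, namely that its cofibre is $X \wedge Y$, and that the next term (the cofibre of $q$) is $\Sigma(X\vee Y)$, with the connecting map $\rho$ being the standard coextension.

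First I would recall that $j: X\vee Y \hookrightarrow X\times Y$ is a cofibration (it is the inclusion of a CW-subcomplex, as all our spaces are CW-complexes), and that the quotient $(X\times Y)/(X\vee Y)$ is by definition the smash product $X\wedge Y$; hence the mapping cone $C_j$ is homotopy equivalent to $X\wedge Y$, and under this equivalence the map $X\times Y \to C_j$ becomes the quotient map $q: X\times Y \to X\wedge Y$. This establishes the first three terms $X\vee Y \xrightarrow{j} X\times Y \xrightarrow{q} X\wedge Y$ of the cofibration sequence. Next, the Puppe construction tells us the sequence continues as $C_j \xrightarrow{} \Sigma(X\vee Y) \xrightarrow{\Sigma j} \Sigma(X\times Y) \to \cdots$, where the map $C_j \to \Sigma(X\vee Y)$ collapses the copy of $X\times Y$ inside the cone $C_j$; composing with the homotopy equivalence $X\wedge Y \simeq C_j$ gives the map $\rho: X\wedge Y \to \Sigma(X\vee Y)$ appearing in the statement. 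This yields the full cofibration sequence displayed in the lemma.

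Then I would invoke exactness of $[-,W]$ on a cofibration sequence: applying $[-,W]$ to $X\vee Y \xrightarrow{j} X\times Y \xrightarrow{q} X\wedge Y$ (equivalently, to the portion $X\vee Y \to X\times Y \to C_j$ of the Puppe sequence) produces the exact sequence of pointed sets
\[
[X\wedge Y, W] \xrightarrow{q^*} [X\times Y, W] \xrightarrow{j^*} [X\vee Y, W],
\]
exactness meaning $\operatorname{image}(q^*) = (j^*)^{-1}(c')$ where $c'$ is the base point (constant map) in $[X\vee Y, W]$. Appending the initial term $\{c\} \to [X\wedge Y, W]$ is harmless: a one-point set maps into any pointed set, and the image is the base point, so the statement is an unconditioned assertion that $[X\wedge Y,W]$ is pointed with distinguished element the class of the constant map $c$. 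This completes the argument.

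The main (and really only) obstacle is that none of this is hard: it is a bookkeeping exercise in identifying the terms of the Puppe sequence. The one point requiring a word of care is the CW/cofibration hypothesis — one should note that $X\vee Y \hookrightarrow X\times Y$ is a genuine cofibration (which holds for CW pairs, as assumed throughout the paper) so that the mapping cone agrees up to homotopy with the strict quotient $X\wedge Y$; without this, the quotient map need not model the cofibre. I expect to dispatch the entire proof in a couple of lines by citing the exactness of $[-,W]$ applied to the Puppe sequence of $j$, after identifying $C_j \simeq X\wedge Y$.
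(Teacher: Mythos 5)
There is a genuine gap: you have misread what the initial term $\{c\} \to [X\wedge Y, W]$ asserts. Exactness of a sequence of pointed sets at $[X\wedge Y, W]$ means that the preimage under $q^*$ of the base point of $[X\times Y, W]$ equals the image of $\{c\}$, i.e.\ that $(q^*)^{-1}(\text{const}) = \{c\}$: any map $X\wedge Y \to W$ that becomes null-homotopic after precomposition with $q$ is already null-homotopic. This is not ``an unconditioned assertion that $[X\wedge Y, W]$ is pointed''; it is precisely the nontrivial content of the lemma, and it is the only part that the paper actually uses later (it is invoked repeatedly in the form ``$q$ is homotopy monic'', e.g.\ in Proposition~\ref{map_inducing_brace} and Theorem~\ref{phi_alg_map}). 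Your argument establishes only exactness at the middle term $[X\times Y, W]$, which is the standard Puppe-sequence fact and does not use the wedge/product/smash structure at all.

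To close the gap one must continue the Puppe sequence one step further to the left,
\[
[\Sigma(X\times Y), W] \xrightarrow{(\Sigma j)^*} [\Sigma(X\vee Y), W] \xrightarrow{\rho^*} [X\wedge Y, W] \xrightarrow{q^*} [X\times Y, W],
\]
and prove that $(\Sigma j)^*$ is surjective; exactness then forces $\operatorname{im}\rho^* = \{c\}$, and hence $(q^*)^{-1}(\text{const}) = \operatorname{im}\rho^* = \{c\}$. The surjectivity of $(\Sigma j)^*$ is where the specific geometry of $j : X\vee Y \hookrightarrow X\times Y$ enters: under the $\Sigma$-$\Omega$ adjunction it amounts to showing that every $f : X\vee Y \to \Omega W$ extends over $X\times Y$, which the paper does explicitly by setting $\tilde f(x,y) = f(x,y_0)\cdot f(x_0,y)$ using the loop multiplication on $\Omega W$. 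Note that this step genuinely fails for an arbitrary cofibration in place of $j$, so no purely formal appeal to the Puppe sequence can yield the lemma as stated.
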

\begin{proof}
As the cofibration sequence induces a long exact sequence \cite[p. 134]{Whitehead_elements} 
\begin{align*}
    \cdots \rightarrow [\Sigma( X\times  Y),W] \xrightarrow{(\Sigma j)^*} [\Sigma( X&\vee  Y),W] \xrightarrow{\rho^*} 
        [ X\wedge  Y, W]\\
    &\xrightarrow{q^*} [ X\times  Y,W] \xrightarrow{j^*} [ X\vee  Y ,W],
\end{align*}
it is enough to show that $(\Sigma j)^*$ is onto. 
Using the $\Sigma$-$\Omega$ adjunction, it is then equivalent to show  that 
$$j^*:[ X\times  Y, \Omega W] \to [ X\vee  Y, \Omega W]$$ is onto.
Consider $f\in [X\vee Y, \Omega W]$ and define $\Tilde{f}(x, y) := f(x, y_0) \cdot f(x_0, y)$ for $(x, y) \in X \times Y$, where $x_0$ and $y_0$ are the base points in $X$ and $Y$, respectively, and `$\cdot$' represents the concatenation operation in $\Omega W$. Consequently, $\Tilde{f}\in [X \times Y, \Omega W]$, and the map $j^*(\Tilde{f}) = f$, implying that $j^*$ is onto.
\end{proof}

Note that if $X=Y$ and $W=\Omega X$, we have the commutator map $C\in [\Omega X\times \Omega X,\Omega X]$, and $j^*(C)$ is null-homotopic. {By Lemma~\ref{cofib_seq}, there exists a unique (up to homotopy) $S\in [\Omega X\wedge \Omega X,\Omega X]$ satisfying $q^*(S)=C$.} 
\begin{defi}[Samelson map]\label{defn:samelsonMap}
    A map $S\in [\Omega X\wedge \Omega X, \Omega X]$ such that $q^*(S)=C$ will be called the \textit{Samelson map}.
\end{defi}
The map $S$ induces the Samelson product, i.e., for $f\in \pi_k(\Omega X),g\in \pi_l(\Omega X)$, the Samelson product is given as $\langle f,g\rangle=S\comp (f\wedge g)$. The generalized Whitehead product is now defined as follows.

\begin{defi}[Generalized Whitehead product \cite{Arkowitz_gen_Whitehead}]
Let $\displaystyle \alpha \in [\Sigma A,X]$ and $\displaystyle \beta \in [\Sigma B,X]$ with  $\ad \alpha \in[A,\Omega X]$ and $\ad \beta \in[B,\Omega X]$ being the respective adjoints. The \textit{generalized Whitehead product} of $\alpha,\beta$ is defined as  $\ad^{-1}\left( S\comp (\ad \alpha \wedge \ad \beta) \right)\in [\Sigma (A\wedge B), X]$, where $S$ is the Samelson map. We shall denote this element by $[\alpha ,\beta ] \in [\Sigma (A\wedge B),X]$. 
\end{defi}

\begin{remark}\label{Gen_White_prop}
\begin{enumerate}[\quad (a)]
\item In \cite{Arkowitz_gen_Whitehead}, Arkowitz defined the generalized Whitehead product as follows. They identified a base point preserving map $\omega : \Sigma (A \wedge B) \rightarrow \Sigma A \vee \Sigma B$, called the \emph{universal Whitehead product map}, and defined the generalized Whitehead product of $f : \Sigma A \rightarrow X, g : \Sigma B \rightarrow X$ as the homotopy class of the composition
\[\begin{tikzcd}
\Sigma(A\wedge B) \arrow{r}{\omega} & \Sigma A \vee \Sigma B \arrow{r}{f\vee g} & X\vee X \arrow{r}{\nabla} & X 
\end{tikzcd},\]
where $\nabla$ is the fold map.
Using \cite[Thm 2.4]{Arkowitz_gen_Whitehead} and the $\Sigma$-$\Omega$ adjunction isomorphism, one can verify that both definitions agree. We will employ either definition, choosing the one that is more convenient based on the context. 
\item If we take $A=S^{k-1}, B=S^{l-1}$ in the above definition, then the generalized Whitehead product coincides with the classical Whitehead product. That is why the notations used for both products are not distinguished, but will be understood depending on the context.  
\item Generalized Whitehead product is functorial, i.e., $f_*([\alpha,\beta])=[f_*  \alpha,f_* \beta]$ for $f: X\to Y$.
\item If $X$ is an $H$-space, then $\Omega X$ is homotopy abelian following the Eckmann-Hilton argument. Thus, the Samelson map $S$ is null-homotopic and hence the generalized Whitehead product of $X$ vanishes identically.
\end{enumerate}
\end{remark}

\section{Generalized Brace Product} \label{sec:genBraceProductProperties}
Recall that given any fibration $F\overset{i}{\hookrightarrow} E \xrightarrow{p}B$, we have the following fibration sequence
\[\begin{tikzcd}[column sep=2em]
    \cdots \arrow{r}{} & \Omega^2 B \arrow{r}{\partial} & \Omega F \arrow{r}{\Omega i} & \Omega E \arrow{r}{\Omega p} & \Omega B \arrow{r}{\partial} & F \arrow{r}{i} & E \arrow{r}{p} & B.
\end{tikzcd}\]
This induces a long exact sequence of groups (and pointed sets) when one applies the functor $[X, -]$ for an arbitrary space $X$. The existence of a homotopy section $s: B \rightarrow  E$ then splits this long exact sequence into short exact sequences in groups
\begin{equation}\label{i_*IsMonic}
    \begin{tikzcd}
    0 \arrow{r}{} & {[\Sigma^i X, F]} \arrow{r}{i_*} & {[\Sigma^i X, E]} \arrow{r}{p_*} & {[\Sigma^i X, B]} \arrow{r}{} \ar[l,bend right,"s_*"']& 0
    \end{tikzcd}, \quad i \ge 1,
\end{equation}
where we have used the $\Sigma \text{-} \Omega$ adjunction as appropriate. Now, for $f : \Sigma X \to B, g : \Sigma Y \to F$, by  Remark~\ref{Gen_White_prop}(c) we have 
\[p_*[s \comp f, i \comp g] = [p \comp s \comp f, p \comp i \comp g] = [p \comp s \comp f, 0] = 0.\]

\begin{defi}[{Generalized brace product \cite[Def 3.3]{Yoon-DecomposabilityOfEvaluationFibration}}] \label{defn:genBraceProduct}
    Consider a fibration $F\overset{i}{\hookrightarrow} E \xrightarrow{p}B$ admitting a homotopy section $s$. Then for any $f:\Sigma X\to B$ and $g:\Sigma Y\to F$, the element $\left\{ f, g \right\}_s \in \left[ \Sigma\left( X\wedge Y \right), F \right]$ is defined by the equation 
    \begin{equation} \label{eq:genBraceDefinition}
        \left\{ f, g \right\}_s = i_*^{-1}[s_*f, i_* g].
    \end{equation}
    This defines a map 
    $$
    \{,\}_s:[\Sigma X,B]\times [\Sigma Y,F] \to [\Sigma(X\wedge Y),F],
    $$
    which is called the \textit{generalized brace product}.
\end{defi} 

Note that if $\Sigma X,\Sigma Y$ both are spheres, then it follows from the definition itself and Remark~\ref{Gen_White_prop}(b) that the generalized brace product and brace product coincide. Consequently, we reuse the same symbol $\{,\}_s$ to denote the generalized brace product as well. Henceforth, if it is necessary to distinguish between the two, we shall refer to the case of spheres as the James brace product.

Analogous to the Samelson product, the generalized brace product (and hence the standard one as well) comes from a space-level map. Kallel and Sjerve \cite[Lemma 2.9]{Kallel-Sjerve} first observed this for the brace product in homotopy groups, although their proof was inadequate. Here we prove this fact in complete detail.

\begin{prop}\label{map_inducing_brace}
    For a fibration $F\overset{i}{\hookrightarrow} E\overset{p}{\to} B$ with a homotopy section $s$, there is a map $J_s:\Omega B\wedge \Omega F\to \Omega F$, which induces the generalized brace product. Moreover, this map is unique up to homotopy.
\end{prop}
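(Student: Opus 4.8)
The plan is to mimic the construction of the Samelson map $S$ from Definition~\ref{defn:samelsonMap}, but relative to the section $s$. First I would write down the obvious candidate at the level of homotopy classes and then lift it to an honest space-level map using Lemma~\ref{cofib_seq}. Concretely, consider the composite
\[
\Omega B \times \Omega F \xrightarrow{\Omega s \times \Omega i} \Omega E \times \Omega E \xrightarrow{C} \Omega E,
\]
where $C$ is (a choice of) commutator map on the $H$-space $\Omega E$. Restricting along $j : \Omega B \vee \Omega F \hookrightarrow \Omega B \times \Omega F$ gives $C \comp (\Omega s \vee \Omega i)$, which is null-homotopic because on a wedge the commutator of two maps into an $H$-space is trivial (the two loops ``do not interact'' — the same fact used right after Lemma~\ref{cofib_seq}). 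Hence $j^*\big(C \comp (\Omega s\times \Omega i)\big) = 0$, and by the exact sequence of pointed sets in Lemma~\ref{cofib_seq} there is a class $\overline{J} \in [\Omega B \wedge \Omega F, \Omega E]$ with $q^*(\overline{J}) = C\comp(\Omega s\times \Omega i)$; by the last sentence before Definition~\ref{defn:samelsonMap}, this class is unique.

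Next I would show $\overline{J}$ factors through $\Omega i$. The key point is that $\Omega p \comp \overline{J}$ is null-homotopic: post-composing $q^*(\overline{J}) = C\comp(\Omega s\times\Omega i)$ with $\Omega p$ gives the commutator in $\Omega B$ of $\Omega p\comp\Omega s \simeq \mathrm{Id}_{\Omega B}$ (restricted to the first factor) and $\Omega p \comp \Omega i \simeq \ast$ (on the second factor), so $\Omega p\comp C\comp(\Omega s\times\Omega i)$ kills $j$, hence by uniqueness in Lemma~\ref{cofib_seq} equals $q^*(\Omega p\comp\overline{J})$ forces $\Omega p \comp \overline{J} = 0$ in $[\Omega B\wedge\Omega F, \Omega B]$. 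Since $\Omega B \wedge \Omega F$ is a suspension (being $\Sigma(X\wedge \Omega F)$ for $X$ the appropriate model, or more simply since smashing with anything and the standard half-smash gives a co-$H$ structure — actually $\Omega B\wedge\Omega F\simeq \Sigma(\Omega'B\wedge\Omega F)$ after desuspending is not needed; what I really need is that $[\Omega B\wedge\Omega F,-]$ sits in the exact sequence \eqref{i_*IsMonic} with $X$ replaced by the appropriate desuspension), the fibration sequence $\Omega F \xrightarrow{\Omega i}\Omega E\xrightarrow{\Omega p}\Omega B$ together with the section gives exactness, so $\overline{J}$ lifts uniquely to $J_s \in [\Omega B\wedge\Omega F,\Omega F]$ with $\Omega i\comp J_s = \overline{J}$. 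Uniqueness of $J_s$ follows from injectivity of $(\Omega i)_* = i_*$ in that exact sequence.

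Finally I would check that $J_s$ induces the generalized brace product, i.e. that for $f:\Sigma X\to B$, $g:\Sigma Y\to F$ one has $\{f,g\}_s = \ad^{-1}\big(J_s\comp(\ad(s\comp f)\wedge \ad(i\comp g))\big)$, matching Definition~\ref{defn:genBraceProduct}. This is a diagram chase: by construction $\Omega i \comp J_s\comp(\ad(sf)\wedge\ad(ig)) = \overline J\comp(\ad(sf)\wedge\ad(ig))$, and since $q^*(\overline J) = C\comp(\Omega s\times\Omega i)$ this composite is precisely the Samelson-map description of the generalized Whitehead product $[s\comp f, i\comp g]$ after applying $\ad$, exactly as the Samelson map $S$ induces $\langle\,,\,\rangle$ and hence the generalized Whitehead product (Definition of generalized Whitehead product). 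Applying $\ad^{-1}$ and the injectivity of $i_*$ gives $\{f,g\}_s = i_*^{-1}[s_*f,i_*g]$ as required.

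The main obstacle I anticipate is the bookkeeping in the second step: making precise that $[\Omega B\wedge\Omega F,-]$ slots into an exact sequence of the form \eqref{i_*IsMonic} so that the lift $J_s$ exists and is unique. One clean way around it is to avoid desuspension entirely and argue directly with the fibration sequence $\cdots\to\Omega F\to\Omega E\to\Omega B\to\cdots$ after applying $[\Omega B\wedge\Omega F,-]$: the section $s$ splits it into short exact sequences (the same argument as for \eqref{i_*IsMonic}, valid for any space in the first slot since $\Omega B\wedge\Omega F$ is still a loop-type/co-$H$ object — indeed $\Omega B\wedge\Omega F$ is a suspension, as $A\wedge B\simeq \Sigma(\ldots)$ whenever one factor is connected, or simply because $\Omega B$ is homotopy equivalent to a CW-complex and $\Omega B \wedge \Omega F$ carries the required co-$H$ structure from $\Sigma$ applied after the adjoint identification is not needed — $[\Omega B \wedge \Omega F, G]$ is a group for $G$ a loop space, which is all we use). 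With that in hand, exactness at $[\Omega B\wedge\Omega F,\Omega E]$ gives the lift, and injectivity of $i_*$ gives uniqueness, closing the argument.
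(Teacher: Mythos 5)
Your proposal is correct and follows essentially the same route as the paper: lift the commutator $C\comp(\Omega s\times\Omega i)$ through the smash (the paper packages this as $S\comp(\Omega s\wedge\Omega i)$ for the Samelson map $S$ of $\Omega E$, which is the same class by the uniqueness in Lemma~\ref{cofib_seq}), kill it under $\Omega p$ using $p\comp s\simeq \mathrm{Id}_B$ together with injectivity of $(q')^*$, and then lift through $(\Omega i)_*$ via the exact sequence obtained by applying $[\Omega B\wedge\Omega F,-]$ to the looped fibration sequence, split by $\Omega^2 s$ --- no suspension or co-$H$ structure on $\Omega B\wedge\Omega F$ is needed, exactly as you conclude at the end. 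One small slip to fix: in the final verification the argument fed to $J_s$ must be $\ad f\wedge\ad g$ (landing in $\Omega B\wedge\Omega F$, the domain of $J_s$), not $\ad(s\comp f)\wedge\ad(i\comp g)$ (which lands in $\Omega E\wedge\Omega E$, the domain of $S$); with that correction the diagram chase, and the uniqueness statement via evaluating at $f_0=\ad^{-1}(\mathrm{Id}_{\Omega B})$, $g_0=\ad^{-1}(\mathrm{Id}_{\Omega F})$, is the paper's computation verbatim.
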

\begin{proof}
Recall from Definition~\ref{defn:samelsonMap}, we have the commutator map $C: \Omega E \times \Omega E \rightarrow \Omega E$, and the Samelson map $S: \Omega E \wedge \Omega E \rightarrow \Omega E$, such that $S\comp q \simeq C$, where $q: \Omega E \times \Omega E \rightarrow \Omega E \wedge \Omega E$ is the quotient map.
Now, from the following commutative diagram
\begin{equation*}
    \begin{tikzcd}
        \Omega B\vee \Omega F\ar[r,hook,"j'"] \ar[d,"\Omega s\vee \Omega i"] &\Omega B\times \Omega F \ar[r, "q'"] \ar[d, "\Omega s\times \Omega i"]  &\Omega B\wedge \Omega F\ar[d, "\Omega s\wedge \Omega i"]\\
        \Omega E\vee \Omega E\ar[r,hook,"j"]  &\Omega E\times \Omega E \ar[r, "q"]  &\Omega E\wedge \Omega E
    \end{tikzcd}
\end{equation*}
we have
\begin{align*}
    \Omega p\comp S\comp (\Omega s\wedge \Omega i)\comp q' 
    &= \Omega p\comp S\comp q\comp (\Omega s\times \Omega i)\\
    &\simeq \Omega p\comp C\comp (\Omega s\times \Omega i)\\
    &= \Omega p\comp (\Omega s\cdot \Omega i \cdot \overline{\Omega s}\cdot \overline{\Omega i})\\
    &= \Omega (p\comp s)\cdot \Omega (p\comp i) \cdot \overline{\Omega (p\comp s)}\cdot \overline{\Omega (p\comp i)}\\
    &\simeq \Omega (\textrm{Id}_B)\cdot \Omega c_b \cdot \overline{\Omega (\textrm{Id}_B)}\cdot \overline{\Omega c_b}\\ 
    &\qquad \qquad (\text{as $p\comp s\simeq \textrm{Id}_B$, and $p\comp i=$ constant map, say $c_b$})\\
    &\simeq \Omega c_b
\end{align*}
Given $X = \Omega B$, $Y = \Omega F$, and $W = \Omega B$, Lemma~\ref{cofib_seq} implies that $(q')^*$ is injective. Consequently, the preceding computation yields $\Omega p \comp S \comp (\Omega s \wedge \Omega i) \simeq \Omega c_b$.  
Then, applying the functor $[\Omega B\wedge\Omega F, -]$ to  the following sequence of fibration
$$\Omega^2E\xrightarrow{\Omega^2p} \Omega^2B\xrightarrow{\Omega \partial} \Omega F\xrightarrow{\Omega i}\Omega E\xrightarrow{\Omega p}\Omega B,$$
and noting that $\Omega^2 p$ has right inverse $\Omega^2s$, 
we obtain the following exact sequence 
$$\left\{ \ast \right\} \to [\Omega B\wedge \Omega F,\Omega F]\xrightarrow{(\Omega i)_*}[\Omega B\wedge \Omega F,\Omega E] \xrightarrow{(\Omega p)_*} [\Omega B\wedge \Omega F, \Omega B].$$
As $S\comp (\Omega s\wedge \Omega i)\in \text{Ker}(\Omega p)_*$, we have a unique (up to homotopy) map $J_s:\Omega B\wedge \Omega F\to \Omega F$ such that $(\Omega i)_*(J_s)=S\comp (\Omega s\wedge \Omega i)$.  
Now for $f:\Sigma X\to B$ and $g:\Sigma Y\to F$, we have the following

\begin{align*}
    i_*(\{f,g\}_s)&=[s\comp f,i\comp g]\\
    &=\ad^{-1} \left(S\comp( \ad (s \comp f)\wedge \ad (i \comp g)) \right) \\
    &=\ad^{-1} \left( S\comp ((\Omega s\comp \ad f)\wedge (\Omega i\comp \ad g)) \right)\\
    &=\ad^{-1} \left( S\comp (\Omega s\wedge \Omega i)\comp (\ad f\wedge \ad g) \right)\\
    &=\ad^{-1} \left( (\Omega i)_*\left(J_s\comp (\ad f\wedge \ad g)\right) \right)\text{ (by the construction of $J_s$)}\\
    &= i_*\left(\ad^{-1} \left(J_s\comp (\ad f\wedge \ad g)\right)\right)
\end{align*}
    Since $i_*$ is monic (see (\ref{i_*IsMonic})), we get $\{f,g\}_s=\ad^{-1}\left(J_s\comp (\ad {f}\wedge\ad {g})\right)$. 
    Thus, $J_s$ induces the generalized brace product $\{,\}_s$.  
    
    Observe that by taking $f_0 = \ad^{-1}(\mathrm{Id}_{\Omega B}) : \Sigma \Omega B \rightarrow B$ and $g_0 = \ad^{-1}(\mathrm{Id}_{\Omega F}) : \Sigma \Omega F \rightarrow F$, we have 
    \[\left\{ f_0, g_0 \right\}_s = \ad^{-1} \left( J_s \comp \left( \ad f_0 \wedge \ad g_0 \right) \right) = \ad^{-1}\left( J_s \comp \left( \mathrm{Id}_{\Omega B} \wedge \mathrm{Id}_{\Omega F} \right) \right) = \ad^{-1} J_s.\]
    Hence, $J_s = \ad \left\{ f_0, g_0 \right\}_s$ is defined, uniquely up to homotopy.
\end{proof}
\begin{remark}\label{remark:UniversalBrace}
    In the above Proposition~\ref{map_inducing_brace}, we have seen that the generalized brace product $\{f,g\}_s$ of any two maps $f:\Sigma X\to B$ and $g:\Sigma Y\to F$ factors through the brace product $\{f_0,g_0\}_s$ of $f_0:\Sigma \Omega B\to B$ and $g_0:\Sigma \Omega F\to F$, in the sense that
    $$\ad \{f,g\}_s=\ad \{f_0, g_0\}_s\comp (\ad f\wedge \ad g).$$
    Thus, we can say the $J_s$ map or the brace product $\{f_0,g_0\}_s$ is universal for the generalized brace product.
\end{remark}

\subsection{Independence of base point and homotopy of sections}
In this section, we briefly discuss the base point independence of the generalized brace product. Given a well-pointed space $(A,*)$ (i.e., $* \hookrightarrow A$ is a cofibration), and a path $\gamma : [0, 1] \rightarrow X$ joining $x_0 = \gamma(0)$ to $x_1 = \gamma(1)$, we can define a canonical morphism 
\[\mathsf{h}_{[\gamma]} : [(A, *), (X,x_0)] \rightarrow [(A, *), (X,x_1)]\]
using the homotopy extension property in the following way. Given any map $f : (A, *) \rightarrow (X, x_0)$,  we have the diagram 
\begin{equation} \label{eqn:basepointChangeMap}
\begin{tikzcd}
    0 \arrow{r}{f} \arrow[hookrightarrow]{d} & X^A \arrow{d}{ev_*} \\
    {[0,1]} \arrow{r}[swap]{\gamma} \arrow[dotted]{ru}[swap]{\tilde{f}} & X
\end{tikzcd}
\end{equation}
which admits a lift $\tilde{f}$ since $(A, *)$ is well-pointed. This lift is unique up to homotopy for any homotopy of $\gamma$ relative to $\left\{ 0,1 \right\}$ and any base point preserving homotopy of $f$. Consequently, we may define $\mathsf{h}_{[\gamma]}([f]) = [\tilde{f}(1)]$. It is easy to see that $\mathsf{h}_{[\gamma]}$ is a bijection, with the inverse given by $\mathsf{h}_{[\bar{\gamma}]}$. Note that for any $s : X \rightarrow  Y$ we have
\begin{equation}\label{eqn:basePointChangeMapPostcompose}
\mathsf{h}_{[s \comp \gamma]} \comp s_* = s_* \comp h_{[\gamma]}.
\end{equation}
Furthermore, given $f_i : (A_i, *) \rightarrow (X, x_0), i=1,2$, we have a lift of $f_1 \vee  f_2$, which is homotopic to the wedge of their individual lifts. Consequently, if $A$ is a suspension, then $\mathsf{h}_{[\gamma]}$ is a group isomorphism. We have the following lemma.
\begin{lemma}\label{lemma:genWhiteheadBasepoint}
The generalized Whitehead product is independent of base points, i.e., for any well-pointed spaces $(\Sigma A, *)$ and $(\Sigma B, *)$, we have a commutative diagram 
\[\begin{tikzcd}
    {[(\Sigma A, *), (X, x_0)] \times [(\Sigma B, *), (X, x_0)]} \arrow{r}{[~,~]} \arrow{d}[swap]{\mathsf{h}_{[\gamma]}\times \mathsf{h}_{[\gamma]}} & {[(\Sigma (A \wedge B),*), (X,x_0)]} \arrow{d}{\mathsf{h}_{[\gamma]}} \\
    {[(\Sigma A, *), (X,x_1)] \times [(\Sigma B, *), (X, x_1)]} \arrow{r}[swap]{[~,~]} & {[(\Sigma (A \wedge B), *), (X, x_1)]}
\end{tikzcd}\]
\end{lemma}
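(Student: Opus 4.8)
The plan is to use the characterization of the generalized Whitehead product via the Samelson map (or equivalently via Arkowitz's universal Whitehead map $\omega$) and reduce the claim to the corresponding naturality statement for the Samelson product in the loop space, where the base-point change map is more directly controlled. First I would recall that for $\alpha \in [(\Sigma A, *), (X, x_0)]$ and $\beta \in [(\Sigma B, *), (X, x_0)]$, the generalized Whitehead product is $[\alpha, \beta] = \ad^{-1}(S_X \comp (\ad \alpha \wedge \ad \beta))$, where $S_X : \Omega X \wedge \Omega X \to \Omega X$ is the Samelson map for the base point $x_0$. The subtlety is that the Samelson map itself depends on the base point of $X$; so I would first observe that a path $\gamma$ from $x_0$ to $x_1$ induces a homotopy equivalence $\Omega_\gamma : (\Omega X, c_{x_0}) \to (\Omega X, c_{x_1})$ (conjugation by $\gamma$, i.e. $\ell \mapsto \bar\gamma \cdot \ell \cdot \gamma$ up to reparametrization), and that this equivalence intertwines the commutator maps $C_{x_0}$ and $C_{x_1}$ up to homotopy, since it is an $H$-map up to homotopy. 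By the uniqueness clause in Lemma~\ref{cofib_seq} (applied to $\Omega X \wedge \Omega X$), it therefore also intertwines the Samelson maps: $\Omega_\gamma \comp S_{x_0} \simeq S_{x_1} \comp (\Omega_\gamma \wedge \Omega_\gamma)$.

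Next I would connect this to the base-point change map $\mathsf{h}_{[\gamma]}$ on homotopy classes of maps. The key compatibility is that under the $\Sigma$-$\Omega$ adjunction, the diagram
\[
\begin{tikzcd}
{[(\Sigma A, *), (X, x_0)]} \arrow{r}{\ad} \arrow{d}[swap]{\mathsf{h}_{[\gamma]}} & {[(A, *), (\Omega X, c_{x_0})]} \arrow{d}{(\Omega_\gamma)_*} \\
{[(\Sigma A, *), (X, x_1)]} \arrow{r}{\ad} & {[(A, *), (\Omega X, c_{x_1})]}
\end{tikzcd}
\]
commutes. This is essentially the assertion that the adjunction is natural with respect to the base-point change constructed via the homotopy extension property in \eqref{eqn:basepointChangeMap}; I would verify it by tracing through the definition of $\tilde f$ in that diagram and identifying $\tilde f(1)$ after adjunction with the loop-level conjugation $\Omega_\gamma \comp \ad f$. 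There is also a straightforward multiplicativity point: the adjoint of the Whitehead product lands in $\Omega X$ via the Samelson construction, which is compatible with smash products, so applying $(\Omega_\gamma)_*$ to $S_{x_0}\comp(\ad\alpha \wedge \ad\beta)$ gives $\Omega_\gamma \comp S_{x_0} \comp (\ad\alpha \wedge \ad\beta) \simeq S_{x_1}\comp (\Omega_\gamma \wedge \Omega_\gamma)\comp(\ad\alpha \wedge \ad\beta) = S_{x_1}\comp(\Omega_\gamma \comp \ad\alpha \wedge \Omega_\gamma \comp \ad\beta)$.

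Assembling the pieces: starting from $[\alpha, \beta]$, apply $\mathsf{h}_{[\gamma]}$; pass to adjoints using the naturality square above so that $\ad \mathsf{h}_{[\gamma]}[\alpha,\beta] = (\Omega_\gamma)_*(S_{x_0}\comp(\ad\alpha \wedge \ad\beta))$; rewrite the right-hand side using the intertwining of Samelson maps as $S_{x_1}\comp (\Omega_\gamma\comp\ad\alpha \wedge \Omega_\gamma\comp\ad\beta)$; and recognize $\Omega_\gamma \comp \ad\alpha = \ad(\mathsf{h}_{[\gamma]}\alpha)$ and likewise for $\beta$ by the same naturality square. This yields $\ad\mathsf{h}_{[\gamma]}[\alpha,\beta] = S_{x_1}\comp(\ad\mathsf{h}_{[\gamma]}\alpha \wedge \ad\mathsf{h}_{[\gamma]}\beta) = \ad[\mathsf{h}_{[\gamma]}\alpha, \mathsf{h}_{[\gamma]}\beta]$, and applying $\ad^{-1}$ gives exactly the commutativity of the square in the lemma. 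Alternatively, one could run the whole argument using Arkowitz's definition via $\omega : \Sigma(A\wedge B) \to \Sigma A \vee \Sigma B$ and the wedge compatibility of $\mathsf{h}_{[\gamma]}$ already noted before the lemma statement, which is perhaps cleaner: $\mathsf{h}_{[\gamma]}(\nabla \comp (f\vee g)\comp \omega) = \nabla \comp (\mathsf{h}_{[\gamma]}f \vee \mathsf{h}_{[\gamma]}g)\comp \omega$ follows because $\mathsf{h}_{[\gamma]}$ commutes with precomposition by the fixed map $\omega$ and with the fold map $\nabla$ (which only involves the target), and it is compatible with wedges of lifts.

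\textbf{Main obstacle.} The delicate point is the base-point dependence of the Samelson map $S$ itself: the generalized Whitehead product is defined via $S$ for a specific base point, so one must show that conjugation by $\gamma$ carries the $x_0$-Samelson map to the $x_1$-Samelson map up to homotopy. This rests on $\Omega_\gamma$ being a homotopy-associative $H$-map (so it respects commutators) together with the uniqueness part of Lemma~\ref{cofib_seq}, and on checking that the adjunction is genuinely natural with respect to the homotopy-extension-property construction of $\mathsf{h}_{[\gamma]}$ — a compatibility that is intuitively clear but needs care to state precisely, since $\mathsf{h}_{[\gamma]}$ is only defined up to homotopy. If one instead adopts the $\omega$-based definition, the obstacle largely evaporates because everything reduces to formal manipulations with a fixed map $\omega$ and the fold map, using only properties of $\mathsf{h}_{[\gamma]}$ already established (naturality under post-composition \eqref{eqn:basePointChangeMapPostcompose} and compatibility with wedges); this is the route I would actually take in the write-up.
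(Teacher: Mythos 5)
Your proposal is correct, and the route you say you would actually write up — Arkowitz's $\omega$-based definition combined with the already-established facts that $\mathsf{h}_{[\gamma]}$ commutes with post-composition by $\nabla$, with pre-composition by the fixed map $\omega$, and with wedges of lifts — is precisely the paper's proof, which consists of the single observation that the lift of $\nabla \comp (f \vee g)$ is homotopic to $\nabla \comp (\tilde{f} \vee \tilde{g})$ and then pre-composes with $\omega$. The Samelson-map route you develop first is also viable (conjugation $\Omega_\gamma$ is an $H$-map, so it intertwines commutators and hence, by the injectivity of $q^*$ from Lemma~\ref{cofib_seq}, the Samelson maps at the two base points), but it carries the extra burden of verifying that the $\Sigma$-$\Omega$ adjunction converts $\mathsf{h}_{[\gamma]}$ into $(\Omega_\gamma)_*$, which is exactly the kind of care the $\omega$-based argument lets you avoid.
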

\begin{proof}
Recall, we have the base point preserving map $\omega : \Sigma (A \wedge B) \rightarrow \Sigma A \vee \Sigma B$, so that the (generalized) Whitehead product of $f : (\Sigma A, *) \rightarrow (X, x_0)$ and $g : (\Sigma B, *) \rightarrow (X, x_0)$ is represented by the composition 
\[\begin{tikzcd}
\Sigma (A \wedge B) \arrow{r}{\omega} & \Sigma A \vee \Sigma B \arrow{r}{f \vee g} & X \vee X \arrow{r}{\nabla} & X 
\end{tikzcd}.\]
Since in Diagram \eqref{eqn:basepointChangeMap} the lift of $\nabla \comp (f \vee g)$ is homotopic to $\nabla \comp (\tilde{f} \vee \tilde{g})$, the claim follows by precomposing with $\omega$.
\end{proof}

Let us now consider a fibration $p: E \rightarrow B$, with section $s: B \rightarrow E$. Given two base points $b_0, b_1 \in B$, consider a path $\gamma : [0,1] \rightarrow B$ joining $b_0 = \gamma(0)$ to $b_1= \gamma(1)$. Denote the fibres $F_i = p^{-1}(b_i), i = 0, 1$, and fix the base points $e_i = s(b_i) \in F_i \subset E$. Then, $p : (E, e_i) \rightarrow (B, b_i)$ is base point preserving for $i = 0, 1$. Denote the inclusion maps $\iota_j : (F_j, e_j) \hookrightarrow (E, e_j), j= 0, 1$, and set  $\tilde{\gamma} = s \comp \gamma$. We have the following diagram 
\begin{equation} \label{eqn:basepointChangeWithSection}
\begin{tikzcd}[column sep=2cm]
    F_0 \vee [0, 1] = F_0 \times \left\{ 0 \right\} \cup \left\{ e_0 \right\} \times [0,1] \arrow{r}{\iota_0 \vee \tilde{\gamma}} \arrow[hookrightarrow]{d}[swap]{} & E \arrow{d}{p} \\
    F_0 \times [0, 1] \arrow{r}[swap]{\gamma \comp \text{pr}_2} \arrow[dotted]{ru}[swap]{\Gamma} & B
\end{tikzcd}
\end{equation}
Since $(F_0, e_0)$ is well-pointed, $F_0 \vee [0,1] \hookrightarrow F_0 \times [0,1]$ is a cofibration (see \cite[Thm 6]{Strom-NoteOnCofibration}), and also a homotopy equivalence. Thus, we have a lift, say, $\Gamma$.

Clearly, $p \comp \Gamma(\text{-}, t) = \gamma(t)$, and $\Gamma(e_0, t) = \tilde{\gamma}(t)$. In particular, we have a map $\mathsf{L}_\gamma : (F_0, e_0)\rightarrow (F_1, e_1)$ defined as $\Gamma(\text{-}, 1)$. It follows that if $\gamma \simeq \gamma^\prime (\text{rel} \left\{ 0,1 \right\})$, then $\mathsf{L}_\gamma \simeq \mathsf{L}_{\gamma^\prime}$, and furthermore $\mathsf{L}_{\gamma \cdot \eta} \simeq \mathsf{L}_\eta \comp \mathsf{L}_\gamma$ whenever $\gamma, \eta$ are two paths in $B$ that can be concatenated \cite[p. 405]{Hatcher}. Consequently, we get a homotopy equivalence $\mathsf{L}_{\gamma} : (F_0, e_0) \rightarrow (F_1, e_1)$. With the above notations, we have the following result.

\begin{prop}\label{prop:genBraceBasepoint}
The generalized brace product is independent of choice base points in the following sense: for any $(\Sigma X, *)$ and $(\Sigma Y, *)$, the following diagram commutes 
\[\begin{tikzcd}
    \left[ (\Sigma X,*), (B,b_0) \right] \times \left[ (\Sigma Y,*), \left( F_0, e_0 \right) \right] \arrow{r}{\left\{ , \right\}_s} \arrow{d}[swap]{\mathsf{h}_{[\gamma]}\times (\mathsf{L}_{\gamma})_*} & \left[ \left( \Sigma (X \wedge Y), * \right), \left( F_0, e_0 \right) \right] \arrow{d}{(\mathsf{L}_{\gamma})_*} \\
    \left[ (\Sigma X, *), (B, b_1) \right] \times \left[ (\Sigma Y, *), \left( F_1, e_1) \right) \right] \arrow{r}[swap]{\left\{ , \right\}^\prime_s} & \left[ \left( \Sigma(X\wedge Y), * \right), \left( F_1, e_1 \right) \right]
\end{tikzcd}\]
\end{prop}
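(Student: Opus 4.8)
<br>

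The plan is to reduce the statement to the base-point independence of the generalized Whitehead product (Lemma~\ref{lemma:genWhiteheadBasepoint}) by unwinding the definition of the generalized brace product and chasing the two relevant diagrams, \eqref{eqn:basepointChangeWithSection} for the fibre and the analogous one for the section. Concretely, recall from Definition~\ref{defn:genBraceProduct} that $i_{0*}\{f, g\}_s = [s_*f, \iota_{0*}g]$ in $[\Sigma(X\wedge Y), (E, e_0)]$, and likewise $\iota_{1*}\{h_{[\gamma]}f, (\mathsf{L}_\gamma)_*g\}'_s = [s_*(h_{[\gamma]}f), \iota_{1*}(\mathsf{L}_\gamma)_*g]$ in $[\Sigma(X\wedge Y), (E, e_1)]$. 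Since each $\iota_{j*}$ is injective by \eqref{i_*IsMonic}, and since $\iota_{j*}$ is compatible with the base-point change maps via \eqref{eqn:basePointChangeMapPostcompose}, it suffices to establish that, under the base-point change $\mathsf{h}_{[\tilde\gamma]} : [\Sigma(X\wedge Y), (E, e_0)] \to [\Sigma(X\wedge Y), (E,e_1)]$ along $\tilde\gamma = s\comp\gamma$, the Whitehead bracket $[s_*f, \iota_{0*}g]$ is carried to $[s_*(h_{[\gamma]}f), \iota_{1*}(\mathsf{L}_\gamma)_*g]$. By Lemma~\ref{lemma:genWhiteheadBasepoint} applied in $(E, e_0) \leadsto (E, e_1)$ along $\tilde\gamma$, we already know $\mathsf{h}_{[\tilde\gamma]}[s_*f, \iota_{0*}g] = [\mathsf{h}_{[\tilde\gamma]}(s_*f), \mathsf{h}_{[\tilde\gamma]}(\iota_{0*}g)]$, so everything comes down to the two identities $\mathsf{h}_{[\tilde\gamma]}(s_*f) = s_*(\mathsf{h}_{[\gamma]}f)$ and $\mathsf{h}_{[\tilde\gamma]}(\iota_{0*}g) = \iota_{1*}((\mathsf{L}_\gamma)_*g)$.

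The first identity is immediate from \eqref{eqn:basePointChangeMapPostcompose} with $s$ in place of the generic map there: since $\tilde\gamma = s\comp\gamma$, we have $\mathsf{h}_{[s\comp\gamma]}\comp s_* = s_*\comp \mathsf{h}_{[\gamma]}$. The second identity is the heart of the matter: it asserts that changing the base point of a fibre class $g \in [\Sigma Y, (F_0, e_0)]$ along $\gamma$ and then including into $E$ agrees with first including $g$ into $(E, e_0)$ and transporting along $\tilde\gamma$. I would prove this by comparing lifting problems. The parallel transport $\mathsf{L}_\gamma$ was built in \eqref{eqn:basepointChangeWithSection} as $\Gamma(\text{-},1)$ for a lift $\Gamma : F_0\times[0,1]\to E$ of $\gamma\comp \mathrm{pr}_2$ extending $\iota_0 \vee \tilde\gamma$; composing $\Gamma$ with a representative $g : \Sigma Y \to F_0$ gives precisely a lift realizing $\mathsf{h}_{[\tilde\gamma]}(\iota_{0*}g)$ in the diagram \eqref{eqn:basepointChangeMap} (for the space $E$, the path $\tilde\gamma$, and the map $\iota_0\comp g$), because $\Gamma\comp(g\times\mathrm{id})$ starts at $\iota_0\comp g$, sits over $\gamma$ at the base point level, and ends at $\iota_1\comp(\mathsf{L}_\gamma\comp g) = \iota_1\comp (\mathsf{L}_\gamma)_*g$. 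By the uniqueness (up to homotopy) of such lifts noted after \eqref{eqn:basepointChangeMap}, this yields $\mathsf{h}_{[\tilde\gamma]}(\iota_{0*}g) = \iota_{1*}((\mathsf{L}_\gamma)_*g)$, as wanted.

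Putting the pieces together: $\iota_{1*}\{\mathsf{h}_{[\gamma]}f,(\mathsf{L}_\gamma)_*g\}'_s = [s_*(\mathsf{h}_{[\gamma]}f), \iota_{1*}(\mathsf{L}_\gamma)_*g] = [\mathsf{h}_{[\tilde\gamma]}(s_*f), \mathsf{h}_{[\tilde\gamma]}(\iota_{0*}g)] = \mathsf{h}_{[\tilde\gamma]}[s_*f, \iota_{0*}g] = \mathsf{h}_{[\tilde\gamma]}(\iota_{0*}\{f,g\}_s) = \iota_{1*}(\mathsf{h}_{[\tilde\gamma]}\{f,g\}_s)$, where the last step uses \eqref{eqn:basePointChangeMapPostcompose} with $\iota$; but then a little care is needed — on the right we want $(\mathsf{L}_\gamma)_*\{f,g\}_s$ rather than $\mathsf{h}_{[\tilde\gamma]}\{f,g\}_s$, so I would apply the second identity once more with the class $\{f,g\}_s \in [\Sigma(X\wedge Y), (F_0, e_0)]$ in place of $g$, giving $\mathsf{h}_{[\tilde\gamma]}(\iota_{0*}\{f,g\}_s) = \iota_{1*}((\mathsf{L}_\gamma)_*\{f,g\}_s)$. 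Cancelling the injective $\iota_{1*}$ then gives $\{\mathsf{h}_{[\gamma]}f, (\mathsf{L}_\gamma)_*g\}'_s = (\mathsf{L}_\gamma)_*\{f,g\}_s$, which is the commutativity of the square. The main obstacle is the careful bookkeeping in the second identity — verifying that the concrete lift $\Gamma\comp(g\times\mathrm{id})$ is admissible for the base-point-change diagram \eqref{eqn:basepointChangeMap} over $E$ and invoking the right uniqueness statement — but no genuinely new idea beyond Lemma~\ref{lemma:genWhiteheadBasepoint} and the homotopy extension property is required.
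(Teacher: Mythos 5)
Your proposal is correct and follows essentially the same route as the paper: the paper's proof also reduces the claim to Lemma~\ref{lemma:genWhiteheadBasepoint} together with the two compatibilities $\mathsf{h}_{[s\comp\gamma]}\comp s_* = s_*\comp\mathsf{h}_{[\gamma]}$ and $\mathsf{h}_{[s\comp\gamma]}\comp(\iota_0)_* = (\iota_1)_*\comp(\mathsf{L}_\gamma)_*$, the latter established exactly as you do, by observing that $\Gamma$ from Diagram~\eqref{eqn:basepointChangeWithSection} supplies the required lift. Your self-correction at the end (applying the second identity once more to the class $\{f,g\}_s$ itself) is precisely how the paper's computation closes.
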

\begin{proof}
Observe that, for any $(Z, *)$, we have a commutative diagram
\begin{equation}\label{prop:genBraceBasepoint:1}
    \begin{tikzcd}
        {[(Z,*), (F_0,e_0)]} \arrow{r}{(\iota_0)_*} \arrow{d}[swap]{(\mathsf{L}_{\gamma})_*} & {[(Z,*), (E, e_0)]} \arrow{d}{\mathsf{h}_{[s\comp \gamma]}} \\
        {[(Z,*), (F_1,e_1)]} \arrow{r}[swap]{(\iota_1)_*} & {[(Z,*), (E, e_1)]}    
    \end{tikzcd}
\end{equation}
since, $\Gamma$, as in Diagram (\ref{eqn:basepointChangeWithSection}), is a lift of $\iota_0 \vee (s \comp \gamma)$ by construction. Also, from Lemma~\ref{lemma:genWhiteheadBasepoint}, we have the following commutative diagram
\begin{equation}\label{prop:genBraceBasepoint:2}
    \begin{tikzcd}
        \left[(\Sigma X, *), (E, e_0) \right] \times \left[ (\Sigma Y, *), (E, e_0) \right] \arrow{r}{[~,~]} \arrow{d}[swap]{\mathsf{h}_{[s\comp \gamma]} \times \mathsf{h}_{[s\comp \gamma]}} & \left[ (\Sigma(X \wedge Y), *), (E, e_0) \right] \arrow{d}{\mathsf{h}_{[s\comp \gamma]}} \\
        \left[ (\Sigma X, *), (E, e_1) \right] \times \left[ (\Sigma Y, *), (E, e_1) \right] \arrow{r}[swap]{[~,~]} & \left[ (\Sigma(X \wedge Y), *), (E, e_1) \right]
    \end{tikzcd}
\end{equation}
Hence, given $\alpha \in [(\Sigma X, *), (B,b)]$ and $\beta \in [(\Sigma Y, *), (E_b, s(b))]$ we have
\begin{alignat*}{3}
    (\mathsf{L}_{\gamma})_* \left\{ \alpha, \beta \right\}_s &= (\mathsf{L}_{\gamma})_*  (\iota_0)_*^{-1}[s_* \alpha, (\iota_0)_* \beta] \\
    &= (\iota_1)_*^{-1} \mathsf{h}_{[s\comp \gamma]}[s_* \alpha, (\iota_0)_* \beta] \\
    &= (\iota_1)_*^{-1} \bigl[ \mathsf{h}_{[s\comp \gamma]} s_* \alpha, \mathsf{h}_{[s\comp \gamma]} (\iota_0)_*\beta\bigr] &&\qquad (\text{by Diagram \eqref{prop:genBraceBasepoint:2}})\\
    &= (\iota_1)_*^{-1} \bigl[ s_* \left( \mathsf{h}_{[\gamma]} \alpha \right), (\iota_1)_* (\mathsf{L}_{\gamma})_* \beta \bigr] &&\qquad (\text{by Diagram \eqref{prop:genBraceBasepoint:1} and \eqref{eqn:basePointChangeMapPostcompose}})\\
    &= \left\{ \mathsf{h}_{[\gamma]} \alpha, (\mathsf{L}_{\gamma})_* \beta \right\}^\prime_s.
\end{alignat*}
This proves the claim.
\end{proof}

For the rest of the article, we shall tacitly assume that the base points of the spaces are understood from the context. In a similar vein, let us also show that the brace product is independent of the choice of any vertical homotopy of the section.

\begin{defi}
Given two sections $s_0, s_1: B \rightarrow E$ of the fibration $p: E \rightarrow B$, we say $s_0$ and $s_1$ are \emph{vertically homotopic} if there exists a homotopy $s_t: B \rightarrow E$ of sections.
\end{defi}

\begin{prop}\label{prop:braceProductHomotopySection}
Let $s_t: B \rightarrow E$ be a vertical homotopy of sections. Denote, $F = p^{-1}(b_0)$ and consider the two base points $e_i = s_i(b_0) \in F$ for $i = 0, 1$. Then, for any $(\Sigma X, *)$ and $(\Sigma Y, *)$, the following diagram commutes
\[\begin{tikzcd}
    \left[ (\Sigma X, *), (B, b_0) \right] \times \left[ (\Sigma Y, *), (F, e_0) \right] \arrow{r}{\left\{ , \right\}_{s_0}} \arrow{d}[swap]{\mathrm{Id} \times \mathsf{h}_{[\gamma]}} & \left[ \Sigma(X \wedge Y), (F, e_0) \right] \arrow{d}{\mathsf{h}_{[\gamma]}} \\
    \left[ (\Sigma X, *), (B, b_0) \right] \times \left[ (\Sigma Y, *), (F, e_1) \right] \arrow{r}[swap]{\left\{ , \right\}_{s_1}} & \left[ \Sigma(X \wedge Y), (F, e_1) \right]
\end{tikzcd},\]
where $\gamma(t) = s_t(b_0)$ is a path in $F$ joining $e_0$ to $e_1$.
\end{prop}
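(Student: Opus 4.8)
The plan is to follow the proof of Proposition~\ref{prop:genBraceBasepoint} almost verbatim, with the fibre transport $\mathsf{L}_\gamma$ there replaced by the base point change $\mathsf{h}_{[\gamma]}$ along the path $\gamma(t) = s_t(b_0)$ \emph{inside $F$}, together with the observation that on the base $B$ there is no change of base point at all. Indeed, since each $s_t$ is a section, $p\comp\gamma$ is the constant path at $b_0$, so $\gamma$ genuinely lies in $F = p^{-1}(b_0)$ and $i\comp\gamma$ is a path in $E$ from $e_0$ to $e_1$. Writing $\iota_j : (F, e_j) \hookrightarrow (E, e_j)$ for $j = 0,1$, recall from \eqref{i_*IsMonic} that $(\iota_0)_*$ and $(\iota_1)_*$ are injective with image $\ker p_*$, so that $\{\alpha,\beta\}_{s_0} = (\iota_0)_*^{-1}[(s_0)_*\alpha, (\iota_0)_*\beta]$ and similarly for $s_1$; note also that $\mathsf{h}_{[\gamma]}$ is a group isomorphism on each of the relevant sets of homotopy classes, since $\Sigma Y$ and $\Sigma(X\wedge Y)$ are suspensions, so the asserted diagram is one of groups.

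First I would record the two diagrammatic inputs, exactly as in the proof of Proposition~\ref{prop:genBraceBasepoint}. Applying \eqref{eqn:basePointChangeMapPostcompose} with the inclusion $i : F \to E$ in place of $s$ and the path $\gamma$ gives, for every pointed $(Z,*)$, the commuting square $\mathsf{h}_{[i\comp\gamma]}\comp (\iota_0)_* = (\iota_1)_*\comp \mathsf{h}_{[\gamma]}$ of maps $[(Z,*),(F,e_0)] \to [(Z,*),(E,e_1)]$; since $p\comp\iota_0$ is constant, the Whitehead product $[(s_0)_*\alpha,(\iota_0)_*\beta]$ lies in $\ker p_* = \mathrm{im}\,(\iota_0)_*$, which lets us legitimately interchange $\mathsf{h}$ and $(\iota_\bullet)_*^{-1}$ on the elements at hand. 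Second, Lemma~\ref{lemma:genWhiteheadBasepoint} applied to $(E,e_0)$, $(E,e_1)$ and the path $i\comp\gamma$ gives $\mathsf{h}_{[i\comp\gamma]}[\phi,\psi] = [\mathsf{h}_{[i\comp\gamma]}\phi,\, \mathsf{h}_{[i\comp\gamma]}\psi]$.

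The one genuinely new ingredient is the identity $\mathsf{h}_{[i\comp\gamma]}\big((s_0)_*\alpha\big) = (s_1)_*\alpha$, which plays the role that $\mathsf{h}_{[s\comp\gamma]}\comp s_* = s_*\comp\mathsf{h}_{[\gamma]}$ played before. To prove it, observe that $(\sigma,t)\mapsto s_t(\alpha(\sigma))$ is a (free) homotopy $\Sigma X\times[0,1]\to E$ from $(s_0)_*\alpha$ to $(s_1)_*\alpha$ whose restriction to the base point of $\Sigma X$ is precisely $t\mapsto s_t(b_0) = (i\comp\gamma)(t)$; taking $\tilde f(t) = s_t\comp\alpha \in E^{\Sigma X}$ as the lift in Diagram~\eqref{eqn:basepointChangeMap}, the very definition of $\mathsf{h}$ yields $\mathsf{h}_{[i\comp\gamma]}[(s_0)_*\alpha] = [\tilde f(1)] = [(s_1)_*\alpha]$. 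With these three facts in hand the diagram chase closes exactly as in Proposition~\ref{prop:genBraceBasepoint}: for $\alpha\in[(\Sigma X,*),(B,b_0)]$ and $\beta\in[(\Sigma Y,*),(F,e_0)]$,
\[\mathsf{h}_{[\gamma]}\{\alpha,\beta\}_{s_0} = (\iota_1)_*^{-1}\mathsf{h}_{[i\comp\gamma]}\big[(s_0)_*\alpha,(\iota_0)_*\beta\big] = (\iota_1)_*^{-1}\big[(s_1)_*\alpha,\,(\iota_1)_*\mathsf{h}_{[\gamma]}\beta\big] = \{\alpha,\mathsf{h}_{[\gamma]}\beta\}_{s_1}.\]
I expect the only subtle point to be the matching of this free-homotopy transport with the base point change map $\mathsf{h}$ against Diagram~\eqref{eqn:basepointChangeMap}; it is standard but should be written carefully, and one must also keep track of the fact that the bracket lies in $\mathrm{im}\,(\iota_0)_*$ so that applying $(\iota_\bullet)_*^{-1}$ is meaningful. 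Everything else is the same formal manipulation already carried out for Proposition~\ref{prop:genBraceBasepoint}.
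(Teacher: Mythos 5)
Your proposal is correct and follows essentially the same route as the paper's proof: the same two commutative squares (the compatibility of $\iota_*$ with the base point change maps, and the identity $\mathsf{h}_{[\iota\comp\gamma]}(s_0)_*\alpha=(s_1)_*\alpha$ obtained from the lift $(z,t)\mapsto s_t(\alpha(z))$), combined with Lemma~\ref{lemma:genWhiteheadBasepoint} and the identical final diagram chase. The only differences are cosmetic (your explicit remark that the bracket lies in $\mathrm{im}\,(\iota_0)_*$, and your use of two labelled inclusions $\iota_0,\iota_1$ where the paper writes a single $\iota$).
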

\begin{proof}
Consider the inclusion map $\iota: F \hookrightarrow E$. Then, we have a commutative diagram 
\begin{equation}\label{prop:braceProductHomotopySection:1}
    \begin{tikzcd}
        \left[ (\Sigma Y, *), (F, e_0) \right] \arrow{r}{\iota_*} \arrow{d}[swap]{\mathsf{h}_{[\gamma]}} & \left[ (\Sigma Y, *), (E, s_0(b_0)) \right] \arrow{d}{\mathsf{h}_{[\iota \comp \gamma]}} \\
        \left[ (\Sigma Y, *), (F, e_1) \right] \arrow{r}[swap]{\iota_*} & \left[ (\Sigma Y, *), (E, s_1(b_0)) \right]
    \end{tikzcd}
\end{equation}
Also, for any $\alpha : (Z, *) \rightarrow (B, b_0)$, note that $(z,t) \mapsto s_t \comp \alpha(z)$ gives rise to a lift in Diagram~\eqref{eqn:basepointChangeMap}, and so we have $\mathsf{h}_{\iota \comp \gamma}\left( [s_0 \comp \alpha] \right) = [s_1 \comp \alpha]$. In particular, the diagram
\begin{equation}\label{prop:braceProductHomotopySection:2}
    \begin{tikzcd}
        \left[ (\Sigma X, *), (B, b_0) \right] \arrow{r}{(s_0)_*} \arrow{d}[swap]{\mathrm{Id}} & \left[ (\Sigma X, *), (E, e_0) \right] \arrow{d}{\mathsf{h}_{[\iota \comp \gamma]}} \\
        \left[ (\Sigma X, *), (B, b_0) \right] \arrow{r}[swap]{(s_1)_*} & \left[ (\Sigma X, *), (E, e_1) \right]
    \end{tikzcd}
\end{equation}
commutes as well. Hence, for $\alpha \in [(\Sigma X, *), (B,b_0)]$ and $\beta \in [(\Sigma Y, *), (F, e_0)]$, we have
\begin{alignat*}{3}
    \mathsf{h}_{[\gamma]} \left\{ \alpha, \beta \right\}_{s_0} &= \mathsf{h}_{[\gamma]} (\iota_*)^{-1}[(s_0)_* \alpha, \iota_* \beta] \\
    &= (\iota_*)^{-1} \mathsf{h}_{[\iota \comp \gamma]} [(s_0)_* \alpha, \iota_* \beta] \\
    &= (\iota_*)^{-1} [\mathsf{h}_{[\iota \comp \gamma]} (s_0)_* \alpha, \mathsf{h}_{[\iota \comp \gamma]} \iota_* \beta] &&\qquad (\text{by Diagram~\ref{lemma:genWhiteheadBasepoint}})\\
    &= (\iota_*)^{-1} [(s_1)_* \alpha, \iota_* \mathsf{h}_{[\gamma]}\beta] &&\qquad (\text{by Diagram~\eqref{prop:braceProductHomotopySection:1} and~\eqref{prop:braceProductHomotopySection:2}})\\
    &= \left\{ \alpha, \mathsf{h}_{[\gamma]}\beta \right\}_{s_1}
\end{alignat*}
This completes the proof.
\end{proof}

\subsection{Brace product of pullback and product fibrations}
In this section, we analyze how the generalized brace product behaves under the usual constructions on a fibration, such as pullback and product. The following lemma relates the generalized brace product of a fibration and its pullback fibration by a map.
\begin{lemma}\label{pullback_brace}
Suppose $F\overset{i}{\hookrightarrow} E\overset{p}{\to} B$ is a fibration with a (homotopy) section $s: B\to E$ and $f: \Tilde{B}\to B$ is a continuous map. Then 
\begin{align*}
        \{\alpha,\beta\}_{f^*s}=\{f_*(\alpha),\beta\}_s\text{ for } \alpha\in  [\Sigma X,  \Tilde{B}] , \beta \in [\Sigma Y, F],
\end{align*}
where $\{,\}_{f^*s}$ and $\{,\}_s$ denote the generalized brace product of the pullback fibration $f^*E$ and $E$ with respect to the pullback section $f^*s$ and the section $s$ respectively.
\end{lemma}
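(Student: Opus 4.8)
The strategy is to reduce everything to the defining equation \eqref{eq:genBraceDefinition} and the functoriality of the generalized Whitehead product (Remark~\ref{Gen_White_prop}(c)). First I would set up the pullback square, writing $\tilde{F} \overset{\tilde{\imath}}{\hookrightarrow} f^*E \overset{\tilde{p}}{\to} \tilde{B}$ for the pullback fibration, together with the canonical map $\hat{f}: f^*E \to E$ covering $f$, so that $p \comp \hat{f} = f \comp \tilde{p}$. The key structural observation is that pulling back identifies the fibres: $\hat{f}$ restricts to a homeomorphism (hence a based homotopy equivalence) $\tilde{F} = \tilde{p}^{-1}(\tilde{b}_0) \xrightarrow{\;\cong\;} F = p^{-1}(f(\tilde{b}_0))$, and under this identification the inclusion of the fibre satisfies $\hat{f} \comp \tilde{\imath} = i$ (after the obvious identification of $\tilde{F}$ with $F$, which I will suppress notationally as the statement implicitly does, since $\beta \in [\Sigma Y, F]$ is used for both fibrations). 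Moreover, by the very definition of the pullback section, $f^*s: \tilde{B} \to f^*E$ is the unique map with $\tilde{p} \comp (f^*s) = \mathrm{Id}_{\tilde{B}}$ and $\hat{f} \comp (f^*s) = s \comp f$.

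Next I would compute. Fix $\alpha \in [\Sigma X, \tilde{B}]$ and $\beta \in [\Sigma Y, F]$. Applying $\hat{f}_*$ to the defining relation $\tilde{\imath}_* \{\alpha, \beta\}_{f^*s} = [(f^*s)_* \alpha, \tilde{\imath}_* \beta]$ and using functoriality of the generalized Whitehead product (Remark~\ref{Gen_White_prop}(c)):
\[
i_* \{\alpha, \beta\}_{f^*s} \;=\; \hat{f}_* \tilde{\imath}_* \{\alpha, \beta\}_{f^*s} \;=\; \hat{f}_*[(f^*s)_* \alpha, \tilde{\imath}_* \beta] \;=\; \bigl[ \hat{f}_* (f^*s)_* \alpha,\; \hat{f}_* \tilde{\imath}_* \beta \bigr] \;=\; \bigl[ s_* f_* \alpha,\; i_* \beta \bigr] \;=\; i_* \{ f_*(\alpha), \beta \}_s,
\]
where the last equality is the defining relation \eqref{eq:genBraceDefinition} for the brace product of the original fibration. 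Since $i_*$ is injective on $[\Sigma(X \wedge Y), F]$ by \eqref{i_*IsMonic}, we may cancel it, yielding $\{\alpha, \beta\}_{f^*s} = \{f_*(\alpha), \beta\}_s$.

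The main obstacle—really the only place requiring genuine care rather than bookkeeping—is justifying the fibre identification $\hat{f}|_{\tilde{F}}: \tilde{F} \xrightarrow{\cong} F$ and the compatibility $\hat{f} \comp \tilde{\imath} = i$ at the level of \emph{based} spaces, so that the homotopy classes in $[\Sigma Y, F]$ and $[\Sigma(X\wedge Y), F]$ genuinely match up. For an honest fibration (rather than a fibre bundle) one takes the pullback in the sense of homotopy fibres; here $f^*E = \tilde{B} \times_B E$ and the fibre over $\tilde{b}_0$ is literally $\{\tilde{b}_0\} \times p^{-1}(f(\tilde{b}_0))$, so $\hat{f} = \mathrm{pr}_2$ restricts to the identity on $F$ and the compatibility is immediate; one only needs to fix base points consistently ($e_0 = s(f(\tilde{b}_0))$, $\tilde{e}_0 = (f^*s)(\tilde{b}_0)$), which is harmless by the base-point-independence established in Proposition~\ref{prop:genBraceBasepoint}. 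I would state this identification explicitly at the outset and then the computation above closes the argument with no further subtlety.
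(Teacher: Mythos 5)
Your proposal is correct and follows essentially the same route as the paper: the paper proves a general lemma for a map of fibrations compatible with sections and fibre inclusions (using functoriality of the generalized Whitehead product and injectivity of $i_*$), and the pullback case is the immediate specialization with $\Phi = \hat{f}$, $\phi = f$, and $\varphi$ the fibre identification—exactly the computation you wrote out. Your extra care about the based fibre identification and base points is a reasonable elaboration of what the paper leaves implicit.
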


The proof is immediate from the following more general case.
\begin{lemma}\label{lemma:mapOfFibrations}
    Suppose there are two fibrations with sections and a diagram
    \[\begin{tikzcd}
        F_1 \arrow{r}{i_1} \arrow{d}[swap]{\varphi} & E_1 \arrow{d}{\Phi} \arrow{r}[swap]{p_1} & B_1 \arrow[bend right=30]{l}[swap]{s_1} \arrow{d}{\phi} \\
        F_2 \arrow{r}[swap]{i_2} & E_2 \arrow{r}{p_2} & B_2 \arrow[bend left=30]{l}{s_2}
    \end{tikzcd}\]
    such that $\Phi \comp \left( s_1 \vee i_1 \right) \simeq \left( s_2 \vee i_2 \right) \comp \left( \phi \vee \varphi \right) = \left( s_2 \comp \phi \right) \vee \left( i_2 \comp \varphi \right)$. Then, for any $\alpha \in \left[ \Sigma X, B_1 \right], \beta \in \left[ \Sigma Y_1, F_1  \right]$ we have 
    \[\varphi_* \left\{ \alpha, \beta \right\}_{s_1} = \left\{ \phi_* \alpha, \varphi_* \beta \right\}_{s_2}.\]
\end{lemma}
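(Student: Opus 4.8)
The strategy is to chase the definition \eqref{eq:genBraceDefinition} of the generalized brace product through the hypothesized map of fibrations, using functoriality of the generalized Whitehead product (Remark~\ref{Gen_White_prop}(c)) and the injectivity of the fibre inclusion on homotopy classes \eqref{i_*IsMonic}. The key point is that the brace product $\{\alpha,\beta\}_{s_1}$ is characterized by the equation $(i_1)_*\{\alpha,\beta\}_{s_1} = [(s_1)_*\alpha, (i_1)_*\beta]$ in $[\Sigma(X\wedge Y), E_1]$, and similarly for the second fibration; so it suffices to verify that $(i_2)_*\varphi_*\{\alpha,\beta\}_{s_1}$ and $(i_2)_*\{\phi_*\alpha,\varphi_*\beta\}_{s_2}$ agree, and then invoke that $(i_2)_*$ is monic.

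Concretely, I would proceed as follows. First, apply $\Phi_*$ to the defining equation $(i_1)_*\{\alpha,\beta\}_{s_1} = [(s_1)_*\alpha, (i_1)_*\beta]$ and use functoriality of the generalized Whitehead product to get $\Phi_*[(s_1)_*\alpha, (i_1)_*\beta] = [(\Phi\comp s_1)_*\alpha, (\Phi\comp i_1)_*\beta]$. Second, feed in the hypothesis $\Phi\comp(s_1\vee i_1) \simeq (s_2\comp\phi)\vee(i_2\comp\varphi)$: since $\alpha$ is a map into $B_1$ (precisely, a representative of a class factoring through the first wedge summand) and $\beta$ into $F_1$, this gives $(\Phi\comp s_1)_*\alpha = (s_2\comp\phi)_*\alpha = (s_2)_*(\phi_*\alpha)$ and $(\Phi\comp i_1)_*\beta = (i_2)_*(\varphi_*\beta)$. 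Combining, $\Phi_*(i_1)_*\{\alpha,\beta\}_{s_1} = [(s_2)_*\phi_*\alpha, (i_2)_*\varphi_*\beta] = (i_2)_*\{\phi_*\alpha,\varphi_*\beta\}_{s_2}$, where the last equality is the defining equation for the second fibration. Third, on the left-hand side, commutativity $\Phi\comp i_1 \simeq i_2\comp\varphi$ (the left square, which follows from restricting the hypothesis to the $F_1$ summand) gives $\Phi_*(i_1)_* = (i_2)_*\varphi_*$, so the left-hand side equals $(i_2)_*\varphi_*\{\alpha,\beta\}_{s_1}$. Thus $(i_2)_*\varphi_*\{\alpha,\beta\}_{s_1} = (i_2)_*\{\phi_*\alpha,\varphi_*\beta\}_{s_2}$, and cancelling the monomorphism $(i_2)_*$ from \eqref{i_*IsMonic} yields the claim.

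The main obstacle is making precise the step where the hypothesis on the wedge map $s_1\vee i_1$ is applied to the individual classes $\alpha$ and $\beta$: one must be careful that a homotopy $\Phi\comp(s_1\vee i_1)\simeq (s_2\comp\phi)\vee(i_2\comp\varphi)$ of maps out of $B_1\vee F_1$ restricts, on precomposition with the inclusions of the two wedge summands and then with representatives of $\alpha,\beta$, to the required homotopies $\Phi\comp s_1\comp a \simeq s_2\comp\phi\comp a$ and $\Phi\comp i_1\comp b\simeq i_2\comp\varphi\comp b$. This is routine but is the only place any genuine verification is needed; everything else is formal manipulation with functoriality and the short exact sequence \eqref{i_*IsMonic}. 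Finally, Lemma~\ref{pullback_brace} follows by specializing to $\tilde B = B_1$, $B_2 = B$, $E_1 = f^*E$, $E_2 = E$, $F_1 = F_2 = F$ with $\varphi = \mathrm{Id}_F$, $\phi = f$, and $\Phi$ the canonical map $f^*E\to E$, $s_1 = f^*s$, $s_2 = s$, for which the compatibility hypothesis holds by construction of the pullback.
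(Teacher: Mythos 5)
Your proposal is correct and follows essentially the same argument as the paper: apply $(i_2)_*$ to $\varphi_*\{\alpha,\beta\}_{s_1}$, rewrite it as $\Phi_*(i_1)_*\{\alpha,\beta\}_{s_1}$ using the left square, push the defining Whitehead bracket through $\Phi_*$ by functoriality, substitute via the wedge hypothesis, and cancel the monomorphism $(i_2)_*$. The only difference is that you spell out the (routine) restriction of the wedge homotopy to the two summands, which the paper leaves implicit.
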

\begin{proof}
    We have
    \begin{align*}
        \left( i_2 \right)_* \varphi_* \left\{ \alpha, \beta \right\}_{s_1}
        &= \Phi_* \left( i_1 \right)_* \left\{ \alpha, \beta \right\}_{s_1} \\
        &= \Phi_* \left[ \left( s_1 \right)_* \alpha, \left( i_1 \right)_* \beta \right] \\
        &= \left[ \Phi_* \left( s_1 \right)_* \alpha, \Phi_* \left( i_1 \right)_* \beta \right] \\
        &= \left[ \left( s_2 \right)_* \phi_* \alpha, \left( i_2 \right)_* \varphi_* \beta \right] \\
        &= \left( i_2 \right)_* \left\{ \phi_* \alpha, \varphi_* \beta \right\}_{s_2}
    \end{align*}
    As $\left( i_2 \right)_*$ is monic, the proof follows.
\end{proof}

Let us now observe how the generalized brace product of the product of two fibrations depends on those of the individual fibrations.
\begin{lemma}
Suppose $F_i\overset{j_i}{\hookrightarrow} E_i\xrightarrow{p_i} B$ is a fibration with homotopy section $s_i$ for $i=1,2$. Then $F:=F_1\times F_2 \lhook\joinrel\xrightarrow{(j_1,j_2)} E:=E_1\times E_2\xrightarrow{(p_1,p_2)} B\times B$ is the product fibration with homotopy section $s:=(s_1,s_2)$. If $\Delta: B\to B\times B$ denotes the diagonal map, then the generalized brace product in the pullback bundle $\Delta^*(E)$ with respect to the pullback homotopy section $\Delta^*(s)$ is given as follows.
\begin{align*}
    \{\alpha,\beta\}_{\Delta^*(s)}=\left(\{\alpha, \beta_1\}_{s_1}, \{\alpha, \beta_2\}_{s_2}\right) \text{ for } \alpha\in  [\Sigma X,  B] , \beta=(\beta_1,\beta_2) \in [\Sigma Y, F].
\end{align*}
\end{lemma}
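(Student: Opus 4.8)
The plan is to reduce the product-fibration statement to the already-established functoriality result of Lemma~\ref{lemma:mapOfFibrations}, applied to the two projection maps of the product fibration, together with a characterization of brace products into a product fibre via their components. First I would observe that the two projections give maps of fibrations with sections
\[\begin{tikzcd}[column sep=1.5cm]
    F_1\times F_2 \arrow{r}{(j_1,j_2)} \arrow{d}[swap]{\mathrm{pr}_k^F} & E_1\times E_2 \arrow{d}{\mathrm{pr}_k^E} \arrow{r}{(p_1,p_2)} & B\times B \arrow[bend right=30]{l}[swap]{(s_1,s_2)} \arrow{d}{\mathrm{pr}_k^B} \\
    F_k \arrow{r}[swap]{j_k} & E_k \arrow{r}{p_k} & B \arrow[bend left=30]{l}{s_k}
\end{tikzcd}\]
for $k=1,2$, which commute strictly (no homotopy needed), so the hypothesis $\Phi\comp(s_1\vee i_1)\simeq(s_2\comp\phi)\vee(i_2\comp\varphi)$ of Lemma~\ref{lemma:mapOfFibrations} is satisfied on the nose. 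Applying that lemma to the product fibration $F\hookrightarrow E\to B\times B$ with section $s=(s_1,s_2)$ yields, for $\alpha'=(\alpha_1',\alpha_2')\in[\Sigma X,B\times B]$ and $\beta=(\beta_1,\beta_2)\in[\Sigma Y,F]$,
\[(\mathrm{pr}_k^F)_*\{\alpha',\beta\}_s=\{(\mathrm{pr}_k^B)_*\alpha',(\mathrm{pr}_k^F)_*\beta\}_{s_k}=\{\alpha_k',\beta_k\}_{s_k}.\]

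Next I would handle the pullback along the diagonal. By Lemma~\ref{pullback_brace}, pulling back the product fibration along $\Delta:B\to B\times B$ gives $\{\alpha,\beta\}_{\Delta^*(s)}=\{\Delta_*(\alpha),\beta\}_s$ for $\alpha\in[\Sigma X,B]$, where $\Delta_*(\alpha)=(\alpha,\alpha)\in[\Sigma X,B\times B]$ (since $\Delta=(\mathrm{Id}_B,\mathrm{Id}_B)$ and post-composition with a product map is computed componentwise). Combining this with the component computation above, taking $\alpha_k'=\alpha$ for both $k$, we get
\[(\mathrm{pr}_k^F)_*\{\alpha,\beta\}_{\Delta^*(s)}=(\mathrm{pr}_k^F)_*\{(\alpha,\alpha),\beta\}_s=\{\alpha,\beta_k\}_{s_k}\qquad(k=1,2).\]

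Finally, I would invoke the elementary fact that a homotopy class into a product $F_1\times F_2$ is determined by its two components — precisely, the canonical map $[\Sigma(X\wedge Y),F_1\times F_2]\to[\Sigma(X\wedge Y),F_1]\times[\Sigma(X\wedge Y),F_2]$ induced by the projections is a bijection (indeed a group isomorphism, since $\Sigma(X\wedge Y)$ is a suspension). Since the two sides of the claimed identity have the same image under both $(\mathrm{pr}_1^F)_*$ and $(\mathrm{pr}_2^F)_*$, they are equal, giving $\{\alpha,\beta\}_{\Delta^*(s)}=\bigl(\{\alpha,\beta_1\}_{s_1},\{\alpha,\beta_2\}_{s_2}\bigr)$ as desired. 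I do not anticipate a serious obstacle here: every ingredient is already in place, and the only points requiring a line of justification are that post-composition with projection maps is componentwise (immediate from the universal property of products) and that the inclusion $(j_1,j_2)=j_1\times j_2$ interacts correctly with the projections so that Lemma~\ref{lemma:mapOfFibrations}'s hypothesis holds strictly. If anything is delicate, it is purely bookkeeping about which maps are $\mathrm{pr}_k$ of what; the homotopy-theoretic content is entirely contained in the two lemmas already proved.
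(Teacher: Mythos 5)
Your proposal is correct and follows essentially the same route as the paper: both proofs first invoke Lemma~\ref{pullback_brace} to reduce $\{\alpha,\beta\}_{\Delta^*(s)}$ to $\{(\alpha,\alpha),(\beta_1,\beta_2)\}_s$ in the product fibration, and then identify the components. The only (cosmetic) difference is in the second step: the paper pushes forward along the monic $(j_1,j_2)_*$ and uses that the generalized Whitehead product in $E_1\times E_2$ is computed componentwise, whereas you push forward along the projections via Lemma~\ref{lemma:mapOfFibrations} and use that a class in $[\Sigma(X\wedge Y),F_1\times F_2]$ is determined by its two components — these are mirror images of the same bookkeeping and both are valid.
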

\begin{proof}
By Lemma~\ref{pullback_brace}, we have $\{\alpha,\beta\}_{\Delta^*(s)}=\{\Delta_*(\alpha),\beta\}_s=\{(\alpha,\alpha),(\beta_1,\beta_2)\}_s$. Now 
\begin{align*}
    (j_1,j_2)_*\left( \{(\alpha,\alpha),(\beta_1,\beta_2)\}_s \right)
    &= \left[ s_*(\alpha,\alpha), (j_1,j_2)_*(\beta_1,\beta_2)\right]\\
    &= \left[ ((s_1)_*(\alpha), (s_2)_*(\alpha)), ((j_1)_*(\beta_1), (j_2)_*(\beta_2))\right]\\
    &= \left( [(s_1)_*(\alpha), (j_1)_*(\beta_1)], [(s_2)_*(\alpha), (j_2)_*(\beta_2)]\right)\\
    &= \left( (j_1)_*\{\alpha,\beta_1\}_{s_1}, (j_2)_*\{\alpha,\beta_2\}_{s_2} \right)\\
    &= (j_1,j_2)_*\left( \{\alpha,\beta_1\}_{s_1}, \{\alpha,\beta_2\}_{s_2} \right)
\end{align*}
As $(j_1,j_2)_*$ is injective, the above equality implies 
\[\{(\alpha,\alpha),(\beta_1,\beta_2)\}_s = \left(\{\alpha, \beta_1\}_{s_1}, \{\alpha, \beta_2\}_{s_2}\right).\qedhere\]
\end{proof}

\subsection{Brace product in free loop space fibration}
Suppose $Z$ is a simple space, i.e., $\pi_1(Z)$ is abelian and its action on the higher homotopy groups of $Z$ is trivial. Let $\mathcal{L}^m_0 Z$ be the component of the constant map in $\mathcal{L}^mZ:=\textup{Map}(S^m, Z)$. Let $\Omega^m Z$ be the corresponding component in the space of based maps that contains the constant based map. Then the evaluation map $ev:\mathcal{L}_0^m Z\to Z$ at the base point of $S^m$ defines a (Hurewicz) fibration $\Omega^m Z\overset{i}{\hookrightarrow}  \mathcal{L}_0^m Z \xrightarrow{ev} Z$ with the canonical section $s$ of constant maps. We will refer to this as the $m^{\text{th}}$ free loop space fibration of $Z$.
Hansen \cite[Thm 2.1]{Hansen} proved a natural relation between the Whitehead product of $Z$ and the brace product in this fibration. Later, Yoon \cite[Thm 3.4]{Yoon-DecomposabilityOfEvaluationFibration} established a similar relation between the generalized Whitehead product and the generalized brace product in any general evaluation fibration. In the special case of the $m^{\text{th}}$ free loop space fibration, we reestablish the relation in the following explicit form.

{\begin{thm}[{\cite[Thm 3.4]{Yoon-DecomposabilityOfEvaluationFibration}}]\label{brace_whitehead_in_free_loop}
    Suppose $f\in [\Sigma X, Z]$, $g\in [\Sigma Y,\Omega^m Z]$. Then, in the $m^\text{th}$ free loop space fibration as above, we have
    $$\{f,g\}_s =\ad^m [f,\ad^{-m}(g)],$$
    where
    \[\begin{tikzcd}
        \left[ \Sigma^k U, V \right] \arrow[yshift=3pt]{r}{\ad^k} & \left[ U, \Omega^k V \right] \arrow[yshift=-3pt]{l}{\ad^{-k}}
    \end{tikzcd}\]
    denote the $\Sigma$-$\Omega$ adjoint isomorphisms repeated $k$-times for arbitrary spaces $U, V$.
\end{thm}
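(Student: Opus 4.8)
The plan is to verify the claimed identity after applying the monomorphism $i_*$ from \eqref{i_*IsMonic}, i.e.\ to establish
\[
[\,s_*f,\ i_*g\,]\;=\;i_*\,\ad^m\!\big[f,\ \ad^{-m}(g)\big]\qquad\text{in }\ [\Sigma(X\wedge Y),\,\mathcal{L}^m_0 Z],
\]
and to do so by transposing both sides through the exponential law. Identifying $\mathcal{L}^m_0 Z$ with a component of $\textup{Map}(S^m,Z)$, the exponential adjunction gives a natural bijection $[\Sigma(X\wedge Y),\,\mathcal{L}^m_0 Z]\cong[\,\Sigma(X\wedge Y)\ltimes S^m,\ Z\,]$, where $A\ltimes S^m:=(A\times S^m)/(\ast\times S^m)$; under it the inclusion $i\colon\Omega^m Z\hookrightarrow\mathcal{L}^m_0 Z$ corresponds to precomposition with the quotient $q\colon \Sigma(X\wedge Y)\ltimes S^m\twoheadrightarrow\Sigma(X\wedge Y)\wedge S^m$, while the bijection $[\Sigma(X\wedge Y),\Omega^m Z]\cong[\Sigma(X\wedge Y)\wedge S^m,Z]$ is exactly $\ad^{-m}$ (after the canonical homeomorphism $\Sigma(X\wedge Y)\wedge S^m\cong\Sigma^{m+1}(X\wedge Y)$). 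So the displayed equality becomes the assertion that the transpose of $[s_*f,i_*g]$ equals $q^*\big[f,\ad^{-m}(g)\big]$. Granting this, the transpose of $[s_*f,i_*g]$ agrees with that of $i_*\ad^m[f,\ad^{-m}(g)]$, whence $[s_*f,i_*g]=i_*\ad^m[f,\ad^{-m}(g)]$, and injectivity of $i_*$ yields $\{f,g\}_s=\ad^m[f,\ad^{-m}(g)]$.

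For the transpose of $[s_*f,i_*g]$ I would use Arkowitz's model from Remark~\ref{Gen_White_prop}(a): $[s_*f,i_*g]=\nabla\comp\big((s\comp f)\vee(i\comp g)\big)\comp\omega_{X,Y}$, with $\omega_{X,Y}\colon\Sigma(X\wedge Y)\to\Sigma X\vee\Sigma Y$ the universal Whitehead product map and $\nabla$ the fold map of $\mathcal{L}^m_0 Z$. Since the adjunction is natural, transposing this yields $\nabla_Z\comp\big(f\vee\ad^{-m}(g)\big)\comp r\comp\big(\omega_{X,Y}\ltimes\mathrm{id}_{S^m}\big)$, where one uses that the transpose of $s\comp f$ is $f\comp\mathrm{pr}_{\Sigma X}$ (the section consists of \emph{constant} maps, so the $S^m$-coordinate is ignored) and that the transpose of $i\comp g$, namely $(\upsilon,w)\mapsto g(\upsilon)(w)$, factors through the quotient $\Sigma Y\ltimes S^m\to\Sigma Y\wedge S^m$ — because $g$ takes values in \emph{based} maps $\Omega^m Z$ — with induced map $\Sigma Y\wedge S^m\to Z$ precisely $\ad^{-m}(g)$; here $r\colon(\Sigma X\vee\Sigma Y)\ltimes S^m\cong(\Sigma X\ltimes S^m)\vee(\Sigma Y\ltimes S^m)\to\Sigma X\vee\Sigma(Y\wedge S^m)$ is the map that projects off the $S^m$-factor on the first summand and collapses to the smash on the second. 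On the other hand $q^*\big[f,\ad^{-m}(g)\big]=\nabla_Z\comp\big(f\vee\ad^{-m}(g)\big)\comp\omega_{X,\,Y\wedge S^m}\comp q$.

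Comparing the two, the theorem comes down to the purely formal identity
\[
r\comp\big(\omega_{X,Y}\ltimes\mathrm{id}_{S^m}\big)\ \simeq\ \omega_{X,\,Y\wedge S^m}\comp q\ \colon\ \Sigma(X\wedge Y)\ltimes S^m\longrightarrow\Sigma X\vee\Sigma(Y\wedge S^m),
\]
a statement about universal Whitehead product maps alone, with no reference to $Z$ or the fibration; it involves only the reassociation $\Sigma(X\wedge Y)\wedge S^m\cong\Sigma\big(X\wedge(Y\wedge S^m)\big)$ and the splitting $A\ltimes S^m\simeq A\vee(A\wedge S^m)$ for $A$ a suspension, and can be verified from the characterization of $\omega$ up to homotopy — for instance by naturality of the generalized Whitehead product applied to the pinch $\Sigma Y\times S^m\to\Sigma Y\wedge S^m$, or cellularly using that $S^m$ is itself a suspension. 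I expect this identity, together with the bookkeeping of the signs introduced by permuting the suspension coordinates of $\Sigma^{m+1}(X\wedge Y)$, to be the main obstacle; everything else is formal manipulation of adjunctions. Lastly, all the base-point ambiguities coming from the section of constant maps are harmless by Propositions~\ref{prop:genBraceBasepoint} and~\ref{prop:braceProductHomotopySection}. (Alternatively, the statement is the case $A=S^m$ of Yoon's \cite[Thm~3.4]{Yoon-DecomposabilityOfEvaluationFibration}, once the adjunction $[\Sigma Y,\textup{Map}_\bullet(S^m,Z)]\cong[\Sigma(Y\wedge S^m),Z]$ appearing there is rewritten as the iterated $\Sigma$–$\Omega$ adjoint $\ad^m$.)
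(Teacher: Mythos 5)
Your proposal is correct in outline but takes a genuinely different route from the paper. The paper's proof is a hands-on loop-level computation: it represents $i_*\{f,g\}_s$ by the map $\ad^{-1}\bigl(S\comp(\Omega s\wedge\Omega i)\comp(\ad f\wedge\ad g)\bigr)$ from Proposition~\ref{map_inducing_brace}, writes out the resulting commutator loop in $\mathcal{L}^m_0 Z$ explicitly, evaluates it at a point $\theta\in S^m$ (using that $s$ lands in constant maps), and recognizes the evaluated loop as the commutator $C'\bigl(\ad f(x),\ad(\ad^{-m}g(y\wedge\theta))\bigr)$ in $\Omega Z$, which represents $[f,\ad^{-m}g]$; monicity of $i_*$ then finishes the argument, with the two implicit swaps $S^1\wedge S^m\leftrightarrow S^m\wedge S^1$ cancelling. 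You instead transpose everything through the half-smash adjunction $[A,\mathcal{L}^m_0Z]\cong[A\ltimes S^m,Z]$ and Arkowitz's $\omega$-model, correctly identifying $i_*$ with $q^*$ and the transposes of $s\comp f$ and $i\comp g$ with $f\comp\mathrm{pr}$ and $\ad^{-m}(g)\comp(\text{quotient})$, thereby reducing the theorem to the fibration-independent identity $r\comp(\omega_{X,Y}\ltimes\mathrm{id}_{S^m})\simeq\omega_{X,\,Y\wedge S^m}\comp q$. That identity is true, and in fact is immediate from the join model $\Sigma(X\wedge Y)\simeq CX\times Y\cup X\times CY$ with $\omega([x,t],y)=[x,t]$, $\omega(x,[y,t])=[y,t]$ (the same model the paper uses in \S\ref{GenBraceOfaFibOverSuspension}): both sides send $(([x,t],y),w)\mapsto[x,t]$ and $((x,[y,t]),w)\mapsto[y,t]\wedge w$ on the nose, given compatible choices of the reassociation homeomorphisms. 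So the only incomplete step in your write-up — you "expect" rather than verify the key identity — closes easily, and the sign bookkeeping you flag is the same cancellation of coordinate swaps the paper records at \eqref{eq:freeLoop:prime} and \eqref{eq:freeLoop:daggerPrime}. What your route buys is a cleaner separation of the formal adjunction manipulations from the one genuinely geometric input, and it tracks Yoon's original evaluation-fibration argument more closely; what the paper's route buys is a self-contained computation that never leaves the category of based loops and needs no half-smash bookkeeping.
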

\begin{proof}
    The element $ i_* \left\{ f, g \right\}_s$ is represented by the map
    \[\ad^{-1}\left( S \comp \left( \Omega s \wedge \Omega i \right) \comp \left( \ad f \wedge \ad g \right) \right) = \ad^{-1} \left( S \comp \left( \left( \Omega s \comp \ad f \right) \wedge \left( \Omega i \comp \ad g \right) \right) \right).\]
    Here $S : \Omega\left( \mathcal{L}^m_0 Z\right) \wedge \Omega\left(\mathcal{L}^m_0 Z \right) \rightarrow \Omega\mathcal{L}^m_0 Z$ is a Samelson map satisfying $S \comp q \simeq C$, $C$ is the commutator and $q$ is the quotient map to the smash product.
    Similarly, $\ad^m[f,\ad^{-m}(g)]$ is represented by the map
    $$\ad^m\left(\ad^{-1}\left(S'\comp (\ad(f)\wedge \ad(\ad^{-m}(g)))\right)\right),$$
    where $S'$ is a Samelson map induced from the commutator map $C'$ on $\Omega Z\times \Omega Z$.
    Then, for given $(x \wedge y )\in X \wedge Y$, this map gives a loop in $\mathcal{L}^m_0 Z$ via
    \begin{align*}
        & t \mapsto \left( S \comp \left( \left( \Omega s \comp \ad f \right) \wedge \left( \Omega i \comp \ad g \right) \right) \left( x \wedge y \right) \right) (t) \\
        \simeq \ & t \mapsto \left( C \left( \Omega s \comp \ad f (x), \Omega i \comp \ad g(y) \right) \right)(t) \\
        =\ & \left( t \mapsto \left( \Omega s \comp \ad f(x) \right)(t) \right) \cdot \left( t \mapsto \left( \Omega i \comp \ad g(y) \right)(t) \right) \\
        &\qquad \cdot \left( t \mapsto \left( \Omega s \comp \ad f(x) \right)(\bar{t}) \right) \cdot \left( t \mapsto \left( \Omega i \comp \ad g(y) \right)(\bar{t}) \right) \\
        =\ & \left( t \mapsto s \comp f(x \wedge t) \right) \cdot \left( t \mapsto i \comp g(y \wedge t)\right) \cdot \left( t \mapsto s \comp f\left( x \wedge \bar{t} \right) \right) \cdot \left( t \mapsto i\comp g(y \wedge \bar{t})\right),
    \end{align*}
    where $\bar{t}$ denotes the reversed parameter in the suspension. Next, evaluating at $\theta \in S^m$, and noting that $s(z)$ is the constant map $\theta \mapsto z$ for any $z \in Z$, we have a loop in $Z$ given by
    \begin{align*}
        &\left( t \mapsto f(x \wedge t) \right) \cdot \left( t \mapsto  g(y \wedge t)(\theta) \right) \cdot \left( t \mapsto f(x \wedge \bar{t}) \right) \cdot \left( t \mapsto g(y \wedge \bar{t})(\theta) \right) \\
        =\ & \left( t \mapsto f(x \wedge t) \right) \cdot \left( t \mapsto  \ad^{-m} g(y \wedge t \wedge \theta) \right) \\
        &\qquad \cdot \left( t \mapsto f(x \wedge \bar{t}) \right) \cdot \left( t \mapsto \ad^{-m} g(y \wedge \bar{t} \wedge \theta) \right) \\
        =\ & \left( t \mapsto f(x \wedge t) \right) \cdot \left( t \mapsto  \ad^{-m} g(y \wedge \theta \wedge t) \right) \\
        &\qquad \cdot \left( t \mapsto f(x \wedge \bar{t}) \right) \cdot \left( t \mapsto \ad^{-m} g(y \wedge \theta \wedge \bar{t}) \right) \tag{$\dagger$}\label{eq:freeLoop:prime}\\
        =\ & \left( t \mapsto \ad f(x) (t) \right) \cdot \left( t \mapsto \ad \left( \ad^{-m} g(y \wedge \theta) \right)(t) \right) \\
        &\qquad \cdot \left( t \mapsto \ad f(x)(\bar{t}) \right) \cdot \left( t \mapsto \ad \left( \ad^{-m} g ( y \wedge \theta)\right) (\bar{t})\right) \\
        =\ &  t \mapsto C' \left( \ad f(x), \ad \left( \ad^{-m} g (y \wedge \theta) \right) \right)(t) \\
        \simeq \ & t \mapsto \left( \left( S' \comp \left( \ad f \wedge \ad\left( \ad^{-m} g \right) \right) \right)(x \wedge y \wedge \theta) \right) (t) \\
        =\ &   t \mapsto \ad^{-1}\left( S' \comp \left( \ad f \wedge \ad\left( \ad^{-m} g \right) \right) \right) (x \wedge y \wedge \theta \wedge t) \\
        =\ &   t \mapsto \ad^{-1}\left( S' \comp \left( \ad f \wedge \ad\left( \ad^{-m} g \right) \right) \right) (x \wedge y \wedge t \wedge \theta) \tag{$\dagger^\prime$}\label{eq:freeLoop:daggerPrime} \\
        =\ &   t \mapsto \ad^m \Big( \ad^{-1} \left( S' \comp \left( \ad f \wedge \ad \left( \ad^{-m} g \right) \right) (x \wedge y \wedge t) \right) \Big) (\theta).
    \end{align*}
    Note that we have implicitly used the homeomorphism $S^1 \wedge S^m \rightarrow S^m \wedge S^1$ and its inverse, respectively, in \eqref{eq:freeLoop:prime} and \eqref{eq:freeLoop:daggerPrime}, and thus, we do not need to pre-compose by this map in the homotopy group. Now, the last function is a loop in $\Omega^m Z$. Since $i_*$ is monic (see \eqref{i_*IsMonic}), we conclude that $\left\{ f, g \right\}_s$ is represented by the map 
    \[\ad^m \Big( \ad^{-1} \left( S \comp \left( \ad f \wedge \ad \left( \ad^{-m} g \right) \right) \right) \Big).\]
    But this map also represents $\ad^m \left( [f, \ad^{-m} g] \right)$, which completes the proof.
\end{proof}}

\section{Vanishing of the Brace Product}\label{sec:vanishingOfBrace}
In this section, we discuss the various consequences and equivalent criteria for the vanishing of both the James brace product and the generalized brace product.

\subsection{\texorpdfstring{$H$}{H}-splitting of the loop space of the total space}
\begin{thm}\label{generalized_brace-splitting}
    Suppose $F\overset{i}{\hookrightarrow} E\xrightarrow{p} B$ is a fibration with homotopy section $s:B\to E$. Then the generalized brace product $\{,\}_s$ vanishes if and only if the map $\varphi_s=(\Omega s\cdot\Omega i): \Omega B\times \Omega F\to \Omega E$ is an $H$-splitting.
\end{thm}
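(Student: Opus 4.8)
The plan is to prove the equivalence by exploiting the space-level map $J_s : \Omega B \wedge \Omega F \to \Omega F$ from Proposition~\ref{map_inducing_brace}, which universally induces the generalized brace product (Remark~\ref{remark:UniversalBrace}). First I would recall that $\varphi_s = \Omega s \cdot \Omega i$ is always a homotopy equivalence by \cite{Eckmann-Hilton}, so the only content is whether it can be chosen (or always is) an $H$-map. The natural strategy is to measure the failure of $\varphi_s$ to be an $H$-map by a single obstruction map into $\Omega E$, show this obstruction factors through the smash $\Omega B \wedge \Omega F$, and then identify it with $\Omega i \comp J_s$ (or equivalently with the universal brace product $\{f_0, g_0\}_s$). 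The assumption of CW complexes, well-pointedness, and the product/diagonal compatibilities established in \S\ref{sec:genBraceProductProperties} let all these comparisons be done up to homotopy.

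The key steps, in order, are as follows. (1) Set up the two composites $\Omega E \times \Omega E \supset (\Omega B \times \Omega F) \times (\Omega B \times \Omega F) \to \Omega E$: namely $\mu_{\Omega E} \comp (\varphi_s \times \varphi_s)$ and $\varphi_s \comp \mu_{\Omega B \times \Omega F}$, where the $\mu$'s are loop concatenations (so $\mu_{\Omega B \times \Omega F}$ is the coordinatewise concatenation). Since $\varphi_s(b, f) = \Omega s(b) \cdot \Omega i(f)$, expand both and compare: using that $\Omega B$ and $\Omega F$ are $H$-spaces and commuting $\Omega i(f)$ past $\Omega s(b')$ costs exactly a commutator, one sees the ``difference'' of the two composites is governed by the commutator map $C$ on $\Omega E$ restricted to the image of $\Omega s \times \Omega i$, i.e.\ by $C \comp (\Omega s \times \Omega i) \simeq S \comp (\Omega s \wedge \Omega i) \comp q'$ in the notation of Proposition~\ref{map_inducing_brace}. (2) Observe this obstruction map $(\Omega B \times \Omega F)^{\times 2} \to \Omega E$ is trivial whenever one of the two $\Omega B$-coordinates or one of the two $\Omega F$-coordinates is the basepoint, so by repeated application of Lemma~\ref{cofib_seq} it factors (up to homotopy) through a smash factor, which after untangling is $\Omega B \wedge \Omega F$, and the resulting map is precisely $\Omega i \comp J_s$. (3) Conclude: if $\{,\}_s$ vanishes, then by the universality statement (Remark~\ref{remark:UniversalBrace}) the universal brace $\{f_0,g_0\}_s$ is null, hence $J_s = \ad\{f_0,g_0\}_s \simeq \ast$, hence the obstruction vanishes and $\varphi_s$ is an $H$-map, i.e.\ an $H$-splitting. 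Conversely, if $\varphi_s$ is an $H$-splitting then the obstruction $\Omega i \comp J_s$ is null; since $\Omega i = (i_*)$ is injective on homotopy classes (by \eqref{i_*IsMonic}, applied to $[\Omega B \wedge \Omega F, -]$ as in the proof of Proposition~\ref{map_inducing_brace}), $J_s \simeq \ast$, so $\{f,g\}_s = \ad^{-1}(J_s \comp (\ad f \wedge \ad g))$ vanishes for all $f, g$.

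There is a subtlety I would address carefully: ``$H$-splitting'' a priori allows choosing a different $H$-structure or only asks that $\varphi_s$ respect the $H$-structures up to homotopy, and one must be sure the obstruction-theoretic computation is insensitive to these choices — the standard move is to work with the honest (Moore-path or strictly associative) loop concatenations on $\Omega B, \Omega F, \Omega E$ and note that any two $H$-structures on a loop space are homotopic, so the single obstruction $\Omega i \comp J_s$ captures the genuine obstruction to $\varphi_s$ being an $H$-map; homotopy-commutativity of $\Omega E$ is not assumed and is exactly what is being tested. The second technical point is step (2): extracting the factorization through $\Omega B \wedge \Omega F$ requires that the obstruction map kill not just the wedge $(\Omega B \times \Omega F) \vee (\Omega B \times \Omega F)$ but the appropriate ``fat wedge'' so that it descends through the quotient $(\Omega B \times \Omega F)^{\times 2} \to \Omega B \wedge \Omega F$ — here one uses that each of the four ``partial'' maps (fixing a coordinate to the basepoint) is null, together with iterated use of Lemma~\ref{cofib_seq}, exactly as in the derivation of $J_s$.

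I expect the main obstacle to be step (1)/(2): cleanly and honestly identifying the single obstruction map measuring the failure of $\varphi_s$ to be an $H$-map, proving it factors through $\Omega B \wedge \Omega F$, and matching it on the nose with $\Omega i \comp J_s$ rather than merely up to some sign or coordinate swap. Once that identification is in place, both implications follow formally from the universality of $J_s$ and the injectivity of $\Omega i$.
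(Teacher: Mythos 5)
Your proposal is correct and follows essentially the same route as the paper: both reduce the vanishing of $\{,\}_s$ to the null-homotopy of $J_s$ via Proposition~\ref{map_inducing_brace}, translate this through the injectivity of $(\Omega i)_*$ and Lemma~\ref{cofib_seq} into the null-homotopy of the commutator $C \comp (\Omega s \times \Omega i)$, i.e.\ $\Omega s\cdot \Omega i \simeq \Omega i \cdot \Omega s$, and identify this last condition with $\varphi_s$ being an $H$-map. Your extra care in steps (1)--(2) about the fat wedge and coordinate ordering only elaborates what the paper asserts in one line, so the arguments coincide in substance.
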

\begin{proof}
\sloppy
    We know from Proposition~\ref{map_inducing_brace} that $\{,\}_s$ vanishes if and only if $J_s:\Omega B\wedge \Omega F\to \Omega F$ is null-homotopic, since $J_s = \ad \left\{ \ad^{-1} \mathrm{Id}_{\Omega B}, \ad^{-1} \mathrm{Id}_{\Omega F} \right\}_s$. As $(\Omega i)_*$ is monic, $J_s$ is null-homotopic if and only if $(\Omega i)_*(J_s)=S\comp (\Omega s\wedge \Omega i)$ is null-homotopic. We have the following diagram
    \begin{equation*}
        \begin{tikzcd}
            \Omega B\times \Omega F\ar[rr, "q'"] \ar[d, "\Omega s\times \Omega i"] & &\Omega B\wedge \Omega F\ar[d, "\Omega s\wedge \Omega i"]\\
            \Omega E\times \Omega E \ar[urr, phantom, "\scalebox{1.7}{$\circlearrowright$}" description] \ar[rr, "q"] \ar[dr, "C", swap] & &\Omega E\wedge \Omega E\ar[dl, "S"]\\
            \ar[urr, phantom, "\cca{h}"{xshift=1em, yshift=6pt}] &\Omega E &
        \end{tikzcd}
    \end{equation*}
    In the above diagram, the square commutes (denoted by \scalebox{1.7}{$\circlearrowright$}), but the triangle commutes up to homotopy (denoted by $\cca{h}$). It then follows from the diagram and by an application of Lemma~\ref{cofib_seq}, that $S\comp (\Omega s\wedge \Omega i)$ is null homotopic if and only if $C\comp (\Omega s\times  \Omega i)$ is null-homotopic, or equivalently, $\Omega s\cdot\Omega i\simeq \Omega i\cdot\Omega s$. On the other hand, $\varphi_s$ is an $H$-map if and only if $\Omega s \cdot \Omega i\simeq \Omega i \cdot \Omega s$. Hence, $\varphi_s$ is an $H$-splitting if and only if $\{,\}_s$ vanishes.
\end{proof} 

Using Example 4.3 of Porter \cite{Porter}, we obtain a fibration, where the James brace product vanishes identically, but the generalized brace product does not.
\begin{eg}\label{eg:brace_vanish_but_not_genaralized}
Consider the fundamental class $\iota\in H^{6n}(K(\Z, 6n); \Z_3)\cong \text{Hom}(\Z, \Z_3)$ (i.e., $\iota$ corresponds to the quotient homomorphism from $\Z$ to $\Z_3$) and $\mathcal{P}^1_3(\iota)\in H^{6n+4}(K(\Z, 6n); \Z_3)$, where $\mathcal{P}^i_p:H^n(X;\Z_p)\to H^{n+2i(p-1)}(X;\Z_p)$ denotes the reduced $p^{\text{th}}$ power operation. Porter \cite[Def. 3.12, Lemma 3.13]{Porter} claims that $\iota, \mathcal{P}^1_3(\iota)$ are significant, i.e., there exists $\Tilde{\alpha}: \Sigma A\to K(\Z,6n)$ and $\Tilde{\beta}: \Sigma B\to K(\Z, 6n)$ for some $A, B$ with finitely generated homology such that $\Tilde{\alpha}^*(\iota)\neq 0$ and $\Tilde{\beta}^*(\mathcal{P}^1_3(\iota))\neq 0$. In particular, one can take $\Tilde{\alpha}:\Sigma S^{6n-1}\to K(\Z, 6n)$ as the generator of $\pi_{6n}(K(\Z,6n))$. Let $E$ be the total space of the pullback fibration of the path space fibration of $K(\Z_3, 12n+4)$ by 
$$\zeta:=\iota\smile \mathcal{P}^1_3(\iota)\in H^{12n+4}(K(\Z,6n);\Z_3)\cong \text{Hom}\left(K(\Z,6n), K(\Z_3,12n+4)\right).$$ 
Suppose $\alpha:\Sigma S^{6n-1}\to E$ and $\beta:\Sigma B\to E$ are lifts of $\Tilde{\alpha}$ and $\Tilde{\beta}$, respectively. Porter \cite[Eg. 4.3]{Porter} showed that the generalized Whitehead product, $[\beta,\alpha]\neq 0$; but as $E$ has only two non-zero homotopy groups in dimension $6n$ and $12n+3$ (follows from the long exact sequence), based on the dimensions of possible non-zero spherical Whitehead products it follows that all spherical products are zero in $E$. Now consider the free loop space fibration $\Omega E\hookrightarrow \mathcal{L}E \xrightarrow{ev} E$ with constant section $s$. Then Theorem~\ref{brace_whitehead_in_free_loop} for $m=1$ implies that the James brace product $\{,\}_s\equiv 0$, but the generalized brace product $\{\beta,\ad(\alpha)\}_s = \ad([\beta,\alpha])\neq 0$.
\end{eg}

\begin{remark}\label{remark:Kallel-Sjerve}
    Lemma 2.12 of Kallel-Sjerve \cite{Kallel-Sjerve} states that 
    \begin{quote}
        For a fibration $F\to E\to B$ with section $s$, if the James brace product vanishes identically, then $\varphi_s$ is an $H$-space splitting.
    \end{quote}
    In light of Theorem \ref{generalized_brace-splitting}, Example \ref{eg:brace_vanish_but_not_genaralized} provides an explicit counter-example to the above lemma. Although this lemma was included there as a general discussion on the properties of the brace product, it was not used anywhere in the paper.
    In the MathSciNet review of Liu's article \cite{Liu}, the first author, Sadok Kallel, remarked that this lemma is not true for arbitrary fibrations, but it is true when both the fibre and the base spaces are wedges of spheres. We observe the same in Corollary~\ref{base_fibre_product-wedge_sphere}.
\end{remark}

\subsection{The Samelson Product on the Loop Space of the Total Space}
We now derive an equivalent condition for the vanishing of the brace product in terms of the Samelson product.
\begin{thm}\label{phi_alg_map}
    Suppose $F\overset{i}{\hookrightarrow} E\overset{p}{\to} B$ is a fibration with a homotopy section $s:B\to E$. Then the James brace product $\{,\}_s$ vanishes for the given fibration if and only if the map $(\varphi_{s})_*:\pi_*(\Omega B\times\Omega F)\to \pi_*(\Omega E)$ is an algebra isomorphism with algebra structure given by the Samelson product.
\end{thm}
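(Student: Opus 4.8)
The plan is to exploit that, by the Eckmann--Hilton result quoted in the introduction, $\varphi_s = \Omega s \cdot \Omega i : \Omega B \times \Omega F \to \Omega E$ is \emph{always} a homotopy equivalence, so $(\varphi_s)_*$ is automatically a graded group isomorphism; the entire content is therefore whether $(\varphi_s)_*$ respects the Samelson products. The source here is an honest loop space, $\Omega B \times \Omega F \cong \Omega(B\times F)$, with loop multiplication performed componentwise. First I would check that under the splitting $\pi_*(\Omega B \times \Omega F) \cong \pi_*(\Omega B) \oplus \pi_*(\Omega F)$ the Samelson product is ``block diagonal'', $\langle(a,b),(a',b')\rangle = (\langle a,a'\rangle,\langle b,b'\rangle)$, the mixed terms $\langle(a,0),(0,b')\rangle$ vanishing because the commutator map of a direct product of ($H$-)groups is constant on $G_1\times\{e\}$ paired against $\{e\}\times G_2$, combined with the bilinearity of the Samelson product.

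Next I would note that the restrictions of $(\varphi_s)_*$ to the two summands are precisely $(\Omega s)_*$ and $(\Omega i)_*$. As $\Omega s$ and $\Omega i$ are loop maps, they are $H$-maps and hence preserve Samelson products, so $(\varphi_s)_*$ already intertwines the Samelson products on each summand. Together with the block-diagonal structure above, this shows that $(\varphi_s)_*$ is an algebra homomorphism if and only if it annihilates the mixed terms, i.e. if and only if
\[
\langle (\Omega s)_*a,\ (\Omega i)_* b\rangle = 0 \quad\text{in } \pi_*(\Omega E)\ \text{ for all } a\in\pi_*(\Omega B),\ b\in\pi_*(\Omega F),
\]
the point being that the left side of the multiplicativity identity on such a pair is $(\varphi_s)_*\langle(a,0),(0,b)\rangle=(\varphi_s)_*(0)=0$.

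Finally I would translate this mixed-term vanishing into the James brace product via the $\Sigma$-$\Omega$ adjunction $\ad:\pi_n(\Omega E)\xrightarrow{\cong}\pi_{n+1}(E)$ recalled in \S\ref{sec:background}, which is natural ($\ad\circ(\Omega s)_*=s_*\circ\ad$ and $\ad\circ(\Omega i)_*=i_*\circ\ad$) and carries the Samelson product to the Whitehead product up to sign. Hence $\langle(\Omega s)_*a,(\Omega i)_*b\rangle=0$ exactly when $[s_*(\ad a),i_*(\ad b)]=0$, which by the definition of the brace product is $i_*\{\ad a,\ad b\}_s$; since $i_*$ is monic (see \eqref{i_*IsMonic}), this is equivalent to $\{\ad a,\ad b\}_s=0$. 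As $a,b$ range over all homotopy classes their adjoints range over the corresponding homotopy groups of $B$ and $F$, so $(\varphi_s)_*$ is an algebra isomorphism precisely when the James brace product $\{,\}_s$ vanishes identically. (Alternatively, the mixed-term identity can be read off Proposition~\ref{map_inducing_brace}: $\langle(\Omega s)_*a,(\Omega i)_*b\rangle=(\Omega i)_*\big(J_s\comp(a\wedge b)\big)$, so the mixed terms vanish iff $J_s$ kills all products of spherical classes, which by Remark~\ref{remark:UniversalBrace} is exactly the identical vanishing of the James brace product.)

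I expect the one genuinely delicate step to be the first: pinning down that the Samelson product used on $\Omega B\times\Omega F$ is the one induced by componentwise loop multiplication and showing it has no mixed terms. Once that is in place, the remainder reduces to naturality of $\ad$, the sign-compatibility of the Samelson and Whitehead products, and the monicity of $i_*$ from \eqref{i_*IsMonic}, all of which is routine.
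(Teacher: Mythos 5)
Your overall strategy is the same as the paper's: both proofs use that $(\varphi_s)_*$ is automatically a graded group isomorphism (Proposition~\ref{homotopy_equivalence}), reduce multiplicativity to the vanishing of the mixed terms $\langle (\Omega s)_*\alpha, (\Omega i)_*\beta\rangle$, i.e.\ to the homotopy $\Omega s(\alpha)\cdot\Omega i(\beta)\simeq \Omega i(\beta)\cdot\Omega s(\alpha)$, and identify that condition with the identical vanishing of $\{,\}_s$ via the adjunction between the Samelson and Whitehead products together with the monicity of $i_*$. The step you flag as delicate (block-diagonality of the Samelson product on $\Omega B\times\Omega F\cong\Omega(B\times F)$) is fine: the commutator map of a product is the product of the commutator maps, which is exactly how the paper uses $C_{B\times F}$ and $S_{B\times F}$ in Diagram~\eqref{diag:samelson2}.

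The genuine weak point is elsewhere, in the direction ``mixed terms vanish $\Rightarrow$ $(\varphi_s)_*$ multiplicative.'' There you must compare $(\varphi_s)_*\langle(a,b),(a',b')\rangle$ with $\bigl\langle (\Omega s)_*a+(\Omega i)_*b,\ (\Omega s)_*a'+(\Omega i)_*b'\bigr\rangle$, and your expansion of the right-hand side into four terms uses bi-additivity of the Samelson product. That bi-additivity fails when a factor lies in $\pi_0(\Omega -)=\pi_1(-)$: there the Samelson product is a genuine group commutator (resp.\ the $\pi_1$-action on higher $\pi_*$), and $\langle x+y,z\rangle\neq\langle x,z\rangle+\langle y,z\rangle$ in general. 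Since the theorem is stated without simple-connectivity hypotheses and the James brace product includes $\pi_1(B)\times\pi_j(F)$, these degrees are in scope. The conclusion still holds, but the correct justification is the one the paper gives: expand the commutator $C_E\bigl((\Omega s(\alpha_1)\cdot\Omega i(\beta_1)),(\Omega s(\alpha_2)\cdot\Omega i(\beta_2))\bigr)$ as an explicit word in loops and use the commutation homotopy \eqref{brace_vanish_implies} repeatedly to rearrange it into $C_E(\Omega s(\alpha_1),\Omega s(\alpha_2))\cdot C_E(\Omega i(\beta_1),\Omega i(\beta_2))$ --- a commutator-identity argument valid in all degrees, rather than an appeal to bilinearity. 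With that substitution your proof closes; the converse direction, which you obtain by testing on $(a,0)$ and $(0,b)$, needs no bilinearity and matches the paper's argument exactly.
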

\begin{proof}
    For any space $X$, suppose $q_X:\Omega X\times \Omega X\to \Omega X\wedge \Omega X$ is the quotient map, $C_X:\Omega X\times \Omega X\to \Omega X$ is the commutator map, and $S_X:\Omega X\wedge \Omega X\to \Omega X$ is the Samelson map. We know that $S_X\comp q_X\simeq C_X$.\\
    \indent First note that the James brace product $\{,\}_s$ vanishes identically if and only if the top row $\Omega i\comp J_s\comp (\alpha\wedge \beta)$ in Diagram \eqref{diag:samelson1} is null-homotopic for any $\alpha\in \pi_k(\Omega B)\cong \pi_{k+1}(B),\beta\in \pi_l(\Omega F)\cong \pi_{l+1}(F)$. Since $q:S^k\times S^l\to S^k\wedge S^l$ is homotopy monic (i.e., $q^*$ is injective, by Lemma~\ref{cofib_seq}), $\Omega i\comp J_s\comp (\alpha\wedge \beta)$ is null homotopic if and only if $\Omega i\comp J_s\comp (\alpha\wedge \beta)\comp q$ is null-homotopic. But from the homotopy commutativity (denoted by $\cca{h}$) of Diagram \eqref{diag:samelson1}, this is equivalent to the null homotopy of the bottom row $C_E\comp (\Omega s\times \Omega i)\comp (\alpha \times \beta)=\Omega s(\alpha)\cdot\Omega i(\beta)\cdot\Omega s(\Bar{\alpha})\cdot\Omega i(\Bar{\beta})$. 
    \begin{equation}\label{diag:samelson1}
        \begin{tikzcd}
            S^k\wedge S^l \ar[rr, "{\alpha\wedge\beta}"]& & \Omega B\wedge \Omega F\ar[r, "J_s"] \ar[d,"\Omega s\wedge \Omega i"] &\Omega F\ar[r, "\Omega i"] & \Omega E\ar[d, equal]\\
            & &\Omega E\wedge \Omega E\ar[rr,"S_E"] \arrow[ur, phantom, "\cca{h}"{xshift=3em, yshift=3pt} description]& & \Omega E\\
            S^k\times S^l \arrow[uurr, phantom, "\scalebox{1.7}{$\circlearrowright$}" description]\ar[uu,"q"]\ar[r, "\alpha\times \beta"] &\Omega B\times \Omega F \ar[r, "\Omega s\times \Omega i"] & \Omega E \times \Omega E \arrow[urr, phantom, "\cca{h}"{xshift=0pt, yshift=3pt} description] \ar[u, "q_{E}"]\ar[rr, "C_E"] & &\Omega E \ar[u, equal]
        \end{tikzcd}
    \end{equation}
        This means that the James brace product $\{,\}_s$ vanishes identically if and only if
        \begin{equation}\label{brace_vanish_implies}
            \Omega s(\alpha)\cdot\Omega i(\beta)\simeq \Omega i(\beta)\cdot\Omega s(\alpha)\text{ for any }\alpha\in \pi_k(\Omega B),\beta\in \pi_l(\Omega F), k,l\geq 0.
        \end{equation}
        On the other hand, $(\varphi_s)_*$ is always a module isomorphism (see Proposition~\ref{homotopy_equivalence}). Thus, $\varphi_s$ is an algebra isomorphism if and only if $\varphi_s$ preserves the Samelson product. But, $\varphi_s$ preserves the Samelson product if and only if 
        \begin{equation}\label{eq:SamelsonMap}
            \varphi_s \comp S_{B\times F}\comp (f\wedge g)\simeq S_E\comp (\varphi_s\wedge \varphi_s)\comp (f \wedge g)\text{ for }f \in \pi_k(\Omega B\times \Omega F),\ g \in \pi_l(\Omega B\times \Omega F).
        \end{equation}
        Note that, in Diagram \eqref{diag:samelson2}, the marked squares and the triangle commute or commute up to homotopy. Since the quotient map $q: S^k\times S^l\to S^k\wedge S^l$ is homotopy monic, \eqref{eq:SamelsonMap} is equivalent to the homotopy commutativity of the dashed-dotted marked arrows in the Diagram \eqref{diag:samelson2}.
    \begin{equation}\label{diag:samelson2}
        \begin{tikzcd}
            S^k\wedge S^l\ar[r,"f\wedge g"]  &(\Omega B\times \Omega F)\wedge (\Omega B\times \Omega F) \ar[r, "S_{B\times F}"] &[1em] \Omega B\times \Omega F \ar[r,"\varphi_s", densely dashdotted]& \Omega E\ar[d, equal]\\
            S^k\times S^l \ar[u, "q"] \ar[d, "q"] \arrow[ur, phantom, "\scalebox{1.7}{$\circlearrowright$}" description] \ar[r, "f\times g", densely dashdotted]& (\Omega B\times \Omega F)\times (\Omega B\times \Omega F) \ar[u,"{q_{B\times F}}"] \ar[d,"{q_{B\times F}}"] \ar[r, "\varphi_s\times \varphi_s", densely dashdotted] \arrow[ur,phantom, "\cca{h}"{xshift=-3.5em, yshift=3pt}] \ar[ru, "{C_{B\times F}}"{xshift=8pt}, densely dashdotted] &\Omega E\times \Omega E \ar[d,"q_E"]\ar[r,"C_E", densely dashdotted] & \Omega E\\
            S^k\wedge S^l \ar[ur, phantom, "\scalebox{1.7}{$\circlearrowright$}" description] \ar[r,"f\wedge g"]& (\Omega B\times \Omega F)\wedge (\Omega B\times \Omega F) \arrow[ur, phantom, "\cca{h}"{xshift=3pt, yshift=3pt} description] \ar[r, "\varphi_s\wedge \varphi_s"] & \Omega E\wedge \Omega E \arrow[ur, phantom, "\cca{h}"{xshift=3pt, yshift=3pt} description] \ar[r,"S_E"] & \Omega E\ar[u, equal]
        \end{tikzcd}
    \end{equation}
    Thus, we only need to show that $\varphi_s\comp C_{B\times F}\comp (f\times g)\simeq C_E\comp (\varphi_s\times \varphi_s)\comp (f\times g)$. If $f=(\alpha_1,\beta_1)$ and $g=(\alpha_2,\beta_2)$ as components, then expanding the above two maps, we have 
    \begin{align*}
        &\varphi_s\comp C_{B\times F}\comp (f\times g)\\
        =\ & \varphi_s(\alpha_1\alpha_2\Bar{\alpha}_1\Bar{\alpha}_2, \beta_1\beta_2\Bar{\beta}_1\Bar{\beta}_2)\\
        =\ & \Omega s(\alpha_1\alpha_2\Bar{\alpha}_1\Bar{\alpha}_2)\cdot \Omega i(\beta_1\beta_2\Bar{\beta}_1\Bar{\beta}_2)\\
        =\ & \Omega s(\alpha_1)\cdot\Omega s(\alpha_2)\cdot\Omega s(\Bar{\alpha}_1)\cdot \Omega s(\Bar{\alpha}_2)\cdot \Omega i(\beta_1)\cdot \Omega i(\beta_2)\cdot \Omega i(\Bar{\beta}_1)\cdot \Omega i(\Bar{\beta}_2),\\
        & C_E\comp (\varphi_s\times \varphi_s)\comp (f\times g)\\
        =\ & C_E(\Omega s(\alpha_1)\cdot\Omega i(\beta_1), \Omega s(\alpha_2)\cdot\Omega i(\beta_2))\\
        =\ &  \Omega s(\alpha_1)\cdot\Omega i(\beta_1)\cdot \Omega s(\alpha_2)\cdot\Omega i(\beta_2)\cdot\overline{\Omega s(\alpha_1)\cdot\Omega i(\beta_1)}\cdot \overline{\Omega s(\alpha_2)\cdot\Omega i(\beta_2)}\\
        =\ & \Omega s(\alpha_1)\cdot\Omega i(\beta_1)\cdot\Omega s(\alpha_2)\cdot\Omega i(\beta_2)\cdot \Omega i(\Bar{\beta}_1)\cdot \Omega s(\Bar{\alpha}_1)\cdot\Omega i(\Bar{\beta}_2)\cdot \Omega s(\Bar{\alpha}_2)
    \end{align*}
    Now, if the James brace product $\{,\}_s$ vanishes identically, then using the homotopy \eqref{brace_vanish_implies} repeatedly, we see that the above two maps are homotopic, i.e., $\varphi_s$ is an algebra isomorphism.\\ 
    \indent Conversely, suppose $\varphi_s$ is an algebra isomorphism, or equivalently 
    $$\varphi_s\comp C_{B\times F}\comp (f\times g)\simeq C_E\comp (\varphi_s\times \varphi_s)\comp (f\times g)\text{ for all }f \in \pi_k(\Omega B\times \Omega F),\ g \in \pi_l(\Omega B\times \Omega F).$$
    For $\alpha\in \pi_k(\Omega B), \beta\in \pi_l(\Omega F)$, take $f:=(\alpha,c_F)\in \pi_k(\Omega B\times \Omega F),\ g:=(c_B,\beta)\in \pi_l(\Omega B\times \Omega F)$, where $c_B,c_F$ are constant maps in $\Omega B, \Omega F$, respectively. 
    Then, 
    $$\varphi_s\comp C_{B\times F}\comp (f\times g)\simeq C_E\comp (\varphi_s\times \varphi_s)\comp (f\times g)$$ 
    implies
    \begin{align*}
        &c_E\simeq \Omega s(\alpha)\cdot\Omega i(\beta)\cdot \Omega s(\Bar{\alpha})\cdot \Omega i(\Bar{\beta})\text{ ($c_E$ is the constant map in $\Omega E$)}\\
        \text{i.e., }& \Omega s(\alpha)\cdot\Omega i(\beta)\simeq \Omega i(\beta)\cdot \Omega s(\alpha).
    \end{align*}
    Therefore, \eqref{brace_vanish_implies} implies that the James brace product $\{,\}_s$ vanishes identically.
\end{proof}

\subsection{Decomposability of a fibration over a suspension}
The following is a direct generalization of \cite[Thm 2.1]{HiltonWhitehead} in the context of the generalized Whitehead product.
\begin{prop}\label{gen_whitehead_vanish_under_composition}
    Suppose $\alpha: \Sigma A\to X$ and $\beta: \Sigma B\to X$ are two maps such that $[\alpha,\beta]=0$. Then for any $f:\Sigma C\to \Sigma A$ and $g: \Sigma D\to \Sigma B$,
    $$[\alpha\comp f, \beta\comp g]=0.$$
\end{prop}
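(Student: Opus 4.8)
\emph{Plan of proof.} The plan is to mimic the classical Hilton--Whitehead argument \cite[Thm 2.1]{HiltonWhitehead} by recasting the vanishing of a generalized Whitehead product as the solvability of an extension problem over a product of suspensions. Recall from Remark~\ref{Gen_White_prop}(a) that $[\alpha,\beta]$ is represented by $\nabla\comp(\alpha\vee\beta)\comp\omega_{A,B}$, where $\omega_{A,B}\colon\Sigma(A\wedge B)\to\Sigma A\vee\Sigma B$ is Arkowitz's universal Whitehead product map and $\nabla$ is the fold map. The key structural fact I would invoke is that $\omega_{A,B}$ is, up to homotopy, the attaching map of the top cell of $\Sigma A\times\Sigma B$; equivalently, $\Sigma(A\wedge B)\xrightarrow{\omega_{A,B}}\Sigma A\vee\Sigma B\xrightarrow{j_{A,B}}\Sigma A\times\Sigma B$ is a cofibre sequence (\cite{Arkowitz_gen_Whitehead}; this is the same input underlying Remark~\ref{Gen_White_prop}(a) and is in the same spirit as Lemma~\ref{cofib_seq}). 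Applying $[-,X]$ and using exactness of the resulting sequence of pointed sets $[\Sigma A\times\Sigma B,X]\xrightarrow{j_{A,B}^*}[\Sigma A\vee\Sigma B,X]\xrightarrow{\omega_{A,B}^*}[\Sigma(A\wedge B),X]$, one gets the reformulation: for any $\gamma\colon\Sigma U\to X$ and $\delta\colon\Sigma V\to X$, the map $\nabla\comp(\gamma\vee\delta)\colon\Sigma U\vee\Sigma V\to X$ extends (up to homotopy) over $\Sigma U\times\Sigma V$ if and only if $[\gamma,\delta]=0$ in $[\Sigma(U\wedge V),X]$.

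Granting this, the argument is short. Since $[\alpha,\beta]=0$, there is $H\colon\Sigma A\times\Sigma B\to X$ with $H\comp j_{A,B}\simeq\nabla\comp(\alpha\vee\beta)$. For arbitrary $f\colon\Sigma C\to\Sigma A$ and $g\colon\Sigma D\to\Sigma B$, I would form $H\comp(f\times g)\colon\Sigma C\times\Sigma D\to X$ and use the tautological identity $(f\times g)\comp j_{C,D}=j_{A,B}\comp(f\vee g)$ together with $\nabla\comp(\alpha\vee\beta)\comp(f\vee g)=\nabla\comp\bigl((\alpha\comp f)\vee(\beta\comp g)\bigr)$ to conclude
\[\bigl(H\comp(f\times g)\bigr)\comp j_{C,D}\;=\;H\comp j_{A,B}\comp(f\vee g)\;\simeq\;\nabla\comp\bigl((\alpha\comp f)\vee(\beta\comp g)\bigr).\]
Thus the wedge map of $\alpha\comp f$ and $\beta\comp g$ extends over $\Sigma C\times\Sigma D$, and the reformulation above, now applied with $U=C$ and $V=D$, forces $[\alpha\comp f,\beta\comp g]=0$.

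I expect the only genuinely delicate point to be the first paragraph: correctly identifying $\omega_{A,B}$ with the attaching map of the top cell of $\Sigma A\times\Sigma B$, and hence the exactness statement at the middle term that converts ``$[\gamma,\delta]=0$'' into ``extends over the product.'' Once that is in place, the rest is formal naturality of the cartesian product and the wedge, valid for \emph{arbitrary} $f,g$. An alternative would be to argue directly from the adjoint description $[\alpha,\beta]=\ad^{-1}\bigl(S_X\comp(\ad\alpha\wedge\ad\beta)\bigr)$ used in this paper, noting $\ad(\alpha\comp f)=\Omega\alpha\comp\ad f$ and that $\ad\alpha$ factors through the James unit $A\to\Omega\Sigma A$; but since $f$ and $g$ are not assumed to be suspensions, this route does not obviously close up, whereas the cofibre-sequence/extension argument treats all $f,g$ uniformly.
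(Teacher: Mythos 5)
Your proof is correct and is essentially the paper's argument: the paper invokes \cite[Proposition 5.1]{Arkowitz_gen_Whitehead} (the product of $\alpha\vee\beta$ extends over $\Sigma A\times\Sigma B$ iff $[\alpha,\beta]=0$), obtains an extension $m$, and then uses $m\comp(f\times g)$ exactly as your $H\comp(f\times g)$, concluding by the converse of the same proposition. The only difference is that you re-derive Arkowitz's extension criterion from the cofibre sequence $\Sigma(A\wedge B)\xrightarrow{\omega}\Sigma A\vee\Sigma B\to\Sigma A\times\Sigma B$ rather than citing it, which is a valid (and standard) justification of the same input.
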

\begin{proof}
    Since $[\alpha,\beta] =0$, we know from \cite[Proposition 5.1]{Arkowitz_gen_Whitehead} that there exists a map $m:\Sigma A\times \Sigma B\to X$ such that $m\big|_{\Sigma A}\simeq \alpha$ and $m\big|_{\Sigma B}\simeq \beta$. Then, for any $f:\Sigma C\to \Sigma A$ and $g:\Sigma D\to \Sigma B$, the map $m\comp (f\times g):\Sigma A\times \Sigma B\to X$ satisfies 
    \begin{align*}
    \qquad m\comp (f\times g)\big|_{\Sigma A}\simeq\alpha\comp f\text{ and }m\comp (f\times g)\big|_{\Sigma B}\simeq \beta\comp g.
    \end{align*}
    Therefore, using the converse of \cite[Proposition 5.1]{Arkowitz_gen_Whitehead}, we conclude $[\alpha\comp f, \beta \comp g] =0$.
\end{proof}

The above proposition has the following useful corollary.
\begin{coro}\label{equi_Whitehead_brace}
If $\Sigma F\overset{i}{\hookrightarrow} E\xrightarrow{p} \Sigma B$ is a fibration with homotopy section $s$, then $[s,i]=i_*\{\mathrm{Id}_{\Sigma B}, \mathrm{Id}_{\Sigma F} \}_s=0$ if and only if the generalized brace product vanishes, i.e., $\{,\}_s\equiv 0$. 
\end{coro}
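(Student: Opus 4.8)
The plan is to deduce the equivalence from the universality of the brace product established in Proposition~\ref{map_inducing_brace} together with the functoriality of the generalized Whitehead product. One direction is immediate: if $\{,\}_s \equiv 0$, then in particular $\{\mathrm{Id}_{\Sigma B}, \mathrm{Id}_{\Sigma F}\}_s = 0$, whence $i_*\{\mathrm{Id}_{\Sigma B}, \mathrm{Id}_{\Sigma F}\}_s = [s_*\mathrm{Id}_{\Sigma B}, i_*\mathrm{Id}_{\Sigma F}] = [s, i] = 0$ by the defining equation~\eqref{eq:genBraceDefinition} and the fact that $i_*$ is monic.

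For the converse, suppose $[s, i] = i_*\{\mathrm{Id}_{\Sigma B}, \mathrm{Id}_{\Sigma F}\}_s = 0$; since $i_*$ is monic, this means $\{\mathrm{Id}_{\Sigma B}, \mathrm{Id}_{\Sigma F}\}_s = 0$. Now take arbitrary $f \in [\Sigma X, \Sigma B]$ and $g \in [\Sigma Y, \Sigma F]$. I want to show $\{f, g\}_s = 0$. The key observation is that $f$ and $g$ arise from $\mathrm{Id}_{\Sigma B}$ and $\mathrm{Id}_{\Sigma F}$ by precomposition: $f = \mathrm{Id}_{\Sigma B} \comp f$ and $g = \mathrm{Id}_{\Sigma F} \comp g$, with $f: \Sigma X \to \Sigma B$ and $g: \Sigma Y \to \Sigma F$ both being maps out of suspensions into suspensions. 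Applying $i_*$ and using~\eqref{eq:genBraceDefinition} together with functoriality of the generalized Whitehead product (Remark~\ref{Gen_White_prop}(c)), we have
\[
i_*\{f, g\}_s = [s_* f, i_* g] = [(s \comp \mathrm{Id}_{\Sigma B})\comp f, (i \comp \mathrm{Id}_{\Sigma F})\comp g] = [s \comp f, i \comp g].
\]
Since $\{\mathrm{Id}_{\Sigma B}, \mathrm{Id}_{\Sigma F}\}_s = 0$ gives $[s, i] = [s \comp \mathrm{Id}_{\Sigma B}, i \comp \mathrm{Id}_{\Sigma F}] = 0$, Proposition~\ref{gen_whitehead_vanish_under_composition} applied with $\alpha = s: \Sigma B \to E$, $\beta = i: \Sigma F \to E$ (both genuinely maps out of suspensions, which is exactly the hypothesis of that proposition) yields $[s \comp f, i \comp g] = 0$. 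Hence $i_*\{f, g\}_s = 0$, and monicity of $i_*$ forces $\{f, g\}_s = 0$.

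The only subtlety — and the place one must be careful rather than a genuine obstacle — is checking that the hypotheses of Proposition~\ref{gen_whitehead_vanish_under_composition} are met: namely that the source spaces really are suspensions. This is precisely where the assumption that the base and fibre of the fibration are themselves suspensions, $\Sigma B$ and $\Sigma F$, is used: it guarantees that $s$ and $i$ are maps whose domains are suspensions, so that $[s,i]$ is a generalized Whitehead product of the appropriate form, and that $f, g$ can be taken to be maps between suspensions as required by Proposition~\ref{gen_whitehead_vanish_under_composition}. Once this bookkeeping is in place, the argument is a direct chain of the displayed identities, so the proof is short.
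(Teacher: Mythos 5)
Your proposal is correct and follows essentially the same route as the paper: the nontrivial direction is exactly the paper's application of Proposition~\ref{gen_whitehead_vanish_under_composition} to $s:\Sigma B\to E$ and $i:\Sigma F\to E$, combined with the definition of the brace product and the injectivity of $i_*$. The extra step rewriting $f=\mathrm{Id}_{\Sigma B}\comp f$ and invoking functoriality is harmless but unnecessary, since $s_*f=s\comp f$ by definition.
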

\begin{proof}
Let $[s,i] =0$. Then, for any $f:\Sigma X\to \Sigma B$ and $g:\Sigma Y\to \Sigma F$,  Proposition~\ref{gen_whitehead_vanish_under_composition} implies $[s\comp f,i\comp g] =0$, i.e., $i_*(\{f,g\}_s)=0$ (by definition of $\{,\}_s$). As $i_*$ is monic (see (\ref{i_*IsMonic})) and $f,g$ are arbitrary, we have $\{,\}_s\equiv 0$. Conversely, if $\{,\}_s\equiv 0$, then $\{\mathrm{Id}_{\Sigma B}, \mathrm{Id}_{\Sigma F} \}_s=0$, and hence, $[s,i] =i_* \{\mathrm{Id}_{\Sigma B}, \mathrm{Id}_{\Sigma F}\}_s =0$.
\end{proof}

Let us now establish a connection between the vanishing of the generalized brace product and the decomposability of the total space of a fibration. 
\begin{prop}\label{splitting_in_suspension_over_suspension}
Suppose $\Sigma F\overset{i}{\hookrightarrow} E\xrightarrow{p} \Sigma B$ is a fibration of CW-complexes with homotopy section $s$. If the generalized brace product $\{\mathrm{Id}_{\Sigma B}, \mathrm{Id}_{\Sigma F}\}_s =0$, then $E\simeq \Sigma B\times \Sigma F$, and moreover, $\varphi_s=(\Omega s\cdot\Omega i): \Omega \Sigma B\times \Omega \Sigma F\to \Omega E$ is an $H$-splitting.
\end{prop}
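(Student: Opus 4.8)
The plan is to deduce the ``moreover'' clause directly from the theory already set up, and then to produce a genuine homotopy equivalence $\Sigma B\times\Sigma F\to E$ by combining Arkowitz's criterion with a loop-space argument.

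\emph{The $H$-splitting part.} By Corollary~\ref{equi_Whitehead_brace}, the hypothesis $\{\mathrm{Id}_{\Sigma B},\mathrm{Id}_{\Sigma F}\}_s=0$ already forces the generalized brace product $\{,\}_s$ to vanish identically. Then Theorem~\ref{generalized_brace-splitting} immediately gives that $\varphi_s=\Omega s\cdot\Omega i:\Omega\Sigma B\times\Omega\Sigma F\to\Omega E$ is an $H$-splitting; in particular $\varphi_s$ is a homotopy equivalence, as is always the case for a fibration admitting a section (cf.\ \cite{Eckmann-Hilton}).

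\emph{The decomposability part.} Corollary~\ref{equi_Whitehead_brace} also yields $[s,i]=i_*\{\mathrm{Id}_{\Sigma B},\mathrm{Id}_{\Sigma F}\}_s=0$, i.e.\ the generalized Whitehead product of $s:\Sigma B\to E$ and $i:\Sigma F\to E$ vanishes. By \cite[Prop.~5.1]{Arkowitz_gen_Whitehead} (the same criterion used in the proof of Proposition~\ref{gen_whitehead_vanish_under_composition}) there is a map $m:\Sigma B\times\Sigma F\to E$ with $m\comp\iota_1\simeq s$ and $m\comp\iota_2\simeq i$, where $\iota_1:\Sigma B\hookrightarrow\Sigma B\times\Sigma F$ and $\iota_2:\Sigma F\hookrightarrow\Sigma B\times\Sigma F$ are the two inclusions. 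Since $\Sigma B\times\Sigma F$ and $E$ are CW complexes, by Whitehead's theorem it is enough to show $m$ is a weak equivalence, and for that I would show that $\Omega m$ is a homotopy equivalence. The key claim is that, under the canonical identification $\Omega(\Sigma B\times\Sigma F)\cong\Omega\Sigma B\times\Omega\Sigma F$, the map $\Omega m$ is homotopic to $\varphi_s$: indeed $\Omega m$ strictly preserves loop concatenation, and in the product $H$-space $\Omega\Sigma B\times\Omega\Sigma F$ there is a homotopy, natural in $(\lambda,\mu)$, from the identity to $(\lambda,\mu)\mapsto(\Omega\iota_1\,\lambda)\cdot(\Omega\iota_2\,\mu)$ (concatenate each coordinate with the constant loop), so that $\Omega m(\lambda,\mu)\simeq\Omega(m\comp\iota_1)(\lambda)\cdot\Omega(m\comp\iota_2)(\mu)\simeq\Omega s(\lambda)\cdot\Omega i(\mu)=\varphi_s(\lambda,\mu)$, and these homotopies assemble into a homotopy $\Omega m\simeq\varphi_s$. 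Since $\varphi_s$ is a homotopy equivalence, so is $\Omega m$; hence $\pi_n(m)$ is an isomorphism for every $n\ge 1$ via $\pi_n(-)\cong\pi_{n-1}(\Omega-)$, and since $\Sigma B\times\Sigma F$ and $E$ are path-connected (a suspension is path-connected, and a fibration over a path-connected base with path-connected fibre has path-connected total space), $m$ is a weak equivalence, hence a homotopy equivalence. This gives $E\simeq\Sigma B\times\Sigma F$.

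I expect the main obstacle to be the identification $\Omega m\simeq\varphi_s$: one has to promote the pointwise ``concatenation with a constant loop is inessential'' statements to a single natural homotopy of maps, and be careful about the canonical identification $\Omega(\Sigma B\times\Sigma F)\cong\Omega\Sigma B\times\Omega\Sigma F$ under which $\Omega m$ and $\varphi_s$ are compared. Everything else is either a direct appeal to Corollary~\ref{equi_Whitehead_brace}, Theorem~\ref{generalized_brace-splitting}, and \cite{Arkowitz_gen_Whitehead}, or a routine application of Whitehead's theorem; note in particular that this route needs no simple-connectivity hypothesis beyond the CW assumption, since it produces isomorphisms on all homotopy groups directly.
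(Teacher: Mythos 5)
Your proof is correct, and the first half (deducing the $H$-splitting from Corollary~\ref{equi_Whitehead_brace} and Theorem~\ref{generalized_brace-splitting}) is exactly what the paper does. For the decomposability half the two arguments start the same way --- both invoke Arkowitz's criterion to produce $m:\Sigma B\times\Sigma F\to E$ restricting to $s$ and $i$ --- but then diverge. The paper stays at the level of homotopy groups: it computes $(p\comp m)_*=(p_1)_*$ directly from $m\comp j_1\simeq s$, $m\comp j_2\simeq i$, fits $m_*$ into a map of the two split short exact sequences of the fibrations, and concludes by the five lemma. You instead loop everything and identify $\Omega m$ with $\varphi_s$ up to homotopy, using that $\Omega m$ strictly preserves concatenation together with the natural reparametrization homotopy $\mathrm{Id}_{\Omega\Sigma B\times\Omega\Sigma F}\simeq\bigl((\lambda,\mu)\mapsto(\Omega\iota_1\lambda)\cdot(\Omega\iota_2\mu)\bigr)$; since $\varphi_s$ is a homotopy equivalence by the Eckmann--Hilton splitting (Proposition~\ref{homotopy_equivalence} in the appendix), so is $\Omega m$, whence $m$ is a weak equivalence and Whitehead's theorem finishes. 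Your route is a genuine alternative: it buys the stronger statement that $\Omega m\simeq\varphi_s$ (so the abstract equivalence $E\simeq\Sigma B\times\Sigma F$ is compatible, after looping, with the $H$-splitting), at the cost of relying on the appendix result and on the care you yourself flag about assembling the pointwise homotopies into a single natural one --- which does work, since concatenation with a constant loop is inessential via a reparametrization that is continuous in the loop variable. The paper's five-lemma argument is more elementary and self-contained, requiring nothing beyond the long exact sequence of the fibration.
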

\begin{proof}
Note that, if $\{\mathrm{Id}_{\Sigma B},\mathrm{Id}_{\Sigma F}\}_s =0$, then $[s,i] =i_*(\{\mathrm{Id}_{\Sigma B},\mathrm{Id}_{\Sigma F}\}_s) = 0$. This implies (by \cite[Proposition 5.1]{Arkowitz_gen_Whitehead}) that there exists a map $m:\Sigma B\times \Sigma F\to E$ such that $m\big|_{\Sigma B}\simeq s$ and $m\big|_{\Sigma F}\simeq i$. Thus, if $j_1:\Sigma B\to \Sigma B\times \Sigma F$ and $j_2:\Sigma F\to \Sigma B\times \Sigma F$ denote the respective inclusions, then $m\comp j_1\simeq s$ and $m\comp j_2\simeq i$. 
We will show that $m$ is a homotopy equivalence. Since all our spaces are CW-complexes, by Whitehead's theorem, it is enough to prove that $m$ induces isomorphism on homotopy groups.
Now, for $\alpha\in \pi_*(\Sigma B\times \Sigma F)\cong \pi_*(\Sigma B)\oplus \pi_*(\Sigma F)$, we have $\alpha_1\in \pi_*(\Sigma B)$ and $\alpha_2\in \pi_*(\Sigma F)$ such that $\alpha=(j_1)_*(\alpha_1)+(j_2)_*(\alpha_2)$. Then 
\begin{align*}
    (p\comp m)_*(\alpha)&= (p\comp m)_*((j_1)_*(\alpha_1)+(j_2)_*(\alpha_2))\\
    &= (p\comp m\comp j_1)_*(\alpha_1)+(p\comp m\comp j_2)_*(\alpha_2))\\
    &=(p\comp s)_*(\alpha_1)+(p\comp i)_*(\alpha_2)\\
    &=\alpha_1+0 \text{ (as $(p\comp s)_*=1$ and $(p\comp i)_*=0$)}\\
    &=(p_1)_*(\alpha)
\end{align*}
Here, $p_1:\Sigma B\times\Sigma F\to \Sigma B$ is the projection onto the first coordinate. Thus, we have the following commutative diagram of two short exact sequences.
\[
\begin{tikzcd}
    0\ar[r]& \pi_*(\Sigma F) \ar[d,equal]\ar[r,"(j_2)_*"] &\pi_*(\Sigma B\times \Sigma F) \ar[d,"m_*"] \ar[r,"(p_1)_*"] &\pi_*(\Sigma B) \ar[r]\ar[d,equal]&0\\
    0\ar[r]& \pi_*(\Sigma F) \ar[r,"i_*"] & \pi_*(E)\ar[r, "p_*"] &\pi_*(\Sigma B)\ar[r]&0
\end{tikzcd}
\]
Hence, by the five lemma, $m_*$ is an isomorphism, and therefore $E \simeq \Sigma B \times \Sigma F$. Moreover, since $\{\mathrm{Id}_{\Sigma B}, \mathrm{Id}_{\Sigma F}\}_s = 0$, Corollary~\ref{equi_Whitehead_brace} implies that $\{,\}_s \equiv 0$. Consequently, by Theorem~\ref{generalized_brace-splitting}, $\varphi_s$ is an $H$-splitting.
\end{proof}

As an immediate corollary, we have the following interesting result.

\begin{coro}\label{brace_in_sphere_over_sphere}
Suppose $S^m\overset{i}{\hookrightarrow} E\xrightarrow{p}S^n$ is a fibration. Then $E\simeq S^n\times S^m$ if and only if there exists a homotopy section $s$ such that the James brace product $\{\mathrm{Id}_{S^n},\mathrm{Id}_{S^m}\}_s=0$.
\end{coro}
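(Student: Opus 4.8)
The forward direction ($\Leftarrow$) is immediate from Proposition~\ref{splitting_in_suspension_over_suspension}. Writing $S^n=\Sigma S^{n-1}$ and $S^m=\Sigma S^{m-1}$, the James brace product $\{\mathrm{Id}_{S^n},\mathrm{Id}_{S^m}\}_s$ is the generalized brace product $\{\mathrm{Id}_{\Sigma S^{n-1}},\mathrm{Id}_{\Sigma S^{m-1}}\}_s$ (Remark~\ref{Gen_White_prop}(b) and the discussion following Definition~\ref{defn:genBraceProduct}); so its vanishing gives $E\simeq S^n\times S^m$ at once (indeed also that $\varphi_s$ is an $H$-splitting).

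For the converse I would start from a homotopy equivalence $h\colon S^n\times S^m\xrightarrow{\ \simeq\ }E$ and first extract a homotopy section. Since $H^*(E)\cong H^*(S^n\times S^m)$, the Serre spectral sequence of $p$ collapses (it has only four nonzero entries on the $E_2$-page, and a nonzero differential would decrease the total rank), so $p^*$ carries $H^n(S^n)$ isomorphically onto a direct summand of $H^n(E)$. When $n\ne m$ this means $p\comp h\comp j\colon S^n\to S^n$ has degree $\pm1$, so after composing $h$ with a reflection of a factor, $s:=h\comp j$ is a homotopy section, where $j\colon S^n\hookrightarrow S^n\times S^m$ is the first inclusion; when $n=m$ one instead uses that the sole obstruction to a section lies in $\pi_{n-1}(S^n)=0$ and picks any section $s$ (not necessarily of the form $h\comp j$).

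It then remains to show $[s,i]=0$; since $i_*$ is monic by \eqref{i_*IsMonic}, this is exactly $\{\mathrm{Id}_{S^n},\mathrm{Id}_{S^m}\}_s=i_*^{-1}[s,i]=0$. The engine is the extension criterion \cite[Prop.~5.1]{Arkowitz_gen_Whitehead} for the vanishing of a generalized Whitehead product, already used in the proof of Proposition~\ref{splitting_in_suspension_over_suspension}, which gives two things: \emph{(i)} any ``mixed'' Whitehead product $[\alpha,\beta]$ with $\alpha\in j_*\pi_*(S^n)$ and $\beta\in k_*\pi_*(S^m)$ vanishes in $\pi_*(S^n\times S^m)$ (where $k\colon S^m\hookrightarrow S^n\times S^m$ is the second inclusion), because representatives of $\alpha$ and $\beta$ jointly extend over $S^n\times S^m$; and \emph{(ii)} $[\mathrm{Id}_{S^n},\xi]=0$ in $\pi_{n+m-1}(S^n)$, where $\xi:=[\,p\comp h\comp k\,]\in\pi_m(S^n)$, because $p\comp h\colon S^n\times S^m\to S^n$ extends its restriction $\mathrm{Id}_{S^n}\vee\xi$ to $S^n\vee S^m$ (using $p\comp h\comp j=p\comp s\simeq\mathrm{Id}_{S^n}$). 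Transporting $[s,i]$ along the isomorphism $h^{-1}_*$ and writing $[h^{-1}\comp s],[h^{-1}\comp i]$ in their $\pi_*(S^n)\oplus\pi_*(S^m)$ components, bilinearity of the Whitehead bracket together with \emph{(i)} collapses $h^{-1}_*[s,i]$ to $\pm\,j_*[\iota_n,\eta]$ for some $\eta\in\pi_m(S^n)$; a short bookkeeping — using that the $\pi_m(S^m)$-component of $[h^{-1}\comp i]$ is $\pm1$, and, if $\pi_n(S^m)\ne0$, first replacing $s$ by another section in its homotopy class (i.e.\ adding a suitable element of $\pi_n(S^m)$) — identifies $\eta$ with $\pm\xi$, whence \emph{(ii)} yields $[s,i]=\pm\,h_*j_*[\iota_n,\xi]=0$.

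The main obstacle is exactly this bookkeeping in the converse: $h$ need not be fibrewise, so $h\comp k$ is not the fibre inclusion $i$, and one must track carefully how the section class and the fibre class sit inside $\pi_*(E)\cong\pi_*(S^n)\oplus\pi_*(S^m)$. The case $n=m$ is the one point needing a genuinely separate touch: there $\pi_n(S^m)\ne0$ and no section is canonical, so one takes any section, observes that $\alpha_1\alpha_2[\iota_n,\iota_n]=0$ — with $\alpha_1,\alpha_2$ the degrees of $p\comp h$ on the two factors, again by \emph{(ii)} — and runs a short parity computation to choose a section within its class for which $[s,i]=0$. The spectral sequence collapse, the existence of a section, and the Whitehead-bracket reductions are all routine.
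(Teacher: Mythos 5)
Your proposal is correct and the $\Leftarrow$ direction coincides with the paper's (both quote Proposition~\ref{splitting_in_suspension_over_suspension}), but your converse takes a genuinely different route. The paper does not work with a bare homotopy equivalence at all: it invokes Sasao's theorem (\cite[Cor.\ 1.1]{Sasao-HomotopyTypeOfSphericalFibreSpacesOverSpheres}) that a spherical fibration over a sphere whose total space is merely homotopy equivalent to $S^n\times S^m$ is automatically \emph{fibre homotopy trivial}, so one may choose $h$ with $p\comp h\simeq p_1$ and $h\comp j_2\simeq i$; then $s:=h\comp j_1$ is a section and $[s,i]=h_*[j_1,j_2]=0$ in one line, uniformly in $n,m$. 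You instead rebuild this from scratch: the Serre spectral sequence to extract the section, then Whitehead-bracket bookkeeping in $\pi_*(S^n)\oplus\pi_*(S^m)$. Your bookkeeping does close up for $n\ne m$, $n,m\ge 2$ --- the linchpin, which you should make explicit, is that $p\comp i\simeq\mathrm{const}$ forces the $\pi_m(S^n)$-component of $h^{-1}\comp i$ to equal $-d\,\xi$ with $d=\pm1$ the $\pi_m(S^m)$-component, so after the mixed terms die one is left with $\mp d\,j_*[\iota_n,\xi]$, which vanishes by your fact \emph{(ii)}; note that no modification of $s$ is actually needed here. The thinnest point is $n=m$, which you only gesture at: it does complete (write $s_*\iota_n=\alpha a+\beta b$, $i_*\iota_n=\gamma a+\delta b$ in the basis $a=h_*j_*\iota_n$, $b=h_*k_*\iota_n$; the cross terms $[a,b]$ vanish, $d_1d_2[\iota_n,\iota_n]=0$ together with $\gamma d_1+\delta d_2=0$ and $\alpha d_1+\beta d_2=1$ controls $\alpha\gamma$, and replacing $s$ by $s+i\lambda$ for suitable $\lambda\in\pi_n(S^n)$ kills $\beta\delta$ --- with a separate parity argument when $n$ is odd and $[\iota_n,\iota_n]$ is $2$-torsion), but this case analysis is real work and must be written out, not waved at. In exchange, your argument is self-contained modulo Arkowitz's extension criterion, whereas the paper's is shorter but outsources the entire content of the converse to Sasao.
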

\begin{proof}
If there exists a homotopy section $s$ of the given fibration such that the James brace product $\{\mathrm{Id}_{S^n},\mathrm{Id}_{S^m}\}_s=0$, then from above Proposition~\ref{splitting_in_suspension_over_suspension}, we have $E\simeq S^n\times S^m$.

Conversely, if $E \simeq S^n \times S^m$, it is established in \cite[Corollary 1.1]{Sasao-HomotopyTypeOfSphericalFibreSpacesOverSpheres} that $E$ is fibre homotopically trivial; that is, there exists a homotopy equivalence $h: S^n \times S^m \to E$ such that $p \comp h \simeq p_1$ and $m \comp j_2 \simeq i$, where $p_1 : S^n \times S^m \to S^n$ is the projection onto the first coordinate and $j_2 : S^m \to S^n \times S^m$ denotes the inclusion into the second coordinate. Define $s := h \comp j_1$, where $j_1 : S^n \to S^n \times S^m$ is the inclusion into the first coordinate. Observe that $p \comp s = p \comp h \comp j_1 \simeq p_1 \comp j_1 = \text{Id}_{S^n}$, so $s$ serves as a homotopy section for the fibration. Furthermore,
\begin{align*}
    i_*\{\text{Id}_{S^n}, \text{Id}_{S^m}\}_s&= [s,i] =[m\comp j_1, m\comp j_2]\\
    &=m_*([j_1,j_2])=m_*([(1,0), (0,1)]=0
\end{align*}
Therefore, $\{\text{Id}_{S^n}, \text{Id}_{S^m}\}_s=0$, and hence $s$ is a section for the fibration for which the brace $\{\text{Id}_{S^n}, \text{Id}_{S^m}\}_s=0$.
\end{proof}

\begin{eg}
In the context of oriented $S^2$ bundles over $S^2$, there are only two bundles, one is the trivial $S^2\times S^2$ and the other is the connected sum $\mathbb{P}^2\#\Bar{\mathbb{P}}^2$. Here, $\mathbb{P}^2$ denotes the complex projective plane, and $\Bar{\mathbb{P}}^2$ denotes $\mathbb{P}^2$ with opposite orientation. As the intersection forms of the above two $4$-manifolds are $\left(\begin{array}{cc}
    0 & 1 \\
    1 & 0
\end{array}\right)$ and $\left(\begin{array}{cc}
    1 & 0 \\
    0 & -1
\end{array}\right)$, respectively (see \cite[pp. 21-22]{Kirby}), and they are non-isomorphic with integer coefficients, we get $S^2\times S^2\not\simeq \mathbb{P}^2\#\Bar{\mathbb{P}}^2$. Hence, we have the following corollary.
\begin{coro}
There does not exist any homotopy section $s$ of the non-trivial $S^2$ bundle over $S^2$ with respect to which the brace product vanishes.
\end{coro}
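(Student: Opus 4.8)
The plan is to derive the statement directly from Corollary~\ref{brace_in_sphere_over_sphere} by a contradiction argument. Write the non-trivial oriented $S^2$ bundle over $S^2$ as $S^2 \overset{i}{\hookrightarrow} E \xrightarrow{p} S^2$; by the classification of oriented $S^2$ bundles over $S^2$ recalled above, its total space is $E \cong \mathbb{P}^2 \# \bar{\mathbb{P}}^2$. Suppose, towards a contradiction, that there exists a homotopy section $s: S^2 \to E$ with $\{\mathrm{Id}_{S^2}, \mathrm{Id}_{S^2}\}_s = 0$. Applying Corollary~\ref{brace_in_sphere_over_sphere} with $m = n = 2$, the vanishing of this James brace product yields a homotopy equivalence $E \simeq S^2 \times S^2$.

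To close the argument, I would invoke the homotopy invariance of the integral intersection form of a closed oriented simply connected $4$-manifold: a homotopy equivalence $E \simeq S^2 \times S^2$ forces an isomorphism of intersection forms over $\mathbb{Z}$. But the intersection form of $S^2 \times S^2$ is the hyperbolic (even) form $\left(\begin{smallmatrix} 0 & 1 \\ 1 & 0 \end{smallmatrix}\right)$, whereas that of $\mathbb{P}^2 \# \bar{\mathbb{P}}^2$ is the odd form $\left(\begin{smallmatrix} 1 & 0 \\ 0 & -1 \end{smallmatrix}\right)$, and these are not isomorphic over $\mathbb{Z}$ — precisely the computation already recorded in the example above showing $S^2 \times S^2 \not\simeq \mathbb{P}^2 \# \bar{\mathbb{P}}^2$. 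This contradiction shows that no such section $s$ can exist.

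I do not anticipate any genuine obstacle: the content is entirely carried by Corollary~\ref{brace_in_sphere_over_sphere}, and everything else — the identification of the non-trivial bundle's total space with $\mathbb{P}^2 \# \bar{\mathbb{P}}^2$ and the non-isomorphism of the two intersection forms — is already established in the preceding discussion. The only point worth stating with a little care is that the equivalence produced by Corollary~\ref{brace_in_sphere_over_sphere} is an (ordinary, not necessarily orientation-preserving) homotopy equivalence of the underlying $4$-manifolds, which suffices to compare their intersection forms up to isomorphism and hence to reach the contradiction.
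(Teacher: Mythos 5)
Your proposal is correct and follows exactly the paper's own route: the corollary is deduced from Corollary~\ref{brace_in_sphere_over_sphere} together with the intersection-form computation showing $S^2\times S^2\not\simeq \mathbb{P}^2\#\bar{\mathbb{P}}^2$, which is precisely the content of the example preceding the statement. Your added remark on the homotopy invariance of the intersection form is a reasonable point of care but does not change the argument.
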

\end{eg}
Let us now deduce some equivalent criteria for the vanishing of the brace product in some interesting classes of fibrations.

\begin{prop}\label{BraceInWedgeOverWedgeSpaces}
Suppose $ F\overset{i}{\hookrightarrow} E\xrightarrow{p} B$ is a fibration with homotopy section $s$, where $B=\vee_{r\in\mathcal{I}} \Sigma^2 B_r$ and $F=\vee_{j\in\mathcal{J}} \Sigma^2 F_j$ are finite wedges of double suspensions, with inclusions $\alpha_r :\Sigma^2 B_r \hookrightarrow B$ and $\beta_j :\Sigma^2 F_j \hookrightarrow F$. Then the following are equivalent.
\begin{enumerate}[\quad(a)]
    \item $[s,i] =0$. 
    \item $\{,\}_s\equiv 0$.
    %\item $[s_* \beta_j,i_* \alpha_i]=0$ for all $i,j$.
    \item $\{\alpha_r, \beta_j\}_s=0$ for all $(r,j) \in \mathcal{I} \times \mathcal{J}$.
\end{enumerate}
\end{prop}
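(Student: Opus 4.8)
The plan is to prove the cycle $(a)\Rightarrow(b)\Rightarrow(c)\Rightarrow(a)$, of which only the last implication requires real work. Since $B=\bigvee_{r}\Sigma^{2}B_{r}=\Sigma\bigl(\bigvee_{r}\Sigma B_{r}\bigr)$ and $F=\bigvee_{j}\Sigma^{2}F_{j}=\Sigma\bigl(\bigvee_{j}\Sigma F_{j}\bigr)$ are both suspensions, Corollary~\ref{equi_Whitehead_brace} applies verbatim and yields $(a)\Leftrightarrow(b)$. The implication $(b)\Rightarrow(c)$ is immediate from the definition, since $\{,\}_{s}\equiv 0$ means $\{f,g\}_{s}=0$ for \emph{every} pair of maps, in particular for $f=\alpha_{r}$ and $g=\beta_{j}$.

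For $(c)\Rightarrow(a)$: because $i_{*}$ is injective and $i_{*}\{\mathrm{Id}_{B},\mathrm{Id}_{F}\}_{s}=[s,i]$, it suffices to prove $\{\mathrm{Id}_{B},\mathrm{Id}_{F}\}_{s}=0$. Write $B=\Sigma B'$ and $F=\Sigma F'$ with $B'=\bigvee_{r}\Sigma B_{r}$ and $F'=\bigvee_{j}\Sigma F_{j}$, and recall from Proposition~\ref{map_inducing_brace} the map $J_{s}\colon\Omega B\wedge\Omega F\to\Omega F$ satisfying $\ad\{f,g\}_{s}=J_{s}\comp(\ad f\wedge\ad g)$ for all $f\colon\Sigma X\to B$ and $g\colon\Sigma Y\to F$. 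The idea is to express both $\ad\{\mathrm{Id}_{B},\mathrm{Id}_{F}\}_{s}$ and each $\ad\{\alpha_{r},\beta_{j}\}_{s}$ through $J_{s}$ and then exploit that $B'\wedge F'$ is a finite wedge. Using the adjunction naturality identity $\ad(\Sigma h)=\ad(\mathrm{Id})\comp h$, put $e_{B'}:=\ad(\mathrm{Id}_{B})\colon B'\to\Omega B$ and $e_{F'}:=\ad(\mathrm{Id}_{F})\colon F'\to\Omega F$ (the units of the $\Sigma$-$\Omega$ adjunction), so that $\ad\{\mathrm{Id}_{B},\mathrm{Id}_{F}\}_{s}=\Theta$, where $\Theta:=J_{s}\comp(e_{B'}\wedge e_{F'})\colon B'\wedge F'\to\Omega F$. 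On the other hand, the wedge-summand inclusion $\alpha_{r}\colon\Sigma^{2}B_{r}\hookrightarrow B$ equals $\Sigma\sigma_{r}$ for the summand inclusion $\sigma_{r}\colon\Sigma B_{r}\hookrightarrow B'$, so $\ad\alpha_{r}=e_{B'}\comp\sigma_{r}$, and likewise $\ad\beta_{j}=e_{F'}\comp\tau_{j}$ for $\tau_{j}\colon\Sigma F_{j}\hookrightarrow F'$; hence $\ad\{\alpha_{r},\beta_{j}\}_{s}=\Theta\comp(\sigma_{r}\wedge\tau_{j})$.

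Finally, since smashing and suspension commute with finite wedges, $B'\wedge F'=\bigvee_{r,j}(\Sigma B_{r}\wedge\Sigma F_{j})$, with $\sigma_{r}\wedge\tau_{j}$ the inclusion of the $(r,j)$-th wedge summand. A map out of a finite wedge (a finite coproduct in the pointed homotopy category) is null-homotopic if and only if each of its restrictions to the summands is; hence $\Theta$ is null-homotopic if and only if $\Theta\comp(\sigma_{r}\wedge\tau_{j})$ is null-homotopic for every $(r,j)$, i.e.\ if and only if $\{\alpha_{r},\beta_{j}\}_{s}=0$ for all $(r,j)$. Under hypothesis (c) this holds, so $\{\mathrm{Id}_{B},\mathrm{Id}_{F}\}_{s}=\ad^{-1}\Theta=0$, whence $[s,i]=0$, which is (a).

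Most of this is formal: the logical skeleton of the three implications, the naturality of the adjunction, and the distributivity of $\wedge$ and $\Sigma$ over finite wedges. The one place that demands care — and where I expect the only (mild) friction — is the bookkeeping identifying $\ad\{\alpha_{r},\beta_{j}\}_{s}$ with the restriction of $\Theta=\ad\{\mathrm{Id}_{B},\mathrm{Id}_{F}\}_{s}$ to the $(r,j)$-th wedge summand of $B'\wedge F'$: verifying that $\alpha_{r},\beta_{j}$ are genuinely the suspensions of the summand inclusions $\sigma_{r},\tau_{j}$, and that under $B'\wedge F'=\bigvee_{r,j}(\Sigma B_{r}\wedge\Sigma F_{j})$ the map $\sigma_{r}\wedge\tau_{j}$ is the inclusion of the $(r,j)$-th summand.
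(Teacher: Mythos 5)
Your proof is correct, and for the only nontrivial implication, $(c)\Rightarrow(a)$, it takes a genuinely different route from the paper. The paper writes $s=\sum_{r} s\comp\alpha_r\comp p_r^B$ and $i=\sum_{j} i\comp\beta_j\comp p_j^F$ (with $p_r^B, p_j^F$ the pinch maps onto the wedge summands), invokes the bi-linearity of the generalized Whitehead product from \cite{Arkowitz_gen_Whitehead} to expand $[s,i]$ as $\sum_{r,j}[s\comp\alpha_r\comp p_r^B,\,i\comp\beta_j\comp p_j^F]$, and kills each term via Proposition~\ref{gen_whitehead_vanish_under_composition}; the double-suspension and finiteness hypotheses are exactly what make that expansion legitimate. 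You instead route everything through the universal map $J_s$ of Proposition~\ref{map_inducing_brace}: since $\ad\{\mathrm{Id}_B,\mathrm{Id}_F\}_s=J_s\comp(e_{B'}\wedge e_{F'})$ is a map out of $B'\wedge F'=\bigvee_{r,j}(\Sigma B_r\wedge\Sigma F_j)$ whose restriction to the $(r,j)$-th summand is $\ad\{\alpha_r,\beta_j\}_s$, the coproduct property of the wedge in the pointed homotopy category finishes the argument. Your bookkeeping ($\alpha_r=\Sigma\sigma_r$, $\ad\alpha_r=e_{B'}\comp\sigma_r$, and $\sigma_r\wedge\tau_j$ being the summand inclusion after distributing $\wedge$ over $\vee$) all checks out. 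What your approach buys is notable: it sidesteps bi-linearity of the Whitehead product entirely, so it does not actually need the wedge to be finite (the bijection $[\bigvee_k A_k,Z]\cong\prod_k[A_k,Z]$ holds for arbitrary wedges), nor does it need the summands to be \emph{double} suspensions for this implication --- single suspensions suffice, since Corollary~\ref{equi_Whitehead_brace} only requires $B$ and $F$ to be suspensions. This directly addresses the limitation the paper flags in the remark following the proposition, where the restriction to finite wedges is attributed to the failure of bi-linearity under infinite sums.
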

\begin{proof}
    Since $B$ and $F$ are suspensions themselves, the equivalence of $(a)$ and $(b)$ follows from Corollary~\ref{equi_Whitehead_brace}. Therefore, it is enough to prove the equivalence of $(b)$ and $(c)$. 
    If we assume $\{,\}_s\equiv 0$, then $\{\alpha_r, \beta_j\}_s=0$ for all $(r,j)\in \mathcal{I}\times \mathcal{J}$ is immediate.
    Conversely, suppose $\{\alpha_r, \beta_j\}_s=0$ for all $(r,j)\in \mathcal{I}\times \mathcal{J}$. Now, to prove $\{,\}\equiv 0$, it is enough to prove $[s,i]=0$ by Corollary~\ref{equi_Whitehead_brace}. Note that, $s, i$ can be written as $s=\sum_{r\in \mathcal{I}} s_r$ and $i=\sum_{j\in \mathcal{J}}i_j$, where $s_r, i_j$ are the following compositions:
    \begin{align*}
        s_r:B=\vee_{r\in \mathcal{I}}\Sigma^2 B_r\xrightarrow{p_r^B}\Sigma^2 B_r\overset{\alpha_r}{\hookrightarrow} B\xrightarrow{s} E\\
        i_j:F=\vee_{j\in \mathcal{J}}\Sigma^2 F_j\xrightarrow{p_j^F}\Sigma^2 F_j\overset{\beta_j}{\hookrightarrow} F\xrightarrow{i} E
    \end{align*}
    Here, $p_r^B, p_j^F$ are the projection maps to $r$-th and $j$-th component of $B$ and $F$, respectively.
    Since $s_r$ and the $i_j$'s (in particular, $s$ and $i$) are finitely many maps from double suspension spaces, by the bi-linearity of the generalized Whitehead product \cite[Proposition 3.4]{Arkowitz_gen_Whitehead}, we have
    \begin{align*}
        [s,i]&=\left[\sum_{r\in \mathcal{I}}s_r, \sum_{j\in \mathcal{J}}i_j\right]=\sum_{(r,j)\in \mathcal{I}\times \mathcal{J}}[s_r, i_j]\\
        &=\sum_{(r,j)\in \mathcal{I}\times \mathcal{J}}[s\comp \alpha_r \comp p_r^B, i\comp \beta_j\comp p_j^F]
    \end{align*}
    Now, since $[s\comp \alpha_r, i\comp \beta_j]=i_*\{\alpha_r, \beta_j\}_s=0$ for all $(r,j)\in \mathcal{I}\times \mathcal{J}$, then by Proposition~\ref{gen_whitehead_vanish_under_composition}, we have $[s\comp \alpha_r \comp p_r^B, i\comp \beta_j\comp p_j^F]=0$ for all $(r,j)\in \mathcal{I}\times \mathcal{J}$. Hence 
    \[[s,i]=\sum_{(r,j)\in \mathcal{I}\times \mathcal{J}}[s\comp \alpha_r \comp p_r^B, i\comp \beta_j\comp p_j^F]=0.\qedhere\]
\end{proof}
\begin{remark}
    In general, the bi-linearity of the Whitehead product does not behave well under infinite sums, as noted in \cite{Brazas-IdentitiesForWhitehead}. For this reason, we have restricted our discussion to finite wedges. If the bi-linearity issue could be resolved, the proposition could potentially be extended to infinite wedges as well.
\end{remark}
If the suspension spaces in the above proposition are spheres of suitable dimensions, the following corollary follows from Theorem \ref{generalized_brace-splitting}.
\begin{coro}\label{base_fibre_product-wedge_sphere}
    Suppose $ F \overset{i}{\hookrightarrow} E\xrightarrow{p} B$ is a fibration with a homotopy section $s$, where $F=\vee_{r=1}^m S^{k_r}$ and $B=\vee_{j=1}^n S^{l_j}$ for $k_r, l_j\geq 2$. Then the map $\varphi_s=(\Omega s\cdot\Omega i):\Omega B\times \Omega F\to \Omega E$ is an $H$-splitting if and only if the James brace product $\{\alpha_j,\beta_r\}_s=0$ for $1\leq r\leq m$, $1\leq j\leq n$, and $\alpha_j:S^{l_j}\hookrightarrow B$, $\beta_r:S^{k_r}\hookrightarrow F$ are the inclusion maps.
\end{coro}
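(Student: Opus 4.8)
The plan is to obtain this as a formal consequence of Proposition~\ref{BraceInWedgeOverWedgeSpaces} and Theorem~\ref{generalized_brace-splitting}, the only preliminary step being to recognize the fibration as a special case of the setting of the former. Since $k_r, l_j \geq 2$, every sphere involved is a double suspension: writing $S^{k_r} = \Sigma^2 S^{k_r - 2}$ and $S^{l_j} = \Sigma^2 S^{l_j - 2}$, the spaces $F = \vee_{r=1}^m S^{k_r}$ and $B = \vee_{j=1}^n S^{l_j}$ are finite wedges of double suspensions, and the maps $\beta_r : S^{k_r} \hookrightarrow F$ and $\alpha_j : S^{l_j} \hookrightarrow B$ are precisely the wedge-summand inclusions denoted $\beta_j$ and $\alpha_r$ in Proposition~\ref{BraceInWedgeOverWedgeSpaces}.

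First I would apply Proposition~\ref{BraceInWedgeOverWedgeSpaces} to conclude that the three conditions $[s,i] = 0$, $\{,\}_s \equiv 0$, and ``$\{\alpha_j, \beta_r\}_s = 0$ for all $(j,r)$'' are equivalent. At this stage one should observe, as noted immediately after Definition~\ref{defn:genBraceProduct}, that when the suspension spaces are spheres the generalized brace product agrees with the James brace product; hence the third condition is exactly the hypothesis of the corollary, now read as a statement about the brace product $\{\alpha_j,\beta_r\}_s \in \pi_{l_j + k_r - 1}(F)$.

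Next I would invoke Theorem~\ref{generalized_brace-splitting}, which asserts that $\{,\}_s$ vanishes if and only if $\varphi_s = (\Omega s \cdot \Omega i) : \Omega B \times \Omega F \to \Omega E$ is an $H$-splitting. Chaining this with the equivalence from the previous paragraph gives $\varphi_s$ is an $H$-splitting $\iff$ $\{,\}_s \equiv 0$ $\iff$ $\{\alpha_j, \beta_r\}_s = 0$ for all $1 \leq r \leq m$, $1 \leq j \leq n$, which is the assertion.

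I do not expect a genuine obstacle here: the substantive content — that vanishing of the brace product on the generating inclusions forces $[s,i] = 0$, via bilinearity of the generalized Whitehead product over finite wedges together with Proposition~\ref{gen_whitehead_vanish_under_composition} — has already been established in Proposition~\ref{BraceInWedgeOverWedgeSpaces}. The only points demanding attention are bookkeeping ones: checking that the hypothesis $k_r, l_j \geq 2$ is exactly what is needed to express the wedge summands as double suspensions, and matching the two notations for the brace product of sphere inclusions.
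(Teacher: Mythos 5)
Your proposal is correct and is exactly the argument the paper intends: the paper itself presents this corollary as an immediate consequence of Proposition~\ref{BraceInWedgeOverWedgeSpaces} (noting that spheres of dimension at least $2$ are double suspensions) combined with Theorem~\ref{generalized_brace-splitting}. Your write-up simply makes explicit the bookkeeping the paper leaves implicit.
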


\subsection{Fibrations with vanishing brace product}
The following proposition allows us to construct a new fibration with vanishing (generalized) brace product from any given fibration that admits a section.
\begin{prop}\label{brace_vanish_bundle}
Suppose $F\overset{i}{\hookrightarrow} E\xrightarrow{p} B$ is a fibration with a homotopy section $s:B\to E$. Also, assume there exists a product space $\Tilde{B}=\prod_k B_k$ and a map $f: \Tilde{B}\to B$ such that $f\comp \iota_k:B_k\to B$ is null-homotopic for each $k$, where $\iota_k: B_k\hookrightarrow \Tilde{B}$ is the inclusion map. Then the pullback fibration $F\overset{j}{\hookrightarrow} f^*E\xrightarrow{\tilde{p}}\Tilde{B}$ has the homotopy section $\Tilde{s}=f^*s$ such that
$\{,\}_{\tilde{s}}\equiv 0.$
\end{prop}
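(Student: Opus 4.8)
The plan is to reduce the vanishing of $\{,\}_{\tilde{s}}$ to the vanishing of the generalized brace product on a wedge-like model of $\tilde{B}$, and then exploit the hypothesis that $f$ kills each factor $B_k$. First I would invoke Lemma~\ref{pullback_brace} (the pullback formula): for any $\alpha \in [\Sigma X, \tilde{B}]$ and $\beta \in [\Sigma Y, F]$ we have $\{\alpha, \beta\}_{\tilde{s}} = \{f_* \alpha, \beta\}_s$. Thus it suffices to show $f_* \alpha = 0$ in $[\Sigma X, B]$ for \emph{every} $\alpha : \Sigma X \to \tilde{B}$, or at least that $[s_* (f_* \alpha), i_* \beta] = 0$; the cleanest route is the former, reducing everything to a statement purely about the map $f : \tilde{B} \to B$.

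The key step is then the following claim: if $f \comp \iota_k$ is null-homotopic for each $k$, then $f_* = 0$ on $[\Sigma X, \prod_k B_k]$ for all $X$. Here I would use that a map $\Sigma X \to \prod_k B_k$ is determined up to homotopy by its components $\Sigma X \to B_k$, i.e. $[\Sigma X, \prod_k B_k] \cong \prod_k [\Sigma X, B_k]$, and that the inclusions $\iota_k$ generate the relevant homotopy in the sense that any element $\alpha$ can be written (for a finite product) as a sum $\alpha = \sum_k (\iota_k)_* (\mathrm{pr}_k \comp \alpha)$, where this addition is the group structure on $[\Sigma X, \tilde{B}]$ coming from the suspension coordinate. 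Applying $f_*$ and using functoriality together with $f \comp \iota_k \simeq \ast$ gives $f_* \alpha = \sum_k (f \comp \iota_k)_* (\mathrm{pr}_k \comp \alpha) = 0$. One has to be slightly careful that the decomposition $\alpha = \sum_k (\iota_k)_* \mathrm{pr}_k(\alpha)$ holds; this follows because the composite $\prod_k B_k \xrightarrow{(\mathrm{pr}_k)_k} \prod_k B_k$ equals the identity, combined with the fact that on a suspension source the diagonal-type comparison between $\prod [\Sigma X, B_k]$ and iterated sums of $(\iota_k)_*$ is exactly the standard identification of the product group structure — essentially the argument used in Lemma~\ref{cofib_seq} to split $j^*$, here applied inductively over the (finitely many) factors.

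Once $f_* \alpha = 0$ is established, the conclusion is immediate: $\{\alpha, \beta\}_{\tilde{s}} = \{f_* \alpha, \beta\}_s = \{0, \beta\}_s = i_*^{-1}[s_* 0, i_* \beta] = i_*^{-1}[0, i_*\beta] = i_*^{-1}(0) = 0$, using that the generalized Whitehead product (hence the brace product) is additive and vanishes when one argument is null, together with injectivity of $i_*$ from \eqref{i_*IsMonic}. Since $\alpha, \beta$ were arbitrary, $\{,\}_{\tilde{s}} \equiv 0$. It also remains to record the elementary facts that $\tilde{s} = f^* s$ is genuinely a homotopy section of $\tilde{p}$ (because $\tilde{p} \comp \tilde{s}$ is the pullback of $p \comp s \simeq \mathrm{Id}_B$, hence homotopic to $\mathrm{Id}_{\tilde{B}}$) and that the fibre of the pullback fibration is again $F$ with inclusion $j$ — both standard.

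I expect the main obstacle to be the claim that $f_* = 0$ on $[\Sigma X, \prod_k B_k]$, specifically the passage from "$f$ kills each factor $B_k$ separately" to "$f$ kills every map out of a suspension into the product." For a \emph{finite} product this is the additive-decomposition argument sketched above and is routine; if the paper genuinely wants infinite products $\prod_k B_k$, one must either restrict to finite products (as was done for wedges in Proposition~\ref{BraceInWedgeOverWedgeSpaces} for exactly this kind of bilinearity reason) or argue more carefully that an arbitrary $\alpha : \Sigma X \to \prod_k B_k$ factors, up to homotopy, through a finite subproduct when $X$ is a finite complex — or simply observe that $f_* \alpha$ has all components null, hence is null, since $[\Sigma X, \prod_k B_k] = \prod_k[\Sigma X, B_k]$ holds for arbitrary products and $(\mathrm{pr}_k \comp f)_* \alpha = (f \comp \iota_k)_* (\text{stuff})$ — the last identity being where one must check the bookkeeping. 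I would present the proof via this "check all components are zero" formulation, which sidesteps the infinite-sum subtlety entirely.
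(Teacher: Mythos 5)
Your main argument is exactly the paper's proof: decompose $\alpha = \sum_k (\iota_k)_*(\mathrm{pr}_k \comp \alpha)$ in the group $[\Sigma X, \tilde{B}] \cong \prod_k [\Sigma X, B_k]$, apply $f_*$ together with $f \comp \iota_k \simeq 0$ to get $f_*\alpha = 0$, and conclude via Lemma~\ref{pullback_brace} and $\{0,\beta\}_s = 0$. One caution about your preferred fallback: ``checking that $f_*\alpha$ has all components null'' does not typecheck, since $f_*\alpha$ lives in $[\Sigma X, B]$ and $B$ is not a product — the componentwise description applies only to $\alpha$ itself — so for an infinite index set you are still left with the infinite-sum subtlety you correctly identified, which the paper also glosses over by writing $\alpha = \prod_k \alpha_k$.
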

\begin{proof}
Let $\alpha \in \left[\Sigma X, \Tilde{B}\right]$ and $\beta\in \left[\Sigma Y, F\right]$. As $\Tilde{B}=\prod_k B_k$, $\left[\Sigma X, \Tilde{B}\right]=\prod_k \left[\Sigma X, B_k\right]$ and thus $\alpha=\prod_k \alpha_k$, where $\alpha_k= \iota_k\comp p_k\comp \alpha$ for $p_k:\Tilde{B}\to B_k$ is the projection onto the $k^{\text{th}}$ coordinate. Then as $f\comp \iota_k\simeq 0$ for all $k$,
\begin{align*}
    f_*(\alpha)&= \prod_k f_*(\alpha_k)
    = \prod_k f_*(\iota_k\comp p_k\comp \alpha)
    = \prod_k f\comp \iota_k\comp p_k\comp \alpha
    = 0
\end{align*}
Hence, from Lemma~\ref{pullback_brace}, we have 
$$\{\alpha,\beta\}_{\Tilde{s}}=\{\alpha,\beta\}_{f^*s}=\{f_*(\alpha,\beta)\}_s=\{0,\beta\}_s=0.$$
Since the above equality holds for any $\alpha\in [\Sigma X, \Tilde{B}]$ and $\beta\in [\Sigma Y, F]$, we have $\{,\}_{\Tilde{s}}\equiv 0$.
\end{proof}
\begin{coro}\label{product_sphere_brace}
Suppose $E$ is an $F$-bundle over a sphere $S^{l_1+\cdots+l_k}$ with homotopy section $s$. Also, let $f: S^{l_1}\times \cdots \times S^{l_k}\to S^{l_1+\cdots+l_k}$ be the pinching map (or any continuous map). Then $\{,\}_{f^*s}=0$ in the pullback bundle $f^*E$ over $S^{l_1}\times \cdots \times S^{l_k}$.
\end{coro}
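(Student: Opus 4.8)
The plan is to obtain this as an immediate application of Proposition~\ref{brace_vanish_bundle}. I would take the product space there to be $\tilde{B} = \prod_{r=1}^{k} B_r$ with $B_r = S^{l_r}$, so that $\tilde{B} = S^{l_1}\times\cdots\times S^{l_k}$, and let $f\colon \tilde{B}\to B = S^{l_1+\cdots+l_k}$ be the given pinching map (or the arbitrary continuous map in question). The fibration is $F\overset{i}{\hookrightarrow} E\xrightarrow{p} S^{l_1+\cdots+l_k}$ with its homotopy section $s$, which is exactly the input required by Proposition~\ref{brace_vanish_bundle}. The only hypothesis of that proposition that is not automatic is that $f\comp \iota_r\colon B_r\to B$ is null-homotopic for each $r$, where $\iota_r\colon S^{l_r}\hookrightarrow S^{l_1}\times\cdots\times S^{l_k}$ is the inclusion of the $r$-th factor (sending $x$ to the tuple with $x$ in slot $r$ and the base point in every other slot).

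Verifying this null-homotopy is the one genuine step. For the pinching map it is essentially definitional: the pinching map collapses the fat wedge $T = \{(x_1,\dots,x_k) : x_j = \ast \text{ for some } j\}$ to the base point, and for $k\ge 2$ the image of $\iota_r$ lies entirely inside $T$, so $f\comp \iota_r$ is literally the constant map. For a general continuous map $f$, I would instead argue by connectivity: the homotopy class $[f\comp \iota_r]$ lives in $\pi_{l_r}\bigl(S^{l_1+\cdots+l_k}\bigr)$, and since each $l_j\ge 1$ we have $l_r < l_1+\cdots+l_k$ whenever $k\ge 2$, so this group vanishes and $f\comp \iota_r$ is null-homotopic regardless of $f$. (The case $k = 1$ is vacuous, since then $f$ may be taken to be the identity and $f^*E = E$.)

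With that hypothesis in hand, Proposition~\ref{brace_vanish_bundle} applies directly: the pullback fibration $F\overset{j}{\hookrightarrow} f^*E\xrightarrow{\tilde p} S^{l_1}\times\cdots\times S^{l_k}$ carries the homotopy section $f^*s$, and its generalized brace product satisfies $\{,\}_{f^*s}\equiv 0$; in particular the James brace product vanishes there too. I do not expect any real obstacle; the only point needing care is the null-homotopy of $f\comp\iota_r$, and the connectivity observation above disposes of even the ``any continuous map'' version cleanly.
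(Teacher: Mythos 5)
Your proposal is correct and follows essentially the same route as the paper: both reduce to Proposition~\ref{brace_vanish_bundle} and verify its hypothesis by observing that $f\comp\iota_r$ represents an element of $\pi_{l_r}(S^{l_1+\cdots+l_k})=0$ since $l_r<l_1+\cdots+l_k$. Your extra remarks (the pinching map being literally constant on the fat wedge, and the caveat about $k=1$) are fine but not needed.
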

\begin{proof}
Note that, if $\iota_j:S^{l_j}\hookrightarrow S^{l_1}\times \cdots \times S^{l_k}$ denotes the inclusion for $j=1,2,\ldots, k$, then $f\comp \iota_j: S^{l_j}\to S^{l_1+l_2+\cdots+ l_k}$ is null homotopic because $\pi_{l_j}(S^{l_1 + l_2 +\cdots+ l_k}) = 0$ for all $j=1,2,\ldots, k$. Hence, the corollary follows from previous Proposition~\ref{brace_vanish_bundle}.
\end{proof}

Here is an example of a fibration for which the brace product vanishes, but the fibration is not trivial. This non-triviality is detected by the second Stiefel-Whitney class. 

\begin{eg}\label{eg:S^nBundleOverGenusSurface}
Consider any connected, oriented $S^n$ bundle $E$ over the genus $g$ surface $\Sigma_g$ with section $s$, where $g\geq 1$ and $n\geq 2$. Let $j:\Sigma_g^{(1)}\hookrightarrow \Sigma_g$ be the inclusion of the 1-skeleton $\Sigma_g^{(1)}=\vee_{n=1}^{2g}S^1$ of $\Sigma_g$. Then the pullback bundle $j^*(E)$ is an oriented bundle over the wedge of circles with section $j^*(s)$. Hence, it is trivial, as the fibre of $j^*(E)$ is $S^n$. Let $j^*(s)=(s_1,s_2): \Sigma_g^{(1)}\to \Sigma_g^{(1)}\times S^n$. But being $n\geq 2$, $\pi_1(S^n)=0$ and consequently, $s_2\simeq 0$. Thus, by Proposition~\ref{BraceInWedgeOverWedgeSpaces}, $\{,\}_{j^*(s)}\equiv 0$. Now, for $\alpha\in \pi_1(\Sigma_g)$, we may assume $\alpha=j_*(\alpha')$ for $\alpha'\in \pi_1(\Sigma_g^{(1)})$ (by cellular approximation). Then, for $\beta\in \pi_k(S^n)$, by Lemma~\ref{pullback_brace}, $\{\alpha,\beta\}_s=\{j_*(\alpha'), \beta\}_s=\{\alpha',\beta\}_{j^*(s)}=0$. But as $\Sigma_g$ does not have any higher homotopy groups, the James brace product $\{,\}_s\equiv 0$.

Suppose the oriented $S^n$ bundle $E$ over $\Sigma_g$ is the unit sphere bundle $S(\zeta)\xrightarrow{\pi} \Sigma_g$ of some real vector bundle $\zeta$ of rank $n+1$ over $\Sigma_g$ with structure group $SO(n+1)$. The equivalence class of such $\zeta$ corresponds bijectively to $\pi_1(SO(n+1))\cong \Z_2$, meaning that, up to equivalence, there are only two such bundles. One is the trivial bundle $\R^{n+1}\times \Sigma_g$, and the other is $f^*(\gamma^1_1\oplus \epsilon^{n-1})$, where $f:\Sigma_g\to \C\mathbb{P}^1(\cong S^2)$ is a degree $1$ map. Here, $\gamma_1^1$ and $\epsilon^{n-1}$ denote the tautological line bundle and the rank $n-1$ trivial bundle over $\C\mathbb{P}^1$, respectively. The non-equivalency of the above two bundles is captured by the second Stiefel-Whitney class $w_2$. Indeed, $w_2(\R^{n+1}\times \Sigma_g)=0$ but 
\begin{align*}
    w_2\left(f^*(\gamma^1_1\oplus \epsilon^{n-1})\right)&=f^*w_2\left(\gamma^1_1\oplus \epsilon^{n-1}\right)\\
    &=f^*w_2(\gamma^1_1)\neq 0,
\end{align*} 
because by construction, $w_2(\gamma^1_1)\neq 0$ and $f$ is a degree $1$ map, inducing an isomorphism on $H^2$.
Thus, for non-triviality, let us assume $\zeta$ is the equivalence class of $f^*(\gamma^1_1\oplus \epsilon^{n-1})$. In particular, $w_2(\zeta)\neq 0$.\\
\indent Let us calculate the total Stiefel-Whitney class $w(S(\zeta)):=w(T(S(\zeta)))$ of the tangent space of $S(\zeta)$, which is a homotopy invariant for $S(\zeta)$. 
Furthermore, the normal bundle of the inclusion $S(\zeta)\to \zeta$ is a trivial rank $1$ bundle. 
By horizontal and vertical decomposition, we have $T\zeta\cong T\Sigma_g\oplus \zeta$. Thus, 
$$T(S(\zeta))=T\zeta\big|_{S(\zeta)}\cong \pi^*(T\Sigma_g) \oplus \pi^*(\zeta).$$
Note that the Euler class of $T\Sigma_g$ is a multiple of the Euler characteristic $\chi(\Sigma_g)=2-2g$ and hence $w_2(T\Sigma_g)=0$. 
Also, the orientability of $\Sigma_g$ implies $w_1(T\Sigma_g)=0$. 
Consequently, $w(T\Sigma_g)=1$.
Therefore,
\begin{align*}
   w(T(S(\zeta)))&= w(\pi^*(T\Sigma_g) \oplus \pi^*(\zeta))\\
    &= \pi^*w(T\Sigma_g) \smile \pi^*w(\zeta)\\ 
    &= 1 \smile \pi^*w(\zeta)\\
    &= \pi^*w(\zeta).
\end{align*}
 As $\zeta$ is orientable, $w_1(\zeta)=0$. 
 But as $\pi^*$ is injective in $H^2$ and $w_2(\zeta)(\neq 0)\in H^2(\Sigma_g)$, $\pi^*w_2(\zeta)\neq 0$. Also, see that
 $$w(T(S^n\times \Sigma_g))=w(TS^n\oplus T\Sigma_g)=w(TS^n)\smile w(T\Sigma_g)=1\smile 1=1.$$
 Hence,
 $w(T(S(\zeta)))=1+\pi^*w_2(\zeta)\neq 1=w(T(S^n\times \Sigma_g))$ i.e.,
 $E=S(\zeta)\not \simeq S^n\times \Sigma_g$. 
\end{eg}

In the following example, we consider the free loop space fibrations over $S^2$. This provides an example of a fibration where the generalized brace product vanishes, and thus by Theorem~\ref{generalized_brace-splitting}, we obtain the $H$-splitting of the corresponding based loop space fibration.
\begin{eg}\label{Eg:FreeLoopSpaceFibrationOfS2} 
    Let us consider the fibration $\Omega^m S^2 \hookrightarrow \mathcal{L}_0^m S^2 \rightarrow S^2$ with the canonical section $s$, which sends each point of $S^2$ to the constant map that assigns this point on $S^m$. For $m = 1$, consider the maps $f = \mathrm{Id}_{S^2}$ and $g = \ad \mathrm{Id}_{S^2}$. Then, by  Theorem~\ref{brace_whitehead_in_free_loop}
    \[\left\{ f, g \right\}_s = \ad[f, \ad^{-1} g] = \ad[\mathrm{Id}_{S^2}, \mathrm{Id}_{S^2}] = 2 \ad \gamma \ne 0,\]
    where $\gamma: S^3 \rightarrow S^2$ is the Hopf map. On the other hand, for $m \ge 2$, the generalized brace product in this fibration vanishes. In other words, as a consequence of Theorem~\ref{brace_whitehead_in_free_loop}, it is enough to show that all the generalized Whitehead products 
    \[[\Sigma A, S^2] \times [\Sigma^{m+1} B, S^2] \rightarrow [\Sigma^{m+1} A\wedge B, S^2]\]
    vanish. Now, from the Hopf fibration $S^1 \hookrightarrow S^3 \xrightarrow{\gamma} S^2$, we have the long exact sequence
    \[\dots \rightarrow \left[ \Sigma B, \Omega^m S^1 \right] \rightarrow \left[ \Sigma B, \Omega^m S^3 \right] \xrightarrow{\gamma_*} \left[ \Sigma B, \Omega^m S^2 \right] \rightarrow \left[ \Sigma B, \Omega^{m-1} S^1 \right] \rightarrow \dots \]
    Since $\Omega^i S^1$ is contractible for $i \ge 2$ and is homotopy equivalent to $\mathbb{Z}$ for $i = 1$, it follows that $\left[ \Sigma B, \Omega^i S^1 \right] = 0$ for $i \ge 1$. In particular, for $m \ge 2$, we have 
    \[\left[ \Sigma^{m+1} B, S^2 \right] = \left[ \Sigma B, \Omega^m S^2 \right] \overset{\gamma_*}{\cong} \left[ \Sigma B, \Omega^m S^3 \right] = \left[ \Sigma^{m+1} B, S^3 \right].\]
    Say, $f : \Sigma A \rightarrow S^2 , g : \Sigma^{m+1} B \rightarrow S^2$ are given. Using the above identification, we then have $\tilde{g} : \Sigma^{m+1} B \rightarrow S^3$ such that $g \simeq \gamma \comp \tilde{g}$. Now, it follows from \cite{Whitehead-ProductsInHomotopyGroups} that $[\mathrm{Id}_{S^2}, \gamma] = 0$. But then by Proposition~\ref{gen_whitehead_vanish_under_composition}, we have $[f, g] = [\mathrm{Id}_{S^2} \comp f, \gamma \comp \tilde{g}] = 0$. Consequently, the generalized brace product vanishes identically.
\end{eg}

\begin{remark}
    Hansen asked in \cite{Hansen} whether $\mathcal{L}_0^2 S^2 \simeq S^2 \times \Omega^2 S^2$. As a consequence of the above example and Theorem~\ref{generalized_brace-splitting}, we have $\Omega \mathcal{L}_0^m S^2 \simeq \Omega S^2 \times \Omega^{m+1} S^2$ as $H$-spaces for $m \ge 2$, and consequently the (quasi) Lie algebras $\left( \pi_*\left( \mathcal{L}_0^2 S^2 \right), [~,~] \right)$ and $\left( \pi_*\left( S^2 \times \Omega^2 S^2 \right), [~,~] \right)$ are abstractly isomorphic. Thus, to detect whether the spaces are homotopy equivalent or not, we possibly need stronger invariants.
\end{remark}

\section{Localization of Spaces and Brace Product} \label{sec:genBraceLocalization}
Let us recall some relevant definitions for localization of spaces. Suppose $X$ is a simply connected CW-type space and $\Pa$ is a set of some primes. Consider 
$$K = K_{\mathcal{P}}:=\{m/k:m,k\in \mathbb{Z},\ \text{gcd}(k,p)=1\ \forall p\in \Pa \}\overset{\text{sub-ring}}{\subseteq}\Q.$$

\begin{defi}\label{defn:PLocalSpace}
    A simply connected space $X$ is called \emph{$\mathcal{P}$-local}, if $\pi_i(X)$ is a $K$-module for all $i$.
\end{defi}

\begin{defi}\label{defn:PLocalEquivalence}
    Given two simply connected spaces $X$ and $Y$, a map $f: X \to Y$ is called a \emph{$\mathcal{P}$-local equivalence} if $\pi_*(f) \otimes 1_K : \pi_\ast(X)\otimes K \rightarrow  \pi_\ast(Y) \otimes K$ is an isomorphism.
\end{defi}

Equivalent definitions of $\mathcal{P}$-local spaces and $\mathcal{P}$-local equivalences in terms of reduced homology follow from \cite[Thm 9.3]{Felix_rational} and \cite[Thm 9.6]{Felix_rational} respectively.

\begin{defi}\label{defn:PLocalization}
    Given a simply connected space $X$, a \emph{$\mathcal{P}$-localization} of $X$ is given by a map $\varphi_X : X \rightarrow X_{\mathcal{P}}$, where $X_{\mathcal{P}}$ is a $\mathcal{P}$-local space, and $\varphi_X$ is a $\mathcal{P}$-local equivalence.
\end{defi}

In case $\Pa=\phi$, we have $K=\Q$ and a $\Pa$-localization is called a \textit{rational space of $X$}. It is denoted by $X_\Q$ instead of $X_\phi$. For any $\Pa$, it follows from \cite[Thm 9.7]{Felix_rational} that given any simply connected CW complex, there is a relative CW complex $(X_{\mathcal{P}}, X)$, such that $X_{\mathcal{P}}$ is $\mathcal{P}$-local and the inclusion $X \hookrightarrow X_{\mathcal{P}}$ is a $\mathcal{P}$-localization. Moreover, if $f: X\to Z$ is a continuous map, where $Z$ is any $\Pa$-local space, then $f$ can be extended to a unique (up to homotopy) map $\Tilde{f}: X_\Pa\to Z$ (see \cite[Thm 9.7(ii)]{Felix_rational}). As a consequence, we have a canonical isomorphism $[S^n_{\mathcal{P}}, X_{\mathcal{P}}] \cong \pi_n(X_{\mathcal{P}})$. Furthermore, $[S^n_\mathcal{P}, X_\mathcal{P}]$ can be given a $K$-module structure which makes the bijection a $K$-module isomorphism. A $\Pa$-localization of $X$ is unique up to homotopy equivalence relative to $X$. Subsequently, the following results will be useful in our case.
\begin{lemma}{\cite[Cor 3.2, Proposition 3.3]{MimuraNishidaToda-Localization}}\label{prop_local_spaces}
Suppose $X$ and $Y$ are two simply connected spaces, with $\mathcal{P}$-localizations $\varphi_X : X \hookrightarrow X_{\mathcal{P}}$ and $\varphi_Y : Y \hookrightarrow Y_{\mathcal{P}}$. Then, the following maps are $\mathcal{P}$-localizations.
\begin{enumerate}[\quad(a)]
    \item $\varphi_X \vee \varphi_Y : X \vee Y \hookrightarrow X_\Pa\vee Y_\Pa$
    \item $\Sigma \varphi_X : \Sigma X \hookrightarrow \Sigma (X_{\mathcal{P}})$
    \item $\Omega \varphi_X : \Omega X \hookrightarrow \Omega \left( X_{\mathcal{P}} \right)$, provided $X$ is $2$-connected.
    \item $\varphi_X \times \varphi_Y : X \times Y \hookrightarrow X_{\mathcal{P}} \times Y_{\mathcal{P}}$
    \item $\varphi_X \wedge \varphi_Y : X \wedge Y \hookrightarrow X_\Pa \wedge Y_\Pa$.
\end{enumerate}
\end{lemma}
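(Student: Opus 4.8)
The plan is to reduce everything to the homological characterizations of $\Pa$-local spaces and $\Pa$-local equivalences recalled just above (following \cite[Thm 9.3]{Felix_rational} and \cite[Thm 9.6]{Felix_rational}): a simply connected space $Z$ is $\Pa$-local exactly when each reduced homology group $\tilde H_i(Z;\Z)$ is a $K$-module, and a map $f$ between simply connected spaces is a $\Pa$-local equivalence exactly when $\tilde H_*(f;K)$ is an isomorphism. Since $K\subseteq \Q$ is a localization of $\Z$, it is a PID and is flat over $\Z$, so $H_*(-;K)\cong H_*(-;\Z)\otimes K$ and Künneth holds over $K$. For each of the five constructions I would check that (i) the space assembled out of $X_\Pa$ (and $Y_\Pa$) is simply connected, (ii) its reduced homology consists of $K$-modules, so it is $\Pa$-local, and (iii) the relevant map induces an isomorphism on $\tilde H_*(-;K)$; step (iii) will follow from naturality of the standard homology computation together with the hypothesis that $\varphi_X$ and $\varphi_Y$ are $\Pa$-local equivalences, so no new geometric input is needed.

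For parts (a) and (b) I would note that $X\vee Y$ and $\Sigma X$ are simply connected (van Kampen, respectively the connectivity of a suspension), and invoke the natural isomorphisms $\tilde H_*(X\vee Y;K)\cong \tilde H_*(X;K)\oplus \tilde H_*(Y;K)$ and $\tilde H_n(\Sigma X;K)\cong \tilde H_{n-1}(X;K)$. These immediately show that $X_\Pa\vee Y_\Pa$ and $\Sigma(X_\Pa)$ have $K$-module homology, hence are $\Pa$-local, and that $\varphi_X\vee\varphi_Y$ and $\Sigma\varphi_X$ are homology isomorphisms over $K$. For parts (d) and (e) I would use that $X\times Y$ and $X\wedge Y$ are simply connected and that Künneth over the PID $K$ expresses $\tilde H_*(X\times Y;K)$ and $\tilde H_*(X\wedge Y;K)$ as sums of tensor products and $\mathrm{Tor}$ terms of the $K$-modules $\tilde H_*(X;K)$ and $\tilde H_*(Y;K)$; since tensor and $\mathrm{Tor}$ of $K$-modules are $K$-modules, $X_\Pa\times Y_\Pa$ and $X_\Pa\wedge Y_\Pa$ are $\Pa$-local, and naturality of Künneth makes $\varphi_X\times\varphi_Y$ and $\varphi_X\wedge\varphi_Y$ $K$-homology isomorphisms. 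Alternatively, (e) can be deduced from (a) and (d): the cofibration sequence $X\vee Y\to X\times Y\to X\wedge Y$ maps to $X_\Pa\vee Y_\Pa\to X_\Pa\times Y_\Pa\to X_\Pa\wedge Y_\Pa$, and comparing the two long exact homology sequences with $K$-coefficients via the five lemma identifies $\varphi_X\wedge\varphi_Y$ as a $\Pa$-local equivalence onto a $\Pa$-local space.

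For part (c) I would abandon homology and argue directly from the homotopy-group definition, which is where the extra hypothesis is used. If $X$ is $2$-connected then so is $X_\Pa$, since $\pi_i(\varphi_X)\otimes 1_K$ is an isomorphism and $\pi_{\le 2}(X)=0$; consequently $\Omega X$ and $\Omega(X_\Pa)$ are simply connected, so they lie in the class where the localization theory applies. Then $\pi_i(\Omega(X_\Pa))\cong \pi_{i+1}(X_\Pa)$ is a $K$-module for all $i$, so $\Omega(X_\Pa)$ is $\Pa$-local, and $\pi_i(\Omega\varphi_X)\otimes 1_K = \pi_{i+1}(\varphi_X)\otimes 1_K$ is an isomorphism, so $\Omega\varphi_X$ is a $\Pa$-localization.

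I expect part (c) to be the only genuinely delicate point: the $2$-connectedness is exactly what keeps $\Omega X$ simply connected, and attempting instead to run a homology argument for the loop space (via an Eilenberg--Moore or loop-space homology spectral sequence) would be considerably more involved, so routing (c) through homotopy groups is the key simplification. Everything else — the simple-connectivity verifications, the identifications of homology of wedges, suspensions, products and smashes, and the naturality checks — is routine.
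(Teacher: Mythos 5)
The paper does not actually prove this lemma; it is quoted verbatim from Mimura--Nishida--Toda \cite[Cor 3.2, Prop 3.3]{MimuraNishidaToda-Localization}, so there is no in-paper argument to compare against. Your proof is correct and is a reasonable self-contained substitute. The route differs from the cited source in emphasis: Mimura--Nishida--Toda deduce these statements from their general theorem that localization preserves fibration and cofibration sequences, whereas you verify each construction directly against the homological characterization of $\Pa$-local spaces and $\Pa$-local equivalences (for (a), (b), (d), (e)) and against the homotopy-group definition (for (c)). Your treatment of (c) is exactly right, and correctly isolates the role of $2$-connectedness: it forces $\pi_2(X_\Pa)\cong\pi_2(X_\Pa)\otimes K\cong\pi_2(X)\otimes K=0$, so both $\Omega X$ and $\Omega(X_\Pa)$ stay simply connected and the homotopy-theoretic definitions apply verbatim.

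One point to tighten in the write-up of (a), (d), (e): the locality of the target space must be checked on \emph{integral} reduced homology (that $\tilde H_i(-;\Z)$ is a $K$-module), whereas $\tilde H_i(-;K)$ is tautologically a $K$-module and proves nothing. As written, you phrase the Künneth and wedge computations with $K$-coefficients. The fix is immediate and uses only ingredients you already have on the table: run the same natural isomorphisms with $\Z$-coefficients, i.e.\ $\tilde H_*(X_\Pa\vee Y_\Pa;\Z)\cong\tilde H_*(X_\Pa;\Z)\oplus\tilde H_*(Y_\Pa;\Z)$, and for the product and smash use the integral K\"unneth theorem, noting that $M\otimes_{\Z}N$ and $\mathrm{Tor}^{\Z}_1(M,N)$ are $K$-modules whenever $M$ is (multiplication by any integer coprime to $\Pa$ remains invertible). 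The $K$-coefficient versions are then reserved for verifying that $\varphi_X\vee\varphi_Y$, $\varphi_X\times\varphi_Y$, $\varphi_X\wedge\varphi_Y$ are homology isomorphisms over $K$, via naturality (or, for the smash, your five-lemma comparison of the two cofibration sequences). With that adjustment the argument is complete.
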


Mimura-Nishida-Toda \cite[Thm 3.1]{MimuraNishidaToda-Localization} also showed that localization preserves fibrations and cofibrations up to homotopy. In our case, we will be particularly interested in rationalization and use the following convention throughout this section: \textit{when discussing rationalization (or localization), we will assume that all spaces are simply connected, even if this is not stated explicitly}. 

For a given fibration $F\overset{i}{\hookrightarrow} E\overset{p}{\to} B$ with a section $s$, consider rational spaces $F_\Q,E_\Q, B_\Q$ with rationalized maps $i_\Q,p_\Q, s_\Q$ so that 
$$F_\Q\overset{i_\Q}{\rightarrow} E_\Q\xrightarrow{p_\Q}B_\Q$$ 
is a fibration with homotopy section $s_\Q$ and has the following homotopy commutative diagram
\begin{equation}\label{Diagram:rationalization}
    \begin{tikzcd}
    F \arrow[r, "i", hook] \arrow[d, "\varphi_F"] & E \arrow[r, "p"'] \arrow[d, "\varphi_E"] & B \arrow[d, "\varphi_B"] \arrow[l, bend right=30,"s"']\\
    F_\Q \arrow[r,"i_\Q"]  & E_\mathbb{Q} \arrow[r, "p_\mathbb{Q}"]   & B_\mathbb{Q} \arrow[l, bend left=30, "s_\Q"] 
\end{tikzcd}
\end{equation}
We will refer to $F_\Q\overset{i_\Q}{\rightarrow} E_\Q\xrightarrow{p_\Q}B_\Q$ as \emph{the rationalized fibration} of the given fibration $F\overset{i}{\hookrightarrow} E\overset{p}{\to} B$.

The following is an example of a fibration with vanishing James brace product in the rationalized fibration setting.
\begin{eg}\label{eg:oddSphereBundleJamesVanish}
Consider the rationalized fibration of any $S^n$ bundle over a simply connected space $X$ with section $s$, where $n>1$ is odd. As the homotopy groups of $S^n_\Q$ are concentrated in degree $n$, the only relevant brace product is $\{\alpha,\beta\}_{s_\Q}$ for $\alpha\in \pi_l(X_\Q), \beta\in \pi_n(S^n_\Q)$. Note that $\{\alpha,\beta\}_{s_\Q}\in \pi_{n+l-1}(S^n_\Q)=0$ unless $l=1$. But as $\pi_1(X_\Q)=0$, we have $\{,\}_{s_\Q}\equiv 0$.
\end{eg}

More generally, one could take any fibration over a simply connected space $X$ with fibre $K(G,n)$, an Eilenberg-Maclane space, and a section $s$. Then a similar argument implies that $\{,\}_{s_{\mathbb{Q}}}\equiv 0$. Such fibrations always appear in Postnikov towers of spaces.

In general, if the base and fiber of a fibration are rationally equivalent to a product of simply connected rational suspension spaces, then the vanishing of the generalized brace can be determined using the following proposition.
\begin{prop}\label{prop:RationalBraceInProductSpaces}
    Suppose $ F\overset{i}{\hookrightarrow} E\xrightarrow{p} B$ is a fibration of simply connected spaces and it admits a homotopy section $s$. Also, suppose 
    $$B_\Q\underset{\simeq}{\xrightarrow{f_B}} \prod_{r\in\mathcal{I}} \Sigma (B_r)_\Q\text{ and }F_\Q\underset{\simeq}{\xrightarrow{f_F}} \prod_{j\in\mathcal{J}} \Sigma (F_j)_\Q$$ 
    with homotopy inverses $g_B, g_F$ respectively and $\mathcal{I}, \mathcal{J}$ are some finite indexing sets. If
    \begin{align*}
        \alpha_r&:\Sigma (B_r)_\Q \hookrightarrow \prod_{r\in\mathcal{I}} \Sigma (B_r)_\Q \xrightarrow{g_B} B_\Q \text{ and}\\
        \beta_j&:\Sigma (F_j)_\Q \hookrightarrow \prod_{j\in\mathcal{J}} \Sigma (F_j)_\Q \xrightarrow{g_F} F_\Q
    \end{align*}
    are the composite maps, then the following are equivalent.
    \begin{enumerate}[\quad(a)]
        \item $\{,\}_{s_\Q}\equiv 0$.
        \item $\{\alpha_r, \beta_j\}_{s_\Q}=0$ for all $(r,j) \in \mathcal{I} \times \mathcal{J}$.
    \end{enumerate}
\end{prop}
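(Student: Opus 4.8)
The implication $(a)\Rightarrow(b)$ is immediate, since one only has to evaluate $\{,\}_{s_\Q}$ on the pair $(\alpha_r,\beta_j)$. The content is $(b)\Rightarrow(a)$, and the plan is to imitate the proof of Proposition~\ref{BraceInWedgeOverWedgeSpaces}, with the \emph{product} decompositions of $B_\Q$ and $F_\Q$ taking over the role played there by the wedge decompositions. Since $i_\Q$ is monic (see \eqref{i_*IsMonic}), it is enough to show that $[\,s_\Q\comp f,\ i_\Q\comp g\,]=0$ in $[\Sigma(X\wedge Y),E_\Q]$ for arbitrary $f:\Sigma X\to B_\Q$ and $g:\Sigma Y\to F_\Q$.

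The first step is a decomposition of $f$ and $g$. Put $f_r:=\mathrm{pr}_r\comp f_B\comp f:\Sigma X\to \Sigma(B_r)_\Q$, where $\mathrm{pr}_r$ is the projection onto the $r$-th factor, and similarly $g_j:=\mathrm{pr}_j\comp f_F\comp g:\Sigma Y\to\Sigma(F_j)_\Q$. Because $\Sigma X$ is a co-$H$ space, $[\Sigma X,-]$ is group-valued, left composition by any map is a homomorphism, and $[\Sigma X,\prod_r\Sigma(B_r)_\Q]\cong\prod_r[\Sigma X,\Sigma(B_r)_\Q]$. Using $f_B\comp g_B\simeq\mathrm{Id}$ together with $\mathrm{pr}_s\comp\iota_r\simeq\mathrm{Id}$ when $r=s$ and $\mathrm{pr}_s\comp\iota_r\simeq\text{const}$ when $r\ne s$, a short diagram chase shows that $\sum_{r\in\mathcal{I}}\alpha_r\comp f_r$ and $f$ have the same image under the composite isomorphism above; hence $f\simeq\sum_{r\in\mathcal{I}}\alpha_r\comp f_r$ (a finite sum, as $\mathcal{I}$ is finite), and likewise $g\simeq\sum_{j\in\mathcal{J}}\beta_j\comp g_j$.

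Consequently $s_\Q\comp f\simeq\sum_r(s_\Q\comp\alpha_r)\comp f_r$ and $i_\Q\comp g\simeq\sum_j(i_\Q\comp\beta_j)\comp g_j$, again because left composition is a homomorphism. Expanding by bi-additivity of the generalized Whitehead product (\cite[Prop.\ 3.4]{Arkowitz_gen_Whitehead}; legitimate here because both index sets are finite and all the sources involved are suspensions), we obtain
\[
[\,s_\Q\comp f,\ i_\Q\comp g\,]=\sum_{(r,j)\in\mathcal{I}\times\mathcal{J}}\bigl[(s_\Q\comp\alpha_r)\comp f_r,\ (i_\Q\comp\beta_j)\comp g_j\bigr].
\]
By hypothesis $(b)$ we have $[\,s_\Q\comp\alpha_r,\ i_\Q\comp\beta_j\,]=i_{\Q,*}\{\alpha_r,\beta_j\}_{s_\Q}=0$ for every $(r,j)$; moreover, by Lemma~\ref{prop_local_spaces}(b) we may identify $\Sigma(B_r)_\Q\simeq\Sigma\bigl((B_r)_\Q\bigr)$ and $\Sigma(F_j)_\Q\simeq\Sigma\bigl((F_j)_\Q\bigr)$, so that $f_r$ and $g_j$ are maps between suspensions. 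Thus Proposition~\ref{gen_whitehead_vanish_under_composition} applies to each summand and makes it vanish, whence $[\,s_\Q\comp f,\ i_\Q\comp g\,]=0$. As $f,g$ were arbitrary, $\{,\}_{s_\Q}\equiv 0$.

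The routine parts are the verification $f\simeq\sum_r\alpha_r\comp f_r$ (a formal consequence of the universal property of the product and the co-$H$ structure on $\Sigma X$) and the bookkeeping needed to regard rationalizations of suspensions as suspensions. The step requiring the most care — and the one literally inherited from the finite-wedge setting — is the bi-additive expansion of the generalized Whitehead product over the finite index sets $\mathcal{I}$ and $\mathcal{J}$; as observed in the remark following Proposition~\ref{BraceInWedgeOverWedgeSpaces}, finiteness is essential, which is exactly why the statement is restricted to finite products.
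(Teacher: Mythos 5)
Your reduction to showing $[\,s_\Q\comp f,\ i_\Q\comp g\,]=0$, and your decomposition $f\simeq\sum_{r}\alpha_r\comp f_r$, $g\simeq\sum_j\beta_j\comp g_j$ (which is correct: the projections and $g_B$ act as homomorphisms on $[\Sigma X,-]$, so the identity can be checked componentwise), follow the same pattern as the paper's argument. But there is a genuine gap at the bi-additive expansion. Arkowitz's distributivity \cite[Prop.\ 3.4]{Arkowitz_gen_Whitehead}, as used throughout this paper (see the hypotheses of Proposition~\ref{BraceInWedgeOverWedgeSpaces}), requires the summed classes to live in $[\Sigma A, Z]$ with $A$ \emph{itself} a suspension, i.e.\ the source must be a \emph{double} suspension; this is what makes the group abelian and kills the correction terms. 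In your setting the sources are $\Sigma X$ and $\Sigma Y$ for \emph{arbitrary} $X,Y$, and $[\Sigma X, B_\Q]$ need not even be abelian (its commutators are governed by Samelson/Whitehead products, which survive rationalization — e.g.\ $[\Sigma(S^1\times S^1), S^2_\Q]$ is non-abelian). So the identity $\bigl[\sum_r u_r,\ \sum_j v_j\bigr]=\sum_{r,j}[u_r,v_j]$ is not justified, and this is precisely the step your whole argument rests on; "all the sources involved are suspensions" is not a sufficient hypothesis.

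The paper closes exactly this hole by first invoking Remark~\ref{remark:UniversalBrace}: the generalized brace product vanishes identically as soon as the single universal class $\{f_0,g_0\}_{s_\Q}$ vanishes, where $f_0=\ad^{-1}(\mathrm{Id}_{\Omega B_\Q})\colon\Sigma\Omega B_\Q\to B_\Q$ and $g_0=\ad^{-1}(\mathrm{Id}_{\Omega F_\Q})\colon\Sigma\Omega F_\Q\to F_\Q$. Since $B$ and $F$ are simply connected, $\Omega B_\Q$ and $\Omega F_\Q$ are connected, so $\Sigma\Omega B_\Q$ and $\Sigma\Omega F_\Q$ are simply connected rational suspensions, hence wedges of spheres of dimension $\ge 2$, hence double suspensions — and only then is the decomposition-plus-bilinearity argument you wrote legitimate. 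If you insert this reduction before your expansion (replacing the arbitrary $f,g$ by $f_0,g_0$), the rest of your proof goes through verbatim and coincides with the paper's.
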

\begin{proof}
    If we assume $\{,\}_{s_\Q}\equiv 0$, then it follows immediately that $\{\alpha_r, \beta_j\}_{s_\Q}=0$ for all $(r,j)\in \mathcal{I}\times \mathcal{J}$.
    Conversely, suppose $\{\alpha_r, \beta_j\}_{s_\Q}=0$ for all $(r,j)\in \mathcal{I}\times \mathcal{J}$. 
    To show that the brace product $\{,\}_{s_\Q}$ vanishes, it suffices - by Remark~\ref{remark:UniversalBrace} - to verify that $\{f_0, g_0\}_{s_\Q}=0$, or equivalently, that $[s_\Q\comp f_0, i_\Q\comp g_0]=0$, where $f_0:\Sigma\Omega B_\Q\to B_\Q$ and $g_0:\Sigma\Omega F_\Q\to F_\Q$ are the adjoints of the identity maps on $\Omega B_\Q$ and $\Omega F_\Q$, respectively. Since $B$ and $F$ are simply connected, both $\Sigma\Omega B_\Q$ and $\Sigma\Omega F_\Q$ are double suspension spaces (as rational suspension spaces are homotopy equivalent to wedges of spheres \cite[Thm 24.5]{Felix_rational}). 
    Since $g_B\comp f_B\simeq \mathrm{Id}_{B_\Q}$, we obtain
    \begin{align*}
        s_\Q\comp f_0&\simeq s_\Q\comp g_B\comp f_B\comp f_0\\
        &\simeq \sum_{r\in \mathcal{I}} s_\Q \comp \alpha_r\comp f_0^r, 
    \end{align*}
    where $f_0^r$ is the $r$-th component of the composite $f_B\comp f_0:\Sigma \Omega B_\Q\to \prod_{r\in\mathcal{I}} \Sigma (B_r)_\Q$. Note that the above summation is taken in the generalized homotopy group, which is well-defined because the domain is a double suspension, making the group commutative. Similarly, 
    $$i_\Q\comp g_0\simeq \sum_{j\in \mathcal{J}} i_\Q\comp \beta_j\comp g_0^j,$$
    where $g_0^j$ is the $j$-th component of the composite $f_F\comp g_0:\Sigma\Omega F_\Q\to \prod_{j\in\mathcal{J}} \Sigma (F_j)_\Q$.
    Therefore, by the bi-linearity of the generalized Whitehead product, 
    $$[s_\Q\comp f_0, i_\Q\comp g_0]=\sum_{(r,j)\in \mathcal{I}\times \mathcal{J}}[s_\Q\comp \alpha_r\comp f_0^r, i_\Q\comp \beta_j\comp g_0^j].$$
    Since $[s_\Q\comp \alpha_r, i_\Q\comp \beta_j]=(i_\Q)_*\{\alpha_r, \beta_j\}_{s_\Q}=0$, it follows from Proposition~\ref{gen_whitehead_vanish_under_composition} that $[s_\Q\comp \alpha_r\comp f_0^r, i_\Q\comp \beta_j\comp g_0^j]=0$ for each $(r,j)\in \mathcal{I}\times\mathcal{J}$. Therefore, we conclude that  $[s_\Q\comp f_0, i_\Q\comp g_0]=0$, and consequently, the brace product $\{,\}_{s_\Q}$ vanishes identically for the rationalized fibration.
\end{proof}

Using the above proposition, we obtain the following example of a rationalized fibration with a vanishing generalized brace product. 
\begin{eg}\label{eg:LieGroup}
Consider the rationalized fibration of any $S^n$-bundle over a simply connected compact Lie group $G$ with a section $s$, where $n > 1$. For example, one may take the unit sphere bundle of the tangent bundle of $G$, suitably embedded in some Euclidean space, ensuring the existence of a section. It is known \cite[Cor 1, \S 3, Chap V]{Serre-Groupes} that $G$ is rationally homotopy equivalent to a product of odd-dimensional spheres. Specifically, there exists a homotopy equivalence $f:G_\Q\to S^{k_1}_\Q \times S^{k_2}_\Q\times \cdots \times S^{k_r}_\Q$ with homotopy inverse $g:S^{k_1}_\Q \times S^{k_2}_\Q\times \cdots \times S^{k_r}_\Q \to G_\Q$,  
where each $k_i$ is an odd number greater than $1$. Let 
$$\alpha_l:\Sigma S^{k_l-1}_\Q \simeq S^{k_l}_\Q \hookrightarrow S^{k_1}_\Q \times S^{k_2}_\Q\times \cdots \times S^{k_r}_\Q \xrightarrow{g} G_\Q$$ 
be the composite map for $l=1,2,...,r$. Now, observe that $\{\alpha_l, \mathrm{Id}_{S^n_\Q}\}_{s_\Q}=0$ for each $l$. Indeed,
$$\{\alpha_l, \mathrm{Id}_{S^n_\Q}\}_{s_\Q}\in [\Sigma (S^{n-1}_\Q\wedge S^{k_l-1}_\Q), S^n_\Q]=[S^{n+k_l-1}_\Q, S^n_\Q]=\pi_{n+k_l-1}(S^n_\Q)=0,$$  
since each $k_l$ is an odd number greater than $1$ and $\pi_{n+k_l-1}(S^n_\Q)=0$ unless $k_l=1$ with $n$ odd, or $k_l=1, n$ with $n$ even. Hence, by the above proposition, the generalized brace $\{,\}_{s_\Q}$ vanishes. Therefore, by Theorem~\ref{generalized_brace-splitting}, the based loop space fibration of this rationalized fibration splits as $H$-spaces.
\end{eg}

In the above example, it is not clear whether the James brace product vanishes at the level of the original fibration (before rationalization). We now present an example where the brace product is non-vanishing for the original fibration, but it vanishes in the rationalized fibration.
\begin{eg}
    Consider the trivial fibration $S^{n} \rightarrow S^{n} \times S^{n} \rightarrow S^{n}$ for odd $n$, with section the diagonal map $\Delta: S^n\to S^n\times S^n$. Then, $\left\{ \mathrm{Id}_{S^{n}}, \mathrm{Id}_{S^{n}}\right\}_\Delta =\left[\mathrm{Id}_{S^{n}}, \mathrm{Id}_{S^{n}}\right] \in \pi_{2n-1}(S^{n})$. By graded commutativity of the Whitehead product, we know that $2\left[\mathrm{Id}_{S^{n}}, \mathrm{Id}_{S^{n}}\right]=0$. Moreover, Gilmore \cite{Gilmore-SomeWhiteheadOnOddSpheres} proved that if $n+1\neq 2^r$ for any $r$, then $\left[\mathrm{Id}_{S^{n}}, \mathrm{Id}_{S^{n}}\right]$ is a non-zero $2$-torsion element, i.e., in that case the James brace product is non-vanishing. However, the James brace product in the rationalized fibration is zero, as $\pi_{2n-1}^\mathbb{Q}(S^{n}) = 0$. Note that the same argument works (using Corollary~\ref{brace_in_sphere_over_sphere}) for any fibration $S^{2k+1} \to E \to S^n$ with a section, for which the total space is \emph{not} split and $n>1$.
\end{eg}

At the level of localized spaces, we obtain the following equivalence relating the vanishing of the generalized Whitehead product. This will be useful to prove the next theorem.
\begin{lemma}\label{RationalGenWhiteAndWhitehead}
    Suppose $A$ is a simply connected space with $r: A\to A_\Pa$ as its $\Pa$-localization. Also, suppose $\alpha\in [\Sigma X, A_\Pa]$ and $\beta\in [\Sigma Y, A_\Pa]$. Then for the unique elements $\alpha_\Pa\in [\Sigma X_\Pa, A_\Pa]$ and $\beta_\Pa \in [\Sigma Y_\Pa, A_\Pa]$, the generalized Whitehead product $[\alpha_\Pa, \beta_\Pa]=0$ whenever $[\alpha,\beta]=0$. 
\end{lemma}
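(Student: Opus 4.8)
The plan is to reduce the statement to a factorization property of the generalized Whitehead product together with the universal property of localization. First I would recall the characterization of the vanishing of a generalized Whitehead product from Arkowitz \cite[Proposition 5.1]{Arkowitz_gen_Whitehead}, already invoked above: $[\alpha,\beta]=0$ in $[\Sigma(X\wedge Y),A_\Pa]$ if and only if the map $\alpha\vee\beta:\Sigma X\vee\Sigma Y\to A_\Pa$ extends over the product $\Sigma X\times\Sigma Y$, i.e.\ there is $m:\Sigma X\times \Sigma Y\to A_\Pa$ with $m|_{\Sigma X}\simeq \alpha$ and $m|_{\Sigma Y}\simeq\beta$. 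So, assuming $[\alpha,\beta]=0$, fix such an extension $m$.

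Next I would produce the analogous extension over $\Sigma X_\Pa\times\Sigma Y_\Pa$. Since $A$ is simply connected, $A_\Pa$ is a $\Pa$-local space; by Lemma~\ref{prop_local_spaces}(b),(d) the inclusion $\Sigma X\times\Sigma Y\hookrightarrow \Sigma(X_\Pa)\times\Sigma(Y_\Pa)=\Sigma X_\Pa\times\Sigma Y_\Pa$ is a $\Pa$-localization (the product of the two suspended localizations $\Sigma r_X$, $\Sigma r_Y$). By the universal property of $\Pa$-localization (\cite[Thm 9.7(ii)]{Felix_rational}), the map $m:\Sigma X\times\Sigma Y\to A_\Pa$ extends uniquely up to homotopy to a map $\widetilde m:\Sigma X_\Pa\times\Sigma Y_\Pa\to A_\Pa$. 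Restricting $\widetilde m$ to $\Sigma X_\Pa$ gives an extension of the $\Pa$-localized map $r_X$ precomposed appropriately; more precisely $\widetilde m|_{\Sigma X_\Pa}$ is a map $\Sigma X_\Pa\to A_\Pa$ whose restriction along $\Sigma X\hookrightarrow \Sigma X_\Pa$ is $\alpha$. By the uniqueness clause in the universal property applied to the localization $\Sigma X\hookrightarrow\Sigma X_\Pa$, this forces $\widetilde m|_{\Sigma X_\Pa}\simeq \alpha_\Pa$, and similarly $\widetilde m|_{\Sigma Y_\Pa}\simeq \beta_\Pa$ — this is exactly the content of "$\alpha_\Pa$ is the unique element extending $\alpha$" in the statement. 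Hence $\widetilde m$ is an extension of $\alpha_\Pa\vee\beta_\Pa$ over $\Sigma X_\Pa\times\Sigma Y_\Pa$, and by the converse direction of \cite[Proposition 5.1]{Arkowitz_gen_Whitehead} we conclude $[\alpha_\Pa,\beta_\Pa]=0$.

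The only genuinely delicate point is bookkeeping: one must make sure that "$\alpha_\Pa$" in the statement really is the localization-extension of $\alpha$ along $\Sigma X\hookrightarrow\Sigma X_\Pa$ (so that the restrictions of $\widetilde m$ match up), and that the target $A_\Pa$ being $\Pa$-local is what legitimizes all the uniqueness arguments. Both hold because $A$ is simply connected, so $A_\Pa$ is $\Pa$-local and every map from a space into it factors uniquely through that space's $\Pa$-localization. I do not expect any serious obstacle; the argument is essentially a diagram chase with the universal property, and the simple-connectivity hypothesis is used precisely to guarantee $A_\Pa$ is $\Pa$-local and that the relevant spaces admit localizations compatibly with $\vee$, $\times$, and $\Sigma$ via Lemma~\ref{prop_local_spaces}.
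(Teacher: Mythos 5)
Your proposal is correct and follows essentially the same route as the paper's proof: both use Arkowitz's extension criterion for the vanishing of the generalized Whitehead product, localize the extension $m$ over $\Sigma X_\Pa \times \Sigma Y_\Pa$ via the universal property of $\Pa$-localization and Lemma~\ref{prop_local_spaces}, identify the restrictions with $\alpha_\Pa$ and $\beta_\Pa$ by uniqueness, and conclude with the converse of Arkowitz's criterion. The only cosmetic difference is that the paper checks the restriction on the wedge $\Sigma X_\Pa \vee \Sigma Y_\Pa$ at once (invoking Lemma~\ref{prop_local_spaces}(a)) rather than factor by factor.
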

\begin{proof}
    Since $[\alpha,\beta]=0$, by \cite[Thm 5.1]{Arkowitz_gen_Whitehead}, there exists a map $m:\Sigma X\times \Sigma Y\to A_\Pa$ such that
    $m|_{\Sigma X\vee \Sigma Y}\simeq \alpha\vee \beta.$
    Now employing Lemma~\ref{prop_local_spaces}(a), and (d), we can choose localization maps $r': \Sigma X\vee \Sigma Y\to \Sigma X_{\mathcal{P}}\vee\Sigma Y_\mathcal{P}$, and $r'':\Sigma X\times \Sigma Y\to \Sigma X_{\mathcal{P}}\times\Sigma Y_\mathcal{P}$ together with $m_\Pa: \Sigma X_{\mathcal{P}}\times \Sigma Y_\mathcal{P}\to A_\Pa$ so that the following diagram commutes up to homotopy.
    \begin{equation*}
        \begin{tikzcd}[column sep=large]
            \Sigma X\vee \Sigma Y \arrow[r, hook] \arrow[d, "r'"] \arrow[rr, "\alpha\vee\beta", bend left]   & \Sigma X\times \Sigma Y \arrow[d, "r''"] \arrow[r, "m"]   & A_\mathcal{P} \\
            \Sigma X_{\mathcal{P}}\vee\Sigma Y_\mathcal{P} \arrow[r, hook] \arrow[rru, "\alpha_\mathcal{P}\vee\beta_\mathcal{P}", bend right=49]   & \Sigma X_{\mathcal{P}}\times \Sigma Y_\mathcal{P} \arrow[ru, "m_\Pa"']   & 
        \end{tikzcd}
    \end{equation*}
    Note that the homotopy commutativity of the top-most triangle is obtained from the given condition, and that of the bottom triangle is obtained via the uniqueness of $\alpha_\Pa\vee \beta_\Pa$ (see \cite[Thm 9.7(ii)]{Felix_rational}). Since the map $m_\Pa: \Sigma X_\Pa\times \Sigma Y_\Pa\to A_\Pa$ satisfies $m_\Pa|_{\Sigma X_\Pa\vee \Sigma Y_\Pa}\simeq \alpha_\Pa\vee \beta_\Pa$, by the converse of \cite[Thm 5.1]{Arkowitz_gen_Whitehead}, we conclude $[\alpha_\Pa, \beta_\Pa]=0$.
\end{proof}

Let us recall that for a well-pointed, path connected space $Y$, there is a rational homotopy equivalence between $\Sigma Y$ and  $\vee_{\alpha\in I}S^{n_\alpha}$ for $n_\alpha\geq 2$ and some indexing set $I$ (see \cite[Thm 24.5]{Felix_rational}).
Using this fact, the hypothesis of Proposition~\ref{splitting_in_suspension_over_suspension} can be relaxed to require only the vanishing of the James brace product in a rationalized fibration whose base and fibre are suspension spaces, as follows.

\begin{thm}\label{cor:JamesBraceVanishImpliesRationalEquiv}
    Suppose $\Sigma F_\Q \xrightarrow{i_\Q} E_\Q \xrightarrow{p_\Q} \Sigma B_\Q$ is the rationalized fibration with section $s_\Q$ of a fibration $\Sigma F \xrightarrow{i} E \xrightarrow{p} \Sigma B$ with section $s$, where $\Sigma F, E,$ and $\Sigma B$ are simply connected finite CW complexes. If the James brace product $\{,\}_{s_\Q}$ vanishes, then $ \Sigma B_\Q\times \Sigma F_\Q\simeq E_\Q$.
\end{thm}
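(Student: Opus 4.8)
The plan is to derive the statement from Proposition~\ref{splitting_in_suspension_over_suspension} applied to the rationalized fibration $\Sigma F_\Q \xrightarrow{i_\Q} E_\Q \xrightarrow{p_\Q} \Sigma B_\Q$: that proposition produces the desired equivalence $E_\Q \simeq \Sigma B_\Q \times \Sigma F_\Q$ as soon as the \emph{generalized} brace product $\{\mathrm{Id}_{\Sigma B_\Q}, \mathrm{Id}_{\Sigma F_\Q}\}_{s_\Q}$ vanishes, which — by the definition of the generalized brace product and the monicity of $(i_\Q)_*$ (see~\eqref{i_*IsMonic}, or Corollary~\ref{equi_Whitehead_brace}) — is equivalent to $[s_\Q, i_\Q] = 0$. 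The hypothesis only supplies the vanishing of the \emph{James} brace product of the rationalized fibration, i.e.\ of $\{\alpha,\beta\}_{s_\Q}$ for sphere classes, so the heart of the argument is to bootstrap $[s_\Q,i_\Q]=0$ from this, and this is the only delicate step. To set it up, I would first record that, since $\Sigma B$ and $\Sigma F$ are simply connected finite CW complexes that are suspensions, \cite[Thm 24.5]{Felix_rational} together with Lemma~\ref{prop_local_spaces}(a),(b) provides homotopy equivalences $h_B : \bigvee_{r=1}^{k} S^{m_r}_\Q \xrightarrow{\simeq} \Sigma B_\Q$ and $h_F : \bigvee_{j=1}^{l} S^{n_j}_\Q \xrightarrow{\simeq} \Sigma F_\Q$ with all $m_r,n_j \ge 2$ and $k,l$ finite, and that $\Sigma B_\Q, \Sigma F_\Q$ are themselves suspensions up to homotopy; consequently every (generalized) homotopy group appearing below is abelian, so the bilinearity of the generalized Whitehead product \cite[Proposition 3.4]{Arkowitz_gen_Whitehead} is available.

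Next I would compute $[s_\Q \comp h_B, i_\Q \comp h_F]$. Using the co-$H$-structure of the finite wedge $\bigvee_r S^{m_r}_\Q$ one has $\mathrm{Id} \simeq \sum_r \iota_r \comp \pi_r$ (wedge inclusions $\iota_r$, retractions $\pi_r$), so post-composing with $s_\Q \comp h_B$ gives $s_\Q \comp h_B \simeq \sum_r A_r \comp \pi_r$ with $A_r := s_\Q \comp h_B \comp \iota_r \in \pi_{m_r}(E_\Q)$, and similarly $i_\Q \comp h_F \simeq \sum_j B_j \comp \varpi_j$ with $B_j := i_\Q \comp h_F \comp \kappa_j \in \pi_{n_j}(E_\Q)$. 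By bilinearity of the generalized Whitehead product,
\[
  \bigl[\, s_\Q \comp h_B,\ i_\Q \comp h_F \,\bigr] = \sum_{r,j} \bigl[\, A_r \comp \pi_r,\ B_j \comp \varpi_j \,\bigr].
\]
Now $[A_r, B_j] = (i_\Q)_* \{\, h_B \comp \iota_r,\ h_F \comp \kappa_j \,\}_{s_\Q}$ by the definition of the generalized brace product, and $\{ h_B \comp \iota_r, h_F \comp \kappa_j \}_{s_\Q}$ is (up to the canonical rationalization identifications on homotopy groups) a James brace product, its inputs being sphere classes; hence it vanishes by hypothesis and $[A_r, B_j]=0$. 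Proposition~\ref{gen_whitehead_vanish_under_composition} then kills each summand, so $[s_\Q \comp h_B, i_\Q \comp h_F] = 0$. Applying Proposition~\ref{gen_whitehead_vanish_under_composition} once more — now pre-composing with the homotopy inverses $h_B^{-1} : \Sigma B_\Q \to \bigvee_r S^{m_r}_\Q$ and $h_F^{-1} : \Sigma F_\Q \to \bigvee_j S^{n_j}_\Q$ — gives $[s_\Q, i_\Q] = [\, s_\Q \comp h_B \comp h_B^{-1},\ i_\Q \comp h_F \comp h_F^{-1} \,] = 0$.

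Finally, since $(i_\Q)_* \{\mathrm{Id}_{\Sigma B_\Q}, \mathrm{Id}_{\Sigma F_\Q}\}_{s_\Q} = [s_\Q, i_\Q] = 0$ and $(i_\Q)_*$ is monic, the generalized brace product $\{\mathrm{Id}_{\Sigma B_\Q}, \mathrm{Id}_{\Sigma F_\Q}\}_{s_\Q}$ vanishes, and Proposition~\ref{splitting_in_suspension_over_suspension}, applied to the rationalized fibration (a fibration of CW complexes whose base and fibre are suspensions up to homotopy, with homotopy section $s_\Q$), yields $E_\Q \simeq \Sigma B_\Q \times \Sigma F_\Q$, as required. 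The main obstacle, handled above, is exactly the passage from the James to the generalized brace product; it hinges on the fact that rationally $\Sigma B$ and $\Sigma F$ split as \emph{finite} wedges of spheres of dimension $\ge 2$, so that the decomposition $s_\Q \comp h_B \simeq \sum_r A_r \comp \pi_r$ is a finite sum inside abelian homotopy groups, which is what makes the bilinearity (and hence the term-by-term vanishing) legitimate.
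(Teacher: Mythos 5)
Your proposal is correct and follows essentially the same route as the paper: rationally split $\Sigma B_\Q$ and $\Sigma F_\Q$ into finite wedges of (rational) spheres using \cite[Thm 24.5]{Felix_rational}, use bilinearity of the generalized Whitehead product together with Proposition~\ref{gen_whitehead_vanish_under_composition} to reduce $[s_\Q,i_\Q]=0$ to the vanishing of the brace products of the wedge summands, and conclude via Proposition~\ref{splitting_in_suspension_over_suspension}. The one step you compress into the parenthetical ``up to the canonical rationalization identifications'' is exactly where the paper invokes Lemma~\ref{RationalGenWhiteAndWhitehead}: the wedge inclusions are maps out of \emph{rationalized} spheres $S^{m_r}_\Q$, so their brace products are a priori generalized (not James) brace products, and one must verify that the vanishing of $[s_\Q\comp\tilde\alpha,\, i_\Q\comp\tilde\beta]$ for the restrictions $\tilde\alpha,\tilde\beta$ to ordinary spheres (which is what the hypothesis actually gives) forces the vanishing for the unique extensions to $S^{m_r}_\Q$. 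With that lemma cited at this point, your argument is complete and matches the paper's proof.
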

\begin{proof}
    Using Proposition~\ref{splitting_in_suspension_over_suspension}, it is enough to prove that $\{\mathrm{Id}_{\Sigma B_\Q}, \mathrm{Id}_{\Sigma F_\Q}\}_{s_\Q}=0$ or, equivalently, $[s_\Q, i_\Q]=(i_\Q)_*\{\mathrm{Id}_{\Sigma B_\Q}, \mathrm{Id}_{\Sigma F_\Q}\}_{s_\Q}=0$. Since $F$ and $B$ are finite CW complexes, their suspensions $\Sigma F$ and $\Sigma B$ are also finite CW complexes. Then, by \cite[Thm 24.5]{Felix_rational} and Lemma~\ref{prop_local_spaces}, we have homotopy equivalences 
    \begin{align*}
        &f:\vee_{r=1}^n\Sigma^2 S^{n_r}_\Q \xrightarrow{\simeq} \Sigma B_\Q, ~n_r\geq 0, \\
        &g:\vee_{j=1}^m\Sigma^2 S^{m_j}_\Q \xrightarrow{\simeq} \Sigma F_\Q, ~m_j\geq 0.
    \end{align*}
    Therefore, using Proposition~\ref{gen_whitehead_vanish_under_composition}, it reduces to prove $[s_\Q\comp f, i_\Q\comp g]=0$. 
    Suppose 
    \begin{align*}
        &\alpha_r:\Sigma^2 S^{n_r}_\Q\hookrightarrow \vee_{r=1}^n\Sigma^2 S^{n_r}_\Q,\\
        &\beta_j: \Sigma^2 S^{m_j}_\Q \hookrightarrow \vee_{j=1}^m\Sigma S^{m_j}_\Q
    \end{align*}
    are inclusions, and 
    \begin{align*}
        &\Tilde{\alpha}_r:\Sigma^2 S^{n_r}\to \vee_{r=1}^n\Sigma^2 S^{n_r}_\Q,\\
        &\Tilde{\beta}_j:\Sigma^2 S^{m_j}\to \vee_{j=1}^m\Sigma^2 S^{m_j}_\Q
    \end{align*}
    are restriction of $\alpha_r$ and $\beta_j$ respectively so that $(\Tilde{\alpha}_r)_\Q\simeq \alpha_r$ and $(\Tilde{\beta}_j)_\Q\simeq \beta_j$. 
    Since the James brace product vanishes identically, we have 
    \begin{align*}
        \{f \comp \Tilde{\alpha}_r, g \comp \Tilde{\beta}_j\}_{s_\Q}&=0\\
        \text{i.e., } [s_\Q \comp f \comp \Tilde{\alpha}_r, i_\Q \comp g \comp \Tilde{\beta}_j]&=(i_\Q)_*\{f \comp \Tilde{\alpha}_r, g \comp \Tilde{\beta}_j\}_{s_\Q}=0.
    \end{align*}
    Then Lemma~\ref{RationalGenWhiteAndWhitehead} implies $[s_\Q \comp f \comp \alpha_r, i_\Q \comp g \comp \beta_j]=0$. Again by Proposition~\ref{gen_whitehead_vanish_under_composition}, we have $[s_\Q \comp f \comp \alpha_r\comp \pi_r^B, i_\Q \comp g \comp \beta_j\comp \pi_j^F]=0$, where
    \begin{align*}
        &\pi_r^B:\vee_{r=1}^n\Sigma^2 S^{n_r}_\Q \rightarrow \Sigma^2 S^{n_r}_\Q,\\
        &\pi_j^F:\vee_{j=1}^m\Sigma S^{m_j}_\Q \rightarrow \Sigma^2 S^{m_j}_\Q
    \end{align*}
    are the projections. Then note that $s_\Q\comp f=\sum_{r=1}^n s_\Q\comp f\comp \alpha_r\comp \pi_r^B$ and $i_\Q\comp g=\sum_{j=1}^m i_\Q\comp g\comp \beta_j\comp \pi_j^F$. Consequently, using bi-linearity of generalized Whitehead product, we have
    \begin{align*}
        [s_\Q\comp f, i_\Q\comp g]&= \left[ \sum_{r=1}^n s_\Q\comp f\comp \alpha_r\comp \pi_r^B, \sum_{j=1}^m i_\Q \comp g\comp \beta_j\comp \pi_j^F \right]\\
        &=\sum_{r,j=1}^{n,m}\left[s_\Q \comp f \comp \alpha_r\comp \pi_r^B, i_\Q \comp g \comp \beta_j\comp \pi_j^F\right]=0
    \end{align*}
    Hence, the proof follows.
\end{proof}

\begin{remark}\label{remark:ConverseOfRationalDecomposition}
    \begin{enumerate}[(i)]
        \item The above theorem can be extended to the case where the base and fibre spaces have the homotopy type of simply connected finite CW complexes that are co-$H$-spaces, rather than merely suspension spaces. To do so, we use the result that simply connected finite co-$H$-spaces are rationally homotopy equivalent to a wedge of spheres \cite{Henn-OnAlmostRationalCoHSpaces}. The rest of the proof proceeds in essentially the same way.
        \item The converse of the above statement does not hold in its current form. That is, there exist sphere bundles over spheres whose total space is rationally equivalent to the product of the base and fiber, even though the original fibration does not admit a section and, consequently, has no brace product.

        \sloppy For example, consider the principal $S^3$ (or $SU(2)$) bundle $SU(3)$ over $S^5 \cong SU(3)/SU(2)$. Since $SU(3)$ is not homeomorphic to the product $S^3 \times S^5$ (as $\pi_4(SU(3)) = 0 \neq \mathbb{Z}_2 = \pi_4(S^3 \times S^5)$), this principal bundle does not admit any section. However, by inspecting the Serre spectral sequence for this fibration, one finds that the cohomology of $SU(3)$ is isomorphic to that of $S^3 \times S^5$, showing that $SU(3)$ is rationally equivalent to $S^3 \times S^5$.
    \end{enumerate} 
\end{remark}

\section{Brace Product and the \texorpdfstring{$J$}{J}-homomorphism}\label{Sec:J-HomoAndBrace}

Whitehead \cite{Whitehead-OnTheHomotopy} defined a group homomorphism $J: \pi_{n-1}(SO(q))\to \pi_{n+q-1}(S^q)$ for $q,n-1\geq 2$. We generalize the notion of $J$-homomorphism and prove similar results as in \cite{James-Whitehead, Stocker-NoteOnQuasi} for the generalized brace product.

\subsection{Generalized \texorpdfstring{$J$}{J}-Homomorphism}\label{GenJ-HomoDef}
Suppose $Map_\bullet(\Sigma Y, \Sigma Y)$ consists of base point preserving continuous maps from $\Sigma Y$ to itself. We define a map
\[J : [X, Map_\bullet(\Sigma Y, \Sigma Y)] \rightarrow [\Sigma X \wedge Y, \Sigma Y]\]
as follows. Given a map $\rho : X \rightarrow Map_\bullet(\Sigma Y, \Sigma Y)$, consider the map
\begin{align*}
    \tilde{\rho} : X \star Y = C X \times Y \cup X \times CY &\rightarrow \Sigma Y \\
    \left( [x,t],y \right) & \mapsto \bullet \\
    \left( x, [y, t] \right) & \mapsto \rho({x}) \left( [y, t] \right)
\end{align*}
Here, $X\star Y$ denotes the reduced topological join of $X$ and $Y$, and it is identified with $C X \times Y \cup X \times CY$. We then define $J[\rho] = [\tilde{\rho} \comp \mu]$, where $\mu: \Sigma X \wedge Y \rightarrow X \star Y$ is a choice of homotopy inverse for the quotient map $X \star Y \rightarrow \Sigma X \wedge Y$, which is known to be a homotopy equivalence.

\begin{remark}\label{Remark:GenJ-Homo}
    Let $Y = S^{q-1}$ and $X = S^{n-1}$. Viewing $\Sigma Y = S^q$ as $\mathbb{R}^q \cup {\infty}$, we obtain an inclusion
    $SO(q)\hookrightarrow Map_\bullet(\Sigma Y, \Sigma Y)=Map_\bullet(S^q, S^q)$ via the action that fixes $\infty$. Since $\Sigma X\wedge Y=S^{n+q-1}$, the restriction of the generalized $J$-homomorphism to $[S^{n-1}, SO(q)]$ gives a map from $\pi_{n-1}(SO(q))\to [S^{n+q-1}, S^q]=\pi_{n+q-1}(S^q)$. Furthermore, this map agrees with the classical $J$-homomorphism of Whitehead (see \cite[§2]{James-Whitehead}). 
\end{remark}

Next, observe that $Map_\bullet(\Sigma Y, \Sigma Y)$ is a homotopy associative $H$-space, with homotopy identity given by the constant map, and admitting homotopy inverses. Thus, the domain of $J$ is a group. Indeed,
\[\left[ X, Map_\bullet(\Sigma Y, \Sigma Y) \right] = \left[ X_{+}, Map_\bullet(\Sigma Y, \Sigma Y) \right] = \left[ X_{+} \wedge \Sigma Y, \Sigma Y \right] = \left[ \Sigma X_{+} \wedge Y, \Sigma Y \right]\]
is a group, where $X_+$ denotes $X$ with a disjoint base point adjoined. An Eckmann-Hilton type argument then leads to the following lemma.

\begin{lemma}\sloppy
    If $X = \Sigma X'$ is a suspension, then the two group structures on $[\Sigma X', Map_\bullet(\Sigma Y, \Sigma Y)]$, one induced by the co-$H$-space structure of $\Sigma X'$ and the other by the $H$-space structure of $Map_\bullet(\Sigma Y, \Sigma Y)$, coincide. Moreover, this group is abelian.
\end{lemma}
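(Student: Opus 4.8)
The plan is to run the standard Eckmann--Hilton argument on the set $\mathcal{G} := [\Sigma X', M]$, where $M := Map_\bullet(\Sigma Y, \Sigma Y)$. Write $W := \Sigma X'$. Being a suspension, $W$ carries the pinch comultiplication $\nu : W \to W \vee W$ and is a co-$H$-group, so the induced operation $+$ on $\mathcal{G}$, with $\alpha + \beta = (\alpha \perp \beta) \comp \nu$ where $\alpha \perp \beta : W \vee W \to M$ restricts to $\alpha$ and $\beta$ on the two summands, is a group operation. Likewise, since $M$ is a homotopy associative $H$-space with homotopy inverses, writing $\mu : M \times M \to M$ for its multiplication, the operation $\ast$ on $\mathcal{G}$ given by $\alpha \ast \beta = \mu \comp (\alpha \times \beta) \comp \Delta_W$ (with $\Delta_W : W \to W \times W$ the diagonal) is also a group operation. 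The lemma asks precisely that $+ = \ast$ and that this operation be abelian.

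First I would check that $+$ and $\ast$ share the class $0 \in \mathcal{G}$ of the constant map as a common two-sided unit. For $+$ this is automatic, the constant map being a unit for any co-$H$-operation. For $\ast$ it holds because the homotopy identity of the $H$-space $M$ is the constant self-map of $\Sigma Y$, which is the basepoint of $M$; hence the constant map $W \to M$ is a homotopy unit for $\mu$. This step is where the specific structure of $Map_\bullet(\Sigma Y, \Sigma Y)$ enters, and it is the only point that is not purely formal; I regard it as the main (and mild) subtlety of the proof.

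Next I would verify the interchange law $(\alpha + \beta) \ast (\gamma + \delta) = (\alpha \ast \gamma) + (\beta \ast \delta)$ for all $\alpha, \beta, \gamma, \delta \in \mathcal{G}$. Expanding both sides with the formulas above, each is $\mu \comp \big( (\alpha \perp \beta) \times (\gamma \perp \delta) \big)$ precomposed with a map $W \to (W \vee W) \times (W \vee W)$: the left side yields $(\nu \times \nu) \comp \Delta_W$ and the right side yields $\Delta_{W \vee W} \comp \nu$, and these two maps are literally equal, both sending $w \mapsto (\nu(w), \nu(w))$. So the interchange holds at the level of representatives. The Eckmann--Hilton conclusion is then immediate: setting $\beta = \gamma = 0$ gives $\alpha \ast \delta = \alpha + \delta$, so the two operations coincide; setting $\alpha = \delta = 0$ gives $\beta \ast \gamma = \gamma + \beta$, which together with $+ = \ast$ yields commutativity. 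This establishes the lemma.
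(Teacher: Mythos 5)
Your proof is correct and is exactly the Eckmann--Hilton argument that the paper invokes (the paper states the lemma with only the remark that ``an Eckmann--Hilton type argument'' yields it, giving no details). Your identification of the common unit as the class of the constant map and your on-the-nose verification of the interchange law via $(\nu \times \nu)\comp \Delta_W = \Delta_{W\vee W}\comp \nu$ are precisely the points that make that argument go through.
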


We also observe the following properties of the generalized $J$-homomorphism.
\begin{lemma}\label{J(epsilon)=0}
    \begin{enumerate}
        \item $J$ is a group homomorphism.
        \item Given any $\varphi \in Map_\bullet(\Sigma Y, \Sigma Y)$ and $\rho : X \rightarrow Map_\bullet(\Sigma Y, \Sigma Y)$, we have $J[\varphi \comp \rho] = \varphi_* J[\rho]$.
        \item If $X$ is a suspension, then $J[\varepsilon] = 0$, where $\varepsilon$ is the map $\varepsilon : x \mapsto  \mathrm{Id}_{\Sigma Y}$.
        \item If $X$ is a suspension, then given any $\phi \in Map_\bullet(\Sigma Y, \Sigma Y)$, $J[\varphi] = 0$, where $\varphi$ is the map $\varphi : x \mapsto \phi$.
    \end{enumerate}
\end{lemma}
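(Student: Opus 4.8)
The plan is to reinterpret $J$ through the adjunction already recorded above, namely $[X, Map_\bullet(\Sigma Y, \Sigma Y)] \cong [X_{+}\wedge \Sigma Y, \Sigma Y] \cong [\Sigma X_{+}\wedge Y, \Sigma Y]$, which sends $\rho$ to its adjoint $\hat{\rho} : X_{+}\wedge \Sigma Y \to \Sigma Y$, with $\hat{\rho}(x\wedge z) = \rho(x)(z)$ for $x\in X$ and $\hat{\rho}(+\wedge z) = \bullet$. The retraction $p : X_{+}\to S^{0}$ that collapses $X$ induces a splitting $\Sigma X_{+} \simeq S^{1}\vee \Sigma X$, hence $\Sigma X_{+}\wedge Y \simeq \Sigma Y \vee (\Sigma X\wedge Y)$ and, since the source carries the group structure from the $H$-space (i.e.\ the co-$H$-of-$\Sigma Y$) structure, which corresponds to suspending $X_{+}\wedge Y$, a group isomorphism $[X, Map_\bullet(\Sigma Y, \Sigma Y)] \cong [\Sigma Y, \Sigma Y]\times [\Sigma X\wedge Y, \Sigma Y]$. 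The first and main step is to verify that under this isomorphism $J$ is exactly the projection onto the second factor. Unwinding the definition of $\tilde{\rho}$: it is the basepoint on $CX\times Y$ and is $(x, c)\mapsto \rho(x)(qc)$ on $X\times CY$, where $q : CY\to \Sigma Y$ is the quotient; hence $\tilde{\rho} = \hat{\rho}\comp c_{1}$, where $c_{1} : X\star Y \to (X\star Y)/(CX\times Y)\cong X_{+}\wedge \Sigma Y$ is the collapse map. One then checks that $c_{1}\comp \mu : \Sigma X\wedge Y\to X_{+}\wedge \Sigma Y$ is homotopic to the inclusion of the $\Sigma X\wedge Y$ wedge summand, so that $J[\rho] = [\hat{\rho}\comp c_{1}\comp \mu]$ is the second projection of $\hat{\rho}$. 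Granting this, part (1) is immediate: projection onto a direct factor of a product of groups is a group homomorphism.

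Part (2) I would get straight from the definitions. Since $\varphi$ is basepoint preserving, $\widetilde{\varphi\comp \rho} : X\star Y\to \Sigma Y$ coincides, on the nose, with $\varphi\comp \tilde{\rho}$ (it sends $CX\times Y$ to $\varphi(\bullet) = \bullet$ and $(x, c)\mapsto \varphi(\rho(x)(qc))$). Therefore $J[\varphi\comp \rho] = [\varphi\comp \tilde{\rho}\comp \mu] = \varphi_{*}[\tilde{\rho}\comp \mu] = \varphi_{*}J[\rho]$.

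For part (3), the adjoint of $\varepsilon : x\mapsto \mathrm{Id}_{\Sigma Y}$ is $\hat{\varepsilon} = p\wedge \mathrm{Id}_{\Sigma Y} : X_{+}\wedge \Sigma Y\to S^{0}\wedge \Sigma Y = \Sigma Y$, which is precisely the retraction of $X_{+}\wedge \Sigma Y\simeq \Sigma Y\vee (\Sigma X\wedge Y)$ onto its first summand; its second projection is null, so $J[\varepsilon] = 0$ by the identification of $J$ from the first paragraph. (Alternatively, and independently of that identification: $\tilde{\varepsilon}$ factors as $X\star Y\xrightarrow{h} CY\xrightarrow{q}\Sigma Y$ through the reduced cone $CY$, which is contractible, whence $\tilde{\varepsilon}\comp \mu\simeq 0$.) Finally, part (4) follows from (2) and (3): the constant family $\varphi : x\mapsto \phi$ equals $\phi\comp \varepsilon$ (pointwise post-composition with the fixed map $\phi$), so $J[\varphi] = J[\phi\comp \varepsilon] = \phi_{*}J[\varepsilon] = 0$.

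The only place needing genuine care is the verification in the first paragraph that $c_{1}\comp \mu$ is homotopic to the wedge summand inclusion — equivalently, that $J$ is the summand projection; this is an elementary but bookkeeping-heavy chase through the reduced join / smash identifications together with the cofibre sequence $S^{0}\to X_{+}\to X$ (and the resulting splitting). Everything else is formal; in particular the arguments for (3) and (4) above do not actually use the hypothesis that $X$ is a suspension, so the lemma is stated in the form convenient for its later applications rather than the most general one the proof delivers.
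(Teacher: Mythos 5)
Your parts (2)--(4) are sound. For (2) you argue exactly as the paper does ($\widetilde{\varphi \comp \rho} = \varphi \comp \tilde{\rho}$ on the nose). Your second argument for (3) --- factoring $\tilde{\varepsilon}$ as $X \star Y \xrightarrow{h} CY \xrightarrow{q} \Sigma Y$, with $h$ the projection on $X \times CY$ and $h([x,s],y) = y$ on $CX \times Y$ --- is correct (it is continuous on the overlap $X\times Y$ and respects the collapsed basepoint join), and it is genuinely stronger than the paper's proof: the paper deduces $J[\varepsilon]=0$ only for $X$ a suspension, by showing $2J[\varepsilon]=J[\varepsilon]$ using the co-$H$ addition of $\Sigma X'$, whereas your factorization through the contractible reduced cone needs no hypothesis on $X$. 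Your closing observation that (3) and (4) therefore hold for arbitrary $X$ is right.

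The gap is in (1). Everything there rests on the claim that $c_1 \comp \mu : \Sigma X \wedge Y \to X_+ \wedge \Sigma Y$ is homotopic to the wedge-summand inclusion under $X_+ \wedge \Sigma Y \simeq \Sigma Y \vee (\Sigma X \wedge Y)$, and you defer this as bookkeeping. But the verification you implicitly have in mind --- that the composites of $c_1 \comp \mu$ with the two retractions onto $\Sigma Y$ and onto $X \wedge \Sigma Y$ are respectively null and the identity --- does not suffice: a map out of a suspension into a wedge is not determined by its two projections, since $[\Sigma W, A \vee B] \to [\Sigma W, A] \times [\Sigma W, B]$ generally has nontrivial kernel (e.g.\ Whitehead product classes in $\pi_3(S^2\vee S^2)$). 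Note also that the ``null'' half of that check is literally the statement $J[\varepsilon]=0$, since $(r \wedge \mathrm{Id}_{\Sigma Y}) \comp c_1 = \tilde{\varepsilon}$, so the key claim already contains (3). Worse, the reduction is close to circular: declaring $J$ to be ``the second projection'' presupposes that $c_1 \comp \mu$ is a co-$H$ section for the suspension coordinate carrying the group structure of $[X, Map_\bullet(\Sigma Y, \Sigma Y)]$, and that co-$H$ property is exactly what makes $(c_1\comp\mu)^*$ a homomorphism --- i.e.\ it is equivalent to statement (1), not a lemma on the way to it. To close the gap you must either write down an explicit homotopy in join coordinates from $c_1 \comp \mu$ to a chosen co-$H$ section, or prove directly that $c_1 \comp \mu$ is a co-$H$ map. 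The paper sidesteps all of this by direct computation: it exhibits one representative $\tilde{h} \comp \mu$ of $J(\alpha_1 + \alpha_2)$ (concatenation in the cone coordinate of $CY$) and observes that, after a reparametrization of that coordinate, the same map represents $J[h_1] + J[h_2]$; that reparametrization homotopy is the actual content of (1), and your proposal never supplies its analogue.
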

\begin{proof}
    We use additive symbols for multiplication in both groups. Consider two maps $h_i : X \rightarrow Map_\bullet (\Sigma Y, \Sigma Y)$ representing some $\alpha_i$, for $i = 1, 2$. Then, $\alpha_1 + \alpha_2$ is represented by a map
    \begin{align*}
        h : X &\rightarrow Map_\bullet(\Sigma Y, \Sigma Y) \\
        x & \mapsto 
        \left( [y, t] \mapsto 
        \begin{cases}
            h_1(x)([y,2t]), \quad 0 \le t \le \frac{1}{2}\\
            h_2(x)([y,2t - 1]), \quad \frac{1}{2} \le t \le 1
        \end{cases} \right)
    \end{align*}
    We then have the map 
    \begin{align*}
        \tilde{h} : C X \times Y \cup X \times CY &\rightarrow \Sigma Y \\
        \left( [x,t], y \right) &\mapsto \bullet \\
        \left( x, [y,t] \right) &\mapsto 
        \begin{cases}
            h_1 \left( x \right) \left( [y,2t] \right), \quad 0 \le t \le \frac{1}{2} \\
            h_2 \left( x \right) \left( [y,2t - 1] \right), \quad \frac{1}{2} \le t \le 1
        \end{cases}
    \end{align*}
    Clearly, $J\left( \alpha_1 + \alpha_2 \right)$ is represented by $\tilde{h} \comp \mu$. On the other hand, $\tilde{h} \comp \mu$ represents $J[h_1] + J[h_2]$ as well. Hence, $J\left( \alpha_1 + \alpha_2 \right) = J\alpha_1 + J\alpha_2$, i.e., $J$ is a group homomorphism.

    Proof of (2) follows immediately from the definition of $J$.

    Next, suppose $X = \Sigma X^\prime$ is a suspension. Then, $J[\varepsilon] + J[\varepsilon] = [\tilde{\varepsilon} \comp \mu] + [\tilde{\varepsilon} \comp \mu]$ is represented by $\phi \comp \mu$, where
    \begin{align*}
        \phi : C \Sigma X^\prime \times Y \cup \Sigma X^\prime \times Y &\rightarrow \Sigma Y \\
        \left( [[x^\prime, s], t], y \right) & \mapsto \bullet \\
        \left( [x^\prime, s], [y, t] \right) & \mapsto 
        \begin{cases}
            [y, t], \quad 0 \le s \le \frac{1}{2} \\
            [y, t], \quad \frac{1}{2} \le s \le 1
        \end{cases}
    \end{align*}
    Clearly, $\phi = \tilde{\varepsilon}$, and thus 
    $J[\varepsilon] + J[\varepsilon] = J[\varepsilon] \Rightarrow J[\varepsilon] = 0$, which proves (3).

    Proof of (4) follows from (2) and (3).
\end{proof}

\subsection{Generalized Brace Product for a Fibration over a Suspension}\label{GenBraceOfaFibOverSuspension}
Let us consider a fibration $\Sigma F \hookrightarrow E \rightarrow \Sigma B$, where the characteristic map  $\rho: B \rightarrow Map_\bullet(\Sigma F, \Sigma F)$  satisfies $\rho(\bullet) = \mathrm{Id}_{\Sigma F}$. In particular, we have 
\[E = CB \times \Sigma F \cup_\rho CB \times \Sigma F,\]
given by the clutching construction, i.e.,
\begin{align*}
    E=((CB \times \Sigma F) \sqcup (CB \times \Sigma F))/\sim\\
    ([b,1], [f,t])\sim ([b,1], \rho(b)[f,t])
\end{align*} 
Observe that this fibration possesses a canonical section $s: \Sigma B \rightarrow E$, which will sometimes be referred to as $s_\infty$, and is defined by 
\begin{equation}\label{eq:section_s_infty}
    s([b,t]) = \left[ [b, t], \bullet \right]
\end{equation}
This is well-defined because $\rho$ fixes the base point of $\Sigma F$. We now relate the generalized brace product with the generalized $J$-homomorphism.

\begin{thm}\label{BraceAndJ-homoInSuspensionOverSuspension}
Suppose $\Sigma F \overset{i}{\hookrightarrow} E \rightarrow \Sigma B$ is a fibration as above with the canonical section $s$.
    \begin{enumerate}
        \item For $\varepsilon : B \rightarrow Map_\bullet(\Sigma F, \Sigma F)$ defined by $\varepsilon(b) = \mathrm{Id}_{\Sigma F}$, we have 
        $$\left\{ \mathrm{Id}_{\Sigma B}, \mathrm{Id}_{\Sigma F} \right\}_s = J[\varepsilon] - J[\rho].$$ 
        In particular, if $B$ is a suspension, then 
        $$\left\{ \mathrm{Id}_{\Sigma B}, \mathrm{Id}_{\Sigma F} \right\}_s = -J[\rho].$$

        \item For any base point preserving self homotopy equivalence $\varphi: \Sigma F \rightarrow \Sigma F$, we have 
        $$\left\{ \mathrm{Id}_{\Sigma B}, \varphi \right\}_s = J[\varphi \comp \varepsilon] -J[\rho].$$ 
        In particular, if $B$ is a suspension, then 
        $$\left\{ \mathrm{Id}_{\Sigma B}, \varphi \right\}_s = -J[\rho].$$
    \end{enumerate}
\end{thm}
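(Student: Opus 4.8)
The plan is to compute both sides of the identity directly from the clutching description $E = CB \times \Sigma F \cup_\rho CB \times \Sigma F$ and the explicit section $s = s_\infty$ of \eqref{eq:section_s_infty}, reducing everything to the generalized Whitehead product $[s, i] = i_*\{\mathrm{Id}_{\Sigma B}, \mathrm{Id}_{\Sigma F}\}_s$ via Definition~\ref{defn:genBraceProduct}. Since $i_*$ is monic (see \eqref{i_*IsMonic}), it suffices to identify $i_*\{\mathrm{Id}_{\Sigma B}, \varphi\}_s \in [\Sigma(B \wedge F), E]$ with $i_* \bigl( J[\varphi \comp \varepsilon] - J[\rho] \bigr)$. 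First I would recall Arkowitz's description of the generalized Whitehead product as $\nabla \comp (s \vee i) \comp \omega$, where $\omega : \Sigma(B \wedge F) \to \Sigma B \vee \Sigma F$ is the universal Whitehead product map; so the goal becomes a comparison of two maps out of $\Sigma(B \wedge F) \cong B \star F$ (using the standard homotopy equivalence between the join and $\Sigma(B \wedge F)$ already invoked in \S\ref{GenJ-HomoDef}).

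The key step is to unwind the definition of $\tilde\rho : B \star F = CB \times F \cup B \times CF \to \Sigma F$ and the section $s$ simultaneously inside $E$. I would use the join model $B \star F$ as the domain of $\omega$ up to homotopy: on the sub-join $CB \times \{*\}$ the map $s \vee i$ restricted along $\omega$ reads off $s$ composed with a suspension coordinate of $B$, and on $\{*\} \times CF$ it reads off $i$ composed with a suspension coordinate of $F$. Because $E$ is built by gluing two copies of $CB \times \Sigma F$ along $\rho$, the map $\nabla \comp (s \vee i) \comp \omega$ decomposes as a difference of two "half" maps into $E$: one glued using $\rho$ and one glued trivially (i.e. using $\varepsilon$). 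Composing each half with the identification $E \hookleftarrow CB \times \Sigma F$ and then with the fact that $i_* J[\sigma]$ for $\sigma : B \to Map_\bullet(\Sigma F, \Sigma F)$ is represented precisely by the map $\tilde\sigma \comp \mu$ postcomposed with $i : \Sigma F \hookrightarrow E$, I would obtain $i_*\{\mathrm{Id}_{\Sigma B}, \mathrm{Id}_{\Sigma F}\}_s = i_* J[\varepsilon] - i_* J[\rho]$, hence part (1) after canceling $i_*$; and then for part (2) replace $i$ by $i \comp \varphi$ (using functoriality of the generalized Whitehead product, Remark~\ref{Gen_White_prop}(c), and $\varphi_*$-naturality of $J$ from Lemma~\ref{J(epsilon)=0}(2)) so that $\{\mathrm{Id}_{\Sigma B}, \varphi\}_s = \varphi_* \{\mathrm{Id}_{\Sigma B}, \mathrm{Id}_{\Sigma F}\}_s = \varphi_* J[\varepsilon] - \varphi_* J[\rho] = J[\varphi \comp \varepsilon] - J[\varphi \comp \rho]$; here one must check that $J[\varphi \comp \rho]$ can be replaced by $J[\rho]$ in the stated form — this should come down to the fact that precomposing the clutching map by $\varphi$ yields a fiber-homotopy-equivalent bundle with section, so the brace product is unchanged (cf. Proposition~\ref{prop:braceProductHomotopySection} and Lemma~\ref{lemma:mapOfFibrations}), or alternatively absorbing $\varphi$ into a change of trivialization. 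The "if $B$ is a suspension" clauses then follow immediately from Lemma~\ref{J(epsilon)=0}(3): $J[\varepsilon] = 0$, and $J[\varphi \comp \varepsilon] = \varphi_* J[\varepsilon] = 0$ by Lemma~\ref{J(epsilon)=0}(2).

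I expect the main obstacle to be the careful bookkeeping of the homotopy equivalence $\mu : \Sigma(B \wedge F) \to B \star F$ and the universal Whitehead map $\omega$ against the two $CB \times \Sigma F$ charts of $E$: one has to be precise about which "half" of the join goes to which chart, about the orientation/reversal of the cone parameter (so that the two contributions appear with opposite signs, producing $J[\varepsilon] - J[\rho]$ rather than a sum), and about the base-point conventions making $\tilde\rho$ and $s$ well-defined. A secondary but genuine subtlety is justifying, in part (2), that $J[\varphi \comp \rho]$ and $J[\rho]$ give the same brace product even though they need not be equal as elements of $[\Sigma B \wedge F, \Sigma F]$ in general — this requires identifying the new section after precomposition with $\varphi$ and invoking the section-independence results of \S\ref{sec:genBraceProductProperties}. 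Once these identifications are pinned down, the remainder is a routine diagram chase using monicity of $i_*$ and the homomorphism/naturality properties of $J$ established in Lemma~\ref{J(epsilon)=0}.
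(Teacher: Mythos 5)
Your outline for part (1) is essentially the paper's argument: present $E$ as $(CB \times CF) \cup_\phi (\Sigma B \vee \Sigma F)$, decompose the (null-homotopic) attaching map $\phi \comp \mu$ as $[\iota \comp \tilde\rho \comp \mu] + [\iota \comp \epsilon \comp \mu] + [\omega \comp \mu]$ by an explicit homotopy, and solve for $(s \vee i)_*[\omega] = [s,i]$ using monicity of $i_*$. The subtleties you flag (the join model, the cone-parameter reversal producing a difference rather than a sum) are exactly where the paper spends its effort, so part (1) is sound in outline.

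Part (2), however, contains a genuine gap. Your first step is the identity $\left\{ \mathrm{Id}_{\Sigma B}, \varphi \right\}_s = \varphi_* \left\{ \mathrm{Id}_{\Sigma B}, \mathrm{Id}_{\Sigma F} \right\}_s$, justified by "functoriality" (Remark~\ref{Gen_White_prop}(c)). That remark gives $h_*[\alpha,\beta] = [h_* \alpha, h_* \beta]$ for a map $h$ of \emph{target} spaces; it does not let you trade $[s, i \comp \varphi]$ for a pushforward of $[s, i]$, because $i \comp \varphi$ changes only the second argument and there is in general no self-map of $E$ carrying $s$ to $s$ and $i$ to $i \comp \varphi$. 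In fact your identity is false in general: combined with part (1) and Lemma~\ref{J(epsilon)=0}(2) it would force $\varphi_* J[\rho] = J[\rho]$ for every self-equivalence $\varphi$ of $\Sigma F$, which fails already for a degree $-1$ map on a sphere. You correctly sense that something must reconcile $J[\varphi \comp \rho]$ with $J[\rho]$, but you locate the problem in the wrong place and propose fixing it with the section-independence results, which concern vertical homotopies of sections and do not apply. The paper's route is different: take a homotopy inverse $\psi$ of $\varphi$, form the auxiliary fibration $E_\psi$ with characteristic map $\psi \comp \rho$ and canonical section $\tilde s$, produce a homotopy equivalence $\Psi : E \rightarrow E_\psi$ with $\Psi \comp (s \vee i) \simeq (\tilde s \vee \tilde i) \comp (\mathrm{Id}_{\Sigma B} \vee \psi)$, and apply Lemma~\ref{lemma:mapOfFibrations} to get $\psi_* \left\{ \mathrm{Id}_{\Sigma B}, \varphi \right\}_s = \left\{ \mathrm{Id}_{\Sigma B}, \mathrm{Id}_{\Sigma F} \right\}_{\tilde s} = J[\varepsilon] - J[\psi \comp \rho]$; applying $\varphi_*$ then yields $J[\varphi \comp \varepsilon] - J[\rho]$. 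Without this (or an equivalent map-of-fibrations argument), your part (2) does not go through.
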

\begin{proof}
    (1) Firstly, observe that the total space $E$ is given as the attaching space 
    \begin{equation}\label{eq:totalSpaceAttachingMap}
        E\simeq \left( CB \times CF \right) \cup_\phi \left( \Sigma B \vee \Sigma F \right),
    \end{equation}
    where $\phi$ is given as
    \begin{equation} \label{eq:totalSpaceAttachinMapFormula}
        \begin{aligned}
            \phi : CB \times F \cup B \times CF &\rightarrow \Sigma B \vee \Sigma F \\
            \left( [b,t], f \right) &\mapsto \left( [b,t], \bullet \right) \\
            \left( b, [f,t] \right) &\mapsto \left( \bullet, \rho(b)[f,t] \right)
        \end{aligned}
    \end{equation}
    Indeed, we have the diagram 
    \begin{equation} \label{eq:attachingMapEquivalenceDiagram}
        \begin{tikzcd}
            && \Sigma F \arrow{dd}{} \\
            B \times \Sigma F \arrow{r}{\hat{\rho}} \arrow[hook]{dd}[swap]{} \arrow[bend left=30]{rru}{\rho} & CB \times \Sigma F  \arrow{ru}{\simeq} \\
            && \tilde{E}\\
            CB \times \Sigma F \arrow{r}[swap]{} \arrow[bend left=30]{rru}{} & E \arrow[leftarrow,crossing over]{uu}{} \arrow[dashed]{ru}{\Phi}
        \end{tikzcd}
    \end{equation}
    Here, $\hat{\rho}$ is given by $(b,[f,t]) \mapsto ([b,1],\rho(b)[f,t])$. Both $E$ and $\tilde{E}$ are homotopy pushouts since the inclusion $B \times \Sigma F \hookrightarrow CB \times \Sigma F$ is a cofibration \cite[Prop. 6.2.6]{Arkowitz-IntroToHomotopy}. Hence, the deformation retract $CB \times \Sigma F \rightarrow \Sigma F$ collapsing $CB$ induces a homotopy equivalence $\Phi: E \rightarrow \tilde{E}$, making the above diagram (homotopy) commutative. But clearly, $\tilde{E} = \left( CB \times CF \right) \cup_\phi (\Sigma B \vee \Sigma F)$, which combined with $\Phi$ demonstrates the homotopy equivalence of \eqref{eq:totalSpaceAttachingMap}.
    
    Let us now consider the following maps.
    \begin{align*}
        \tilde{\rho} : CB \times F \cup B \times CF &\rightarrow \Sigma F \\
        \left( [b,t], f \right) &\mapsto \bullet  \\
        \left( b, [f,t] \right) &\mapsto \rho(b)[f,t] \\
        \epsilon : CB \times F \cup B \times CF &\rightarrow \Sigma F \\
        \left( [b,t], f \right) &\mapsto \bullet \\
        \left( b, [f,t] \right) &\mapsto [f,1-t] 
    \end{align*}
    We then have, $J[\rho] = [\tilde{\rho} \comp \mu], J[\varepsilon] = -[\epsilon \comp \mu]$. We also have the Whitehead map
    \begin{align*}
        \omega : CB \times F \cup B \times CF &\rightarrow \Sigma B \vee \Sigma F \\
        \left( [b,t], f \right) &\mapsto \left( [b,t], \bullet \right) \\
        \left( b, [f,t] \right) &\mapsto \left( \bullet, [f,t] \right)
    \end{align*}
    It follows from Remark~\ref{Gen_White_prop} that 
    \[i_*\left\{ \mathrm{Id}_{\Sigma B}, \mathrm{Id}_{\Sigma F} \right\}_s = [s, i] = [\left( s \vee i \right) \comp \omega] = (s \vee i)_*[\omega].\]
    We claim that 
    \begin{equation} \label{eq:attachinMapDecomposition}
        [\phi \comp \mu] = [\iota \comp \tilde{\rho} \comp \mu] + [\iota \comp \epsilon \comp \mu] + [\omega \comp \mu],
    \end{equation}
    where $\iota : \Sigma F \hookrightarrow \Sigma B \vee \Sigma F$ is the inclusion. Indeed, the right-hand side is represented by $\psi \comp \mu$, where $\psi$ is the following map.
    \begin{align*}
        \psi : CB \times F \cup B \times CF &\rightarrow \Sigma B \vee \Sigma F \\
        \left( [b,t],f\right) &\mapsto 
        \begin{cases}
            \left( \bullet, \bullet \right), \quad 0 \le t \le \frac{2}{3} \\
            \left( [b,3t - 2], \bullet \right), \quad \frac{2}{3} \le t \le 1
        \end{cases} \\
        \left( b, [f,t] \right) &\mapsto 
        \begin{cases}
            \left( \bullet, \rho(b) [f, 3t] \right), \quad 0 \le t \le \frac{1}{3} \\
            \left( \bullet, [f, 2 - 3t] \right), \quad \frac{1}{3} \le t \le \frac{2}{3} \\
            \left( \bullet, [f, 3t - 2] \right), \quad \frac{2}{3} \le t \le 1
        \end{cases}
    \end{align*}
    Consider the homotopy
    \begin{align*}
        \mathsf{h}_s : CB \times F \cup B \times CF &\rightarrow \Sigma B \vee \Sigma F \\
        \left( [b,t], f \right) & \mapsto 
        \begin{cases}
            (\bullet, \bullet) , \quad 0 \le t \le \frac{2(1 - s)}{3} \\
            \left( \left[ b, \frac{3t + 2s - 2}{1 + 2s} \right], \bullet \right), \quad \frac{2(1 - s)}{3} \le t \le 1
        \end{cases} \\
        \left( b, [f,t] \right) & \mapsto 
        \begin{cases}
            \left( \bullet, \rho(b) \left[ f,  \frac{3t}{1 + 2s}\right] \right), \quad 0 \le t \le \frac{1 + 2s}{3} \\
            \left( \bullet, \left[ f, 2  - 3t + 2s\right] \right), \quad \frac{1 + 2s}{3} \le t \le \frac{2 + s}{3} \\
            \left( \bullet, \left[ f, 3t - 2 \right] \right), \quad \frac{2 + s}{3} \le t \le 1
        \end{cases}
    \end{align*}
    We have $\mathsf{h}_0 = \psi, \mathsf{h}_1 = \phi$, and hence, $\psi \simeq \phi$, establishing (\ref{eq:attachinMapDecomposition}).
    
    Let us denote the induced map as
    \[j : \Sigma B \vee \Sigma F \rightarrow \left( CB \times CF \right) \cup_\phi \left( \Sigma B \vee \Sigma F \right).\]
    Then, we deduce that $j \comp \phi$ is null-homotopic. Also, it follows from the diagram (\ref{eq:attachingMapEquivalenceDiagram}) that 
    \[j \simeq \Phi \comp \left( s \vee i \right): \Sigma B \vee \Sigma F \hookrightarrow \tilde{E}, \quad  \text{and} \quad j \comp \iota \simeq \Phi \comp i: \Sigma F \hookrightarrow \tilde{E}.\] Thus, applying $j_*$ on both sides of \eqref{eq:attachinMapDecomposition}, we have
    \begin{align*}
        0 &= j_*[\iota \comp \tilde{\rho} \comp \mu] + j_*[\iota \comp \epsilon \comp \mu] + j_*[\omega \comp \mu] \\
        &= \Phi_* \left( i_*[\tilde{\rho} \comp \mu] + i_*[\epsilon \comp \mu] + [(s \vee i) \comp \omega \comp \mu] \right) \\
        &= \Phi_* \left( i_*\left( J[\rho] - J[\varepsilon] \right) + [s, i] \right) \; (\text{as $\Phi$ is a homotopy equivalence}) \\
        \Rightarrow \Phi_* [s, i] &= \Phi_* i_* \left( J[\varepsilon] - J[\rho] \right)
    \end{align*}
    Since $\Phi$ is a homotopy equivalence, and $i_*$ is monic, we have 
    $$\left\{ \mathrm{Id}_{\Sigma B}, \mathrm{Id}_{\Sigma F} \right\}_s = J[\varepsilon] - J[\rho]$$
    Furthermore, when $B$ is a suspension, Lemma~\ref{J(epsilon)=0} implies that 
    $$\left\{ \mathrm{Id}_{\Sigma B}, \mathrm{Id}_{\Sigma F} \right\}_s = J[\varepsilon] - J[\rho] = -J[\rho]$$

    (2) Let $\psi : \Sigma F \rightarrow \Sigma F$ be such that $\psi \comp \varphi \simeq \mathrm{Id}_{\Sigma F} \simeq \varphi \comp \psi$. Consider the attaching map
    \[\tilde{\phi} = \left( \mathrm{Id}_{\Sigma B} \vee \psi \right) \comp \phi,\]
    and let $\Sigma F \overset{\tilde{i}}{\hookrightarrow} E_\psi \rightarrow \Sigma B$ be the fibration obtained from the characteristic map $\psi\comp \rho$, with the canonical section $\tilde{s}$. Arguing as in (1), we obtain a homotopy equivalence $\Psi: E \rightarrow E_\psi$, such that $\Psi \comp \left( s \vee i \right) \simeq \left( \tilde{s} \vee \tilde{i} \right) \comp \left( \mathrm{Id}_{\Sigma B} \vee \psi\right)$. By Lemma~\ref{lemma:mapOfFibrations}, we then have 
    \[\psi_*\left\{ \mathrm{Id}_{\Sigma B}, \varphi \right\}_s = \left\{ \mathrm{Id}_{\Sigma B}, \psi\comp \varphi \right\}_{\tilde{s}} = \left\{ \mathrm{Id}_{\Sigma B}, \mathrm{Id}_{\Sigma F} \right\}_{\tilde{s}} = J[\varepsilon] -J[\psi\comp \rho]\]
    Applying $\varphi_*$ on both sides, we have from Lemma~\ref{J(epsilon)=0} that $\left\{ \mathrm{Id}_{\Sigma B}, \varphi \right\}_s = J[\phi \comp \varepsilon] -J[\rho]$. In particular, if $B$ is a suspension, it follows that $\left\{ \mathrm{Id}_{\Sigma B}, \varphi \right\}_s = -J[\rho]$.
\end{proof}

As a consequence, we have the following.
\begin{prop}\label{J-HomoInFibrewiseCompactification}
    Suppose $\Sigma F \hookrightarrow E \rightarrow \Sigma B$ is a fibration with a section $s$, and let $\rho: B \rightarrow Map(\Sigma F, \Sigma F)$ be the characteristic map. Then, $\Sigma \left\{ \mathrm{Id}_{\Sigma B}, \mathrm{Id}_{\Sigma F} \right\}_s$ is independent of the choice of section $s$, where $\Sigma : [\Sigma B \wedge F, \Sigma F] \rightarrow [\Sigma^2 B \wedge F, \Sigma^2 F]$ is the suspension homomorphism. In particular, 
    \[\Sigma \left\{ \mathrm{Id}_{\Sigma B}, \mathrm{Id}_{\Sigma F} \right\}_s = J[\varepsilon] - J[\xi],\]
    where $\varepsilon, \xi : B \rightarrow Map_\bullet(\Sigma^2 F, \Sigma^2 F)$ are respectively  given as 
    \[\varepsilon : b \mapsto \mathrm{Id}_{\Sigma^2 F}, \quad \xi : b \mapsto \Sigma \rho(b).\]
    Furthermore, if $B$ is a suspension, then $\Sigma \left\{ \mathrm{Id}_{\Sigma B}, \mathrm{Id}_{\Sigma F} \right\}_s = -J[\xi]$.
\end{prop}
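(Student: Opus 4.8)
The plan is to deduce everything from Theorem~\ref{BraceAndJ-homoInSuspensionOverSuspension}(1) by passing to the fibrewise suspension of $p$. First I would dispose of the section-independence. Given two sections $s_0,s_1$ of $p$, we have $p_*s_0=p_*s_1=\mathrm{Id}_{\Sigma B}$, so by the split exact sequence \eqref{i_*IsMonic} (applied to $[\Sigma B,-]$) the difference of $s_0$ and $s_1$ in the group $[\Sigma B,E]$ lies in $\operatorname{im}i_*$, say it is $i_*\delta$ with $\delta\in[\Sigma B,\Sigma F]$ unique. Since $\Sigma B$ is a suspension, the generalized Whitehead product is additive in the first variable and functorial, so $i_*\{\mathrm{Id}_{\Sigma B},\mathrm{Id}_{\Sigma F}\}_{s_1}=[s_1,i]=[s_0,i]+[i_*\delta,i]=i_*\bigl(\{\mathrm{Id}_{\Sigma B},\mathrm{Id}_{\Sigma F}\}_{s_0}+[\delta,\mathrm{Id}_{\Sigma F}]\bigr)$; as $i_*$ is monic this gives $\{\mathrm{Id}_{\Sigma B},\mathrm{Id}_{\Sigma F}\}_{s_1}=\{\mathrm{Id}_{\Sigma B},\mathrm{Id}_{\Sigma F}\}_{s_0}+[\delta,\mathrm{Id}_{\Sigma F}]$. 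Now $[\delta,\mathrm{Id}_{\Sigma F}]=\nabla\comp(\delta\vee\mathrm{Id}_{\Sigma F})\comp\omega$, where $\omega$ is the universal Whitehead map; since $\omega$ composed with the inclusion $\Sigma B\vee\Sigma F\hookrightarrow\Sigma B\times\Sigma F$ is nullhomotopic and, after one suspension, $\Sigma(\Sigma B\vee\Sigma F)$ is a retract of $\Sigma(\Sigma B\times\Sigma F)$ (by the suspension splitting of a product), the map $\Sigma\omega$ is nullhomotopic, so $\Sigma[\delta,\mathrm{Id}_{\Sigma F}]=0$. Therefore $\Sigma\{\mathrm{Id}_{\Sigma B},\mathrm{Id}_{\Sigma F}\}_{s_1}=\Sigma\{\mathrm{Id}_{\Sigma B},\mathrm{Id}_{\Sigma F}\}_{s_0}$, which is the independence statement.

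For the formula, let $\Sigma^2F\hookrightarrow\bar E\to\Sigma B$ be the fibrewise (reduced) suspension of $p$, classified by the clutching map $b\mapsto\Sigma\rho(b)$; this takes values in $Map_\bullet(\Sigma^2F,\Sigma^2F)$ because a suspended map preserves the base point, so $\bar E$ carries the canonical section $s_\infty$ of \eqref{eq:section_s_infty}. One checks that the proof of Theorem~\ref{BraceAndJ-homoInSuspensionOverSuspension}(1) uses only that the clutching values preserve the base point, so it applies verbatim to $\bar E$ and yields $\{\mathrm{Id}_{\Sigma B},\mathrm{Id}_{\Sigma^2F}\}^{\bar E}_{s_\infty}=J[\varepsilon]-J[\xi]$ with $\varepsilon:b\mapsto\mathrm{Id}_{\Sigma^2F}$ and $\xi:b\mapsto\Sigma\rho(b)$. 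It then remains to identify $\Sigma\{\mathrm{Id}_{\Sigma B},\mathrm{Id}_{\Sigma F}\}_s$ with $\{\mathrm{Id}_{\Sigma B},\mathrm{Id}_{\Sigma^2F}\}^{\bar E}_{s'}$ for the section $s'$ of $\bar E$ determined by $s$; I would do this either by constructing a zig-zag of fibration maps (through a fibre-homotopy-equivalent model of $p$ and of $\bar E$) realizing the suspension homomorphism on fibres and invoking Lemma~\ref{lemma:mapOfFibrations} together with the monicity of $\bar i_*$, or by re-running the attaching-space computation of Theorem~\ref{BraceAndJ-homoInSuspensionOverSuspension}(1) directly for the suspended clutching data while tracking the extra suspension coordinate. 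Because $\Sigma^2F$ is $2$-connected when $F$ is connected, the base-point-change maps of Proposition~\ref{prop:braceProductHomotopySection} on $[\,\cdot\,,\Sigma^2F]$ are trivial, so together with the section-independence just proved one may replace $s'$ by $s_\infty$. Combining the two computations gives $\Sigma\{\mathrm{Id}_{\Sigma B},\mathrm{Id}_{\Sigma F}\}_s=J[\varepsilon]-J[\xi]$, and the last clause follows at once since $J[\varepsilon]=0$ when $B$ is a suspension, by Lemma~\ref{J(epsilon)=0}(3).

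The hard part will be the comparison $\Sigma\{\mathrm{Id}_{\Sigma B},\mathrm{Id}_{\Sigma F}\}_s=\{\mathrm{Id}_{\Sigma B},\mathrm{Id}_{\Sigma^2F}\}^{\bar E}_{s'}$: one has to make precise the sense in which the suspension homomorphism on the brace product of $p$ agrees with the brace product of its fibrewise suspension, and carry out the attendant base-point bookkeeping (notably when $\Sigma\rho(\bullet)\ne\mathrm{Id}$). Everything else — the section-independence, the appeal to Theorem~\ref{BraceAndJ-homoInSuspensionOverSuspension}, and the suspension clause — is routine bilinearity of the Whitehead product together with the formal properties of $J$.
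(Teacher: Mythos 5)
Your overall strategy is the paper's: pass to the fibrewise suspension $\Sigma_f E$ (your $\bar E$), classified by $\xi = \Sigma\rho$, apply Theorem~\ref{BraceAndJ-homoInSuspensionOverSuspension}(1) there, and transport the answer back. But the step you explicitly defer as ``the hard part'' --- the identification $\Sigma\left\{ \mathrm{Id}_{\Sigma B}, \mathrm{Id}_{\Sigma F} \right\}_s = \left\{ \mathrm{Id}_{\Sigma B}, \mathrm{Id}_{\Sigma^2 F} \right\}_{s_\infty}$ --- is precisely the content of the paper's proof, and neither of your two sketched routes is carried out. The paper does it with a single diagram: there is a fibrewise inclusion $\phi : E \hookrightarrow \Sigma_f E$ over $\mathrm{Id}_{\Sigma B}$ restricting on fibres to $\iota : \Sigma F \hookrightarrow \Sigma^2 F$, so that $\phi \comp i = j \comp \iota$; sliding the suspension coordinate to the cone point gives a vertical homotopy $\phi \comp s \simeq s_\infty$ for \emph{any} section $s$ (which is also where the section-independence comes from, for free); then naturality of the generalized Whitehead product gives $\phi_*[s,i] = [\phi\comp s, \phi\comp i] = [s_\infty, j \comp \iota]$, and since $j_*$ is monic and $\iota_*\gamma$ is the restriction of $\Sigma\gamma$ along $\Sigma B \wedge F \rightarrow \Sigma B \wedge \Sigma F$, this is exactly the desired equality. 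Without some executed version of this comparison your proof is incomplete. (Your worry about base-point bookkeeping when $\Sigma\rho(\bullet)\ne\mathrm{Id}$ is moot: the standing hypothesis of \S\ref{GenBraceOfaFibOverSuspension} is $\rho(\bullet)=\mathrm{Id}_{\Sigma F}$, hence $\Sigma\rho(\bullet)=\mathrm{Id}_{\Sigma^2 F}$.)

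Your separate argument for section-independence is a genuinely different route and is nearly sound: the observation that $\Sigma[\delta,\mathrm{Id}_{\Sigma F}]=0$ because $\Sigma\omega$ is nullhomotopic is correct. The caveat is that the left-distributivity $[s_0 + i_*\delta,\, i] = [s_0,i] + [i_*\delta,i]$ of the generalized Whitehead product (Arkowitz, Proposition 3.4) is only available when the domain of the first variable is a \emph{double} suspension, i.e.\ when $B$ is itself a suspension; the proposition does not assume this, and the paper itself only invokes bilinearity under a double-suspension hypothesis (cf.\ Proposition~\ref{BraceInWedgeOverWedgeSpaces}). The paper's route sidesteps this entirely, since $\phi \comp s \simeq s_\infty$ holds for every section $s$ simultaneously.
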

\begin{proof}
    Suppose $\Sigma_f E$ denotes the total space of the fibration characterized by $\xi$. Then, $\Sigma^2 F \overset{j}{\hookrightarrow} \Sigma_f E \rightarrow \Sigma B$ is the fibrewise suspension, which admits the canonical section $s_\infty$ as defined in \eqref{eq:section_s_infty}. Now, consider the following diagram
    \[\begin{tikzcd}
        \Sigma B \wedge F \arrow[dashed]{r}{\left\{ \mathrm{Id}_{\Sigma B},  \mathrm{Id}_{\Sigma F} \right\}_s} &[5em] \Sigma F \arrow[hook]{r}{i} \arrow[hook]{d}{\iota} & E \arrow{r}{} \arrow[hook]{d}{\phi} & \Sigma B \arrow[equal]{d}{} \arrow[bend right=30]{l}[swap]{s} \\
        \Sigma B \wedge \Sigma F \arrow[dashed]{r}[swap]{\left\{ \mathrm{Id}_{\Sigma B}, \mathrm{Id}_{\Sigma^2 F} \right\}_{s_\infty}} & \Sigma^2 F \arrow[hook]{r}{j} & \Sigma_f E \arrow{r}{} & \Sigma B \arrow[bend left=30]{l}{s_\infty} 
    \end{tikzcd}\]
    where $\phi\comp i=j\comp \iota$ and $\phi \comp s \simeq s_{\infty}$. Then it follows that  
    \begin{align*}
        j_* \Sigma \left. \left\{ \mathrm{Id}_{\Sigma B}, \mathrm{Id}_{\Sigma F} \right\}_s\right|_{\Sigma B\wedge F} &= \phi_* i_* \left\{ \mathrm{Id}_{\Sigma B}, \mathrm{Id}_{\Sigma F} \right\}_s = \phi_* [s, i]\\
        &= [\phi \comp s, \phi \comp i] = [s_\infty, j \comp \iota] = j_*\left.\left\{ \mathrm{Id}_{\Sigma B}, \mathrm{Id}_{\Sigma^2 F} \right\}_{s_\infty}\right|_{\Sigma B\wedge F}.
    \end{align*}
    As $j_*$ is monic, we infer that $\Sigma \left\{ \mathrm{Id}_{\Sigma B}, \mathrm{Id}_{\Sigma F} \right\}_s = \left\{ \mathrm{Id}_{\Sigma B}, \mathrm{Id}_{\Sigma^2 F} \right\}_{s_\infty}$. Therefore, the proof follows from Theorem~\ref{BraceAndJ-homoInSuspensionOverSuspension}(1).
\end{proof}

Let us now prove that the brace product detects the homotopy type of the total space of the fibration.
\begin{thm}\label{thm:genBraceAndHomotopyType}
    For $r = 1, 2$, let $\Sigma F \overset{i_r}{\hookrightarrow} E_r \rightarrow  \Sigma B$ be two fibrations as above, given by characteristic maps $\rho_r$, with canonical sections $s_r$, respectively. Let $\varphi: \Sigma F \rightarrow \Sigma F$ be some base point preserving self-homotopy equivalence. Then the following are equivalent.
    \begin{enumerate}[(i)]
        \item $\varphi_* \left\{ \mathrm{Id}_{\Sigma B}, \mathrm{Id}_{\Sigma F} \right\}_{s_1} = \left\{ \mathrm{Id}_{\Sigma B}, \varphi \right\}_{s_2}$. 

        \item $J[\rho_1]=J[\rho_2]$.
        
        \item There exists a homotopy equivalence $\Phi : E_1 \rightarrow E_2$ satisfying 
        \[\Phi \comp \left( s_1 \vee i_1 \right) \simeq \left( s_2 \vee i_2 \right) \comp \left( \mathrm{Id}_{\Sigma B} \vee \varphi \right) = s_2 \vee \left( i_2 \comp \varphi \right).\]
    \end{enumerate}
\end{thm}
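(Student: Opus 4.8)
The plan is to prove the cycle of implications $(iii)\Rightarrow(i)\Rightarrow(ii)\Rightarrow(iii)$; the first two are short consequences of Theorem~\ref{BraceAndJ-homoInSuspensionOverSuspension}, Lemma~\ref{lemma:mapOfFibrations} and Lemma~\ref{J(epsilon)=0}, while the real work is in $(ii)\Rightarrow(iii)$, for which I would use the homotopy-pushout description of the total space produced in the proof of Theorem~\ref{BraceAndJ-homoInSuspensionOverSuspension}. For $(iii)\Rightarrow(i)$, the data of $(iii)$ is precisely what Lemma~\ref{lemma:mapOfFibrations} requires, applied to the map of fibrations with base map $\mathrm{Id}_{\Sigma B}$, fibre map $\varphi$ and total map $\Phi$; specialising the conclusion of that lemma to $\alpha=\mathrm{Id}_{\Sigma B}$ and $\beta=\mathrm{Id}_{\Sigma F}$ and using $\varphi_*\mathrm{Id}_{\Sigma F}=\varphi$ yields exactly $\varphi_*\{\mathrm{Id}_{\Sigma B},\mathrm{Id}_{\Sigma F}\}_{s_1}=\{\mathrm{Id}_{\Sigma B},\varphi\}_{s_2}$.

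For $(i)\Leftrightarrow(ii)$, I would rewrite both braces in $(i)$ via Theorem~\ref{BraceAndJ-homoInSuspensionOverSuspension}: part (1) gives $\{\mathrm{Id}_{\Sigma B},\mathrm{Id}_{\Sigma F}\}_{s_1}=J[\varepsilon]-J[\rho_1]$ and part (2) gives $\{\mathrm{Id}_{\Sigma B},\varphi\}_{s_2}=J[\varphi\comp\varepsilon]-J[\rho_2]$. Since $\varphi_*$ is a group homomorphism of $[\Sigma B\wedge F,\Sigma F]$ and, by Lemma~\ref{J(epsilon)=0}(2), $\varphi_*J[\varepsilon]=J[\varphi\comp\varepsilon]$ and $\varphi_*J[\rho_1]=J[\varphi\comp\rho_1]$, condition $(i)$ becomes $J[\varphi\comp\varepsilon]-J[\varphi\comp\rho_1]=J[\varphi\comp\varepsilon]-J[\rho_2]$; cancelling and using that $\varphi_*$ is invertible (as $\varphi$ is a self-homotopy equivalence) shows this is equivalent to $J[\varphi\comp\rho_1]=J[\rho_2]$, and hence to $J[\rho_1]=J[\rho_2]$, i.e.\ to $(ii)$.

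For $(ii)\Rightarrow(iii)$, recall from the proof of Theorem~\ref{BraceAndJ-homoInSuspensionOverSuspension} that there is a homotopy equivalence $\Phi_r\colon E_r\to\widetilde{E}_r=(CB\times CF)\cup_{\phi_r}(\Sigma B\vee\Sigma F)$, with $\phi_r$ the attaching map~\eqref{eq:totalSpaceAttachinMapFormula} built from $\rho_r$ and with $\Phi_r$ sending $s_r$ and $i_r$ to the canonical inclusions of $\Sigma B$ and $\Sigma F$ into $\widetilde{E}_r$. Inspecting~\eqref{eq:totalSpaceAttachinMapFormula} shows that $(\mathrm{Id}_{\Sigma B}\vee\varphi)\comp\phi_1$ is the attaching map of the fibration with characteristic map $\varphi\comp\rho_1$, so the decomposition~\eqref{eq:attachinMapDecomposition} gives, in $[\Sigma B\wedge F,\Sigma B\vee\Sigma F]$,
\[ \bigl[(\mathrm{Id}_{\Sigma B}\vee\varphi)\comp\phi_1\comp\mu\bigr]=\iota_*J[\varphi\comp\rho_1]-\iota_*J[\varepsilon]+[\omega\comp\mu],\qquad [\phi_2\comp\mu]=\iota_*J[\rho_2]-\iota_*J[\varepsilon]+[\omega\comp\mu]. \]
Since $(ii)$ is equivalent to $J[\varphi\comp\rho_1]=J[\rho_2]$ (as shown above), these two classes agree, whence $(\mathrm{Id}_{\Sigma B}\vee\varphi)\comp\phi_1\simeq\phi_2$. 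Feeding this homotopy, together with the homotopy equivalence $\mathrm{Id}_{\Sigma B}\vee\varphi$ of $\Sigma B\vee\Sigma F$ and the identity of $CB\times F\cup B\times CF$, into the cofibre sequences that define $\widetilde{E}_1$ and $\widetilde{E}_2$ produces an induced homotopy equivalence $\bar g\colon\widetilde{E}_1\to\widetilde{E}_2$ restricting to $\mathrm{Id}_{\Sigma B}\vee\varphi$ on $\Sigma B\vee\Sigma F$. Then $\Phi:=\Phi_2^{-1}\comp\bar g\comp\Phi_1$ is the desired homotopy equivalence: chasing the homotopies $\Phi_r\comp(s_r\vee i_r)\simeq j_r$ gives $\Phi\comp(s_1\vee i_1)\simeq(s_2\vee i_2)\comp(\mathrm{Id}_{\Sigma B}\vee\varphi)=s_2\vee(i_2\comp\varphi)$, which is $(iii)$.

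I expect $(ii)\Rightarrow(iii)$ to be the main obstacle. One must unwind the clutching/homotopy-pushout description of the total spaces, match $(\mathrm{Id}_{\Sigma B}\vee\varphi)\comp\phi_1$ with the attaching map of the $\varphi$-twisted fibration while tracking base points and the reduced-join identifications, and verify that the resulting equivalence of mapping cones is compatible with the section \emph{and} with the fibre inclusion at the same time; that $\bar g$ is a homotopy equivalence then follows from a five-lemma argument applied to the associated cofibre sequences.
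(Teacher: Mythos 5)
Your proposal follows essentially the same route as the paper: (iii)$\Rightarrow$(i) via Lemma~\ref{lemma:mapOfFibrations}, (i)$\Leftrightarrow$(ii) via Theorem~\ref{BraceAndJ-homoInSuspensionOverSuspension} together with Lemma~\ref{J(epsilon)=0}(2), and the hard implication via the attaching-map decomposition \eqref{eq:attachinMapDecomposition} followed by a homotopy-pushout comparison. The only organizational difference is that you prove (ii)$\Rightarrow$(iii) by identifying $(\mathrm{Id}_{\Sigma B}\vee\varphi)\comp\phi_1$ with the attaching map of the fibration classified by $\varphi\comp\rho_1$, whereas the paper proves (i)$\Rightarrow$(iii) by comparing $\phi_1$ with $\tilde{\phi}_2=(\mathrm{Id}_{\Sigma B}\vee\psi)\comp\phi_2$ for a homotopy inverse $\psi$ of $\varphi$; these are mirror-image computations, and both culminate in the same key homotopy $(\mathrm{Id}_{\Sigma B}\vee\varphi)\comp\phi_1\simeq\phi_2$ and the same gluing of homotopy pushouts.

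One step deserves a caveat. From $\varphi_*\left\{\mathrm{Id}_{\Sigma B},\mathrm{Id}_{\Sigma F}\right\}_{s_1}=\left\{\mathrm{Id}_{\Sigma B},\varphi\right\}_{s_2}$ you correctly deduce $J[\varphi\comp\rho_1]=J[\rho_2]$, i.e.\ $\varphi_*J[\rho_1]=J[\rho_2]$; but invertibility of $\varphi_*$ then gives $J[\rho_1]=\varphi_*^{-1}J[\rho_2]=J[\psi\comp\rho_2]$, \emph{not} $J[\rho_1]=J[\rho_2]$. So what your argument actually establishes is the equivalence of (i) and (iii) with the condition $\varphi_*J[\rho_1]=J[\rho_2]$, which coincides with (ii) as literally stated only when $\varphi_*$ fixes $J[\rho_2]$ (for instance when $\varphi=\mathrm{Id}_{\Sigma F}$). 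This is an imprecision inherited from the source rather than a defect of your strategy: the paper's own proof asserts (i)$\Leftrightarrow$(ii) with exactly the same justification, and if (ii) is read as $\varphi_*J[\rho_1]=J[\rho_2]$ your cycle of implications is complete and agrees with the paper's proof; your use of "(ii)" in the (ii)$\Rightarrow$(iii) step is in fact this corrected condition, so the rest of the argument is unaffected.
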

\begin{proof}\sloppy
    Note that the equivalence between (i) and (ii) follows from Theorem~\ref{BraceAndJ-homoInSuspensionOverSuspension} and Lemma~\ref{J(epsilon)=0}(2). Thus, it is enough to prove the equivalence between (i) and (iii).

    For $r = 1,2$, we have the attaching maps $\phi_r : CB \times F \cup B \times CF \rightarrow \Sigma B \vee \Sigma F$ as given in \eqref{eq:totalSpaceAttachinMapFormula}, and homotopy equivalences
    \[\Phi_r : E_r \rightarrow \tilde{E}_r = \left( CB \times CF \right) \cup_{\phi_r} \left( \Sigma B \vee \Sigma F \right),\]
    as in \eqref{eq:attachingMapEquivalenceDiagram}. We have, $\Phi_r \comp \left( s_r \vee i_r \right) \simeq j_r$, where $j_r : \Sigma B \vee \Sigma F \rightarrow \tilde{E}_r$ is the induced map. From the same diagram, by considering the inclusion $\Sigma F \hookrightarrow CB \times \Sigma F$, we also construct a homotopy inverse, say, $\Theta_r : \tilde{E}_r \rightarrow E_r$ of $\Phi_r$, such that $\Theta_2 \comp j_2 \simeq s_r \vee i_r$.
    
    Suppose $\varphi_* \left\{ \mathrm{Id}_{\Sigma B}, \mathrm{Id}_{\Sigma F} \right\}_{s_1} = \left\{ \mathrm{Id}_{\Sigma B}, \varphi \right\}_{s_2}$.
    It follows from Theorem~\ref{BraceAndJ-homoInSuspensionOverSuspension}(1) that 
    \[\varphi_* \left\{ \mathrm{Id}_{\Sigma B}, \mathrm{Id}_{\Sigma F} \right\}_{s_1} = \left\{ \mathrm{Id}_{\Sigma B}, \varphi \right\}_{s_2} \Rightarrow \varphi_* \left( J[\varepsilon] - J[\rho_1] \right) = J[\varphi \comp \varepsilon] - J[\rho_2].\]
    Let $\psi: \Sigma F \rightarrow \Sigma F$ be a choice of homotopy inverse so that $\psi \comp \varphi \simeq \mathrm{Id}_{\Sigma F} \simeq \varphi \comp \psi$. Let us denote $\tilde{\phi}_2 = \left( \mathrm{Id}_{\Sigma B} \vee \psi \right) \comp \phi_2$. In particular, the fibration obtained from the attaching map $\tilde{\phi}_2$ corresponds to the characteristic map $\psi \comp \rho_2$. From \eqref{eq:attachinMapDecomposition}, we now have
    \begin{align*}
        (\mathrm{Id}_{\Sigma B} \vee \varphi)_* [\phi_1 \comp \mu]
        &= (\mathrm{Id}_{\Sigma B} \vee \varphi)_* \iota_* \left( [\tilde{\rho}_1 \comp \mu] + [\epsilon \comp \mu] \right) + (\mathrm{Id}_{\Sigma B} \vee \varphi)_* [\omega \comp \mu] \\
        &= \iota_* \varphi_* \left( J[\rho_1] - J[\varepsilon] \right) + (\mathrm{Id}_{\Sigma B} \vee \varphi)_* [\omega \comp \mu] \\
        &= \iota_* \left( J[\rho_2] - J[\varphi \comp \varepsilon] \right) + (\mathrm{Id}_{\Sigma B} \vee \varphi)_*[\omega \comp \mu] \\
        &= \iota_* \varphi_* \left( J[\psi \comp \rho_2] - J[\varepsilon]\right) + (\mathrm{Id}_{\Sigma B} \vee \varphi)_*[\omega \comp \mu] \\
        &= (\mathrm{Id}_{\Sigma B} \vee \varphi)_* \iota_*\left( [\widetilde{\psi \comp \rho_2} \comp \mu] + [\epsilon \comp \mu]\right) + (\mathrm{Id}_{\Sigma B} \vee \varphi)_* [\omega \comp \mu] \\
        &= (\mathrm{Id}_{\Sigma B} \vee \varphi)_* [\tilde{\phi}_2 \comp \mu]
    \end{align*}
    As $\mu$ is a homotopy equivalence, we have 
    \[\phi_1 \simeq \tilde{\phi}_2 \Rightarrow \left( \mathrm{Id}_{\Sigma B} \vee \varphi \right) \comp \phi_1 \simeq \phi_2.\]
    We thus have the (homotopy) commutative diagram
    \[\begin{tikzcd}
        \left( CB \times F \right) \cup \left( B \times CF \right) \arrow[equal]{rd}{} \arrow[hook]{dd}{} \arrow{rr}{\phi_1} && \Sigma B \vee \Sigma F \arrow{rd}{\mathrm{Id}_{\Sigma B} \vee \varphi}[swap]{\simeq} \arrow{dd}[pos=.3]{j_1}\\
        & \left( CB \times F \right) \cup \left( B \times CF \right)  \arrow[crossing over]{rr}[pos=.3]{\phi_2} && \Sigma B \vee \Sigma F \arrow{dd}[swap]{j_2} \\
        CB \times CF \arrow{rr}{} \arrow[equal]{rd}{} && \tilde{E}_1 \arrow[dashed]{rd}{\Psi}[swap,pos=.7]{\simeq} \\
        & CB \times CF \arrow{rr}{} \ar[from=uu, hook, crossing over] && \tilde{E}_2 \\
    \end{tikzcd}\]
    As the pushouts are homotopy pushouts, we have an induced homotopy equivalence $\Psi: \tilde{E}_1 \rightarrow \tilde{E}_2$. By construction, $\Psi \comp j_1 \simeq \Phi_1 \comp \left( s_1 \vee i_1 \right)$. Set, $\Phi = \Theta_2 \comp \Psi \comp \Phi_1$. Thus, $\Phi: E_1 \rightarrow E_2$ is a homotopy equivalence. Also, we have
    \begin{align*}
        \Phi \comp \left( s_1 \vee i_1 \right) &\simeq \left( \Theta_2 \comp \Psi \comp \Phi_1 \right) \comp \left( s_1 \vee i_1 \right) \\
        &\simeq \Theta_2 \comp \Psi \comp j_1 \\
        &\simeq \Theta_2 \comp j_2 \comp \left( \mathrm{Id}_{\Sigma B} \vee \varphi \right) \\
        &\simeq \left( s_2 \vee i_2 \right) \comp \left( \mathrm{Id}_{\Sigma B} \vee \varphi \right) = s_2 \vee \left( i_2 \comp \varphi \right).
    \end{align*}

    Conversely, suppose we have a homotopy equivalence $\Phi : E_1 \rightarrow E_2$ satisfying $\Phi \comp \left( s_1 \vee i_1 \right) \simeq s_2 \vee \left( i_2 \comp \varphi \right) = \left( s_2 \vee i_2 \right) \comp \left( \mathrm{Id}_{\Sigma B} \vee \varphi \right)$. Then, it follows from Lemma~\ref{lemma:mapOfFibrations} that $\varphi_* \left\{ \mathrm{Id}_{\Sigma B}, \mathrm{Id}_{\Sigma F} \right\}_{s_1} = \left\{ \mathrm{Id}_{\Sigma B}, \varphi \right\}_{s_2}$.
\end{proof}

\subsection{Applications}
Theorem~\ref{BraceAndJ-homoInSuspensionOverSuspension}, specialized to the case of sphere bundle over a sphere, aids in demonstrating Lemma 1 from Milnor \cite{Milnor-OnWhiteheadJHomo}. To the best of the authors' knowledge, no proof of this result could be located in the existing literature.

\begin{lemma}[{\cite[Lemma 1]{Milnor-OnWhiteheadJHomo}}]\label{Milnor-lemma}
    Let $\xi$ be the oriented rank $q$ vector bundle over $S^n$ $(n\geq 2)$ corresponding to an element $\rho\in \pi_{n-1}(SO(q))$. Also, let $Th(\xi)$ be the Thom space associated with $\xi$. Then $Th(\xi)$ has a cell structure $D^{n+q}\cup_\Phi S^q$, where the attaching map $\Phi: S^{n+q-1}\to S^q$ represents $J(\rho)\in \pi_{n+q-1}(S^q)$.
\end{lemma}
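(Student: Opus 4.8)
The plan is to realize the Thom space $Th(\xi)$ as (homotopy equivalent to) the cofibre of the section $s_\infty : S^n \hookrightarrow E$ of a suitable sphere bundle over $S^n$, and then to identify the resulting attaching map with the generalized $J$-homomorphism using Theorem~\ref{BraceAndJ-homoInSuspensionOverSuspension}. Concretely, let $D(\xi) \to S^n$ be the disc bundle and $S(\xi) \to S^n$ its sphere bundle (with fibre $S^{q-1}$). Passing to the fibrewise one-point compactification gives a bundle $S^q \hookrightarrow E \to S^n$ whose structure group acts through $SO(q) \hookrightarrow Map_\bullet(S^q, S^q)$ fixing the point at infinity; this is precisely the clutching-construction fibration $\Sigma F \hookrightarrow E \to \Sigma B$ of \S\ref{GenBraceOfaFibOverSuspension}, with $F = S^{q-1}$, $B = S^{n-1}$, and characteristic map $\rho : S^{n-1} \to SO(q) \hookrightarrow Map_\bullet(S^q, S^q)$ the given element of $\pi_{n-1}(SO(q))$. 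It carries the canonical section $s_\infty$ given by the point at infinity in each fibre. The key geometric observation is that $E$ deformation retracts onto $D(\xi)$ with the section-at-infinity corresponding to the zero section's complement, so that $E / s_\infty(S^n) \simeq D(\xi)/S(\xi) = Th(\xi)$.

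Next I would analyze the cell structure of $E$. Since $S^n = \Sigma S^{n-1}$ and $S^q = \Sigma S^{q-1}$, the homotopy equivalence \eqref{eq:totalSpaceAttachingMap} from the proof of Theorem~\ref{BraceAndJ-homoInSuspensionOverSuspension} gives
\[
E \simeq (CS^{n-1} \times CS^{q-1}) \cup_\phi (S^n \vee S^q) = D^{n+q} \cup_\phi (S^n \vee S^q),
\]
where $\phi$ is the map in \eqref{eq:totalSpaceAttachinMapFormula}. Collapsing the section $s_\infty(S^n)$, which under the equivalence corresponds to the $S^n$ wedge summand, yields $Th(\xi) \simeq D^{n+q} \cup_{\bar\phi} S^q$, where $\bar\phi : S^{n+q-1} \to S^q$ is the composite of $\phi$ (restricted to $\partial(D^n \times D^q) \cong S^{n+q-1}$, precomposed with the join-vs-smash homotopy equivalence $\mu$) with the collapse $S^n \vee S^q \to S^q$. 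So it remains to show $[\bar\phi] = J[\rho] \in \pi_{n+q-1}(S^q)$.

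For this I would invoke the decomposition \eqref{eq:attachinMapDecomposition} established inside the proof of Theorem~\ref{BraceAndJ-homoInSuspensionOverSuspension}: $[\phi \comp \mu] = [\iota \comp \tilde\rho \comp \mu] + [\iota \comp \epsilon \comp \mu] + [\omega \comp \mu]$ in $[S^{n+q-1}, S^n \vee S^q]$, where $\iota : S^q \hookrightarrow S^n \vee S^q$ is the inclusion. Post-composing with the collapse $c : S^n \vee S^q \to S^q$ kills the Whitehead-map term $[\omega \comp \mu]$ (since $\omega$ lands, after collapsing the $S^n$ factor, in the $S^q$ coordinate in a way that represents a Whitehead product with $\mathrm{Id}$, but more simply $c\comp \omega$ is null after the argument of Lemma~\ref{J(epsilon)=0}; alternatively use that $c_* [\omega\comp\mu] = [\mathrm{Id}_{S^q}, \text{const}] = 0$), and sends $\iota \comp \tilde\rho \comp \mu$ to $\tilde\rho \comp \mu$ representing $J[\rho]$ and $\iota \comp \epsilon \comp \mu$ to $\epsilon \comp \mu$ representing $-J[\varepsilon] = 0$ by Lemma~\ref{J(epsilon)=0}(3), since $S^{n-1}$ is a suspension for $n \geq 2$. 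Hence $[\bar\phi] = J[\rho]$, completing the argument; a small point to check carefully is the orientation/sign bookkeeping so that one gets exactly $J(\rho)$ and not $-J(\rho)$, but this is consistent with the conventions fixed in Remark~\ref{Remark:GenJ-Homo}.

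The main obstacle I anticipate is the first step: rigorously identifying $Th(\xi)$ with $E/s_\infty(S^n)$ and, more delicately, matching the \emph{fibrewise-compactification} clutching data with the \emph{vector-bundle} clutching data $\rho \in \pi_{n-1}(SO(q))$ so that the characteristic map of the $S^q$-bundle is literally $\rho$ composed with the standard inclusion $SO(q) \hookrightarrow Map_\bullet(S^q,S^q)$. Once that identification is in place, everything else is a direct application of the already-proved Theorem~\ref{BraceAndJ-homoInSuspensionOverSuspension} together with the homotopy decomposition \eqref{eq:attachinMapDecomposition} extracted from its proof.
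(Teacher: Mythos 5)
Your proposal is correct and follows essentially the same route as the paper's proof: realize $E$ as the fibrewise compactification of $\xi$ with the section at infinity, identify $Th(\xi)$ with $E/s_\infty(S^n)$, use the cell structure $D^{n+q}\cup_\phi (S^n\vee S^q)$ together with the decomposition \eqref{eq:attachinMapDecomposition}, and note that collapsing onto $S^q$ kills the Whitehead term while the $\varepsilon$-term vanishes since $S^{n-1}$ is a suspension for $n\geq 2$. The identification of the clutching data that you flag as the main obstacle is likewise handled in the paper simply by observing that the associated $S^q$-bundle of $\xi$ acts through $SO(q)\hookrightarrow Map_\bullet(S^q,S^q)$ fixing $\infty$, exactly as you describe.
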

\begin{proof}    
    Suppose $E$ is the $S^q$-bundle over $S^n$ associated with $\rho \in \pi_{n-1}(SO(q))$, equipped with the section at infinity $s$ as described in \eqref{eq:section_s_infty}. In this context, $E$ is simply the fibrewise compactification of the real oriented rank $q$ bundle $\xi$. Consequently, the Thom space $Th(\xi)$ can be identified with $E/s(S^n)$. Furthermore, as outlined in the proof of Theorem~\ref{BraceAndJ-homoInSuspensionOverSuspension}(1), the cell structure of $E$ is given by $D^{n+q} \cup_\phi S^n \vee S^q$, where $\phi: S^{n+q-1} \to S^n \vee S^q$ represents the element $\iota_*J(\rho) + [\omega]$. Here, $\iota: S^q \hookrightarrow S^n \vee S^q$ denotes the inclusion, and $\omega: S^{n+q-1} \to S^n \vee S^q$ is the Whitehead map. Note that, in the above expression for the attaching map $\phi$, the term $J[\epsilon]$ (see \eqref{eq:attachinMapDecomposition}) does not appear because the base $S^n$ $(n \geq 2)$ is a double suspension (by Lemma~\ref{J(epsilon)=0}). Therefore, the cell structure of the Thom space $E/s(S^n)$ is $D^{n+q}\cup_\Phi S^q$, where $\Phi:S^{n+q-1}\to S^q$ represents $(\text{proj}_2)_*(\iota_*J(\rho)+[\omega])$. However, since the composite $S^{n+q-1}\xrightarrow{\omega} S^n\vee S^q\xrightarrow{\text{proj}_2} S^q$ is null-homotopic, and the composite $S^{q}\xrightarrow{\iota} S^n\vee S^q\xrightarrow{\text{proj}_2} S^q$ is the identity, the attaching map $\Phi$ reduces to $J(\rho)$.
\end{proof}

We also found that Proposition 6.4 of Husem\"{o}ller \cite{Husemoller-FibreBundles} is not true. This, in particular, states that ``\textit{image of $J$-homomorphism is contained inside the image of suspension homomorphism of sphere}''. We demonstrate it in the following example.
\begin{eg}\label{eg:Husemollar_counter}
    Consider the $J$-homomorphism $J: \pi_3(SO(3))\to \pi_6(S^3)$. It is known (see top paragraph of p. 65 and Theorem 7.2 (i) of \cite{Toda-HomotopyGroupOfSpheres}) that this map is onto. But the suspension homomorphism $\Sigma: \pi_5(S^2)\cong \Z_2\to \pi_6(S^3)\cong \Z_{12}$ is clearly not surjective. Thus, the image of the $J$-homomorphism cannot be contained in the image of the suspension homomorphism $\Sigma$. 
\end{eg}

We propose the following rectification of \cite[Proposition 6.4]{Husemoller-FibreBundles}. Our proposed rectification is a direct corollary of Proposition~\ref{J-HomoInFibrewiseCompactification}.
\begin{prop}\label{HusemollerRectified}
    For $n, q \ge 2$, let $[\rho]$ be in the image of $\iota_* : \pi_{n-1}(SO(q)) \rightarrow \pi_{n-1}(SO(q + 1))$, induced by the inclusion $\iota : SO(q)\hookrightarrow SO(q+1)$. Then, $J[\rho] \in \pi_{n + q}(S^{q+1})$ is a suspension. In particular, $J[\rho] = - \Sigma \left\{ \mathrm{Id}_{S^n}, \mathrm{Id}_{S^q} \right\}_s$, where the brace product is taken in the sphere bundle $S^{q} \hookrightarrow E \rightarrow S^n$ characterized by $\rho$, which admits a section $s$.
\end{prop}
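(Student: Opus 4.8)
The plan is to reduce directly to Proposition~\ref{J-HomoInFibrewiseCompactification}, after arranging that the clutching data fixes a basepoint so that the fibration falls within the framework of \S\ref{GenBraceOfaFibOverSuspension}. Since $[\rho]$ lies in the image of $\iota_*$, I would first fix a representative $\rho' : S^{n-1} \to SO(q)$ with $\iota \comp \rho' \simeq \rho$, and regard $\Sigma S^{q-1} = S^q$ as $\R^q \cup \{\infty\}$ with basepoint $\infty$, so that $SO(q)$ acts linearly on $\R^q$ and fixes $\infty$. This exhibits $\rho'$ as a map $S^{n-1} \to Map_\bullet(S^q, S^q)$ with $\rho'(\bullet) = \mathrm{Id}_{S^q}$, and the associated sphere bundle $S^q \overset{i}{\hookrightarrow} E \to S^n$ is precisely a fibration of the type considered in \S\ref{GenBraceOfaFibOverSuspension} — equivalently, the fibrewise one-point compactification of the rank-$q$ vector bundle over $S^n$ with clutching function $\rho'$ — carrying the canonical section $s = s_\infty$ of \eqref{eq:section_s_infty}. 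This is the sphere bundle with section appearing in the statement, and by Proposition~\ref{J-HomoInFibrewiseCompactification} the quantity $\Sigma\{\mathrm{Id}_{S^n}, \mathrm{Id}_{S^q}\}_s$ does not depend on which section is used.

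Next, because $n \ge 2$ the base $B = S^{n-1}$ is a suspension, so the last assertion of Proposition~\ref{J-HomoInFibrewiseCompactification} yields
\[\Sigma\left\{ \mathrm{Id}_{S^n}, \mathrm{Id}_{S^q} \right\}_s = -J[\xi], \qquad \xi : S^{n-1} \longrightarrow Map_\bullet\!\left( \Sigma^2 S^{q-1}, \Sigma^2 S^{q-1} \right) = Map_\bullet(S^{q+1}, S^{q+1}),\]
where $\xi(b) = \Sigma(\rho'(b))$ is the reduced suspension of the self-map $\rho'(b)$ of $S^q$. The crux is to identify $\xi$. Writing $S^1 = \R \cup \{\infty\}$, there is the canonical identification $S^1 \wedge \left( \R^q \cup \{\infty\} \right) = \R^{q+1} \cup \{\infty\}$, under which $\xi(b) = \mathrm{Id}_{S^1} \wedge \rho'(b)$ becomes the linear map $\mathrm{Id}_{\R} \oplus \rho'(b)$ on $\R^{q+1} = \R \oplus \R^q$ fixing $\infty$; that is, $\xi(b)$ is exactly the standard linear action of $\iota(\rho'(b)) \in SO(q+1)$ on $S^{q+1} = \R^{q+1} \cup \{\infty\}$. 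Hence $\xi$ factors as $S^{n-1} \xrightarrow{\rho'} SO(q) \xrightarrow{\iota} SO(q+1) \hookrightarrow Map_\bullet(S^{q+1}, S^{q+1})$, so by Remark~\ref{Remark:GenJ-Homo} (with $q$ replaced by $q+1$) together with the homotopy invariance of the generalized $J$-homomorphism, $J[\xi] = J[\iota \comp \rho'] = J[\rho]$, the latter being the classical Whitehead $J$-homomorphism $\pi_{n-1}(SO(q+1)) \to \pi_{n+q}(S^{q+1})$ evaluated at $[\rho]$.

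Combining the two displays gives $J[\rho] = -\Sigma\left\{ \mathrm{Id}_{S^n}, \mathrm{Id}_{S^q} \right\}_s$, so $J[\rho]$ lies in the image of the suspension homomorphism $\Sigma : \pi_{n+q-1}(S^q) \to \pi_{n+q}(S^{q+1})$, which is the assertion.

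I expect the only point requiring genuine care to be the identification of $\xi$ in the second step: one must keep track of basepoints (the single point $\infty$, which serves simultaneously as the smash-product collapse point and as the $SO$-fixed point), verify that the reduced suspension of the one-point compactification $\R^q \cup \{\infty\}$ is $\R^{q+1} \cup \{\infty\}$ with the suspension coordinate appearing as a genuine linear axis on which every $\rho'(b)$ acts as the identity, and observe that the resulting block-diagonal matrix coincides, up to an inessential coordinate permutation that does not affect $J$ up to homotopy, with the standard image of $\rho'(b)$ under $\iota : SO(q) \hookrightarrow SO(q+1)$. Everything else is a formal combination of Proposition~\ref{J-HomoInFibrewiseCompactification} and Remark~\ref{Remark:GenJ-Homo}.
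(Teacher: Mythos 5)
Your argument is correct and lands on the paper's identity $J[\rho] = -\Sigma\left\{ \mathrm{Id}_{S^n}, \mathrm{Id}_{S^q} \right\}_s$, but the final bookkeeping is routed differently. The paper applies Theorem~\ref{BraceAndJ-homoInSuspensionOverSuspension} to the bundle with clutching function $\rho'$ (called $\xi$ there) acting on $\mathbb{R}^q \cup \{\infty\}$ to obtain the \emph{unsuspended} identity $\left\{ \mathrm{Id}_{S^n}, \mathrm{Id}_{S^q} \right\}_s = -J[\rho']$ in $\pi_{n+q-1}(S^q)$, and then suspends and cites Whitehead's classical compatibility $\Sigma \comp J = J \comp \iota_*$ from \cite{Whitehead-OnTheHomotopy} to convert $-\Sigma J[\rho']$ into $-J[\rho]$. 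You instead invoke Proposition~\ref{J-HomoInFibrewiseCompactification} directly, which gives $\Sigma\left\{ \mathrm{Id}_{S^n}, \mathrm{Id}_{S^q} \right\}_s = -J[\xi]$ with $\xi(b) = \Sigma\rho'(b)$, and then identify $\xi$ with $\iota \comp \rho'$ by the block-sum description of the reduced suspension of a compactified linear map. In substance this is the same computation --- your identification of $\Sigma\rho'(b)$ with $\iota(\rho'(b))$ is precisely the geometric content of the cited Whitehead theorem in this case --- but your version is self-contained within the paper's framework, needing only Remark~\ref{Remark:GenJ-Homo} and no external input; the price is the basepoint and coordinate bookkeeping you flag. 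That bookkeeping does go through: the homeomorphism $S^1 \wedge \left( \mathbb{R}^q \cup \{\infty\} \right) \cong \mathbb{R}^{q+1} \cup \{\infty\}$ carries $\mathrm{Id}_{S^1} \wedge A$ to the compactification of $\mathrm{Id}_{\mathbb{R}} \oplus A$, and the two standard embeddings of $SO(q)$ into $SO(q+1)$ (new coordinate first versus last) are conjugate by an element of the connected group $SO(q+1)$, hence homotopic, so the coordinate permutation is indeed harmless. You also make explicit, via the first part of Proposition~\ref{J-HomoInFibrewiseCompactification}, that the suspended brace product is independent of the choice of section --- a point the statement's phrasing invites and which the paper's proof leaves implicit by working only with $s_\infty$.
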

\begin{proof}
    Let $[\rho] = \iota_*[\xi]$ for some $[\xi] \in \pi_{n-1} (SO(q))$. Consider the bundles 
    $$E_\rho = D^n_+ \times S^q \cup_\rho D^n_- \times S^q\text{ and }E_\xi = D^n_+ \times S^{q-1} \cup_\xi D^n_- \times S^{q-1},$$ 
    where the actions are given by $\rho$ and $\xi$ respectively. Then, the fibrewise reduced suspension of $E_\xi$ is $E_\rho$. We have, $E_\rho = D^n_+ \times S^q \cup_\xi D^n_- \times S^q$, where $SO(q)$ acts on $S^q = \mathbb{R}^q \cup \left\{ \infty \right\}$ via $\xi$. Consequently, $E_\rho$ admits a section $s = s_\infty$, which maps to the $\infty$ point in each fibre. Now, by Theorem~\ref{BraceAndJ-homoInSuspensionOverSuspension}, we have $\left\{ \mathrm{Id}_{S^n}, \mathrm{Id}_{S^q} \right\}_s = -J[\xi]$. Taking  suspension on both sides, we have 
    \[\Sigma \left\{ \mathrm{Id}_{S^n} , \mathrm{Id}_{S^q} \right\}_s = -\Sigma J[\xi] = - J(\iota_*[\xi]) = -J[\rho],\]
    where the second equality follows from \cite[Thm 2]{Whitehead-OnTheHomotopy}. This concludes the proof.
\end{proof}

\begin{thm}\label{thm:sphereBundleRationallySplits}
    Consider a fibre bundle $S^q \hookrightarrow E \rightarrow S^n$ with structure group $SO(q + 1)$. If the fibration admits a homotopy section $s:S^n\to E$, then $E$ has the same rational homotopy type as $S^q \times S^n$.
\end{thm}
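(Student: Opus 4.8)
The plan is to split the argument into two regimes according to the rational homotopy type of the fibre, dispatching most cases by the vanishing criterion of Theorem~\ref{cor:JamesBraceVanishImpliesRationalEquiv} and treating the one remaining case by hand. If $q=1$ the bundle is an oriented $S^1$-bundle, hence a principal $SO(2)$-bundle, which is trivial as soon as it admits a section; if $n=1$ it is classified by $\pi_0(SO(q+1))=0$, hence trivial; in both cases $E=S^q\times S^n$ and we are done. So assume $n,q\ge 2$, so that the total space $E=S(\eta)$ (the unit sphere bundle of the associated oriented rank-$(q+1)$ vector bundle $\eta$ over $S^n$), the base $S^n$, and the fibre $S^q$ are simply connected finite CW complexes. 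Since $S^n=\Sigma S^{n-1}$ and $S^q=\Sigma S^{q-1}$, the rationalized fibration $S^q_\Q\xrightarrow{i_\Q}E_\Q\xrightarrow{p_\Q}S^n_\Q$, with the rationalized section $s_\Q$, is a fibration over a rational suspension with a rational suspension as fibre (Lemma~\ref{prop_local_spaces}), so Theorem~\ref{cor:JamesBraceVanishImpliesRationalEquiv} applies once we know that its James brace product vanishes.

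Recall that $\pi_*(S^m_\Q)$ equals $\Q$ in degree $m$, with an additional copy of $\Q$ in degree $2m-1$ when $m$ is even, and is $0$ otherwise. Thus for a nonzero James brace $\{\alpha,\beta\}_{s_\Q}$ with $\alpha\in\pi_k(S^n_\Q)$, $\beta\in\pi_l(S^q_\Q)$, one needs $k\in\{n,2n-1\}$ (the value $2n-1$ only if $n$ is even), $l\in\{q,2q-1\}$ (the value $2q-1$ only if $q$ is even), and $k+l-1\in\{q\}\cup\{2q-1:q\text{ even}\}$; a quick inspection shows that, under $n,q\ge 2$, the only possibility is $k=l=q$ with $q$ even (and then $n=q$), giving $\{\mathrm{Id}_{S^n},\mathrm{Id}_{S^q}\}_{s_\Q}\in\pi_{2q-1}(S^q_\Q)\cong\Q$. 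Consequently, as long as we are not in the case ``$n=q$ with $q$ even'', the James brace product $\{,\}_{s_\Q}$ vanishes identically, and Theorem~\ref{cor:JamesBraceVanishImpliesRationalEquiv} gives $E\simeq_\Q S^n\times S^q$.

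It remains to treat $n=q$ with $q$ even. Replacing $s$ by a genuine section via the homotopy lifting property, a nowhere-vanishing section of $\eta$ splits off a trivial line bundle, $\eta\cong\varepsilon^1\oplus\eta'$ with $\eta'$ oriented of rank $q$; then $E=S(\eta)$ is the fibrewise unreduced suspension of $S(\eta')$, the section $s$ is the corresponding ``north pole'' section, collapsing it yields $E/s(S^n)\simeq Th(\eta')$ (the Thom space of $\eta'$) via the collapse map $\pi\colon E\to Th(\eta')$, and $E$ has a cell structure $(S^q\vee S^q)\cup_\phi e^{2q}$. From the cofibre sequence $S^n\xrightarrow{s}E\xrightarrow{\pi}Th(\eta')$ (write $F=S^q$ for the fibre; here $n=q$), split on cohomology by $p^*$, one gets $\widetilde H^*(E;\Q)$ two-dimensional in degree $q$, spanned by $\alpha=p^*\sigma$ (pullback of a generator of $H^n(S^n)$) and $\beta=\pi^*u$ ($u$ the Thom class of $\eta'$), and one-dimensional in degree $2q$, spanned by $\gamma=\pi^*\bar u$. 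The cup products are $\alpha^2=p^*(\sigma^2)=0$; $\langle\alpha\beta,[E]\rangle=\pm\langle\beta|_F,[F]\rangle=\pm1$ because $\alpha$ is Poincar\'e dual to a fibre and $\beta$ restricts to a generator of $H^q(F)$, so $\alpha\beta=\pm\gamma$; and $\beta^2=\pi^*(u^2)=\pi^*\!\big(e(\eta')\,\bar u\big)=e(\eta')\,\gamma$, using the standard identity $u^2=e(\eta')\bar u$ in $H^*(Th(\eta');\Q)$. Hence the intersection form $H^q(E;\Q)^{\otimes 2}\to H^{2q}(E;\Q)\cong\Q$ has matrix $\left(\begin{smallmatrix}0&1\\1&e(\eta')\end{smallmatrix}\right)$, which over $\Q$ is hyperbolic (complete the square in $\beta$), so $H^*(E;\Q)\cong H^*(S^q\times S^q;\Q)$ as graded rings. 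Since $E$ is a closed $(q-1)$-connected manifold of dimension $2q\le 4q-2$, it is formal (by Miller's theorem on formality of highly connected manifolds), and $S^q\times S^q$ is formal; therefore the ring isomorphism upgrades to $E\simeq_\Q S^q\times S^q$, completing the proof.

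The main obstacle is this last case. The delicate points are: (i) the cup-product computation in $H^*(E;\Q)$---especially the identification $E/s(S^n)\simeq Th(\eta')$ and the Poincar\'e-duality argument showing $\alpha\beta\ne0$, together with the appearance of the Euler number $e(\eta')$ through $u^2$; and (ii) the appeal to a formality theorem. For (ii) one should verify the precise hypotheses of the cited result, or argue directly that a simply connected two-cone on $S^q\vee S^q$ is intrinsically formal: its rational homotopy type is the orbit of the rationalized attaching class under $\operatorname{Aut}(S^q\vee S^q)_\Q\times\Q^\times$, which (via the Hilton--Milnor decomposition of $\pi_{2q-1}(S^q\vee S^q)\otimes\Q$ into Whitehead products of the two fundamental classes) acts with orbits detected precisely by the induced cup form, so the ring isomorphism suffices.
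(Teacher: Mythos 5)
Your proof is correct, and the reduction to the single problematic case $n=q$ with $q$ even is essentially the same degree count on $\pi_*(S^m_\Q)$ that the paper performs (the paper dispatches the $q$ even, $n\ne q$ case via Proposition~\ref{splitting_in_suspension_over_suspension} rather than Theorem~\ref{cor:JamesBraceVanishImpliesRationalEquiv}, but that is immaterial). Where you genuinely diverge is in the remaining case: the paper stays inside its brace-product/$J$-homomorphism framework --- it pulls the bundle back along a degree-two self-map of $S^n$, lifts the clutching class to $\pi_{n-1}(SO(n))$, corrects the lift by a multiple of $\partial(\mathrm{Id}_{S^n})$ using the James--Whitehead ladder so that its $J$-image becomes torsion, and then invokes Theorem~\ref{BraceAndJ-homoInSuspensionOverSuspension} to identify $\{\mathrm{Id},\mathrm{Id}\}_{s_\infty}$ with $-J\xi'$, after which Proposition~\ref{splitting_in_suspension_over_suspension} finishes. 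You instead split $\eta\cong\varepsilon^1\oplus\eta'$ from the section, identify $E/s(S^n)\simeq Th(\eta')$, compute the rational intersection form $\left(\begin{smallmatrix}0&1\\1&e\end{smallmatrix}\right)$ (which is hyperbolic over $\Q$), and upgrade the ring isomorphism $H^*(E;\Q)\cong H^*(S^q\times S^q;\Q)$ to a rational equivalence via formality of $(q-1)$-connected $2q$-manifolds. Both arguments are sound; your cohomological route is arguably more concrete and avoids the delicate bookkeeping with $P(\mathrm{Id}_{S^n})$ for $n=2,4,8$, but it imports an external formality theorem (Miller's) that the paper does not use and exits the brace-product formalism entirely, whereas the paper's argument keeps the result as an application of its main theorems (and in particular exhibits the vanishing of the rational brace product itself, which is what feeds Theorem~\ref{generalized_brace-splitting} and the $H$-splitting story). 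If you prefer to avoid Miller's theorem, your sketched alternative via intrinsic formality of two-cones on $S^q\vee S^q$ (Hilton--Milnor on $\pi_{2q-1}(S^q\vee S^q)\otimes\Q$) does work, but it should be written out, since the action of $\mathrm{Aut}(S^q\vee S^q)_\Q$ on the attaching class and the claim that orbits are detected by the cup form require a short verification.
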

\begin{proof}
    It is enough to show that for some choice of homotopy section $s$, either the generalized brace $\left\{ \mathrm{Id}_{S^n_{\mathbb{Q}}}, \mathrm{Id}_{S^q_{\mathbb{Q}}} \right\}_{s_\Q} = 0$ (Proposition~\ref{splitting_in_suspension_over_suspension}), or the James brace product $\left\{, \right\}_{s_{\mathbb{Q}}}$ vanishes identically (Corollary~\ref{cor:JamesBraceVanishImpliesRationalEquiv}).

    When $q$ is odd, it was observed in Example~\ref{eg:oddSphereBundleJamesVanish} that the James brace product $\left\{ , \right\}_{s_{\mathbb{Q}}}$ always vanishes. When $q$ is even, the homotopy groups of $S^q_{\mathbb{Q}}$ are concentrated in degrees $q$ and $2q - 1$. If $n \ne q$, then the brace product $\left\{ \mathrm{Id}_{S^n}, \mathrm{Id}_{S^q} \right\}_{s_{\mathbb{Q}}} \in \pi_{n+q-1}(S^q_{\mathbb{Q}}) = 0$. Thus, we are left with the possibility that $q = n$ is an even number.

    Let us consider the fibre bundles $S^n \hookrightarrow E \rightarrow S^n$ admitting a section and with structure group $SO(n+1)$ for even $n$. These bundles are classified by elements of $\pi_{n-1}(SO(n+1))$. If $\rho\in \pi_{n-1}(SO(n+1))$ represents the bundle $E$ and $f$ is the degree two map on $S^n$ obtained by $(n-1)$th suspension of the degree $2$ map $z\mapsto z^2$ on $S^1$, then $f^*E$ is the bundle represented by the element $2\rho\in \pi_{n-1}(SO(n+1))$.
    Note that any degree two map $f:S^n\to S^n$ is a rational homotopy equivalence. Hence, using the five lemma, $E$ and the pull-back bundle $f^*E$ are rationally equivalent. 
    Thus, studying the rational homotopy types of the bundles $E$ and $f^*E$ are equivalent. 

    Now from the fibration $SO(n) \hookrightarrow SO(n+1) \rightarrow S^n$, we have the long exact sequence 
    \[\cdots \rightarrow \pi_n(SO(n+1))\rightarrow \pi_n (S^n) \rightarrow \pi_{n-1}(SO(n)) \rightarrow \pi_{n-1}(SO(n+1)) \rightarrow 0.\]
    Since $SO(n+1)$ is a Lie group, $\pi_n(SO(n+1))$ is a torsion subgroup for any even $n$ (see \cite[Cor 1, \S 3, Chap V]{Serre-Groupes}). But $\pi_n(S^n)$ being free, the map $\pi_n(SO(n+1))\rightarrow \pi_n (S^n)$ must be zero, and we then have the following short exact sequence. 
    \begin{equation}\label{eq:soNExactSequence}
        0 \rightarrow {\pi_n(S^n)} \xrightarrow{\partial} \pi_{n-1}(SO(n)) \xrightarrow{i_*} {\pi_{n-1}(SO(n+1))} \rightarrow 0.
    \end{equation}
    It is known (see the paragraph just before the \S 3 of \cite{Whitehead-FreudenthalThm}) that the image of the map
    \begin{align*}
        P:\pi_n(S^n) &\rightarrow  \pi_{2n-1}(S^n) \\ 
        \alpha &\mapsto \left[ \alpha, \mathrm{Id}_{S^n} \right]
    \end{align*}
    generates the kernel of the suspension homomorphism $\Sigma: \pi_{2n-1}(S^n) \rightarrow \pi_{2n}(S^{n+1})$. Since $n$ is even, $[\mathrm{Id}_{S^n},\mathrm{Id}_{S^n}]\neq 0$ is of infinite order 
    and hence using linearity, we have $P$ is injective.  Also, the Freudenthal suspension theorem implies that $\Sigma$ is onto.
    Therefore, we have the following exact sequence.
    \[0\rightarrow \pi_n(S^n)\xrightarrow{P}\pi_{2n-1}(S^n) \xrightarrow{\Sigma} \pi_{2n}(S^{n+1})\rightarrow 0\]
    
    \noindent The above exact sequence together with (\ref{eq:soNExactSequence}) fits into the following commutative diagram (see \cite[Diagram 1.2]{James-Whitehead}).
    \begin{equation}\label{eq:commDiagramExactSeq}
        \begin{tikzcd}
            0 \arrow{r}{} & \pi_n(S^n) \arrow{r}{\partial} \arrow[equal]{d}{} & \pi_{n-1}(SO(n)) \arrow{r}{\iota_*} \arrow{d}{-J} & \pi_{n-1}(SO(n+1)) \arrow{d}{J} \arrow{r}{} & 0 \\
            0 \arrow{r}{} & \pi_n(S^n) \arrow{r}[swap]{P} & \pi_{2n-1}(S^n) \arrow{r}[swap]{\Sigma} & \pi_{2n}(S^{n+1}) \arrow{r}{} & 0
        \end{tikzcd}
    \end{equation}
    Now $n$ being even, we can write $\pi_{2n-1}(S^n)= \mathbb{Z} \oplus G$, where $G$ is some torsion subgroup.
    Since $\iota_*$ is surjective, for $\rho\in \pi_{n-1}(SO(n+1))$, there exists $\xi \in \pi_{n-1}(SO(n))$ such that $\rho = \iota_* \xi$ and thus $2\rho =\iota_*(2\xi)$.     
    Suppose $-J\xi=(m,g)\in \Z\oplus G$. Then $-J(2\xi)=(2m,2g)$.
    It is well-known that for $n$ even, 
    \begin{equation}\label{eq:Pmap}
        P\left(\mathrm{Id}_{S^n} \right)=
    \begin{cases}
        (1,0) &\text{ for }n \neq 2, 4, 8\\
        (2,g_n) &\text{ for }n=2, 4,8,
    \end{cases}
    \end{equation}
    where $g_n$ is a torsion element \cite[p. 274]{Hatcher}. Clearly $g_2 = 0$, and for the exact values of $g_n$ for $n=4,8$ we refer to \cite[Thm 4.1]{Toda-SomeRelaionsInHomotopyGroups}.
    Let us now consider 
    $$\xi'=
    \begin{cases}
        2\xi-2m\partial(\mathrm{Id}_{S^n}) &\text{ for }n\neq 2, 4, 8\\
        2\xi-m\partial(\mathrm{Id}_{S^n}) &\text{ for }n=2, 4,8
    \end{cases}$$ 
    so that $\iota_*(\xi')=2\rho$, as $\iota_*\partial=0$. We observe that $-J\xi'$ is a torsion element. Indeed, using \eqref{eq:Pmap} and the commutativity of the first square in \eqref{eq:commDiagramExactSeq}, we get
    $$-J\xi'=
    \begin{cases}
        -J(2\xi)+2mJ\partial(\mathrm{Id}_{S^n}) =(2m,2g)-(2m,0)=(0,2g)&\text{ for }n\neq 2, 4, 8\\
        -J(2\xi)+mJ\partial(\mathrm{Id}_{S^n})=(2m,2g)-(2m,mg_n)=(0,2g-mg_n) &\text{ for }n=2,4,8.
    \end{cases}$$
    Suppose $S^{n-1} \hookrightarrow \tilde{E} \rightarrow S^n$ is the bundle corresponding to $\xi'\in \pi_{n-1}(SO(n))$. Let $S^n \hookrightarrow E' \rightarrow S^n$ be the fiberwise suspension of $\tilde{E}$ with the canonical section $s = s_{\infty}$. Then, $E'$ is isomorphic to the bundle $f^*E$ given by $2\rho$. Thus, by Theorem~\ref{BraceAndJ-homoInSuspensionOverSuspension} we have $\left\{ \mathrm{Id}_{S^n}, \mathrm{Id}_{S^n} \right\}_s = -J\xi'$, which is a torsion element.

    Lastly, consider the rationalized fibration $S^n_{\mathbb{Q}} \overset{i_{\mathbb{Q}}}{\rightarrow} f^*E_{\mathbb{Q}}\xrightarrow{p_{\mathbb{Q}}}S^n_{\mathbb{Q}}$ with section $s_{\mathbb{Q}}$. Then as in Diagram \ref{Diagram:rationalization}, we have 
    \[\left[s_\Q\comp \varphi, i_\Q\comp \varphi\right]= [\varphi \comp s, \varphi \comp i] = \varphi_*[s, i] = \varphi_* i_* \underbrace{\left\{ \mathrm{Id}_{S^n}, \mathrm{Id}_{S^q} \right\}_s}_{\textrm{torsion}} = 0.\]
    Hence, Lemma \ref{RationalGenWhiteAndWhitehead} implies $\left( i_{\mathbb{Q}} \right)_*\left\{ \mathrm{Id}_{S^n_{\mathbb{Q}}}, \mathrm{Id}_{S^n_{\mathbb{Q}}} \right\}_{s_{\mathbb{Q}}} = [s_{\mathbb{Q}}, i_{\mathbb{Q}}] =0$, which gives $S^n_{\mathbb{Q}} \times S^n_{\mathbb{Q}} \simeq f^*E_{\mathbb{Q}}$ by Proposition~\ref{splitting_in_suspension_over_suspension}. Since $E$ and $f^*E$ are rationally equivalent, $E$ and $S^n \times S^n$ have the same rational homotopy type \cite[Proposition 9.8]{Felix_rational}.\qedhere
\end{proof}

\begin{remark}
    In the above theorem, we restricted the structure group to $SO(q+1)$; however, the proof indicates that such a restriction is needed only in the case where $n = q$ is even.
\end{remark}

\begin{eg}
    For $n \equiv 1, 2 \mod 8$ and $n > 1$, there are precisely two $S^n$ bundles over $S^n$ with $SO(n+1)$ action, since $\pi_{n-1}(SO(n+1)) = \mathbb{Z}_2$. It follows from \cite[Eg. 6.4]{Adams-J(X)group} that $\textrm{Im}\left( J : \pi_{n-1}(SO(n+1)) \rightarrow \pi_{2n}(S^{n+1}) \right) = \mathbb{Z}_2$. Then, by \cite{James-Whitehead}, the nontrivial bundle is homotopically nontrivial as well. On the other hand, it follows from Theorem~\ref{thm:sphereBundleRationallySplits} that the total space of any such bundle is rationally equivalent to $S^n \times S^n$.
\end{eg}

Observe that in the above example, pulling back the nontrivial bundle, say, $E$ by a degree $2$ map $f: S^n \rightarrow S^n$ trivializes it, and consequently, the induced map $f^*E = S^n \times S^n \rightarrow E$ is a rational equivalence using a $5$-lemma argument. Next, we give an example where this argument does not work, and yet the total space of the bundle has the rational homotopy type of a product.

\begin{eg}
    Consider the bundles $S^{12} \hookrightarrow E \rightarrow S^{12}$ with $SO(13)$ action, which are classified by $\pi_{11}(SO(13)) = \mathbb{Z}$. Now, $J : \pi_{11}(SO(13)) \rightarrow \pi_{24}(S^{13})$ is given by the map $\mathbb{Z} \rightarrow \mathbb{Z}_{504}$. In particular, pick some $\rho \in \pi_{11}(SO(13))$ which is not mapped to $0$ under $J$. Consider the bundle $S^{12} \hookrightarrow E \rightarrow S^{12}$ corresponding to $\rho$. Again by \cite{James-Whitehead}, we have $E \not\simeq S^{12} \times S^{12}$. On the other hand, Theorem~\ref{thm:sphereBundleRationallySplits} implies that $E$ is rationally equivalent to $S^{12} \times S^{12}$.
\end{eg}

\appendix
\section{} \label{sec:appendix}
The following proposition is known to be Theorem 5.2 of Eckmann-Hilton \cite{Eckmann-Hilton} in the literature. But we could not find it in the form below, so for completeness, we present a proof of it. The proof technique is similar to Eckmann-Hilton.  
\begin{prop}[{\cite[Thm 5.2]{Eckmann-Hilton}}]\label{homotopy_equivalence}
    Suppose $F\overset{i}{\hookrightarrow} E\xrightarrow{p} B$ is a fibration. If the loop space fibration $\Om F\overset{\Om i}{\hookrightarrow} \Om E\xrightarrow{\Om p} \Om B$ has a homotopy section $\Gamma$, then the loop space fibration splits i.e., $\Om E\simeq \Om B\times \Om F$. Moreover, $\theta:\Om B\times \Om F\to \Om E$ defined by $\theta:=\mu_E \circ (\Gamma\times \Omega i)$ is a homotopy equivalence, where $\mu_E$ denotes the loop concatenation in $E$.
\end{prop}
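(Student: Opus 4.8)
The plan is to prove that $\theta$ is a weak homotopy equivalence and then upgrade to an honest homotopy equivalence via Whitehead's theorem. Since $E, B, F$ are CW complexes, their loop spaces have the homotopy type of CW complexes, so it suffices to show that $\theta_* : \pi_n(\Omega B \times \Omega F) \to \pi_n(\Omega E)$ is an isomorphism for every $n$ (and a bijection on $\pi_0$). First I would record the two short exact sequences that $\theta$ ought to intertwine. The top one is the split short exact sequence
\[0 \to \pi_n(\Omega F) \to \pi_n(\Omega B \times \Omega F) \to \pi_n(\Omega B) \to 0\]
coming from the product decomposition $\pi_n(\Omega B \times \Omega F) \cong \pi_n(\Omega B) \oplus \pi_n(\Omega F)$. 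The bottom one is
\[0 \to \pi_n(\Omega F) \xrightarrow{(\Omega i)_*} \pi_n(\Omega E) \xrightarrow{(\Omega p)_*} \pi_n(\Omega B) \to 0 ,\]
which is exact because $\Omega F \overset{\Omega i}{\hookrightarrow} \Omega E \xrightarrow{\Omega p} \Omega B$ is a fibration (the loop of the given fibration), and is short exact because $\Gamma_*$ is a right inverse to $(\Omega p)_*$, so the connecting maps vanish.

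Next I would verify that the ladder
\[\begin{tikzcd}
0 \ar[r] & \pi_n(\Omega F) \ar[r] \ar[d, equal] & \pi_n(\Omega B \times \Omega F) \ar[r] \ar[d, "\theta_*"] & \pi_n(\Omega B) \ar[r] \ar[d, equal] & 0 \\
0 \ar[r] & \pi_n(\Omega F) \ar[r, "{(\Omega i)_*}"] & \pi_n(\Omega E) \ar[r, "{(\Omega p)_*}"] & \pi_n(\Omega B) \ar[r] & 0
\end{tikzcd}\]
commutes. For the right-hand square, given $(\alpha,\beta) \in \pi_n(\Omega B \times \Omega F)$ one computes $(\Omega p)_* \theta_*(\alpha,\beta) = \mu_B\bigl((\Omega p \comp \Gamma)_*\alpha,\ (\Omega(p\comp i))_*\beta\bigr)$; since $\Omega p \comp \Gamma \simeq \mathrm{Id}_{\Omega B}$, the composite $p\comp i$ is constant, and $\Omega B$ is an $H$-space with homotopy unit the constant loop, this equals $\alpha$, which is exactly the projection to $\pi_n(\Omega B)$. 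For the left-hand square, one uses that $\Gamma$ is base point preserving, hence sends the constant loop $c_B$ to the constant loop $c_E$; therefore, on the summand $\pi_n(\Omega F)$ included as $\beta \mapsto (0,\beta)$, we get $\theta_*(0,\beta) = \mu_E\bigl(\Gamma(c_B),\ \Omega i(\beta)\bigr) = \mu_E\bigl(c_E,\ \Omega i(\beta)\bigr) \simeq \Omega i(\beta)$ by the homotopy unit law for $\Omega E$, which is precisely $(\Omega i)_*\beta$.

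Once the ladder commutes, the five lemma gives that $\theta_*$ is an isomorphism in every degree $n \ge 1$; the identical argument with the short exact sequences of groups $\pi_0(\Omega X) = \pi_1(X)$ (still split by $\Gamma$) shows $\theta_*$ is a bijection on $\pi_0$, so $\theta$ induces isomorphisms on all homotopy groups at all base points. Whitehead's theorem then yields that $\theta : \Omega B \times \Omega F \to \Omega E$ is a homotopy equivalence, and in particular $\Omega E \simeq \Omega B \times \Omega F$. The only genuinely delicate point, and the one where the argument mirrors Eckmann--Hilton most closely, is the base-point bookkeeping: one must keep track throughout that $\Gamma$, $\Omega i$, $\Omega p$ and the chosen homotopy $\Omega p \comp \Gamma \simeq \mathrm{Id}_{\Omega B}$ are all pointed, so that $\Gamma$ carries $c_B$ to $c_E$ and the homotopy unit laws of the $H$-spaces $\Omega E$ and $\Omega B$ apply up to pointed homotopy; everything else is formal. (One could instead try to write down an explicit homotopy inverse $\omega \mapsto \bigl(\Omega p(\omega), ?\bigr)$ with $\Omega i(?) \simeq \Gamma(\Omega p\,\omega)^{-1}\cdot \omega$, but turning the ``lifts up to homotopy'' into an actual map requires extra fibration-theoretic work, so the homotopy-groups route above is the cleaner one.)
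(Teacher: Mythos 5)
Your route is genuinely different from the paper's. You prove that $\theta$ is a weak homotopy equivalence (five lemma on the ladder of split short exact sequences, then Whitehead), whereas the paper constructs an explicit homotopy inverse $\eta = (\Omega p \times f)\comp \Delta$: the map $f : \Omega E \to \Omega F$ is produced from exactness of $[\Omega E, \Omega F] \xrightarrow{(\Omega i)_*} [\Omega E, \Omega E] \xrightarrow{(\Omega p)_*} [\Omega E, \Omega B]$ applied to the ``difference'' $\mu_E \comp \bigl((\bar{I}_E\comp\Gamma\comp\Omega p)\times 1\bigr)\comp\Delta$, and then $\theta\comp\eta \simeq 1$ and $\eta\comp\theta \simeq 1$ are verified by direct loop manipulations. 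The ``extra fibration-theoretic work'' you dismiss in your closing parenthesis is exactly this one application of the Puppe sequence; it is no harder than what you do, and it buys something important, as explained below. Your ladder computations themselves (the right square via $\Omega p\comp\theta \simeq \mathrm{pr}_1$, the left square via $\Gamma(c_B) = c_E$, and the $\pi_0$ statement via the semidirect-product decomposition) are correct.

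The gap is in the sentence ``the identical argument \ldots shows $\theta$ induces isomorphisms on all homotopy groups at all base points.'' The proposition carries no connectivity hypothesis, so $\Omega B$, $\Omega F$, $\Omega E$ may be disconnected, and Whitehead's theorem requires an isomorphism on $\pi_n(-,x)$ for a basepoint $x$ in \emph{every} path component of the source. Your five-lemma ladder is established only at the standard basepoint $(c_B, c_F)$. The obvious way to propagate it to a component containing $(\alpha,\beta)$ — conjugating by the translation homotopy equivalences $(\omega,\nu)\mapsto(\omega\cdot\alpha,\nu\cdot\beta)$ on the source and $e\mapsto e\cdot\theta(\alpha,\beta)$ on the target — fails, because it would require $\Gamma$ to be an $H$-map and, worse, would require $\Omega i(\nu)\cdot\Gamma(\alpha) \simeq \Gamma(\alpha)\cdot\Omega i(\nu)$; the failure of exactly this commutation is the brace-product obstruction that this paper is about, so it cannot be assumed. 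Redoing the ladder at an arbitrary basepoint is possible but requires nontrivial change-of-basepoint and change-of-fibre bookkeeping that you have not supplied. This is why the explicit-inverse argument is preferable here: it proves the homotopy equivalence globally in one stroke, with no case analysis over components. To repair your proof, either add the hypothesis that the relevant spaces are simply connected (insufficient for the paper's use of this proposition), carry out the basepoint bookkeeping honestly, or replace the Whitehead step by Dold's theorem, using that $\Omega p\comp\theta\simeq\mathrm{pr}_1$ exhibits $\theta$ as a map over $\Omega B$ up to homotopy.
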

\begin{proof}
    For the given fibration, we have the following exact sequence
    \begin{align*}
        0 \rightarrow [\Om E, \Om F]\xrightarrow{(\Om i)_*} [\Om E, \Om E] \xrightarrow{(\Om p)_*} [\Om E, \Om B] \rightarrow 0
    \end{align*}
    Suppose $\Bar{I}_X: \Om X\to \Om X$ denotes the loop inversion map for $X=E, B$. Then $\mu_X(\Bar{I}_X(x),x)\simeq e_X\simeq \mu_X(x, \Bar{I}_X(x))$ for $x\in \Om X$, where $\mu_X$ is the loop concatenation in $\Om X$ and $e_X$ is the constant loop in $\Om X$. Also, let $\Delta: E\to E\times E$ be the diagonal map. Then for $a\in \Om E$,
    \begin{align*}
        & (\Om p)_*((\mu_E\circ (\Bar{I}_E\circ \Gamma\circ \Om p) \times 1)\circ \Delta)(a)\\
        =\ & \mu_B (\Om p\circ \Bar{I}_E\circ\Gamma\circ \Om p(a), \Om p(a)) \text{ (as $(\Om p)_*$ is a group homomorphism)}\\
        =\ & \mu_B (\Bar{I}_B(\Om p\circ\Gamma (\Om p(a)), \Om p(a))) \text{ (as $(\Om p)_*$ is a group homomorphism)}\\
        \simeq\ & \mu_B (\Bar{I}_B(\Om p(a)), \Om p(a)) \text{ (as $\Om p\circ \Gamma\simeq I_{\Om B}$  is identity map)}\\
        \simeq\ & e_B, \text{ constant loop in $\Om B$.}
    \end{align*}
    Thus from the exact sequence, we have $\mu_E\circ ((\Bar{I}_E\circ \Gamma\circ \Om p) \times 1)\circ \Delta\in \text{Ker}((\Om p)_*)=\text{Im}((\Om i)_*)$ i.e., there exists (unique) $f:\Om E\to \Om F$ such that $(\Om i)_*(f)=\mu_E\circ ((\Bar{I}_E\circ \Gamma\circ \Om p) \times 1)\circ \Delta$. We will show that $\eta:=(\Om p\times f)\circ \Delta: \Omega E\to \Om B\times \Om F$ is the homotopy inverse of the given $\theta: \Om B\times \Om F\to \Om E$ i.e., $\theta\circ \eta \simeq 1_{\Om E}$ and $\eta\circ \theta \simeq 1_{\Om B\times \Om F}$. Now for $a\in \Om E$,
    \begin{align*}
        \theta\circ \eta(a)
        &= \mu_E\circ (\Gamma\times \Om i)(\Om p(a), f(a))\\
        &= \mu_E (\Gamma(\Om p(a)), \Om i(f(a)))\\
        &= \mu_E (\Gamma(\Om p(a)), \mu_E (\Bar{I}_E(\Gamma(\Om p(a))), a))\text{ (by definition of $f$)}\\
        &\simeq \mu_E (\mu_E(\Gamma(\Om p(a)),  \Bar{I}_E(\Gamma(\Om p(a)))), a))\text{ (by homotopy associativity of $\mu_E$)}\\
        &\simeq \mu_E(e_E,a)\\
        &\simeq a
    \end{align*}
    Thus $\theta\circ \eta (a)\simeq a$ for all $a\in \Om E$ i.e., $\theta\circ \eta\simeq 1_{\Om E}.$
    For $(b,d)\in \Om B\times \Om F$,
    \begin{align*}
        \Om p\circ \theta(b,d)
        &= \Om p(\mu_E(\Gamma(b), \Om i(d))) \text{ (by definition of $\theta$)}\\
        &= \mu_B(\Om p(\Gamma(b)), \Om p(\Om i(d))) \text{ (as $(\Om p)_*$ is a group homomorphism)}\\
        &= \mu_B(b, e_B)\\
        &\simeq b \quad
    \end{align*}
    Also,
    \begin{align*}
        (\Om i)_*(f\circ \theta)(b,d)
        &= (\Om i)_*(f)(\theta(b,d))\\
        &= \mu_E\circ ((\Bar{I}_E\circ \Gamma\circ \Om p) \times 1)\circ \Delta (\mu_E(\Gamma(b), \Om i(d))) \text{ (by definition of $\theta$ and $f$)}\\
        &= \mu_E(\Bar{I}_E(\Gamma( \Om p(\mu_E(\Gamma(b), \Om i(d))))), \mu_E(\Gamma(b), \Om i(d)))\\
        &= \mu_E(\Bar{I}_E(\Gamma(\mu_B (\Om p(\Gamma(b)), \Om p(\Om i(d))))), \mu_E(\Gamma(b), \Om i(d)))\\
        &= \mu_E(\Bar{I}_E(\Gamma(\mu_B (b, e_B))), \mu_E(\Gamma(b), \Om i(d)))\\
        &\simeq \mu_E(\Bar{I}_E(\Gamma(b)), \mu_E(\Gamma(b), \Om i(d)))\\
        &\simeq \mu_E(\mu_E(\Bar{I}_E(\Gamma(b)), \Gamma(b)), \Om i(d)))\\
        &\simeq \mu_E(e_E, \Om i(d))\\
        &\simeq \Om i(d)
    \end{align*}
    As $(\Om i)_*$ is injective, we have $f\circ \theta (b,d)=d$. Thus 
    \begin{align*}
        \eta\circ \theta(b,d)
        &=(\Om p\times f)\circ \Delta(\theta (b,d))\\
        &= (\Om p\circ \theta(b,d), f\circ \theta(b,d))\\
        &\simeq (b,d)\text{ for all }(b,d)\in \Om B\times \Om F
    \end{align*}
    Consequently $\eta\circ \theta\simeq 1_{\Om B\times \Om F}$. This completes the proof.
\end{proof}

\bibliographystyle{abbrv}

\end{document}